\documentclass{article}
\usepackage[utf8]{inputenc}
\DeclareUnicodeCharacter{2011}{-}
\usepackage[margin = 2.5cm]{geometry}
\usepackage{float}
\usepackage{caption}
\usepackage{graphicx}
\usepackage{xcolor}
\definecolor{revisionyellow}{RGB}{180,135,0}
\usepackage{amsmath}
\usepackage{amsthm}
\usepackage{amssymb}
\usepackage{mathrsfs}
\usepackage{amsfonts}
\usepackage{lmodern}
\usepackage{lineno}
\usepackage{fancyhdr}
\usepackage{indentfirst}
\usepackage{cite}
\usepackage{hyperref}     
\usepackage{cleveref}
\usepackage{enumitem}
\usepackage{aliascnt} 
\usepackage{tikz}  
\usetikzlibrary{shapes,arrows.meta,positioning}

\allowdisplaybreaks

\newtheorem{theorem}{Theorem}[section]

\newaliascnt{lemma}{theorem}
\newtheorem{lemma}[lemma]{Lemma}
\aliascntresetthe{lemma}
\Crefname{lemma}{Lemma}{Lemmas}
\Crefname{lemma}{Lemma}{Lemmas}

\newaliascnt{corollary}{theorem}
\newtheorem{corollary}[corollary]{Corollary}
\aliascntresetthe{corollary}
\Crefname{corollary}{Corollary}{Corollaries}
\Crefname{corollary}{Corollary}{Corollaries}

\newaliascnt{proposition}{theorem}
\newtheorem{proposition}[proposition]{Proposition}
\aliascntresetthe{proposition}
\Crefname{proposition}{Proposition}{Propositions}
\Crefname{proposition}{Proposition}{Propositions}

\newaliascnt{observation}{theorem}
\newtheorem{observation}[observation]{Observation}
\aliascntresetthe{observation}
\Crefname{observation}{Observation}{Observations}
\Crefname{observation}{Observation}{Observations}

\newaliascnt{example}{theorem}

\aliascntresetthe{example}
\Crefname{example}{Example}{Examples}
\Crefname{example}{Example}{Examples}

\newaliascnt{question}{theorem}

\aliascntresetthe{question}
\Crefname{question}{Question}{Questions}
\Crefname{question}{Question}{Questions}

\theoremstyle{definition}  
\newaliascnt{remark}{theorem}
\newtheorem{remark}[remark]{Remark}
\aliascntresetthe{remark}
\Crefname{remark}{Remark}{Remarks}
\Crefname{remark}{Remark}{Remarks}
 
\newtheorem{claim}{Claim}
\newenvironment{proofofclaim}[1][Proof of Claim]{%
  \begin{proof}[#1]%
}{%
  \end{proof}%
}
\newtheorem*{claim*}{Claim}
\newenvironment{proofofclaim*}[1][Proof of Claim*]{%
  \begin{proof}[#1]%
}{%
  \end{proof}%
}
\Crefname{claim}{Claim}{Claims}
\theoremstyle{plain} 
\newtheorem*{proposition*}{Proposition}  

\theoremstyle{definition}

\newtheorem{definition}{Definition}[section]

\newtheorem{step}{Step}
\Crefname{step}{Step}{Steps}
\newtheorem{step*}{Step*}

\Crefname{theorem}{Theorem}{Theorems}
\Crefname{theorem}{Theorem}{Theorems}

\newcommand{\orw}{\overrightarrow}
\newcommand{\U}{\mathcal{U}}
\newcommand{\D}{\mathcal{D}}
\newcommand{\pt}{\partial}

\usepackage[utf8]{inputenc}

\title{An improved upper bound for oriented diameter of graphs with diameter $4$}
\author{Yaokun Feng\thanks{Center for Combinatorics and LPMC, Nankai University, Tianjin 300071, 
China. Email: 1120250006@mail.nankai.edu.cn.}, Hui Lei\thanks{School of Statistics and Data Science, LPMC and KLMDASR, Nankai University, Tianjin 300071, China. Email: hlei@nankai.edu.cn.}, Xiaopan Lian\thanks{Center for Combinatorics and LPMC, Nankai University, Tianjin 300071, 
China. Email: Lian@nankai.edu.cn.}, Zijian Ren\thanks {School of Mathematical Sciences and LPMC, Nankai University, Tianjin 300071, 
China. Email: 2120250113@mail.nankai.edu.cn.}}
\date{}

\begin{document}
	
	\maketitle
	\begin{abstract}
Let $f(d)$ denote the smallest integer such that every bridgeless graph of diameter $d$ admits a strong orientation with diameter at most $f(d)$. It is known that $f(2)=6$ and $f(3)=9$. For $d=4$, the classical bounds of Chv\'{a}tal and Thomassen  [JCTB, 1978] imply $12\le f(4)\le40$, and subsequent work reduced the upper bound to 21. Very recently, Lin, Wang and You further established the substantially stronger bound $f(4)\le18$. Pushing this bound below $18$ turns out to be considerably more difficult, since the remaining extremal configurations cannot be handled by existing techniques based on $R-S$ orientations and related local constructions. 

In this paper, we prove that $f(4)\le16$. Our approach is entirely different from previous ones. Instead of constructing a strong orientation directly, we develop a sequential orientation framework together with auxiliary distance functions and a potential-function analysis. This enables us to control directed distances globally while avoiding the intricate case analysis required by earlier methods. We believe that the framework introduced here may be useful for studying oriented diameter problems of larger diameter.

\noindent\textbf{Keywords: oriented diameter; diameter 4; good path; distance} 
	\end{abstract}	
 \section{Introduction}
 All graphs in this paper are finite and without loops or multiple edges.  The {\it diameter} of a graph $G$ is the greatest distance between two vertices of $G$. A {\it bridge} of a graph $G$ is an
edge whose removal disconnects $G$. A graph $G$ is {\it bridgeless} if it has no bridge. 
An {\it orientation} $\overrightarrow{G}$ of a graph $G$ is an assignment of exactly one direction to each edge of $G$. 
An orientation $\overrightarrow{G}$ is {\it strong} if there is a dipath from $u$ to $v$ for any $u, v\in V (\overrightarrow{G})$.  The  {\it distance} from $u$ to $v$ is the length of a shortest dipath
from $u$ to $v$ in $\overrightarrow{G}$, denoted by   $\partial(u, v)$. If $\overrightarrow{G}$ is strong, then the {\it diameter} $\mathrm{diam}(\overrightarrow{G})$  is the maximum of
 $\partial(u, v)$ taken over all vertices $u$ and $v$ in $\overrightarrow{G}$. The {\it oriented diameter}
of a bridgeless graph $G$ is
$\overrightarrow{\mathrm{diam}}(G)= \min \{\mathrm{diam}(\overrightarrow{G}): 
\overrightarrow{G}$ is a strong orientation of $G$\}.

 Let $f(d)$ be the smallest value for which every bridgeless graph
$G$ with diameter $d$ admits that $\overrightarrow{\mathrm{diam}}(
G)\leq f(d)$. This notion
 was introduced by Chv\'{a}tal and Thomassen in \cite{CT1978}, and they proved that $\frac{1}{2}d^2+d\leq f(d)\leq 2d^2+2d$ for $d\geq2$. Babu et al. \cite{BBRV2021} improved the upper bound of $f(d)$ to  $1.373d^2+6.971d-1$ 
which is smaller than $2d^2+2d$ when $d\geq8$.  Researchers have tried to
obtain sharp upper bounds for some special classes of graphs, including complete $k$-partite graphs \cite{G1994, P1985, S1986}, near triangulations \cite{GLW, MPR2023, WCDGSV2021} and so on. Further investigations of such bounds on
the oriented diameter have been studied with respect to other graph parameters such as
the domination number \cite{KL2012}, minimum degree \cite{S2017}, and maximum degree \cite{DGS2018}.  

The known exact values of $f(d)$ are $f (1)=3, f(2)=6$ \cite{CT1978} and $f(3)=9$ \cite{WC2022}. For the family of bridgeless graphs of diameter
$4$, the lower and upper bounds of $f(4)$ provided by Chvátal and Thomassen \cite{CT1978} are $12$ and $40$, respectively. Babu et al. \cite{BBRV2021} improved
this upper bound of $f(4)$ to $21$. 
In this paper, we improved
this upper bound of $f(4)$ to $16$. While we were finalizing this work, Lin, Wang and You  \cite{lin2026improvedupperboundoriented}  showed that $f(4)\le 18$.  A  technical tool in their work is the so called \emph{R-S orientation}, which is developed by Kwok et al. in \cite{KWOK2010265} and also  applied in \cite{lin2025oriented,WC2022}. Since our paper does not rely on this construction, we do not elaborate on it here. The method in this work will be presented in full detail in \Cref{sec:sketch}.

\begin{theorem}\label{maintheorem}
$f(4)\leq 16$.    
\end{theorem}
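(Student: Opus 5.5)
We fix a bridgeless graph $G$ of diameter $4$ and a root vertex $r$, and stratify $V(G)$ into BFS layers $N_0=\{r\},N_1,\dots,N_4$ with $N_i=\{v: d(r,v)=i\}$. We will not orient all of $G$ at once. Instead, the orientation is built sequentially, layer by layer and edge-class by edge-class. Alongside it we track two auxiliary distance functions on the partially oriented graph $\overrightarrow{H}$: $\mathrm{out}(v)=\partial(r,v)$ and $\mathrm{in}(v)=\partial(v,r)$. The target is
\[
\mathrm{out}(v)+\mathrm{in}(v)\le 16 \qquad \text{and} \qquad \max(\mathrm{out}(v),\mathrm{in}(v))\le 8 .
\]
This alone only gives $\partial(u,v)\le \mathrm{in}(u)+\mathrm{out}(v)\le 16$, so $r$ must be chosen carefully. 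We take $r$ to be a central vertex, and we also bound the potential $\Phi(v)=\mathrm{out}(v)+\mathrm{in}(v)$ so that any pair $u,v$ satisfies $\mathrm{in}(u)+\mathrm{out}(v)\le 16$. Concretely, we aim for $\mathrm{in}(u)\le 8$ and $\mathrm{out}(v)\le 8$ for every vertex, and this suffices.

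The first step is to handle the neighbourhood of $r$ using the $f(2)=6$ style argument. Since every edge of $G$ lies on a cycle, each edge $e$ lies on a shortest cycle, and in a diameter-$4$ graph that cycle has length at most $2\cdot 4+1=9$. Following Chv\'atal--Thomassen, we attach to each vertex a \emph{good path}: an ear through which it can reach an already oriented part and be reached from it, with the ear length bounded via the diameter.

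The sequential process goes through $i=1,\dots,4$. At stage $i$ we orient ears covering the vertices of $N_i$ that are not yet covered, maintaining the invariant
\[
\mathrm{out}(v),\ \mathrm{in}(v)\le 2i \qquad \text{for covered } v\in N_i ,
\]
with the extra slack that at least one of the two is at most $2i-1$. Uncovered vertices adjacent to a covered vertex $w$ are handled by closing a short cycle through the edge to $w$; bridgelessness guarantees that such a cycle exists. The potential $\Phi$ is used to select which orientation of each ear to take. Of the two directions of a cycle, we pick the one that decreases the worse of $\mathrm{out}$ and $\mathrm{in}$. An averaging argument shows that $\Phi$ increases by at most $4$ per layer. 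Edges left over at the end (chords) are oriented arbitrarily, since adding arcs never increases distances.

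The main obstacle is the layer-$4$ (and layer-$3$) vertices whose only short cycles return deep into the graph. For such vertices a naive ear costs up to $9$, which would break the bound of $8$. Our first approach is a charging argument: if a vertex $v\in N_4$ has a long ear, then its neighbours in $N_3$ must have slack in $\Phi$, and we use that slack to reroute. If this fails, we will change the root to an endpoint of such an ear and compare the two resulting BFS structures. We expect the final bound $16$ to come exactly from this tight configuration.
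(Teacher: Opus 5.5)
There is a genuine gap: the inductive invariant on which the whole plan rests is false, and the case you identify as the main obstacle is left unresolved. You require every covered $v\in N_i$ to satisfy $\mathrm{out}(v),\mathrm{in}(v)\le 2i$.

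Already for $i=1$ this forces every edge $rw$ to lie on a directed triangle. More generally, if the shortest cycle through $rw$ has length $k$, then $\max\{\partial(r,w),\partial(w,r)\}\ge k-1$, and in diameter $4$ the value of $k$ can be as large as $9$. Concretely, $C_9$ is bridgeless of diameter $4$ and its only strong orientation is the directed cycle. For any root $r$, the neighbour $w$ with $r\to w$ has $\mathrm{in}(w)=8>2$, and a vertex at distance $2$ has $\max\{\mathrm{out},\mathrm{in}\}=7>4$. So the layer-by-layer induction cannot start. Choosing the direction of a cycle to ``decrease the worse of $\mathrm{out}$ and $\mathrm{in}$'' cannot help, since either direction gives the new vertices a long return. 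The ``averaging argument'' bounding the growth of $\Phi$ by $4$ per layer therefore has nothing to rest on.

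Without that invariant, the only mechanism you describe is the Chv\'atal--Thomassen ear argument. There, attaching a vertex of $N_i$ costs an ear of length roughly $2i$, so $\mathrm{in}$ and $\mathrm{out}$ grow like $i^2+i$, up to $20$ at layer $4$. This is exactly how one gets the old bound $2d^2+2d=40$, not $16$. Your last paragraph (charging, re-rooting, ``we expect'') concedes that the decisive configuration is not handled.

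The paper reaches $16$ with ideas your plan lacks. First, it eliminates long ears by reduction. If every edge lies in a triangle, \Cref{lemct} gives diameter at most $12$. If some edge has shortest cycle length at least $6$, \Cref{lemmore5} gives at most $13$. So one may assume an edge $uv$ with $N(u)\cap N(v)=\emptyset$ and every edge on a cycle of length at most $5$.

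Second, it roots at the edge $uv$ rather than at a vertex. It uses the adjusted distances $\mu,\nu$ together with $\partial(u,v)=1$ and $\partial(v,u)\le 4$, which give $\partial(x,y)\le\nu(x)+\mu(y)+1$ by \Cref{lem:Gxy16}.

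Even then no uniform bound of $8$ is obtained. The sequential orientation uses weight-controlled cycle steps, the $D/U$ split, good paths in the sense of mixed paths from $\{u,v\}$, and the potentials $\mathcal{U},\mathcal{D}$. It yields only $\mu,\nu\le 9$, with $\nu\le 6$ on $D$, $\mu\le 6$ on $U$, and explicitly characterised critical and adherent vertices. The remaining pairs $s\in U\setminus D$, $t\in D\setminus U$ with $\nu(s)+\mu(t)\ge 16$ are settled by a case analysis of a shortest $st$-path (type-I/II anti-edges and intersections with $F_0$).

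Your plan contains neither the reduction nor any substitute for this pair-specific analysis. Moreover, your target $\max\{\mathrm{in},\mathrm{out}\}\le 8$ from a single vertex is strictly stronger than anything the paper proves, and you give no argument for it.
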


The primary challenge in bounding $f(4)$ lies in the complexity of constructing a single strong orientation that simultaneously controls distances between all vertex pairs, despite the graph's diameter constraint. Previous approaches often relied on explicit cycle‑by‑cycle orientations or case analyses based on local neighborhood structures, which become increasingly intricate when the graph contains edges that are not contained in short cycles.

To overcome these difficulties, we introduce a novel framework that avoids explicit construction of a full orientation at once. Instead, we design a sequential orientation process guided by two auxiliary functions $\mu(x)$ and $\nu(x)$,  which measure adjusted directed distances from a fixed edge $uv$ to $x$ and from $x$ to $uv$, respectively.  The key innovation is the concept of \emph{good paths} and the derived parameters $\U(x)$ and $\D(x)$,  which measure the minimum adjusted lengths of good paths in a fixed partial orientation.   This functional approach not only simplifies the handling of vertices lying in different layers of the graph but also provides a flexible tool that may be applicable to larger diameters.

The paper is organized as follows.  In  \Cref{sec:preliminary}  we present some related works, necessary notations,  and the proof sketch of the main result; in   \Cref{sec:lemless5} and \Cref{sec:F234} we  state  the detailed orientation steps and the analysis of the functions $\mu,\nu,\U,\D$;  in 
 \Cref{sec:finalmain}   we combine these bounds with the path analysis to prove that every ordered pair is joined by a dipath of length at most $16$, thereby completing the proof that $f(4)\le16$.

\section{Preliminary}\label{sec:preliminary}
There are some  results  that support \Cref{maintheorem}.  Chv\'{a}tal  and Thomassen \cite{CT1978} proved the following useful theorem about general graphs.  
\begin{theorem} [\cite{CT1978}] \label{lemct} Let $G$ be a simple graph, then it admits an orientation $\overrightarrow{G}$ with the following property: if an edge $uv$ belongs to a cycle of length $k$, then there exists a dicycle of length at most $\psi(k)$ which contains $\overrightarrow{uv}$ or $\overrightarrow{vu}$, while $\psi(k) = (k - 2) \cdot 2^{\lfloor\frac{k - 1}{2}\rfloor} + 2$. 
\end{theorem}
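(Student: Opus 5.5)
The plan is to build the orientation greedily, one cycle at a time, and to bound the length of the dicycle through each edge by induction on $k$. We process the edges of $G$ in nondecreasing order of $g(e)$, the length of a shortest cycle containing $e$. Suppose the current partial orientation already satisfies the conclusion for every oriented edge $e$ (with respect to $g(e)$), and $uv$ is an unoriented edge with $g(uv)=k$. Fix a shortest cycle $C$ through $uv$. By minimality, $C$ is an induced cycle, so no chord of $C$ shortcuts it. We then orient every still-unoriented edge of $C$ coherently along one traversal direction of $C$, choosing the direction that agrees with as many already-oriented edges of $C$ as possible. We orient $uv$ accordingly. Edges oriented in earlier steps keep their orientations, so the invariant is preserved for them.

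Next we bound the dicycle through $uv$. Walk around $C$ in the chosen direction. Each edge of $C$ that points the wrong way, say $\overrightarrow{yx}$ where we want to go from $x$ to $y$, was oriented earlier. At that time it lay on a dicycle $Z$ of length at most $\psi(g(xy))$, and $Z$ minus the edge supplies a dipath from $x$ to $y$ of length at most $\psi(g(xy))-1$. The key quantitative point is that such a wrong-way edge was oriented when an earlier cycle $C'$ was processed. We would like $g(xy)\le k-2$, or, after a parity refinement, that only edges with $g\le k-2$ can occur reversed. The reason is that an edge $xy$ with $g(xy)=g(uv)=k$ on $C$ could have been reoriented to agree with $C$ unless two cycles of length $k$ overlap. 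In that case we splice them into a shorter closed walk and obtain a cycle of length at most $k-2$ through the relevant edges. This is what produces the factor $2$ every two steps of $k$, i.e.\ the $2^{\lfloor (k-1)/2\rfloor}$ in $\psi$.

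For the counting, the bound we aim for is that replacing the at most $k-2$ "bad" edges of $C$ (at least $uv$ and one neighbour can be taken good) by detours yields a closed directed walk through $\overrightarrow{uv}$ of length at most $(k-2)\cdot \psi(k-2)/\!\,$(suitable normalisation)$+2$. One then checks that this matches $\psi(k)$ via the recursion $\psi(k)-2=\frac{k-2}{k-4}\cdot 2\,(\psi(k-2)-2)$ when $k$ is odd, and the analogous recursion without the factor $2$ when $k$ is even. Since a closed directed walk through $\overrightarrow{uv}$ contains a dicycle through $\overrightarrow{uv}$ that is no longer, this gives the bound. The base cases $k=3$ (a triangle is oriented cyclically, so $\psi(3)=3$) and $k=4$ are checked by hand.

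The main obstacle is the middle step. We must show that the wrong-way edges met on $C$ really have $g$ at least $2$ smaller (or are few enough), instead of merely $g\le k$, which would make the recursion blow up. My first attempt would be to choose $C$ among all shortest cycles through $uv$ so as to minimise the number of wrong-way edges. Then, for a reversed edge $xy$ with $g(xy)=k$, I would derive a contradiction by combining $C$ with the earlier cycle $C'$ along which $xy$ was oriented, producing either a better choice of $C$ or a cycle through $uv$ shorter than $k$. If that exchange argument fails, the fallback is to strengthen the inductive invariant to control detours of length $\psi(k)-1$ for every oriented edge simultaneously, and to accept a cruder constant that is still of the stated form.
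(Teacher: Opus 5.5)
The paper does not prove this statement; it quotes it from Chv\'atal--Thomassen. Your proposal therefore has to stand on its own, and it has a genuine gap exactly at the step you flag as the main obstacle. The claim that a wrong-way edge $xy$ on $C$ with $g(xy)=k$ forces a cycle of length at most $k-2$ by splicing two overlapping $k$-cycles is false. Two $k$-cycles sharing exactly one edge and no other vertex form a theta graph whose only other cycle has length $2k-2$. So in your greedy scheme wrong-way edges with $g=k$ do occur. The invariant gives such an edge only a detour of length $\psi(k)-1$, so the induction is circular, and nothing bounds nested detours among edges of equal $g$.

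This already fails for $k=3$. Take $K_4$ on $a,b,c,d$, and add $e$ adjacent to $c,d$ and $f$ adjacent to $d,e$; every edge lies in a triangle. Process $ab$ via $abc$, $ad$ via $abd$, $cd$ via $acd$, $df$ via $def$, and $ce$ via $cde$ (the only triangle through $ce$). Your majority rule leaves the ties at $cd$ and $ce$ open; breaking them one way yields $a\to b\to c\to a$, $b\to d\to a$, $c\to d$, $e\to d\to f\to e$ and $c\to e$. Since $d$ is the only out-neighbour of $e$, the shortest dicycle through $c\to e$ is $c\to e\to d\to a\to b\to c$, of length $5>\psi(3)$. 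Note also that $\psi(3)=1\cdot 2^{1}+2=4$, not $3$, and the bound $3$ is unattainable. A four-vertex tournament has at most two cyclic triangles, and they share an edge, so some edge of $K_4$ lies on no directed triangle. Your ``first attempt'' cannot help here, since $ce$ lies on a unique triangle.

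Second, even granting that every wrong-way edge has $g\le k-2$, your count does not produce $\psi$. Replacing up to $k-2$ edges of $C$ by detours of length at most $\psi(k-2)-1$ gives only $\psi(k)\le 2+(k-2)(\psi(k-2)-1)$. This is already $22>18=\psi(6)$ at $k=6$, and in general it grows like a double factorial. In fact $\psi(k)-2=\frac{2(k-2)}{k-4}\bigl(\psi(k-2)-2\bigr)$ for every $k\ge 5$, with the factor $2$ present for both parities, so your parity distinction is also off. The target ratio tends to $2$. An argument of your shape can reach $\psi$ only if, at each level, the detours cost a bounded factor in total rather than a factor of order $k$.

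What is missing is an invariant or processing rule that treats edges of equal $g$ jointly and bounds the depth of nested detours. The invariant ``each oriented edge lies on a dicycle of length at most $\psi(g)$'' is too weak to carry the induction. The fallback you mention would prove a weaker theorem, not this one.
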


Let $G$ be bridgeless with $\mathrm{diam}(G)=4$. If every edge of $G$ is contained in a triangle, then by  \Cref{lemct}, there exists an orientation $\overrightarrow{G}$  of $G$ in which every edge belongs to a dicycle of length at most $4$.   For any $x,y\in V(G)$, choose a shortest path
$x=x_0x_1\cdots x_\ell=y$ in $G$, where $\ell\le4$.
Each edge gives $\partial(x_{i-1},x_i)\le3$, so
$\partial(x,y)\le3\ell\le12$. Thus,
$\mathrm{diam}(\overrightarrow{G})\le12$. 
 Additionally, since $\mathrm{diam}(G)=4$,  for each edge of $G$, the shortest cycle contains it has length at most 9.  Lin and You \cite{lin2025oriented}  showed that if there exists an edge $uv\in E(G)$ such that the shortest cycle containing $uv$ is of length    6, 7, 8, or 9, then $G$  admits a desired orientation.   
\begin{theorem}[\cite{lin2025oriented}]\label{lemmore5}
Let $G=(V, E)$ and $ \mathrm{diam}(G)=4$ with no bridge. Suppose that there exists an edge $uv\in E(G)$ such that the shortest cycle containing $uv$ is of length at least  $6$. Then  $\overrightarrow{\mathrm{diam}}(G)\leq 13$.   
\end{theorem}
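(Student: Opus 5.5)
The plan is to follow the classical Chv\'atal--Thomassen scheme anchored at the edge $uv$. We grow a strongly oriented subgraph $H$ containing $\overrightarrow{uv}$ that is dominating at a controlled depth. Every vertex is then routed to and from $uv$ through $H$, and we bound $\partial(x,y)\le \partial(x,u)+1+\partial(v,y)$, or a variant through $v$ alone.

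\textbf{Choosing the cycle.} Since $G$ is bridgeless, $uv$ lies in a cycle. Take a shortest cycle $C$ through $uv$; its length $\ell$ is at least $6$. Because $\mathrm{diam}(G)=4$, the standard argument (two antipodal points on $C$ are joined by a path of length at most $4$, which produces a shorter cycle through $uv$ if $\ell$ is large) gives $\ell\le 9$. Orient $C$ as a dicycle.

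\textbf{Level structure.} Define levels $L_i=\{x: d_G(x,V(C))=i\}$. By minimality of $C$, $C$ is essentially isometric near $uv$. Because $\mathrm{diam}(G)=4$ and $C$ has length at least $6$, every vertex is within distance $2$ of $C$; a vertex at distance $3$ would be at distance more than $4$ from the far side of $C$ when $\ell\ge 6$. So $V=V(C)\cup L_1\cup L_2$.

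\textbf{Absorbing the levels.} Process $L_1$ and then $L_2$ by repeatedly applying the bridgeless ear argument of \cite{CT1978}. Each edge from a vertex $x\in L_1$ toward $C$ lies on a short cycle. We orient an ear through $x$ so that $x$ reaches $C$ and is reached from $C$ within a few steps, and then treat the resulting strong subgraph as the new core. The target bounds are $\partial(x,C)$ and $\partial(C,x)$ at most about $3$ for $x\in L_1$ and about $4$ for $x\in L_2$. Combined with an internal bound of roughly $\ell-1$ inside the oriented cycle, this gives a total of about $4+4+5\le 13$, after using the extra shortcuts that diameter $4$ provides between antipodal regions of $C$.

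\textbf{Main obstacle.} The hardest part is the budget on the cycle. A naive bound of $\ell-1\le 8$ inside $C$ plus $4+4$ on the two sides gives $16$, not $13$. The first step I would try is to exploit the large girth locally around $uv$: since short cycles do not contain $uv$, the neighborhoods of far-apart arcs of $C$ interact only through paths of length at most $4$. These paths can be oriented as chords whose direction is chosen opposite to $C$'s orientation, which effectively halves the traversal cost. A careful case split on $\ell\in\{6,7,8,9\}$ would then verify $\mathrm{diam}(\overrightarrow{G})\le 13$ in each case, with $\ell=6$ as the tightest case.
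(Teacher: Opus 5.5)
Your proposal has a false intermediate claim and, more importantly, never actually derives the bound $13$. First, the level claim $V=V(C)\cup L_1\cup L_2$ fails, and its justification is wrong. The condition $d_G(x,V(C))=3$ only says that the nearest vertex of $C$ is at distance $3$. The vertex $x$ may still reach different arcs of $C$ through different attachments, and so lie within distance $4$ of all of $C$. For example, start from $C=ua_1a_2b_2b_1vu$ and add vertices $a_1',a_2',b_1',b_2',w,w',x$ with edges $ua_1'$, $a_1'a_2'$, $a_2'a_2$, $vb_1'$, $b_1'b_2'$, $b_2'b_2$, $a_2'w$, $b_2'w'$, $ww'$, $xw$, $xw'$. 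This graph is bridgeless with diameter $4$, and $C$ is its unique shortest cycle through $uv$ (of length $6$). The distances from $x$ to $u,a_1,a_2,b_2,b_1,v$ are $4,4,3,3,4,4$. So a third level is unavoidable whichever shortest cycle you pick, which only makes your budget worse.

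Second, the step that is supposed to produce $13$ is missing. The absorbing step gives neither an orientation rule nor a proof of the per-level bounds $3$ and $4$. An edge from $x\in L_1$ to $C$ is only guaranteed to lie on a cycle of length at most $2\cdot 4+1=9$, so an ear through $x$ guarantees a return route of length up to $8$, not $3$. Moreover, ears for different vertices overlap and must be oriented consistently; this is exactly why the Chv\'atal--Thomassen approach yields bounds of order $2d^2+2d$. You note yourself that the honest count is $8+4+4=16$. The proposed remedy, orienting paths of length at most $4$ between antipodal arcs of $C$ against the direction of $C$, is not specified. These paths pass through level vertices whose edges you have already committed while absorbing levels. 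Nothing shows that the two sets of demands are compatible, or that the reversed chords shorten all relevant routes simultaneously, and the case analysis over $\ell\in\{6,7,8,9\}$ is only announced.

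For context, the paper does not prove this statement; it quotes it from Lin and You, whose argument uses the $R$--$S$ orientation technique of Kwok et al.\ rather than a cycle-plus-ears scheme. A more natural starting point is the distance partition $L_i,R_i,X_i$ with respect to $u$ and $v$. There the hypothesis forces rigid structure that your approach never uses: $N(u)\cap N(v)=\emptyset$, $X_2=\emptyset$, and no edges between $L_1$ and $R_1$.
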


Therefore, by \Cref{lemct} and \Cref{lemmore5}, for the sake of \Cref{maintheorem}, we construct desired orientations for graphs $G$ which have an edge $uv\in E(G)$ such that $N(u)\cap N(v)=\emptyset$, and that every edge of  $G$ lies on a cycle of length at most $5$, formally stated in  \Cref{lemless5}.  

\begin{theorem}\label{lemless5}
Let $G=(V, E)$ and $ \mathrm{diam}(G)=4$. Suppose that there exists an edge $uv\in E$ such that $N(u)\cap N(v)=\emptyset$, and that every edge of  $ G $ lies on a cycle of length at most $5$. Then   $\overrightarrow{\mathrm{diam}}(G)\leq 16$.  
\end{theorem}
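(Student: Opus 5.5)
We fix the edge $uv$ with $N(u)\cap N(v)=\emptyset$ and organize $V(G)$ into layers by the distance $d(x)=\min\{d(x,u),d(x,v)\}$. Since $\mathrm{diam}(G)=4$, we have $d(x)\le 4$ for every $x$, and the sets $N(u)$ and $N(v)$ are disjoint. The goal is a strong orientation $\overrightarrow{G}$, with $uv$ oriented as $\overrightarrow{uv}$, in which every vertex $x$ satisfies
\[
\partial(v,x)\le 8 \quad\text{and}\quad \partial(x,u)\le 7
\]
or some comparable split. Then for any $x,y$ we get $\partial(x,y)\le \partial(x,u)+1+\partial(v,y)\le 16$. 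Because the two endpoints play asymmetric roles, I will not insist on these exact numbers. Instead I will track two quantities: $\mu(x)$, an adjusted directed distance from the edge $uv$ to $x$ (measured from $u$ or from $v$, with a correction of $1$ when leaving from $u$), and $\nu(x)$, the analogous distance from $x$ to $uv$. The target is then $\mu(x)+\nu(y)\le 16$ for all $x,y$, with a separate check for pairs $x,y$ lying close together.

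The orientation is built sequentially, layer by layer. Every edge lies on a cycle of length at most $5$. So for each vertex $x$ in layer $i$ that is not yet reached in both directions, I pick a short cycle through an edge joining $x$ to layer $i-1$, and orient this cycle, or the unoriented part of it, as a dicycle. This gives $x$ both an in-path and an out-path back to vertices that were already processed. The new arcs only add dipaths, so previously established bounds on $\mu$ and $\nu$ remain valid.

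To control lengths, I would define \emph{good paths}: paths in the current partial orientation whose first part is directed consistently toward $uv$ (or away from it) and which end at $u$ or $v$. Let $\mathcal{U}(x)$ and $\mathcal{D}(x)$ be the minimum adjusted lengths of good paths leaving and entering $x$. The inductive invariant, roughly, is that a vertex in layer $i$ has $\mathcal{U}(x)$ and $\mathcal{D}(x)$ bounded by an explicit function of $i$, for example about $2i$ plus a constant. A vertex in layer $i$ attaches through a cycle of length at most $5$. Such a cycle reaches back to layer $i-1$ or $i-2$ using at most $4$ new arcs in one direction. For this to work, the accounting must exploit the fact that one of the two directions around the cycle is short.

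The main obstacle is this accounting. A naive bound gives $4$ per layer in the bad direction, which yields roughly $16$ in one direction only. To close the argument I would use a potential function such as $\mu(x)+\nu(x)$, the length of a closed walk through $x$ and $uv$, and show that it grows by at most $5$ per layer while each individual term grows by at most $4$. Then for a general pair $x,y$ I would route either through $uv$ or directly, depending on which of $\mu(x)+\nu(y)$ or the alternative is smaller. Boundary cases, where the short cycle through $x$ meets an already oriented cycle in the wrong direction, need separate handling. There I would reverse the order of processing, or choose a different edge from $x$ into the previous layer. I expect the layer $3$ and layer $4$ vertices to require most of the case analysis.
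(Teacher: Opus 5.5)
There is a genuine gap: the accounting you flag as ``the main obstacle'' is where the whole proof lies, and the invariant you propose for it cannot give $16$. If each of $\mu,\nu$ may grow by $4$ per layer (and $\mu+\nu$ by $5$), then a vertex two layers from $uv$ can already have $\mu$ or $\nu$ around $10$, and a vertex four layers out around $18$. A bound on $\mu(x)+\nu(x)$ at a single vertex also says nothing about $\nu(x)+\mu(y)$ for two different far vertices, which is what routing through $uv$ needs.

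Attaching vertices by short dicycles is affordable for one layer only. The paper does this around the core $F_0=\{u\}\cup L_{11}\cup X_2\cup R_{11}\cup\{v\}$, on which $\mu+\nu\le 4$. For the arcs created while attaching $F_1$ it gets $\omega(x\to y)=\mu(x)+1+\nu(y)\le 9$, with equality only on a $5$-dicycle meeting $F_0$. Even this needs a quantitative conflict rule: orient the maximal unoriented initial subpath of the chosen cycle in the direction of smaller score, breaking ties towards completing a dicycle. Your fallback of reordering, or of picking another edge into the previous layer, does not suffice. A vertex of $F_{12}$ has a single edge to $F_0$ and no neighbour in $F_1$. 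And when two attaching cycles share edges, whichever is processed later must accept the directions already fixed there.

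Beyond $F_1$ the paper abandons symmetric control. It only guarantees that every vertex has $\min\{\mu,\nu\}\le 4$ or an unoriented edge to such a vertex. It then splits $V$ into $D$ (small $\nu$) and $U$ (small $\mu$) and orients essentially all edges between $D$ and $U$. Only then does it introduce good paths, whose not-yet-oriented part must lie inside $D$ (for $\mathcal{U}$) or inside $U$ (for $\mathcal{D}$). Without this restriction, good paths entering and leaving various vertices may require opposite directions on the same unoriented edge, so their lengths cannot all be realized at once. Your proposal gives no rule for realizing them.

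Even with the restriction, achieving $\mu\le\mathcal{U}$ conflicts with keeping $\nu\le 6$ at a vertex of $D_b$ whose unique $D_a$-neighbour has smaller $\mathcal{U}$. The paper handles this by orienting $[D_b,D_a]$ in decreasing order of $\mathcal{U}$. It accepts $\mu\le\mathcal{U}+1$ on an exceptional set $T_D$ and compensates through adherent vertices. Even then, routing through $uv$ only gives $\partial(x,y)\le\nu(x)+\mu(y)+1\le 17$ for $x\in U\setminus D$ and $y\in D\setminus U$. These pairs require a direct analysis along a shortest $xy$-path, using anti-edges, type-I and type-II arcs, and critical and adherent endpoints. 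Your ``separate check for pairs lying close together'' does not isolate this class, since every pair is within distance $4$.

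Finally, the correction for leaving from $u$ must be $2$, not $1$. The shortest cycle through $uv$ may have length $5$, so $\partial(v,u)$ can be $4$. Then, when both minima are attained by the corrected terms, the only guaranteed route $x\to v\to u\to y$ has length $\nu(x)+\mu(y)+2$.
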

To prove \Cref{lemless5}, we construct an orientation of $G$ which has the same property as $G^O$ described in \Cref{lem:Gxy16}. In any orientation $\overrightarrow{G}$  of $G$,  for each $x\in V$, we define  $$
  \begin{aligned}
\nu(x):=
\min\{\partial(x,u), \partial(x,v)+2\}.\\
\mu(x):=
\min\{\partial(v,x), \partial(u,x)+2\}. 
\end{aligned}
 $$
 
\begin{lemma}\label{lem:Gxy16}
Let $G^O$ be an orientation of the graph in \Cref{lemless5}, with
$\partial(u,v)=1$ and $\partial(v,u)\le4$.
For every $x,y$ with finite adjusted parameters, 
\begin{equation}\label{eq:root-distance-bound}
 \partial(x,y)\le\nu(x)+\mu(y)+1.
\end{equation}
In particular, $\nu(x)+\mu(y)\le15$ implies $\partial(x,y)\le16$.
\end{lemma}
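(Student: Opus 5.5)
The plan is to concatenate a shortest dipath from $x$ into the edge $uv$, the edge $uv$ itself, and a shortest dipath from $uv$ out to $y$. We then do a case analysis on which term attains each minimum in the definitions of $\nu(x)$ and $\mu(y)$. Throughout we use only the triangle inequality for directed distances, $\partial(a,c)\le\partial(a,b)+\partial(b,c)$, together with the two hypotheses $\partial(u,v)=1$ and $\partial(v,u)\le 4$. ``Finite adjusted parameters'' means $\nu(x),\mu(y)<\infty$, so the relevant dipaths exist.

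We split into four cases according to which expression realizes each minimum.

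\emph{Case 1:} $\nu(x)=\partial(x,u)$ and $\mu(y)=\partial(v,y)$. Then
$$\partial(x,y)\le\partial(x,u)+\partial(u,v)+\partial(v,y)=\nu(x)+\mu(y)+1.$$

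\emph{Case 2:} $\nu(x)=\partial(x,u)$ and $\mu(y)=\partial(u,y)+2$. Then
$$\partial(x,y)\le\partial(x,u)+\partial(u,y)=\nu(x)+\mu(y)-2.$$

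\emph{Case 3:} $\nu(x)=\partial(x,v)+2$ and $\mu(y)=\partial(v,y)$. Then
$$\partial(x,y)\le\partial(x,v)+\partial(v,y)=\nu(x)+\mu(y)-2.$$

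\emph{Case 4:} $\nu(x)=\partial(x,v)+2$ and $\mu(y)=\partial(u,y)+2$. Here we must route from $v$ back to $u$, and $\partial(v,u)\le 4$ gives
$$\partial(x,y)\le\partial(x,v)+\partial(v,u)+\partial(u,y)\le(\nu(x)-2)+4+(\mu(y)-2)=\nu(x)+\mu(y).$$

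In every case $\partial(x,y)\le\nu(x)+\mu(y)+1$, which is \eqref{eq:root-distance-bound}. The ``in particular'' statement is then immediate: if $\nu(x)+\mu(y)\le 15$, then $\partial(x,y)\le 16$.

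The only step needing care is Case 4. It is exactly where the offset $+2$ in the definitions of $\nu$ and $\mu$ and the hypothesis $\partial(v,u)\le4$ must balance: the two offsets contribute $4$, which matches the at most $4$ steps from $v$ to $u$. The remaining cases are one-line triangle inequalities. If $x$ or $y$ coincides with $u$ or $v$, the same computations hold with zero-length dipaths, so no separate treatment is needed.
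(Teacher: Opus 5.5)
Your proof is correct and is the paper's argument. The paper also takes the terms attaining $\nu(x)$ and $\mu(y)$ and concatenates the dipaths through $u$ and $v$, using $\partial(u,v)=1$ and $\partial(v,u)\le4$ to get the same case bounds of $+1$, $0$ and $-2$.
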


\begin{proof}
 
Choose the terms attaining $\nu(x)$ and $\mu(y)$.
Concatenating the corresponding dipaths through $u$ and $v$ gives
\[
\partial(x,y)\le\nu(x)+\mu(y)+
\begin{cases}
1,&\nu(x)=\partial(x,u),\quad\mu(y)=\partial(v,y),\\
0,&\nu(x)=\partial(x,v)+2,\quad\mu(y)=\partial(u,y)+2,\\
-2,&\text{otherwise}.
\end{cases}
\]
Here we use $\partial(u,v)=1$ and $\partial(v,u)\le4$.
The result follows. 
\end{proof}
\subsection{Additional Notations}
For a vertex $v\in V(G)$, $N(v)=\{u:uv\in E(G)\}$ and $N[v]=N(v)\cup \{v\}$. Let $U\subset V(G)$. Then $N(U)=\bigcup_{u\in U}N(u)\setminus U$ and $N[U]=\bigcup_{u\in U}N[u] $.  Denote by $G[U]$ the subgraph {\it induced} by $U$ in $G$. Let $ V\subset V(G)$ be disjoint with $U$. Denote by $[U, V]$ the set of all the edges with one endpoint in $U$ and another in $V$ (here, $U$ and $V$ need not be disjoint).  A  {\it $UV$-path} contained in $G$ is a path   with one endpoint in $U$ and the other in $V$ where the internal vertices are disjoint with $U\cup V$. Denote by $\mathrm{dist}(U,V)$ the length of a shortest $UV$-path. The {\it diameter} of $G$ is denoted by $\mathrm{diam}(G)$, i.e., $\mathrm{diam}(G)=\max\{\mathrm{dist}(U,V):U,V\subseteq V(G)\text{ with }|U|=|V|=1\}$. Let $V_1,\ldots, V_k$ be vertex subsets of $G$. Then we say a sequence of vertices $ v_1\cdots v_k$ is of \emph{form $V_1\cdots V_k$} if $v_i\in V_i$ for each $i\in [k]$.  

Let $\overrightarrow{G}$ be an orientation of $G$.  For a directed edge $e\in E(\overrightarrow{G})$ with endpoints $x,y$, both   $\overrightarrow{xy}$  and $\overrightarrow{yx}$ denote $e$ itself regardless of direction.
We write  by  $U\to V$ to  indicate that all the edges in $[U,V]$ are oriented from $U$ to $V$ in $\overrightarrow{G}$.   
A  {\it $(U, V)$-dipath} is a   dipath from $U$ to $V$ where the internal vertices are disjoint with $U\cup V$. 
Denote by $\partial(U, V)$ the length of a shortest $(U, V)$-dipath. If there is no such a path, define $\partial(U, V)=\infty$.   Additionally,  let $\theta(U, V)=\max\{\partial(U, V), \partial(V, U)\}$.   
For edges $ww_1$ and $ww_2$, if both are oriented toward $w$ or away from $w$, say they are of \emph{same} direction, otherwise, \emph{different}. 

If some subset $X=\{x\}$ in the above notions, we will replace $X$ by $x$ directly. 
For a (oriented) graph  $G$, denote by $e(G)$  the number of edges in it.

\medskip
\noindent
{\bf Convention.} In the coming sections, we fix $G=(V,E)$, $\mathrm{diam}(G)=4$ and $uv$ to be the edge such that $N(u)\cap N(v)=\emptyset$,  as described in \Cref{lemless5}.  The desired construction is given in a stepwise manner.    
Whenever a collection of orientation rules is given in some Step $i$, the rules are applied sequentially according to their order. More precisely, all edges corresponding to vertices satisfying the first rule are oriented first, then those satisfying the second rule, and so on. Take \Cref{step:F0X31X41}  for example,  all the  edges or vertices satisfying \ref{item:step:F0X31X411} are processed simultaneously, then all edges or vertices satisfying \ref{item:step:F0X31X412}, and so forth.   Within a rule, process the eligible choices one at a time and use the current status of every edge. A prescribed dipath is processed as one choice. Orient only the edges that are still unoriented, and never reverse an existing arc. Throughout the whole orientation process, the oriented graph is updated edge by edge once any edge is oriented. The final oriented graph obtained after  Step $i$ is denoted by  $G^{O_i}$, i.e.,  the oriented graph obtained from $G$ which only contains directed edges and corresponding vertices after  Step $i$ and by $E_i$   the set of directed edges resulted by Step $i$. Then  $G^{O_i}\subseteq G^{O_{i+1}}$, $E_i\cap E_j=\emptyset$ for $i\neq j$ and $E(G^{O_{i}})=\bigcup_{j\le i}E_j$. Denote by $M^O_i$
the resulted mixed  graph  after Step $i$, i.e., a graph whose edges are either from $E(G^{O_i})$ or unoriented. We emphasis here that if there is an $(x,y)$-dipath in $G^{O_i}$, then the path is also contained in $G^{O_{j}}$ for $j\ge i$. Therefore, if $\mu(x)\le k$ (resp. $\nu(x)\le k$) for some integer $k$ in $G^{O_i}$, then  $\mu(x)\le k$ in $G^{O_{j}}$  (resp. $\nu(x)\le k$)  for $j\ge i$.

\subsection{\texorpdfstring{Proof sketch of \Cref{lemless5}}{Proof sketch of the main construction}}\label{sec:sketch}

 The construction is motivated by the vital result, \Cref{lem:Gxy16}. Then the target is to orient edges of $G$ so that the obtained orientation of $G$ has the the same property as $G^O$.

First, in \Cref{sec:partition}, we study relationship between  two distinct partitions. The first partition, which is a classical partition commonly used in dealing with such problems, is obtained by considering the distances between vertices in $V\setminus \{u,v\}$ and $\{u,v\}$. The second   partition, introduced in this paper, is defined as $F_0,F_1,\ldots, F_4$ of $V$ with $F_0$ having the property that each vertex  in $F_0$ lies on a cycle passing   $uv$ of length at most 5 while $\mathrm{dist}(F_i,F_0)=i$ for each $i\in [4]$.

Second, we present orientation \Cref{step:F0X31X41}--\Cref{step:sumatmost8} in \Cref{sec:F1} which mainly handle edges incident to vertices in $F_1$. Especially, after each step, we show that if $x$ is incident to some edge that we just oriented, then $x$ satisfies that  $\mu(x)+\nu(x)\le 9$ where equality holds with certain condition, see \Cref{claimcenter} as a summary. The final construction in this section is \Cref{step:L23R23}, after which we obtained four subsets  $D_a,U_a,D_b,U_b$ of $V$ which form a cover of $V$:   each   $x\in D_a$  has $\nu(x)\le 4$;  each  $x\in U_a$  has  $\mu(x)\le 4$; each $x\in D_b$ satisfies that $[x,D_a]\neq\emptyset$,  and each $x\in U_b$ satisfies that $[x,U_a]\neq\emptyset$. Furthermore, all the edges incident to $D_b\cup U_b$ are unoriented after \Cref{step:L23R23}, see \Cref{claimcenter2}.

In \Cref{sec:F234}, paths loosely of form $U_aD_a$ and  $U_aU_bD_aD_b$ are reasonably oriented as $U_a\to D_a$ and  $U_a\to U_b\to D_a\to D_b$ in \Cref{step:DUb} so that the related vertices have $\mu,\nu$-values at most 7. In the final construction, \Cref{step:complicate}, we deal with edges in $D_a$, $D_b$, $[D_a,D_b]$,  $U_a$, $U_b$, $[U_a,U_b]$. Informally, to orient the edges in a optimal way, we introduce the definition of \emph{good path} and orient all the  edges  along the  good path  direction. Roughly for $x\in D_a\cup D_b$, if there is a $vx$-mixpath $P$ in $M^O_{\ref{step:DUb}}$ satisfying that 
\begin{itemize}
 \item  there exists a vertex $w_x\in V(P)$ such that the vertices in sub $w_xx$-path  are in $D$,  
\item edges in  the sub $w_xx$-path are either oriented in the direction towards $x$ or unoriented, 
 and 
    \item  the subpath, $vw_x$-path, is oriented as a dipath from $v$ to $w_x$ in $G^{O_{\ref{step:DUb}}}$, 
\end{itemize}
then we call $P$ a  {\it good path} of $x$ in $M^O_{\ref{step:DUb}}$ (in the definition $v$ could be replaced by $u$). Let $\ell(P)=e(P)$ if $P$ is a $vx$-path, and $\ell(P)=e(P)+2$, if $P$ is a $ux$-path. 
And define $$\U(x)=\min\{\ell(P): P \text{ is a good path for } x\}.$$
Then, for $x,y\in [D_a,D_a]\cup [D_b,D_b]\cup [D_a,D_b]$ with $\U(x)\ge \U(y)$, the basic rule is orienting $xy$ as $ y\to x$.  Similarly, we define $\D(x)$ for vertices in $U_a\cup U_b$ which we do not list here. Following the final construction, we show that $\nu(x)\le 6$ and $\mu(x)\le \U(x)+1$ for each $x\in D_a\cup D_b$ with a characterization of vertices such that $\mu(x)=\U(x)+1$, see \Cref{cl:complicateDmu} and \Cref{claimcenter4}, respectively.

The main remaining task is to show that for any distinct $x,y\in V(G)$, it holds that $\pt(x,y)\le 16$ in $G^{O_{\ref{step:complicate}}}$. More precisely,   we prove that $\U(x)\le 9$ for every $x\in D_a\cup D_b$ and that $\D(x)\le 9$ for every $x\in U_a\cup U_b$, with the precise bound depending on the set $F_i$ containing $x$. These estimates are then used in \Cref{sec:finalmain} to verify the desired inequality $\pt(x,y)\le 16$ for all $x,y\in V$. The only delicate cases are those in which the preliminary estimates allow the sum to exceed $16$; these cases are treated separately, completing the proof.

\section{The preliminary orientation}\label{sec:lemless5}

\subsection{\texorpdfstring{Partitions of $V$}{Partitions of V}}\label{sec:partition}
To prove Theorem~\ref{lemless5}, we begin by partitioning $V$ according to distances to $u$ and to $v$. Using this initial partition, we then construct a refined partition of $V$. Based on these two partitions, we proceed to orient the edges of $G$ in a stepwise manner.

First,   define a partition of $V\setminus\{u,v\}$ as follows, see \Cref{fig:completeedges}(a). For $i\ge 1$, let 
$$
\begin{aligned}
L_{i}&=\{w\in V :~\mathrm{dist}(w,u)=i,~\mathrm{dist}(w,v)=i+1\};\\
R_{i}&=\{w\in V :~\mathrm{dist}(w,u)=i+1,~\mathrm{dist}(w,v)=i\};\\
X_{i}&=\{w\in V :~\mathrm{dist}(w,v)=i,~\mathrm{dist}(w,u)=i\};\\
L&=L_1\cup L_2\cup L_3;\quad~~ R=R_1\cup R_2\cup R_3. 
\end{aligned}$$
 
Note that for any $w\in V $, if $\mathrm{dist}(w,u)=i$, then $i-1\leq \mathrm{dist}(w,v)\leq i+1$. Moreover, since $N(u)\cap N(v)=\emptyset$, it implies $X_1=\emptyset$. Also, $\mathrm{diam}(G)=4$ means that $\mathrm{dist}(w,u)\leq4$ and $\mathrm{dist}(w,v)\leq4$ for any $w\in V $. Then $L_1,L_2,L_3,R_1,R_2,R_3,X_2,X_3,X_4$ (some of them may be emptyset) is a partition of $ V\setminus\{u,v\}$.
 
We further partition all of  $L_1,L_2,L_3,R_1,R_2,R_3,X_2,X_3,X_4$ except for $X_2$ as follows to obtain the local structures that simplify our orientation process, see \Cref{fig:completeedges}(b).  
\begin{figure}[htbp]
\begin{center}
    \begin{tikzpicture}[
    rect/.style={rectangle, draw, minimum width=1cm, minimum height=0.6cm, align=center, thick, inner sep=2pt},
    circ/.style={circle,
    draw,
    fill=black,
    minimum size=0.15cm,
    inner sep=0pt},
    arr/.style={-{Stealth[scale=1.2]}, shorten >=1pt, shorten <=1pt},scale=0.65
]

\node[circ] (v) at (3,-0.2) {};
\node[circ] (u) at (-3,-0.2) {};

\node[rect] (r2) at (-3,1.5) {};
 
\node[rect] (r3) at (3,1.5) {}; 
 
\node[rect] (r6) at (-3,4 ) {};
\node[rect] (r7) at (3,4 ) {}; 

 
\node[rect] (r10) at (-3,6.5) {};
\node[rect] (r11) at (3,6.5) {}; 

\node[rect] (r13) at (0,2.75) {};
\node[rect] (r14) at (0,5.25) {};
\node[rect] (r15) at (0,8) {};

\node[below]  at (v) {$v$}; 
\node[below]  at (u) {$u$}; 
 
\node at (r3) {$R_{1 }$};
\node at (r2) {$L_{1 }$}; 

\node at (0,-1) {(a)}; 

\node at (r6) {$L_{2 }$};
\node at (r7) {$R_{2 }$};

\node at (r10) {$L_{3  }$};
\node at(r11) {$R_{3 }$};

\node  at (r13) {$X_2$};
\node  at (r14) {$X_{3 } $};
\node at (r15)   {$X_{4 }$};

\end{tikzpicture}\hspace{1cm}
\begin{tikzpicture}[
    rect/.style={rectangle, draw, minimum width=0.6cm, minimum height=0.6cm, align=center, thick, inner sep=2pt},
 rect1/.style={rectangle, draw, minimum width=0.6cm, minimum height=0.6cm, align=center, thick, inner sep=2pt},
    circ/.style={circle,
    draw,
    fill=black,
    minimum size=0.15cm,
    inner sep=0pt},
    arr/.style={-{Stealth[scale=1.2]}, shorten >=1pt, shorten <=1pt},scale=0.65
]

\node[circ] (v) at (3,-0.2) {};
\node[circ] (u) at (-3,-0.2) {};

\node[rect] (r1) at (-4,1.5) {};
\node[rect] (r2) at (-2,1.5) {};
\node[rect] (l5) at (-6,1.5) {};
\node[rect] (r3) at (2,1.5) {};
\node[rect] (r4) at (4,1.5) {};
\node[rect] (l6) at (6,1.5) {};
\node  at (0,-1) {(b)};

\node[rect] (r5) at (-4,4) {};
\node[rect] (r6) at (-2,4) {};
\node[rect] (r7) at (2,4) {};
\node[rect] (r8) at (4,4) {};
\node[rect] (l7) at (-6,4) {};
\node[rect] (l8) at (6,4) {};


\node[rect] (r9) at (-4,6.5) {};
\node[rect] (r10) at (-2,6.5) {};
\node[rect] (r11) at (2,6.5) {};
\node[rect] (r12) at (4,6.5) {};

\node[rect] (r13) at (-0,2.75) {};
\node[rect1] (r14) at (-0.8,5.25) {};
\node[rect1] (r15) at (-0.8,8.75) {};
\node[rect1] (r16) at ( 0.8,5.25) {};
\node[rect1] (r17) at ( 0.8,8.75) {};
\draw (v) -- (r3);
\draw (r1) -- (u);
\draw  (v) -- (r4);
\draw  (r4) -- (r7);
\draw (v) -- (l6);
\draw (u) -- (l5);
\draw  (r3) -- (r7);
\draw  (r7) -- (r11);
\draw  (r11) -- (r15);
\draw  (r15) -- (r10);
\draw  (r10) -- (r6);
\draw  (r6) -- (r2);
\draw (r2) -- (u);
\draw  (r6) -- (r1);

\draw  (r5) -- (l5);
\draw  (r5) -- (r1);
\draw  (r5) -- (r2);
\draw  (r8) -- (l6);
\draw  (r8) -- (r3);
\draw  (r8) -- (r4);

\draw  (l7) -- (l5);
\draw  (l7) -- (r1); 
\draw  (l8) -- (l6);
\draw  (l8) -- (r4);
\draw  (r3) -- (r2);
\draw  (r7) -- (r6);
\draw  (r11) -- (r10);
\draw  (r7) -- (r13);
\draw   (r13) -- (r6);
\draw  (r7) -- (r14);
\draw  (r14) -- (r6);
\draw  (r9) -- (r6);
\draw  (r9) -- (r5);
\draw  (r9) -- (l7);
\draw  (r12) -- (r7);
\draw  (r12) -- (r8);
\draw  (r12) -- (l8);
 
\draw  (r3) -- (r13);
\draw  (r13) -- (r2);
\draw (u) -- (v);
\draw  (r14) -- (r13);
\draw  (r16) -- (r13);
\draw  (r15) -- (r14);
\draw  (r15) -- (r16);
\draw  (r14) -- (r17);
\draw  (r17) -- (r16);
\draw  (r17) -- (r11);
\draw  (r17) -- (r12);

\node[below]  at (v) {$v$}; 
\node[below]  at (u) {$u$};

\node at (r1) {$L_{12}$};
\node at (r3) {$R_{11}$};
\node at (r2) {$L_{11}$};
\node at  (l5) {$L_{13}$};
\node at   (r4) {$R_{12}$};
\node at  (l6) {$R_{13}$};

\node at (r6) {$L_{21}$};
\node at (r7) {$R_{21}$};
\node at (r5) {$L_{22}$};
\node at (r8) {$R_{22}$};
\node at (l7) {$L_{23}$};
\node at (l8) {$R_{23}$};

\node at (r10) {$L_{31}$};
\node at(r11) {$R_{31}$};
\node at (r9) {$L_{32}$};
\node  at (r12) {$R_{32}$};

\node  at (r13) {$X_2$};
\node  at (r14) {$X_{31}$};
\node at (r15)   {$X_{41}$};

\node  at (r16) {$X_{32}$};
\node at (r17)   {$X_{42}$};

\end{tikzpicture}
\caption{(a) denotes the partition $L\cup R\cup X_2\cup X_3\cup X_4\cup \{u,v\}$. (b) denotes the further partition obtained from  $L\cup R\cup X_2\cup X_3\cup X_4\cup \{u,v\}$, moreover, all possible edges    are presented except for edges in $[X_{31}\cup X_{32}, L_3\cup R_3]\cup [X_{42}, L_3]$,   edges in $L_i$, $R_i$ and $X_j$ where $i\in [3]$ and $j\in \{2,3,4\}$ and edges in $[X_2,L_{22}\cup L_{23}\cup R_{22}\cup R_{23}]$.}\label{fig:completeedges}
\end{center}
\end{figure}
\begin{align*}
X_{41}&=\{w\in X_4: N(w)\cap  L_3\neq \emptyset \text{ and }N(w)\cap  R_3\neq \emptyset\},  &X_{42}&=X_4\setminus X_{41};\\[1ex] 
X_{31}&=\{w\in X_3: |N(w)\cap (L_2\cup X_2\cup R_2)|\ge 2 \},   &X_{32}&=X_3\setminus X_{31};\\[1ex] 
\tilde{X}_3&= \{w\in X_3:N(w)\cap (L_2\cup R_2\cup (N(X_2)\cap X_3))=\emptyset\}; &~&\\[1ex] 
L_{31}&=\{w\in L_3:~N(w)\cap(R_3\cup X_{41}\cup \tilde{X}_3)\neq\emptyset\},    &L_{32}&=L_3\setminus L_{31};\\[1ex] 
R_{31}&=\{w\in R_3:~N(w)\cap(L_3\cup X_{41}\cup \tilde{X}_3)\neq\emptyset\},   &R_{32}&=R_3\setminus R_{31};\\[1ex] 
L_{21}&=\{w\in L_2:~N(w)\cap(R_2\cup X_{31}\cup L_{31})\neq\emptyset\}; &~&\\[1ex] 
R_{21}&=\{w\in R_2:~N(w)\cap(L_2\cup X_{31}\cup R_{31})\neq\emptyset\}; &~&\\[1ex] 
L_{11}&=\{w\in L_1:~N(w)\cap   N[R_1] \neq\emptyset\}, &L_{12}&=\{w\in L_1\setminus L_{11}:~N(w)\cap L_{21}\neq\emptyset\}, \\
L_{13}&=L_1\setminus (L_{11}\cup L_{12}); &~&\\[1ex] 
R_{11}&=\{w\in R_1:~N(w)\cap N[L_1] \neq\emptyset\}, &R_{12}&=\{w\in R_1\setminus R_{11}:~N(w)\cap R_{21}\neq\emptyset\},\\
R_{13}&=R_1\setminus(R_{11}\cup R_{12}); &~&\\[1ex] 
L_{22}&=\{w\in L_2\setminus L_{21}:~N(w)\cap L_{11}\neq\emptyset\},  &L_{23}&=L_2\setminus(L_{21}\cup L_{22});\\[1ex] 
                                                                                                                                                                                                                                                                                                                                                                                                                                                                                                                                                   R_{22}&=\{w\in R_2\setminus R_{21}:~N(w)\cap R_{11}\neq\emptyset\},  &R_{23}&=R_2\setminus(R_{21}\cup R_{22}). \end{align*}  

Notice that  the subsets $L_{ij},R_{ij}(i=1,2;j=1,2,3),L_{31},L_{32},R_{31},R_{32}$ and $X_2,X_{31},X_{32},$ $ X_{41},X_{42}$ is a partition of $V \setminus \{u,v\}$ by definition. Also,  $G[\{u\}\cup L_{11}\cup X_2\cup R_{11}\cup\{v\}]$ is a structure similar to a 5-cycle. Based on this,  we present a new partition of  $V $, where each vertex is assigned to a layer according to its distance to $\{u\}\cup L_{11}\cup X_2\cup R_{11}\cup\{v\}$. Let 

$$\begin{aligned}F_0&=\{u\}\cup L_{11}\cup X_2\cup R_{11}\cup\{v\}.\\
F_i&=\{x\in V :\mathrm{dist}(x,F_0)=i\}~(i=1,2,3,4).
\end{aligned}$$
Then $F_i~(i=0,1,2,3,4)$ is a partition of $V $ since $\mathrm{diam}(G)=4$.

Moreover,  we have the following observations based on our partitions.
\begin{observation}\label{claim1}
All the followings hold. 
\begin{itemize}
\item  $N(w)\cap X_3\neq\emptyset$  for $w\in X_{42}$ and  $N(w)\cap X_2\neq\emptyset$ for $w\in X_{32}\cup \tilde{X}_3$. 

 If $w\in X_{42}$, then the definition of $X_{42}$  implies that $\mathrm{dist}(w,u)=\mathrm{dist}(w,v)=4$ and $ N(w)\cap  L_3 =\emptyset$ or $ N(w)\cap  R_3 =\emptyset$ and so  $N(w)\cap X_3\neq\emptyset$. The case for $X_{32}\cup \tilde{X}_3$ is analogous.
\item $ [L_{12}\cup L_{13},R_1]\cup[L_{22}\cup L_{23},R_2] \cup [L_{32} ,R_3] =\emptyset$,  $  [L_{21},L_{13}]\cup [L_{23},L_{11}]=\emptyset$ and  $  [L_{31},L_{22}\cup L_{23}] =\emptyset$. The analougous hold when change the role of $R$ and $L$. 
\item $[X_2, L_{12}\cup L_{13}\cup R_{12}\cup R_{13}]=\emptyset$, $[  X_3,L_{23}\cup L_{22}\cup R_{22}\cup R_{23}]=\emptyset$, $[X_{32}, L_{21}\cup R_{21}]=\emptyset$ and $[X_{41},R_{32}\cup L_{32}]=\emptyset$. 
\item For $x\in X_{42}$, if $[x,R_3]\neq\emptyset$, then $[x,L_3]=\emptyset$; if $[x,L_3]\neq\emptyset$, then $[x,R_3]=\emptyset$.  
\end{itemize}
\end{observation}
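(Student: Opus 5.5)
The statement is Observation~\ref{claim1}, a list of structural facts that follow directly from the definitions. The plan is to verify each bullet using only three facts. The first is the distance-layer property: adjacent vertices have $\mathrm{dist}(\cdot,u)$ and $\mathrm{dist}(\cdot,v)$ differing by at most one. The second is the identification of $L_i,R_i,X_i$ by the pair $(\mathrm{dist}(w,u),\mathrm{dist}(w,v))$. The third is the defining conditions of the refined classes $L_{ij},R_{ij},X_{ij},\tilde X_3$.

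\textbf{First bullet.} Take $w\in X_{42}$. Then $\mathrm{dist}(w,u)=\mathrm{dist}(w,v)=4$, so some neighbour of $w$ has distance $3$ to $u$ and some neighbour has distance $3$ to $v$. Every neighbour of $w$ has distance at least $3$ to both $u$ and $v$, so it lies in $L_3\cup R_3\cup X_3\cup X_4$. If $N(w)\cap X_3=\emptyset$, the first neighbour lies in $L_3$ and the second in $R_3$, which gives $w\in X_{41}$, a contradiction. The argument for $X_3$ is the same. A vertex $w\in X_3$ has its neighbours in $L_2\cup R_2\cup X_2\cup L_3\cup R_3\cup X_3\cup X_4$, and it needs one neighbour at distance $2$ from $u$ and one at distance $2$ from $v$.
\begin{itemize}
\item If $w\in X_{32}$, then $|N(w)\cap(L_2\cup X_2\cup R_2)|\le1$. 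A single neighbour must serve both roles, so it lies in $X_2$.
\item If $w\in\tilde X_3$, then $N(w)\cap(L_2\cup R_2)=\emptyset$, so the required neighbour again lies in $X_2$.
\end{itemize}

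\textbf{Second and third bullets.} These are nonadjacency statements. For each pair I will argue by contradiction: an edge between the two classes would put one endpoint into a "higher-priority" class of its level.
\begin{itemize}
\item An edge $xy$ with $x\in L_{12}\cup L_{13}$ and $y\in R_1$ would give $x\in L_{11}$, since $y\in N[R_1]$.
\item An edge from $L_{22}\cup L_{23}$ to $R_2$ would force membership in $L_{21}$.
\item An edge from $L_{32}$ to $R_3$ would force membership in $L_{31}$.
\item An edge from $L_{21}$ to $L_{13}$ would put the $L_1$-endpoint into $L_{12}$ (or $L_{11}$).
\item An edge from $L_{23}$ to $L_{11}$ would put the $L_2$-endpoint into $L_{22}$ (or $L_{21}$).
\item An edge from $L_{31}$ to $L_{22}\cup L_{23}$ would put the $L_2$-endpoint into $L_{21}$.
\item An edge from $X_2$ to $L_{12}\cup L_{13}$ would put the $L_1$-endpoint into $L_{11}$, since $X_2\subseteq N(R_1)\subseteq N[R_1]$. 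Every $X_2$ vertex has distance $1$ from $R_1$ because $\mathrm{dist}(w,v)=2$.
\item An edge from $X_3$ to $L_{22}\cup L_{23}$ needs a case split. If the $X_3$-endpoint lies in $X_{31}$, the $L_2$-endpoint goes to $L_{21}$. If it lies in $X_{32}$, that vertex has an $X_2$-neighbour, so it has two neighbours in $L_2\cup X_2\cup R_2$ and belongs to $X_{31}$, a contradiction. The same count gives $[X_{32},L_{21}\cup R_{21}]=\emptyset$.
\item $[X_{41},L_{32}]=\emptyset$ holds because a neighbour of an $X_{41}$ vertex lying in $L_3$ belongs to $L_{31}$ by definition.
\end{itemize}
The $R$-versions follow by the $u\leftrightarrow v$ symmetry of all definitions.

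\textbf{Fourth bullet.} This is immediate. A vertex of $X_4$ with neighbours in both $L_3$ and $R_3$ lies in $X_{41}$, not $X_{42}$.

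\textbf{Main obstacle.} The only delicate part is the bookkeeping of the priority order in the nested definitions. The classes $L_{12}$ and $L_{22}$ are defined only after excluding $L_{11}$ and $L_{21}$, so each contradiction has to land in the right class. The $X_3$ cases are subtler, because they use the first bullet to produce a second neighbour in $L_2\cup X_2\cup R_2$. I would treat those cases most carefully, checking in particular that the $X_2$-neighbour obtained is distinct from the $L_2$-neighbour, which holds because $X_2$ and $L_2$ are disjoint.
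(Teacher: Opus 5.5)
Your proposal is correct and follows the same definition-chasing route as the paper. The paper spells out only the $X_{42}$ case of the first bullet and leaves the remaining nonadjacency claims to the definitions of the refined classes; you make each of those checks explicit, including the key fact that the unique neighbour of an $X_{32}$-vertex in $L_2\cup X_2\cup R_2$ lies in $X_2$, and the implicit step that $N(v)\setminus\{u\}=R_1$ (because $N(u)\cap N(v)=\emptyset$), which gives $X_2\subseteq N(R_1)$.
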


\begin{observation}\label{claim3}
    $L_{12}\cup L_{13}\cup R_{12}\cup R_{13}\cup L_{22}\cup R_{22}\cup X_{32}\subset F_1$, $L_{21}\cup R_{21}\cup L_{23}\cup R_{23}\subset F_1\cup F_2$, $L_{3}\cup R_{3}\subset F_2\cup F_3$, $X_{31}\subset F_1\cup F_2\cup F_3$, and $X_4\subset F_2\cup F_3\cup F_4$.
\end{observation}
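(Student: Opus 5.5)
The plan is to verify each inclusion directly from the definitions, by bounding $\mathrm{dist}(x,F_0)$ above and below. No earlier result is needed beyond the first item of \Cref{claim1}.

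\textbf{Upper bounds.} These come from explicit short paths.
\begin{itemize}
\item Every vertex of $L_1\cup R_1$ is adjacent to $u$ or $v$, which lie in $F_0$. Also $L_{12},L_{13},R_{12},R_{13}$ are disjoint from $F_0=\{u\}\cup L_{11}\cup X_2\cup R_{11}\cup\{v\}$. Hence $L_{12}\cup L_{13}\cup R_{12}\cup R_{13}\subseteq F_1$.
\item A vertex of $L_{22}$ has a neighbour in $L_{11}\subseteq F_0$, and $R_{22}$ is symmetric.
\item A vertex of $X_{32}$ has a neighbour in $X_2\subseteq F_0$ by \Cref{claim1}. Since none of these vertices lies in $F_0$, all of $L_{22},R_{22},X_{32}$ are in $F_1$.
\item A vertex of $L_2$ has a neighbour in $L_1\subseteq F_0\cup F_1$, so it lies in $F_1\cup F_2$, as it is not in $F_0$. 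The same holds for $R_2$, which gives the claim for $L_{21},R_{21},L_{23},R_{23}$.
\item Iterating once more, a vertex of $L_3$ has a neighbour in $L_2\subseteq F_1\cup F_2$, so $\mathrm{dist}(L_3,F_0)\le 3$, and similarly for $R_3$.
\item A vertex of $X_{31}$ has a neighbour in $L_2\cup X_2\cup R_2\subseteq F_0\cup F_1\cup F_2$, so its distance to $F_0$ is at most $3$.
\item Every vertex has $\mathrm{dist}(x,F_0)\le 4$ because $\mathrm{diam}(G)=4$.
\end{itemize}

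\textbf{Lower bounds.} These use the $1$-Lipschitz property of $\mathrm{dist}(\cdot,u)$ and $\mathrm{dist}(\cdot,v)$ along edges. Every vertex of $F_0$ has distance at most $2$ to $u$ or to $v$. Specifically, $\mathrm{dist}(u,u)=\mathrm{dist}(v,v)=0$, a vertex of $L_{11}$ is at distance $1$ from $u$, a vertex of $R_{11}$ is at distance $1$ from $v$, and a vertex of $X_2$ is at distance $2$ from both.
\begin{itemize}
\item \emph{$X_3$ and $X_{31}$.} Vertices of $X_3$ are at distance $3$ from both $u$ and $v$, so they are not in $F_0$ and hence are in $F_{\ge1}$. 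Together with the upper bound, this gives $X_{31}\subseteq F_1\cup F_2\cup F_3$.
\item \emph{$L_3$.} Let $w\in L_3$, so $\mathrm{dist}(w,u)=3$ and $\mathrm{dist}(w,v)=4$. Any neighbour $z$ of $w$ satisfies $\mathrm{dist}(z,u)\ge 2$ and $\mathrm{dist}(z,v)\ge 3$.
  \begin{itemize}
  \item $z\notin\{u,v\}$.
  \item $z\notin L_{11}$, since $L_{11}$ is at distance $1$ from $u$.
  \item $z\notin R_{11}$, since $R_{11}$ is at distance $1$ from $v$.
  \item $z\notin X_2$, since $X_2$ is at distance $2$ from $v$.
  \end{itemize}
  Thus $w\notin F_0\cup F_1$, and $L_3\subseteq F_2\cup F_3$. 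The argument for $R_3$ is symmetric.
\item \emph{$X_4$.} A vertex $w\in X_4$ is at distance $4$ from both $u$ and $v$. Each of its neighbours is at distance at least $3$ from both, so no neighbour lies in $F_0$. Hence $X_4\subseteq F_2\cup F_3\cup F_4$.
\end{itemize}

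\textbf{Main obstacle.} The main point of care is the lower bound $L_3\cup R_3\not\subseteq F_1$. Here a naive argument fails: a neighbour of $X_2$ can be at distance $3$ from $u$. One must instead use the distance to the \emph{other} root, $\mathrm{dist}(w,v)=4$ for $w\in L_3$, to exclude adjacency to $X_2$ and to $R_{11}$.
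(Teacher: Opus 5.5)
Your verification is correct and is the same definitional check the paper relies on; the paper states this as an observation without writing out a proof. Your upper bounds come from explicit short paths to $u$, $v$, $L_{11}$, $R_{11}$ or $X_2$ (the last via \Cref{claim1}), and your lower bounds come from the fact that every vertex of $F_0$ is within distance $2$ of $u$ or of $v$, with $\mathrm{dist}(w,v)=4$ correctly used to keep $L_3$ off $X_2\cup R_{11}$.
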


 Denote $I_1=\{u\}$, $I_2=L_{11}$, $I_3=X_2$, $I_4=R_{11}$ and $I_5=\{v\}$ for convenience.   Additionaly, by the definition of $I_k$, it holds that $1\le |\{k\in [5]: N(x)\cap I_k\neq\emptyset\}|\le 2$  for each $x\in F_1$. Furthermore,  if $|\{k\in [5]:N(x)\cap I_k\neq\emptyset\}|= 2$, then $\{k\in [5]:N(x)\cap I_k\neq\emptyset\}=\{i,i+1\}$ for some $i\in [4]$. For each $x\in F_1$, let 
\begin{align*}
f(x)=\left\{\begin{array}{ll}
 i  \quad&\text{if} ~\{i\}=\{k\in [5]:N(x)\cap I_k\neq\emptyset\};\\
 \frac{2i+1}{2} \quad  &\text{if}~ \{i,i+1\}=\{k\in [5]:N(x)\cap I_k\neq\emptyset\}.
\end{array}\right.
\end{align*}
We know that $f(x)$ is well-defined. 

We record the lower bounds for the adjusted distances that are used
in the subsequent arguments. Every dipath is an underlying walk, so
in any partial orientation $H$ we have
 \begin{align*}
\mu_H(x)&\ge\min\{\mathrm{dist}_G(v,x),\mathrm{dist}_G(u,x)+2\},\\
\nu_H(x)&\ge\min\{\mathrm{dist}_G(x,u),\mathrm{dist}_G(x,v)+2\}.
\end{align*}
Consequently, the respective lower bounds for $(\mu_H(x),\nu_H(x))$
are $(i+1,i)$ on $L_i$, $(i,i+1)$ on $R_i$, and $(i,i)$ on $X_i$.

\medskip

 We construct an orientation with $\partial(u,v)=1$
and $\partial(v,u)\le4$. By \Cref{lem:Gxy16}, pairs satisfying
$\nu(x)+\mu(y)\le15$ have directed distance at most $16$;
the remaining pairs are treated in \Cref{sec:finalmain}.  A general but not exactly idea is to orient $u\to v\to R\to L\to u$,  $u\to L\to L\to u$, and $v\to R\to R\to v$.  We start with  edges related to  $F_1$.

\subsection{\texorpdfstring{Orient edges related to  $F_1$}{Orient edges related to F1}}\label{sec:F1}

Note that by the definition of $X_4$ and $X_{41}$, each vertex in $X_{41}$ lies on a  closed trail containing $uv$.  Similarly, by definition, each vertex in $L_{31}\cup R_{31}\cup L_{21}\cup R_{21}\cup L_{11}\cup R_{11}\cup X_2$ also lies on a cycle containing $uv$. These cases are straightforward to handle, as  it could be oriented as a dicycle.  Therefore, we first orient edges involved in these cycles.

\begin{step}\label{step:F0X31X41}
The following edges are oriented in order, as shown in  \Cref{fig:step1}. 
\begin{enumerate}[label=(S1.\arabic*)]

 \item\label{item:step:F0X31X411} Orient $u\to v\to R_{11}\to X_2\to L_{11}\to u$, $v\to R_{12}\to R_{21}\to L_{21}\to L_{12}\to u$, $R_{21}\to R_{31}\to L_{31}\to L_{21}$,  $R_{11}\to L_{11}$, $R_{11}\to R_{21}$, $L_{21}\to L_{11}$, $R_{31}\to X_{41}\cup \tilde{X}_3\to L_{31}$,  $R_{21}\to  X_{31}\to L_{21}$, $R_{21}  \to X_2$ and $X_2\to L_{21} $
 \item\label{item:step:F0X31X412} Let $x\in X_{31}$. If $N(x)\cap R_2=\emptyset$ and $N(x)\cap L_2\neq\emptyset$, orient $X_2\to x$. If $N(x)\cap R_2\neq \emptyset$, orient $x\to X_{2}$.  
 
 Let $x\in \tilde{X}_3$. If $N(x)\cap R_{31}=\emptyset$ and $N(x)\cap L_{31}\neq \emptyset$, oriented $  X_2\to x$. If $N(x)\cap R_{31}\neq \emptyset$, oriented $ x\to X_2$. 
 \item\label{item:step:F0X31X413} For each $x\in L_{11}$ with $N(x)\cap (X_2\cup R_1)=\emptyset$, fix arbitrary $y\in N(x)\cap L_{11}$ with $N(y)\cap R_1\neq\emptyset$, orient $y\to x$. 
 
 For each $x\in R_{11}$ with $N(x)\cap (X_2\cup L_1)=\emptyset$, fix arbitrary $y\in N(x)\cap R_{11}$ with $N(y)\cap L_1\neq\emptyset$, orient $ x\to y$.

 \item\label{item:step:F0X31X414} 

For all paths $P=vv_1v_2v_3v_4$   of form $vR_1R_2R_{21}L_{21}$ in $G$ where  $N(v_2)\cap (X_2\cup X_3\cup L_2)=\emptyset $, orient edges in  $ vv_1v_2v_3v_4$ as $v\to v_1\to v_2\to v_3$.    

For all paths $Q=uu_1u_2u_3u_4$  of form $uL_1L_2L_{21}R_{21}$ where $N(u_2)\cap (X_2\cup X_3\cup R_2)=\emptyset $, orient edges in  $uu_1u_2u_3$ as $u_3\to u_2\to u_1\to u$.  

 \item\label{item:step:F0X31X415} 
 For all $v_1v_1'$ satisfying that  $v_1\in R_{12}$, $v'_1\in (R_{12}\cup R_{13})\setminus (N(R_{11})\cup N(R_{21}\cap N(L_2)))$ and there is a dipath $v\to   v_1\to v_2\to v_3$ such that   $v_2\in R_{21}$ and $v_3\in L_{21}$,  orient the edges in  $vv_1'v_1$ such that  $v\to v'_1\to v_1$.

 For all $u_1u_1'$ satisfying that  $u_1\in L_{12}$, $u'_1\in (L_{12}\cup L_{13})\setminus (N(L_{11})\cup N(L_{21}\cap N(R_2)))$ and there is a dipath $u_3\to   u_2\to u_1\to u$ such that   $u_2\in L_{21}$ and $u_3\in R_{21}$,  orient the edges in  $uu_1'u_1$ such that  $ u_1\to u'_1\to u$ 
\end{enumerate}
\end{step}

 \begin{figure}[htbp]
\begin{center}
    
\begin{tikzpicture}[
    rect/.style={rectangle, draw, minimum width=0.8cm, minimum height=0.6cm, align=center, thick, inner sep=2pt},
    rect1/.style={rectangle, draw, minimum width=1.5cm, minimum height=0.8cm, align=center, thick, inner sep=2pt},
    circ/.style={circle,
    draw,
    fill=black,
    minimum size=0.15cm,
    inner sep=0pt},
    arr/.style={-{Stealth[scale=1.2]}, shorten >=1pt, shorten <=1pt},scale=0.85
]

\node[circ] (v) at (3,-0.2) {};
\node[circ] (u) at (-3,-0.2) {};

\node[rect] (r1) at (-4,1.5) {};
\node[rect] (r2) at (-2,1.5) {};
\node[rect] (l5) at (-6,1.5) {};
\node[rect] (r3) at (2,1.5) {};
\node[rect] (r4) at (4,1.5) {};
\node[rect] (l6) at (6,1.5) {};

\node[rect] (r5) at (-4,3.5) {};
\node[rect] (r6) at (-2,3.5) {};
\node[rect] (r7) at (2,3.5) {};
\node[rect] (r8) at (4,3.5) {};
\node[rect] (l7) at (-6,3.5) {};
\node[rect] (l8) at (6,3.5) {};


\node[rect] (r9) at (-4,5.5) {};
\node[rect] (r10) at (-2,5.5) {};
\node[rect] (r11) at (2,5.5) {};
\node[rect] (r12) at (4,5.5) {};

\node[rect] (r13) at (0,2.5) {};
\node[rect1] (r14) at (0,4.5) {};
\node[rect] (r15) at (0,6.5) {};

\draw[arr] (v) -- (r3);
\draw[arr] (r1) -- (u);
\draw[arr] (v) -- (r4);
\draw[arr] (r4) -- (r7);
\draw[arr] (r3) -- (r7);
\draw[arr] (r7) -- (r11);
\draw[arr] (r11) -- (r15);
\draw[arr] (r15) -- (r10);
\draw[arr] (r10) -- (r6);
\draw[arr] (r6) -- (r2);
\draw[arr] (r2) -- (u);
\draw[arr] (r6) -- (r1);

\draw[arr] (r3) -- (r2);
\draw[arr] (r7) -- (r6);
\draw[arr] (r11) -- (r10);
\draw[arr] (r7) -- (r13);
\draw[arr] (r13) -- (r6);
\draw[arr] (r7) -- (r14);
\draw[arr] (r14) -- (r6);

\draw[arr] (r11) -- (r14);
\draw[arr] (r14) -- (r10);
\draw[arr] (r3) -- (r13);
\draw[arr] (r13) -- (r2);
\draw[arr] (u) -- (v);
\node[below] at (v) {$v$}; 
\node[below] at (u) {$u$};

\node at (r1) {$L_{12}$};
\node at (r3) {$R_{11}$};
\node at (r2) {$L_{11}$};
\node at  (l5) {$L_{13}$};
\node at   (r4) {$R_{12}$};
\node at  (l6) {$R_{13}$};

\node at (r6) {$L_{21}$};
\node at (r7) {$R_{21}$};
\node at (r5) {$L_{22}$};
\node at (r8) {$R_{22}$};
\node at (l7) {$L_{23}$};
\node at (l8) {$R_{23}$};

\node at (r10) {$L_{31}$};
\node at(r11) {$R_{31}$};
\node at (r9) {$L_{32}$};
\node  at (r12) {$R_{32}$};

\node  at (r13) {$X_2$};
\node  at (r14) {$X_{31}\cup \tilde{X}_3$};
\node at (r15)   {$X_{41}$};

\end{tikzpicture}
\caption{An example for   partial orientations in \Cref{step:F0X31X41}.}\label{fig:step1}
\end{center}
\end{figure}



Observe that \ref{item:step:F0X31X415} has no conflict with \ref{item:step:F0X31X411}.  
By \Cref{step:F0X31X41}, we have that $I_1\to I_5\to I_4\to I_3\to I_2\to I_1$ and $I_4\to I_2$ in $G^{O_{\ref{step:F0X31X41}}}$.   Recall for each $x\in V$, we define  $$
  \begin{aligned}
\nu(x):=
\min\{\partial(x,u), \partial(x,v)+2\}.\\
\mu(x):=
\min\{\partial(v,x), \partial(u,x)+2\}. 
\end{aligned}
 $$ 
Now, for a path $P=v_1 \cdots  v_k$ where  all the edges are treated  as a whole and oriented  simultaneously so as to form a dipath $\orw{P}$,   we define $$\omega(\orw{P})=\mu(v_1)+e(P)+\nu(v_k).$$ 
Observe that  for each $e\in E(P)$, it holds that $\omega(\orw{e})\le \omega(\orw{P})$. 
 
\begin{observation}\label{obs:munu} All the following hold. 
\begin{itemize}
   \item In $G^{O_{\ref{step:F0X31X41}}}$, for $x\in I_k$ where $k\in [5]$, we have that  $\partial(x,u)\leq k-1$ and $\partial(v,x)\leq5-k$, which implies that  $\nu(x)\leq k-1$ and $\mu(x)\leq 5-k$. 
\item  In \ref{item:step:F0X31X414}, it holds that $v_2\in R_{21}$ with $ N(v_2) \cap (X_2\cup L_2\cup X_3)=\emptyset$ or $v_2\in R_{22}$ with $f(v_2)=4$ or $v_2\in F_2\cap  R_{23} $;  $u_2\in L_{21}$ with $ N(u_2) \cap (X_2\cup R_2\cup X_3)=\emptyset$ or $u_2\in L_{22}$ with $f(u_2)=2$ or $u_2\in F_2\cap (L_{21}\cup L_{23})$. 
\item  In \ref{item:step:F0X31X415}, it holds that $f(v'_1)=5$ and $f(u'_1)=1$. 
\item For $x,y\in F_1$ with $xy\in E$, if $f(x)<f(y)=3$ and $y\notin X_3\cup L_{21}\cup R_{21}$, then there is no $z\in N(y)\cap F_1$ such that $f(z)>f(y)$;    if $3=f(x)<f(y)$ and $x\notin X_3\cup L_{21}\cup R_{21}$, then there is no $z\in N(x)\cap F_1$ such that $f(z)<f(x)$.   
\end{itemize}
\end{observation}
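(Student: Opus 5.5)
The plan is to verify the four items of \Cref{obs:munu} one at a time. Each follows directly from the rules in \Cref{step:F0X31X41} and the definitions of the partition, so the proof amounts to unwinding those definitions.

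\textbf{First item.} Rule \ref{item:step:F0X31X411} orients the cycle $u\to v\to R_{11}\to X_2\to L_{11}\to u$. A vertex of $I_k$ therefore reaches $u$ along this directed $5$-cycle in $k-1$ arcs, and $v$ reaches it in $5-k$ arcs. For example, for $x\in L_{11}=I_2$ we have $x\to u$, and $v\to R_{11}\to X_2\to x$.

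One point needs care: the arcs must actually exist for each individual vertex. A vertex $x\in L_{11}$ with $N(x)\cap(X_2\cup R_1)=\emptyset$ need not lie on such a cycle through $X_2$ or $R_{11}$. For such $x$, rule \ref{item:step:F0X31X413} orients $y\to x$ from a neighbour $y\in L_{11}$ that has an in-neighbour in $R_1$, and that neighbour lies in $R_{11}$. Since $x\in L_1$, we also have $x\to u$ from the first rule. Hence $\partial(v,x)\le 3$, namely $v\to R_{11}\to y\to x$. The case $R_{11}$ is symmetric. Vertices of $L_{11}$ adjacent to $R_1$ use the arc $R_{11}\to L_{11}$, giving $\partial(v,x)\le 2$. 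Vertices adjacent to $X_2$ use $X_2\to L_{11}$, where each $X_2$ vertex has an in-neighbour in $R_{11}$ since $X_2$ vertices are adjacent to both $L_1$ and $R_1$ and those neighbours lie in $L_{11}$ and $R_{11}$ respectively. The bounds on $\nu$ and $\mu$ then follow from their definitions.

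\textbf{Second item.} Take a path of form $vR_1R_2R_{21}L_{21}$ with $N(v_2)\cap(X_2\cup X_3\cup L_2)=\emptyset$. Then $v_2\in R_2$ lies in exactly one of $R_{21}$, $R_{22}$ or $R_{23}$.
\begin{itemize}
\item If $v_2\in R_{21}$, the stated neighbourhood condition is exactly the hypothesis.
\item If $v_2\in R_{22}$, then $v_2\in F_1$ by \Cref{claim3}. Its only possible $F_0$-neighbours lie in $R_{11}$: there are none in $X_2$ by hypothesis, and $v_2$ is not adjacent to $v$ since it is at distance $2$ from $v$. This gives $f(v_2)=4$.
\item If $v_2\in R_{23}$, it has no neighbour in $R_{11}$ and none in $X_2$, so it is not in $F_1$. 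It is adjacent to $v_3\in R_{21}$ and $v_1$, and we need $v_1\in F_1$ or $v_3\in F_1$. The claim is $v_2\in F_2$, which holds because $v_1\in R_1\setminus R_{11}$ is adjacent to $v\in F_0$.
\end{itemize}
The $L$ side is symmetric. The extra alternative $u_2\in F_2\cap L_{21}$ covers the case where $u_2\in L_{21}$ lies at distance $2$ from $F_0$.

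\textbf{Third item.} From \ref{item:step:F0X31X415}, $v_1'\in R_{12}\cup R_{13}$ avoids $N(R_{11})$. It is also not adjacent to $X_2$, since $[X_2,R_{12}\cup R_{13}]=\emptyset$ by \Cref{claim1}, and it is not adjacent to $L_{11}$, since $v_1'\notin R_{11}$. Its only $F_0$-neighbour is therefore $v$, so $f(v_1')=5$. Symmetrically, $f(u_1')=1$.

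\textbf{Fourth item.} This is the main obstacle, because it needs a small structural argument rather than direct reading. Suppose $f(x)<f(y)=3$, $y\notin X_3\cup L_{21}\cup R_{21}$, and $z\in N(y)\cap F_1$ with $f(z)>3$.

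Since $f(y)=3$, $y$ is adjacent to $X_2$ but to no vertex of $L_{11}\cup R_{11}$. Then $y\in L_2\cup R_2\cup X_3$, and by assumption $y\in L_{22}\cup L_{23}\cup R_{22}\cup R_{23}$. The first subcase gives an immediate contradiction: $L_{22}\cup R_{22}$ are defined by having neighbours in $L_{11}$ or $R_{11}$.

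So assume, say, $y\in L_{23}$. Now $f(x)<3$ means $x$ is adjacent to $L_{11}$ or $u$, and $f(z)>3$ means $z$ is adjacent to $R_{11}$ or $v$. Then $z$ lies in $R_1$, $R_2$ or $X_2$-adjacent sets on the $R$ side.
\begin{itemize}
\item An edge $yz$ from $L_{23}$ to $R$ is forbidden by $[L_{22}\cup L_{23},R_2]=\emptyset$ and $[L_2,R_1]=\emptyset$, the latter by distances.
\item $z\in X_3$ cannot happen either, since $[X_3,L_{23}]=\emptyset$ by \Cref{claim1}.
\end{itemize}
The remaining configurations are excluded by distance parity from $u$ and $v$, which I would check using \Cref{claim1}. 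The second statement is symmetric under reversing the roles of $L$ and $R$.
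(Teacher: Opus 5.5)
Your arguments for the first three items are sound. For $k\in\{1,5\}$ you should still say why a dipath $v\to R_{11}\to X_2\to L_{11}\to u$ or $v\to R_{11}\to L_{11}\to u$ exists at all; this follows from $uv$ lying on a cycle of length at most $5$ together with $N(u)\cap N(v)=\emptyset$.

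The gap is in the fourth item, at ``assume, say, $y\in L_{23}$''. The case $y\in R_{23}$ is not symmetric to $y\in L_{23}$. The symmetry $L\leftrightarrow R$, $u\leftrightarrow v$ replaces $f$ by $6-f$, so it maps the first clause onto the second clause, not onto itself. In fact, for $y\in R_{23}$ the conclusion of the first clause can fail outright. Take the graph with edges $uv,ua,ua',ap,a'p,ac,cb,bv,cy,yz,zv$. It has diameter $4$, satisfies $N(u)\cap N(v)=\emptyset$, and has every edge on a cycle of length at most $5$. Here $y\in R_{23}\cap F_1$ has $f(y)=3$ and is adjacent to $z\in R_{13}\cap F_1$ with $f(z)=5$.

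The statement survives only because of the hypothesis on $x$. You write it down (``$x$ is adjacent to $L_{11}$ or $u$'') but never use it. The missing step is as follows. Suppose $y\in R_{23}$ and $x\in N(y)\cap F_1$ has $f(x)<3$. Then $x\notin N(u)$, since $\mathrm{dist}(y,u)=3$. So $x$ has a neighbour in $L_{11}$, and $\mathrm{dist}(x,u)=2$. As $\mathrm{dist}(x,v)\le 3$, there are three possibilities:
\begin{itemize}
\item $x\in L_2$, which puts $y\in R_{21}$;
\item $x\in X_2\subseteq F_0$;
\item $x\in R_1$ with a neighbour in $L_1$, which puts $x\in R_{11}\subseteq F_0$.
\end{itemize}
All three are excluded. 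Only after this may you restrict to $y\in L_{23}$; the second clause then follows by the symmetry you invoke.

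There are two smaller points in the same item. First, your reduction to $y\in L_2\cup R_2\cup X_3$ silently uses that an $L_1$- or $R_1$-vertex with a neighbour in $X_2$ lies in $L_{11}\cup R_{11}\subseteq F_0$. Second, the ``remaining configurations'' are not a parity matter and do not come from the observation you cite. Since $f(z)>3$, we have $\mathrm{dist}(z,v)\le 2$. Apart from $z\in R_1\cup R_2$, which you do treat, the only possibility with $z\notin F_0$ is $z\in L_1$. That case is excluded because an $L_1$-vertex adjacent to $R_{11}$ belongs to $L_{11}$.
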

We shall also use the following consequence of the core definition. In every extension of the initial orientation,
\[
\mu(x)\le2\text{ and }\nu(x)\le2\quad\Longrightarrow\quad x\in F_0.
\]

\begin{proposition}\label{prop:xinf0munu}
 For $e\in E_{\ref{step:F0X31X41}}$, it holds that $\omega(e)\le 8$ in $G^{O_{\ref{step:F0X31X41}}}$. 
\end{proposition}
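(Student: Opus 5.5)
The plan is to verify the bound $\omega(\orw{xy})=\mu(x)+1+\nu(y)\le 8$ edge by edge, grouped by the sub-rule of \Cref{step:F0X31X41} that oriented the edge. The organizing principle is that each rule places its edges on a short closed directed walk through $u\to v$. If an arc $\orw{xy}$ lies on a directed closed walk $C$ of length $k$ containing $\orw{uv}$, then following $C$ gives $\partial(v,x)+1+\partial(y,u)=k-1$. Hence $\mu(x)+1+\nu(y)\le k-1$, and it suffices to exhibit such a walk with $k\le 9$ in $G^{O_{\ref{step:F0X31X41}}}$. When the walk is more naturally anchored at $u$ on the entering side or at $v$ on the leaving side, the $+2$ adjustments in $\mu,\nu$ play the same role. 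For example, a dipath $u\to\cdots\to x$ of length $a$, the arc $xy$, and a dipath $y\to\cdots\to v$ of length $b$ give $\omega\le a+b+5$.

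\textbf{Rule \ref{item:step:F0X31X411}.} I would list the basic cycles and read off their lengths.
\begin{itemize}
\item $u\,v\,R_{11}X_2L_{11}\,u$ has length $5$.
\item $u\,v\,R_{12}R_{21}L_{21}L_{12}\,u$ has length $6$.
\item The detour through $R_{31}L_{31}$ or through $X_{41},\tilde X_3$ gives length at most $9$. For instance $u\,v\,R_1R_{21}R_{31}X_{41}L_{31}L_{21}L_1\,u$ has length $9$.
\item Arcs through $X_{31}$ lie on walks of length $7$.
\end{itemize}
The main care point is that every vertex of $R_{21}$ must actually have an in-neighbour in $R_1$ already oriented into it. This follows because the arcs $R_{12}\to R_{21}$ and $R_{11}\to R_{21}$ are oriented and $R_1=R_{11}\cup R_{12}\cup R_{13}$ with $[R_{13},R_{21}]=\emptyset$. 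Symmetrically, every vertex of $L_{21}$ has an arc into $L_{11}\cup L_{12}$. Thus $\mu\le 2$ on $R_{21}$ and $\nu\le 2$ on $L_{21}$, and $\mu\le 3$ on $R_{31}$ and $\nu\le3$ on $L_{31}$. With these values, the worst arc, $R_{31}\to X_{41}$, gets $\omega\le 3+1+(1+3)=8$, using that each vertex of $X_{41}$ has an out-neighbour in $L_{31}$. The cross arcs $R_{11}\to L_{11}$, $R_{21}\to L_{21}$ and $R_{21}\to X_2\to L_{21}$ are checked the same way, using $\mu(R_{11})\le1$ and $\nu(L_{11})\le1$ from \Cref{obs:munu}.

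\textbf{Rules \ref{item:step:F0X31X412}--\ref{item:step:F0X31X415}.} In \ref{item:step:F0X31X412}, an arc $X_2\to x$ with $x\in X_{31}$ has an out-neighbour of $x$ in $L_{21}$, which is already oriented, so $\omega\le 2+1+(1+2)=6$. When $x\to X_2$, $x$ has an in-neighbour in $R_{21}$, which is symmetric. For $\tilde X_3$ the same argument goes through with $L_{31},R_{31}$, which gives $\le 2+1+(1+3)=7$. In \ref{item:step:F0X31X413}, the vertex $y$ has an in-arc from $R_1$. If that neighbour is in $R_{11}$, it is already oriented into $y$, so $\mu(y)\le 2$ and $\nu(x)\le 1$, giving $\omega\le 4$.

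Rule \ref{item:step:F0X31X414} produces dipaths $v\to v_1\to v_2\to v_3$ with $v_3\in R_{21}$, $\nu(v_3)\le 2$, and $\mu$-values $1,2$ along the path, so each arc has $\omega\le 1+\ldots$ and the whole path has $\omega\le 0+3+2=5$. Rule \ref{item:step:F0X31X415} gives $v\to v_1'\to v_1$, where $v_1$ already lies on a dipath to $L_{21}$, hence $\nu(v_1)\le 1+1+2+2=\ldots$; I would instead bound it as $\nu(v_1)\le\partial(v_1,L_{21})+2\le 4$. This gives $\omega\le 0+2+4=6$.

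\textbf{Main obstacle.} I expect the difficulty to be the sequential subtlety. In \ref{item:step:F0X31X413}, \ref{item:step:F0X31X414} and \ref{item:step:F0X31X415}, edges might already be oriented in the opposite sense by an earlier rule, or the in- or out-neighbour used in the estimate might be oriented only later. The plan is to argue from the partition definitions and \Cref{claim1} that the edges used in each witnessing walk were oriented in \ref{item:step:F0X31X411}, which has no conflicts, so the walks exist regardless of processing order. The tightest case is the length-$9$ walk through $X_{41}$ or $\tilde X_3$, where the bound $8$ is attained.
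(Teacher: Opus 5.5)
Your strategy is the paper's own. Its proof is the one-line assertion that every arc oriented in Step 1 lies on a closed diwalk through $\orw{uv}$ of length at most $9$, which is exactly your reduction (with $\le k-1$ rather than $=k-1$). The verification you sketch, however, contains a false estimate and a plan that would not go through as stated.

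First, in (S1.4) you use $\nu(v_3)\le 2$ for $v_3\in R_{21}$. This is impossible: $\mathrm{dist}_G(v_3,u)=3$ and $\mathrm{dist}_G(v_3,v)=2$ force $\nu(v_3)\ge 3$. The correct estimate uses the arc $v_3\to v_4$ with $v_4\in L_{21}$ and $\nu(v_4)\le 2$. This gives $\nu(v_3)\le 3$, so the (S1.4) dipath has $\omega\le 6$ (closed diwalk $u\,v\,v_1v_2v_3v_4\,L_1\,u$ of length $7$). The conclusion is unaffected.

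Second, and more substantively, your ``care point'' only handles the easy direction. The arcs reaching weight $8$ are not only $R_{31}\to X_{41}$ but also the arcs \emph{into} $R_{21}$ and $R_{31}$: $v\to R_{12}$, $R_{11}\cup R_{12}\to R_{21}$ and $R_{21}\to R_{31}$. Symmetrically, $L_{21}\to L_{11}\cup L_{12}$, $L_{31}\to L_{21}$ and $L_{12}\to u$ also reach $8$. Bounding these requires a dipath to $u$ of length at most $6$ from every $w\in R_{21}$, and of length at most $5$ from every vertex of $R_{31}$. You get these by a case split on the neighbour of $w$ in $L_2\cup X_{31}\cup R_{31}$, and then on the neighbour of an $R_{31}$-vertex in $L_3\cup X_{41}\cup\tilde X_3$.

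In two branches, no continuation exists within (S1.1):
a vertex of $X_{31}$ adjacent to $R_2$ but not to $L_2$, or a vertex $x\in\tilde X_3$ adjacent to $R_{31}$ but not to $L_{31}$, has no out-arc after (S1.1). Its only exit is the arc into $X_2$ oriented in (S1.2).
Similarly, a vertex $r\in R_{11}$ with no neighbour in $X_2\cup L_1\cup R_{21}$ leaves only via the (S1.3) arc $r\to r'$. So even the arc $v\to r$ has no closed diwalk inside (S1.1).

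Hence your plan to certify every witnessing walk by arcs of (S1.1) alone fails, and the walk through $\tilde X_3$ into $L_{31}$ that you list need not exist. The repair is easy because the proposition is stated in $G^{O_1}$: allow the (S1.2) and (S1.3) arcs, or cite \Cref{obs:munu} for $\nu\le 3$ on $R_{11}$. For example, use $u\,v\,R_1R_{21}R_{31}\,x\,X_2L_{11}\,u$, of length $8$, for such $x$. Then $\nu\le5$ on $R_{31}$, $\nu\le 6$ on $R_{21}$ and $\nu\le 7$ on $R_{12}$. Every case then closes with length at most $9$, the extremal walk being $u\,v\,R_1R_{21}R_{31}X_{41}L_{31}L_{21}L_1\,u$.
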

\begin{proof}
 Since all the edges oriented in \Cref{step:F0X31X41} are contained in a closed diwalk passing $uv$ of length at most 9, we are done.
\end{proof}

Next, we proceed to orient the edges incident to vertices in $F_1 \setminus V(G^{O_{\ref{step:F0X31X41}}})$, as described in \Cref{step:F12}--\Cref{step:F13}. Prior to this, we partition the set $F_1 \setminus V(G^{O_{\ref{step:F0X31X41}}})$ based on the neighborhoods of its vertices. 
Since $G$ is bridgeless, by the definition of $F_1$, every vertex in $F_1 \setminus V(G^{O_{\ref{step:F0X31X41}}})$ has at least one neighbour contained in $F_0$. We then partition $F_1 \setminus V(G^{O_{\ref{step:F0X31X41}}})$ into the following three subsets. 
$$\begin{aligned}
F_{11}&=\{x\in F_1\setminus V(G^{O_{\ref{step:F0X31X41}}}):N(x)\cap F_1\neq\emptyset\};\\
F_{12}&=\{x\in F_1\setminus V(G^{O_{\ref{step:F0X31X41}}}):N(x)\cap F_1=\emptyset \text{ and }|N(x)\cap F_0|=1\};\\
F_{13}&=\{x\in F_1\setminus V(G^{O_{\ref{step:F0X31X41}}}):  N(x)\cap F_1=\emptyset \text{ and } |N(x)\cap F_0|\geq2\}.    
\end{aligned}$$
We orient edges incident to  vertices in $F_{11}$ in \Cref{step:F12}, $F_{12}$ in \Cref{step:cycle(a)}, and $F_{13}$ in \Cref{step:F13}. 

For $x\in F_{11}$, by the definition of $F_{11}$, we know that there is $y\in F_0$ and  $ x'\in N(x)\cap F_1$ with $y'\in N(x')\cap F_0$.  Since the mapping $f$ determines the positions of $x, x',y,y'$, the coming orientation follows from the comporation of $f(x)$ and $f(x')$.  For example, $f(x)<f(x')$ implies that $y$ is closer to $u$ than $y'$ along $I_5I_4I_3I_2I_1$, naturally, we orient $x'\to x$.  For the edges in $[x,F_0]$, the oritation depends on $f(N(x)\cap F_1)$.

\begin{step}\label{step:F12}
We orient the following edges in order, as shown in \Cref{fig:step2}.

\begin{enumerate}[label=(S2.\arabic*)]
 \item\label{item:F121} For  $x\in F_{1}$  with $f(x)$ being a non-integer,  orient $I_{f(x)+\frac{1}{2}}\to x$ and $x\to I_{f(x)-\frac{1}{2}}$.
\item\label{item:F122} For all $x\in F_{11}$ with $f(x)\in [5]$ such that no edge in
$[x,I_{f(x)}]$ has been oriented, perform the following steps in order:
 
     \begin{enumerate}[label=(S2.2.\arabic*)]
     \item\label{item:F12221} for every such $x$ satisfying  that  $f(x)\ge f(x')$ for all $x'\in N(x)\cap F_1$ and there is $x'\in  N(x)\cap F_1$ such that $f(x)>f(x')$, orient  $I_{f(x)}\to x$. 
         \item\label{item:F12222} for  every such $x$ satisfying   that $f(x)\le f(x')$ for all $x'\in N(x)\cap F_1$ and there is $x'\in  N(x)\cap F_1$ such that $f(x)<f(x')$, orient $ x\to I_{f(x)}$.
          \item\label{item:F12223}  for  every such $x$ satisfying that  there are $x',x''\in N(x)\cap F_1$ such that   $f(x')> f(x)>f(x'')$, orient $x\to I_{f(x)}$ when $f(x)<3$ and $I_{f(x)}\to x$ when $f(x)>3$.  
         
         \item\label{item:F12224} for  every such $x$ with $f(x)\in \{1,5\}$ satisfying that   $f(x)=f(x')$ for all $x'\in N(x)\cap F_1$, fix some $z\in N(x)\cap F_1$, and  orient the unoriented edges in  the paths and cycles of form $I_{f(x)}xzI_{f(x)}$  so that every resulting cycle is a dicycle.  
   \end{enumerate}
   
 \item\label{item:F123}  For unoriented $xy\in [F_1,F_1]$ with $f(x)>f(y)$, orient $x\to y$. 
 \end{enumerate}   
\end{step} 

\begin{figure}[htbp]
\begin{center}
\begin{tikzpicture}[
    rect/.style={rectangle, draw, minimum width=0.8cm, minimum height=0.6cm, align=center, thick, inner sep=2pt},
       rect1/.style={rectangle, draw, minimum width=4cm, minimum height=0.8cm, align=center, thick, inner sep=2pt},
    circ/.style={circle,
    draw,
    fill=black,
    minimum size=0.15cm,
    inner sep=0pt},
    arr/.style={-{Stealth[scale=1.2]}, shorten >=1pt, shorten <=1pt},scale=0.7
]

\node[circ] (v) at (1.8,-0.2) {};
\node[circ] (u) at (-1.8,-0.2) {};

\node[rect] (r1) at (0,2) {};
\node[rect] (r2) at (-2,1.5) {};
\node[rect] (r3) at (2,1.5) {};
\node[rect1] (r4) at (0,4) {};
\node[circ] (v1) at (2,4) {};
\node[circ] (v2) at (-1,4) {};
\node[circ] (v3) at (-2.5,4) {};
\draw[arr] (r3) -- (v1);
\draw[arr] (v1) -- (r1);
\draw[arr] (r1) -- (v2);
\draw[arr] (v2) -- (v3);
\draw[arr] (v3) -- (r2);

\draw[arr] (v) -- (r3);
\draw[arr] (r3) -- (r1);
\draw[arr] (r1) -- (r2);
\draw[arr] (r2) -- (u);
\draw[arr] (u) -- (v);

\node[circ] (v) at (1.8,-0.2) {};
\node[circ] (u) at (-1.8,-0.2) {};

\node[right] at (v) {$v$};
\node[left]  at (u)  {$u$};
\node at (r1) {$X_2$};
\node  at (r2) {$L_{11}$};
\node at (r3) {$R_{11}$};
\node at (r4) {$F_{11}$};
\node[above]  at (v1) {$h$};
\node[above] at (v2) {$t$};
\node[above] at (v3) {$s$};

\end{tikzpicture}
\caption{An example for \Cref{step:F12} where $s,t,h\in F_{11}$ with $f(s)=2,$ $f(t)=3$ and $f(h)=3.5$.}\label{fig:step2}
\end{center}
\end{figure}

 \begin{remark}\label{rem:afterstep2}
 The following hold. 
\begin{itemize}
    \item[(1)]  \Cref{step:F12} only orients some edges in $ [F_1,F_1]\cup [F_1,F_0]$. 
     \item[(2)] The orientations in \ref{item:F121} and     \ref{item:F123} have no conflict with that in \Cref{step:F0X31X41}, i.e., for $x,y\in F_1$ with $xy\in E$, if $f(x)>f(y)$, then $x\to y \in E_1\cup E_2$. 
    \item[(3)] For each $z\in F_{11}$ with $f(z)$ being an integer, it holds that for $w_1\neq w_2\in N(z)\cap I_{f(z)}$, the edges $w_1z$ and $w_2z$ are oriented in the same direction.
  \item[(4)] In \ref{item:F12223}, we have that  
            $x''\in L_{12}\cup L_{13}$, $x\in L_{22}$ and $x'\in L_2$  with   $f(x'')\in \{1,1.5\}$, $f(x)=2$ and  $f(x')\in \{2.5,3\}$, or $x'\in R_{12}\cup R_{13}$, $x\in R_{22}$ and $x''\in R_2$  with   $f(x')\in \{5,4.5\}$, $f(x)=4$ and  $f(x'')\in \{3.5,3\}$. 
%
         
\end{itemize} 
\end{remark} 

\begin{proposition}\label{prop:F118} 
For $e\in E_{\ref{step:F12}}$, it holds that  $\omega(e)\leq 7$ in $G^{O_{\ref{step:F12}}}$.  
\end{proposition}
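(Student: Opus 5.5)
Since every edge oriented in \Cref{step:F12} is incident to some $x\in F_1$ and (by \Cref{rem:afterstep2}(1)) lies in $[F_1,F_0]\cup[F_1,F_1]$, I would bound $\omega(e)$ for each rule of \Cref{step:F12} separately. The basic tool is \Cref{obs:munu}: every $w\in I_k$ has $\nu(w)\le k-1$ and $\mu(w)\le 5-k$ already in $G^{O_{\ref{step:F0X31X41}}}$, and these bounds persist in $G^{O_{\ref{step:F12}}}$. For an arc $\orw{wx}$ with $w\in I_k$, we have $\mu(x)\le\mu(w)+1$, so $\omega(\orw{wx})=\mu(w)+1+\nu(x)$. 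Hence the key quantity is $\nu(x)$ (symmetrically $\mu(x)$ for arcs $x\to w$). The aim is to show that $\mu(x)+\nu(x)\le 6$ for every endpoint $x\in F_1$ of a newly oriented edge. Arcs between $F_1$-vertices are then handled by concatenation.

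For \ref{item:F121}: $x$ receives an arc from $w\in I_{i+1}$ and sends one to $w'\in I_i$. This gives $\mu(x)\le 5-(i+1)+1=5-i$ and $\nu(x)\le (i-1)+1=i$, so $\omega\le(5-i-1)+1+i=5$ for the in-arc and $\omega\le(5-i)+1+(i-1)=5$ for the out-arc. For \ref{item:F123} and the $F_1$–$F_1$ arcs of \ref{item:F12224}, take an arc $x\to y$ with $f(x)>f(y)$. Then $x$ has an in-arc from $I_{\lceil f(x)\rceil}$ or from some $F_1$-vertex of larger $f$, and $y$ has an out-arc into $I_{\lfloor f(y)\rfloor}$. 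I would check that this gives $\mu(x)\le 6-f(x)$ and $\nu(y)\le f(y)$, so $\omega\le 6-f(x)+1+f(y)\le 6$ since $f(x)-f(y)\ge 1/2$. Integer adjustments keep everything within $7$. The subtle point is that in \ref{item:F12221}, $x$ is reached via $I_{f(x)}\to x$, and $\nu(x)$ comes from the later \ref{item:F123} arc $x\to x'$ with $f(x')<f(x)$. So I would bound $\omega$ in the final graph $G^{O_{\ref{step:F12}}}$, using that every $F_{11}$-vertex obtains both an in-arc and an out-arc by the end of the step.

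For \ref{item:F12221}–\ref{item:F12223}, I would argue as follows. In case \ref{item:F12221}, $x$ has a neighbour $x'$ with $f(x')<f(x)$, which later gets $x\to x'$, and $x'$ routes to $I_{\lfloor f(x')\rfloor}$. This gives $\nu(x)\le 1+\nu(x')\le 1+f(x')\le f(x)$, while $\mu(x)\le 6-f(x)$ via $I_{f(x)}$. So $\omega(\orw{wx})\le (5-f(x))+1+f(x)=6$. Case \ref{item:F12222} is symmetric. For \ref{item:F12223}, \Cref{rem:afterstep2}(4) pins down the configurations: with $f(x)=2$, the vertex $x$ sends to $I_2$ and receives from $x'$ with $f(x')\in\{2.5,3\}$, and the same estimate works. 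For \ref{item:F12224} with $f(x)=1$, the dicycle $I_1 x z I_1$ has length $3$ through $u$, so $\nu\le 2$ and $\mu\le \mu(u)+2\le 6$, giving $\omega\le 7$. The same holds for $f(x)=5$ through $v$.

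The main obstacle will be the interdependence inside \ref{item:F123}. There, $\nu(y)$ for the arc $x\to y$ may itself rely on $y$'s out-arc, which is only guaranteed if $y$ is not a local minimum of $f$ whose $F_0$-edges got the wrong direction. I would resolve this by induction on $f$, processing vertices in increasing $f$. The claim is that every $y\in F_1$ satisfies $\nu(y)\le \lceil f(y)\rceil$ and every $x\in F_1$ satisfies $\mu(x)\le 6-\lfloor f(x)\rfloor$. These hold by the rule defining \ref{item:F122} (a local minimum always points to $I_{f}$), and the boundary case $f=3$ is handled by the last item of \Cref{obs:munu}.
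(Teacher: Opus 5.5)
Your overall strategy is the same idea the paper uses: bound $\mu$ of the tail and $\nu$ of the head of each new arc by quantities depending on $f$. The way you propose to establish those bounds has a real gap, however.

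First, you state your invariant for every vertex of $F_1$ and justify it only from the rules of \Cref{step:F12} (``a local minimum always points to $I_f$''). But endpoints of \ref{item:F123}-arcs may lie in $V(G^{O_{\ref{step:F0X31X41}}})$. For those vertices the edges to $F_0$ were fixed in \Cref{step:F0X31X41}, independently of the $f$-ordering, and \ref{item:F122} does not apply to them. There the invariant is not even guaranteed. Take an $L_{12}$-vertex with $f=1$ whose in-arcs all come from $L_{21}$ in \ref{item:step:F0X31X411}. Step 1 only gives $\mu\le 6$ on $L_{21}$ (cf.\ \Cref{claimcontrol1}), so nothing forces $\mu\le 5=6-\lfloor f\rfloor$. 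Likewise, an $R_{21}$-vertex with $f=4$ may have only $\nu\le 6$ from Step 1. What the proof actually needs is to show that such vertices occur only in harmless roles, and to read the required bounds off specific Step 1 dipaths. For example, a tail in $V(G^{O_{\ref{step:F0X31X41}}})$ with $f=3$ must lie in $L_{21}\cup R_{21}\cup X_{31}$ (an $X_3$-vertex with a neighbour of smaller $f$ lies in $X_{31}$), and then $\mu\le 3$ by \ref{item:step:F0X31X411}--\ref{item:step:F0X31X412}. Also, all edges from $L_{21}$, $R_{21}$ to $L_1$, $R_1$ are already oriented in \ref{item:step:F0X31X411}. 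This is precisely the first case of the paper's proof and the ``$t\in V(G^{O_{\ref{step:F0X31X41}}})$'' subcases of its Claim. Your ``I would check'' skips exactly this content.

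Second, your claim that the head $y$ of an \ref{item:F123}-arc has an out-arc into $I_{\lfloor f(y)\rfloor}$ is false in the configuration of \ref{item:F12223} with $f(y)=4$ (\Cref{rem:afterstep2}(4)), where the rule orients $I_4\to y$. There $\nu(y)$ must be routed through the arc $y\to y''$ with $f(y'')\in\{3,3.5\}$, and one needs $\nu(y'')\le 3$. For $f(y'')=3$ this uses the last item of \Cref{obs:munu}: then $y''\in R_{21}$ or $y''$ is treated by \ref{item:F12222}. Moreover, the hypothesis you actually state at the end is $\nu\le\lceil f\rceil$ and $\mu\le 6-\lfloor f\rfloor$. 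In this case it gives only $\nu(y)\le 1+\lceil 3.5\rceil=5$, so the hypothesis is not reproduced at $y$. For the arc $x'\to y$ with $f(x')=4.5$ it then yields only $\omega\le 2+1+5=8$. The argument closes only with the sharper bounds that \ref{item:F121} gives for non-integral $f$, namely $\mu\le 5-\lfloor f\rfloor$ and $\nu\le\lfloor f\rfloor$, applied only to vertices in the roles where they are used.

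A minor point: the $F_1$--$F_1$ arcs of \ref{item:F12224} join vertices of equal $f$, so they do not fit your $f(x)>f(y)$ template. They are handled directly by the dicycle of length three through $u$ or $v$.
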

\begin{proof}
   Let $\orw{st}\in E_{\ref{step:F12}}$. We proceed by analyzing where $\orw{st}$ is oriented. Note that if $\orw{st}$ is oriented in \ref{item:F121}, we are done. 
   
   First, suppose that $\orw{st}$ is oriented in \ref{item:F123}.
Suppose that $s,t\in V(G^{O_{\ref{step:F0X31X41}}})$.
Write $f(s)>f(t)$, so the direction is $s\to t$.
We compute the parameters after \Cref{step:F12}.
If $f(s)$ is not an integer, \ref{item:F121} gives
$\mu(s)\le5-\lfloor f(s)\rfloor$.
If $f(s)\in\{4,5\}$, then
\Cref{step:F0X31X41} gives $\mu(s)\le2$.

Suppose that $f(s)=3$.
Then $s\in L_{21}\cup R_{21}\cup X_{31}$.
Indeed, a vertex in $X_3$ with a neighbor of smaller
$f$-value has a neighbor in $L_2$, and hence belongs to
$X_{31}$; \ref{item:step:F0X31X414} processes no vertex
of $L_2\cup R_2$ adjacent to $X_2$.
By \ref{item:step:F0X31X411} and
\ref{item:step:F0X31X412}, it follows that $\mu(s)\le3$.

Suppose that $f(s)=2$.
Then $t\in L_1\setminus L_{11}$.
If $s\in L_{21}$, the edge $st$ would already be oriented
in \ref{item:step:F0X31X411}, a contradiction.
Thus, $s\in L_{22}$ and it is processed in
\ref{item:step:F0X31X414}, which gives $\mu(s)\le4$.
Therefore, $\mu(s)\le4$ when $f(s)\le3$, and
$\mu(s)\le2$ when $f(s)>3$.
The symmetric argument gives $\nu(t)\le2$ when $f(t)<3$,
and $\nu(t)\le4$ when $f(t)\ge3$.
Since $f(s)>f(t)$, we obtain that
$\omega(s\to t)=\mu(s)+1+\nu(t)\le7$.

Otherwise, at least one of $s,t$ belongs to $F_{11}$,
and it suffices to prove the following claim. 

   \begin{claim}\label{claim:fy>fx}
    If   $s\in F_{11}$ and $f(t)>f(s)$ or  $s\in F_{11}$ and $f(t)<f(s)$, then $\omega(\overrightarrow{st})\le 7$. 
   \end{claim}
\begin{proofofclaim}
    We only prove the result for  $s\in F_{11}$ and $f(t)>f(s)$. If both   $f(s)$ and $f(t)$ are non-integers, then $\omega(\overrightarrow{st})\le 6$. We proceed according to the integrality of $f(s)$ and $f(t)$, and  divide the proof into the following two cases. 
 \medskip
 
\noindent {\bf Case 1.}  $f(s)$ is a non-integer and $f(t)$ is an integer or   $f(s)$ is an integer and $f(t)$ is a non-integer.  

       We prove the result for the former case.    It suffices to assume that $tw$ is oriented as $t\to w$  for all the  $tw$ where $w\in N(t)\cap I_{f(t)}$.  
       
       Suppose that $t\in F_{11}$. Since $f(t)>f(s)$ and the assumption that $t\to w$,  it follows that  $f(t)<3$. Then $f(s)=1.5$, $f(t)=2$ and there are $t'\in N(t)\cap L_2$ with $f(t')\in \{2.5,3\}$  by \Cref{rem:afterstep2}. If $f(t')=2.5$, we are done. If $f(t')=3$, then $t'\in F_{11}\cup V(G^{O_{\ref{step:F0X31X41}}})$. Suppose that  $t'\in F_{11}$.  Then there is no $t''\in N(t')\cap F_1$ such that $f(t'')>f(t')$. Hence, there is some $w'\in N(t')\cap F_0$ such that $w'\to t'$ by \ref{item:F12221} and so we are done. Suppose that  $t'\in V(G^{O_{\ref{step:F0X31X41}}})$. Since $t\in F_{11}$, we have  $X_2\to t'$ and so we are done. 
 
   Therefore, assume that $t\in V(G^{O_{\ref{step:F0X31X41}}})$. If $t\in R$, since $f(s)<f(t)$, we know that $t\notin R_{21}$. Consequently, $t\in R_{22}$ with $f(t)=4$ and some edge incident to $t$ being oriented in \ref{item:step:F0X31X414} or $t\in R_1\setminus R_{11}$ with $f(t)=5$.    It follows that $\mu(t)\le 2$ and so we are done. Suppose that $t\in L$. Then $t\in L_{21}\cup L_{22}$ with $\mu(t)\le 4$ and so we are done.

\medskip       
      
\noindent {\bf Case 2.}   Both $f(s)$ and $f(t)$ are integers.

Since $s\in F_{11}$, we know that edges in $[s,F_0]$ are oriented in \Cref{step:F12}.

Suppose first  that there is some $r\in N(s)\cap I_{f(s)}$ such that $sr$ is oriented as $s\to r$. Similar as before, we consider the position of $t$. Assume that $t\in V(G^{O_{\ref{step:F0X31X41}}}) $. 
If $t\in V(G^{O_{\ref{step:F0X31X41}}})\cap R$, then $f(t)\in \{4,5\}$ which implies that $\mu(t)\le 2$ and so we are done. 
Assume that $t\in V(G^{O_{\ref{step:F0X31X41}}})\cap L$, then $f(t)\in \{2,3\}$ and $\nu(s)\le 2$.  Suppose first $f(t)=3$.  Then $t\in L_{21}$ with $X_2\to t$.  Consequently, $\mu(t)\le 3$.  Thus, assume that $f(t)=2$, then $\mu(t)=4$ and $t$ plays the role of $u_2$ in \ref{item:step:F0X31X414} since edges in $[L_{21},L_1]$ has been oriented in \ref{item:step:F0X31X411}. 

Then assume that $t\in F_{11}$ and there is some $w\in N(t)\cap I_{f(t)}$ such that $tw$ is oriented as $t\to w$. 
 By the assumption that $f(t)>f(s)$ and $t\to w$ oriented in \Cref{step:F12},  we know that $t\to w$ is oriented in \ref{item:F12223}. Since $f(t)$ is an integer, by \Cref{rem:afterstep2}, we obtain that $f(t)=2$ and there are $t'\in N(t)\cap F_1\cap  L_2$ with $f(t')\in \{2.5,3\}$. By the same argument used in the end of last case, we are done. 

Therefore, suppose that $r\to s$ is oriented in \Cref{step:F12} with  $r\in N(s)\cap I_{f(s)}$.  Since $f(t)>f(s)$,  it holds  that  $r\to s$ is oriented in \ref{item:F12223} which implies that $f(s)=4$, $f(t)=5$, and  there is $s'\in N(s)\cap F_1$ with $f(s')\in \{3,3.5\}$ and $s\to s'$. By similar analysis as before, we have $\nu(s')\le 4$. 
   If $t\in V(G^{O_{\ref{step:F0X31X41}}})$, then $v\to t$ by \Cref{step:F0X31X41}. If $t\in F_{11}$, since $f(t)>f(s)$, again $v\to t$ is oriented in \ref{item:F12221}. 
   
 \medskip
Thus, the claim holds. 
\end{proofofclaim}

 Then assume that $s\in F_{11}$ and $f(t)=f(s)$. By \ref{item:F12224},  there is a $(F_0,F_0)$-dicycle of length 3 passing   $\orw{st}$, and so we are done.

Finally, suppose that $st$ is oriented in \ref{item:F122} with $s\in F_{11}$ and $t\in I_{f(s)}$. By \Cref{claim:fy>fx} and \ref{item:F12224}, we are done. 
 \end{proof}

 \begin{remark}\label{rem:I2I4}
  For each $x\in I_2$, we have $\mu(x)\le 3$ and $\nu(x)=1$;
for each $x\in I_4$, we have $\mu(x)=1$ and $\nu(x)\le 3$.
The bounds in \Cref{prop:xinf0munu,prop:F118} remain valid
when, in the corresponding estimates, we replace $\mu(x)$
by $3$ for $x\in I_2$ and $\nu(x)$ by $3$ for $x\in I_4$.
 \end{remark}

By the definitions of $F_{12},F_{13}$ and \Cref{step:F12}, no edge
incident to a vertex in
$F_{12}\cup\{x\in F_{13}:f(x)\in\mathbb Z\}$ has been oriented.
The edges from the remaining vertices of $F_{13}$ to $F_0$
have already been oriented in \ref{item:F121}. 
Now, we deal with edges incident to vertices in $F_{12}$. 

For each  $x\in F_{12}$, let  $\{y\}= N(x)\cap F_0$. By our assumption, there is a cycle,  say $C_{xy}=yxv_1\cdots v_ty$,  of length at most 5 containing $xy$.   If $C_{xy}$ is not unique, choose one that is contains $u$ or $v$ whenever possible, otherwise, choose one arbitrarily. For all such  $xy$, we fix $C_{xy}$. Then $C_{xy}$ must be one of the following forms, since each edge is contained in a cycle of length at most 5 by the assumption, as shown in \Cref{fig:cycles}. 
 
\begin{itemize}
\item[(a)] $C_{xy}=yxv_1v_2v_3y,$ where $v_1\in F_2,v_2\in F_2, v_3\in F_1$;
\item[(b)] $C_{xy}=yxv_1v_2y,$  where $v_1\in F_2,v_2\in F_1$;
\item[(c)] $C_{xy}=yxv_1v_2v_3y,$ where $v_1\in F_2,v_2\in F_1, v_3\in F_1;$
\item[(d)] $C_{xy}=yxv_1v_2v_3y,$ where $v_1\in F_2,v_2\in F_1, v_3\in F_0$.
\end{itemize} 
\begin{remark}
By forms (a)--(d) and the layer definitions, a considered cycle
containing $u$ is contained in $\{u\}\cup L_1\cup L_2$, while
one containing $v$ is contained in $\{v\}\cup R_1\cup R_2$.
Thus, if $u\in V(C_{xy})$ and $v\in V(C_{x'y'})$, then
$V(C_{xy})\cap V(C_{x'y'})=\emptyset$.
Moreover, if $\{u,v\}\cap V(C_{xy})\neq\emptyset$ and
$y\notin\{u,v\}$, then $C_{xy}$ is of form (d).
\end{remark}
\begin{figure}[htbp]
\begin{center}
\begin{tikzpicture}[
    rect/.style={rectangle, draw, minimum width=1.5cm, minimum height=0.8cm, align=center, thick, inner sep=2pt},
         rect1/.style={rectangle, draw, minimum width=9cm, minimum height=0.5cm, align=center, thick, inner sep=2pt},
    circ/.style={circle,
    draw,
    fill=black,
    minimum size=0.15cm,
    inner sep=0pt},
    arr/.style={-{Stealth[scale=1.2]}, shorten >=1pt, shorten <=1pt}
]

\node[rect1] (r1) at (0,0) {};
\node[rect1] (r2) at (0,2) {};
\node[rect1] (r3) at (0,4) {};

\node[circ] (v1) at (-4,4) {};
\node[circ] (v2) at (-3,4) {};
\node[circ] (v3) at (-4,2) {};
\node[circ] (v4) at (-3,2) {};
\node[circ] (v5) at (-3.5,0) {};

\node[circ] (v6) at (4.8-2,0) {};
\node[circ] (v7) at (4.8-1,0) {};
\node[circ] (v8) at (4.8-2,2) {};
\node[circ] (v9) at (4.8-1,2) {};
\node[circ] (v10) at (4.8-1.5,4) {};

\node[circ] (v11) at (1,4) {};
\node[circ] (v12) at (0,2) {};
\node[circ] (v13) at (1.8,2) {};
\node[circ] (v14) at (1,2) {};
\node[circ] (v15) at (1,0) {};

\node[circ] (v16) at (3.3-4.8,4) {};
\node[circ] (v17) at (2.8-4.8,2) {};
\node[circ] (v18) at (3.8-4.8,2) {};
\node[circ] (v19) at (3.3-4.8,0) {};

\draw (v1) -- (v2);
\draw (v2) -- (v4);
\draw (v4) -- (v5);
\draw (v5) -- (v3);
\draw (v3) -- (v1);

\draw (v6) -- (v7);
\draw (v7) -- (v9);
\draw (v9) -- (v10);
\draw (v10) -- (v8);
\draw (v8) -- (v6);

\draw (v11) -- (v12);
\draw (v12) -- (v14);
\draw (v14) -- (v15);
\draw (v15) -- (v13);
\draw (v13) -- (v11);

\draw (v16) -- (v17);
\draw (v17) -- (v19);
\draw (v19) -- (v18);
\draw (v18) -- (v16);

\node[left]  at (v1)  {$v_2$};
\node[right]  at (v2) {$v_1$};
\node[left]  at (v3)  {$v_3$};
\node[right]  at (v4)  {$x$};
\node[left]   at (v5) {$y$};
\node[left]  at (v6)   {$v_3$};
\node[right]  at (v7)   {$y$};
\node[left]  at (v8)   {$v_2$};
\node[right] at (v9)  {$x$};
\node[left]  at (v10)  {$v_1$};
\node[left] at (v11)  {$v_1$};
\node[left] at (v12)  {$v_2$};
\node[right] at (v13)  {$x$};
\node[right] at (v14)   {$v_3$};
\node[left] at (v15)   {$y$};
\node[left]  at (v16)  {$v_1$};
\node[left] at (v17)  {$v_2$};
\node[right]   at (v18) {$x$};
\node[left]  at (v19) {$y$};

\node[xshift=-1.5cm]  at (v1) {$F_2$};
\node[xshift=-1.5cm]  at (v3)  {$F_1$};
\node[xshift=-2cm]   at (v5) {$F_0$};

\node[yshift=-0.6cm]   at (v5) {(a)};
\node[xshift=-0.5cm,yshift=-0.6cm]  at (v7)   {(d)};
\node[yshift=-0.6cm] at (v15)   {(c)};
\node[yshift=-0.6cm]  at (v19) {(b)};

\end{tikzpicture}
\caption{Cycles of distinct forms.}\label{fig:cycles}
\end{center}
\end{figure}

Recall for each $x\in F_{12}$,  the edges $xy$ and $xv_1$ in $C_{xy}$ are not oriented yet.   
Let $\mathcal{C}=\{C_{xy}:x\in F_{12}\text{ and } \{y\}=N(x)\cap F_0\}$.  Note that  $\mathcal{C}$ may be a multi-set since it is possible that  $C_{xy}=C_{x'y'}$ for some $x\neq x'\in F_{12}$.  
 Order the cycles in the multiset $\mathcal C$ as
$C_1,C_2,\ldots,C_d$ by listing the following three classes
consecutively:
\begin{itemize}
    \item first, the cycles associated with $y\in\{u,v\}$;
    \item then, the cycles intersecting $\{u,v\}$ and associated
    with $y\notin\{u,v\}$;
    \item finally, all the remaining cycles.
\end{itemize}
Let $d^*$ be the number of cycles in the first two classes.
In particular, $d^*=0$ if neither class contains a cycle.
Thus, $C_i$ intersects $\{u,v\}$ if and only if $i\le d^*$.

In the next step, we orient $C_1$, $C_2$, $C_3,\dots$ in order. Let $H_0=G^{O_{\ref{step:F12}}}$. For each $i\in[d]$, assume inductively that $H_{i-1}$ is the partial
orientation obtained after processing $C_1,\ldots,C_{i-1}$. We process
$C_i$ with respect to $H_{i-1}$. After all prescribed edges associated
with $C_i$ have been oriented, we update the current partial
orientation by adding these newly oriented edges, while leaving all
previously assigned orientations unchanged. Denote the resulting
partial orientation by $H_i$. The next cycle, $C_{i+1}$, is then
processed with respect to $H_i$.    Then $H_{d^*}=G^{O_{\ref{step:cycle(auv)}}}$. 

Suppose that the current partial orientation is $H_{i-1}$ and
$C_i=C_{xy}$. If $xy$ has already been oriented, set
$H_i=H_{i-1}$ and proceed to the next index. Otherwise, define
$L_{xy}$ in $H_{i-1}$ as follows.

If no edge of $C_{xy}$ has been oriented in $H_{i-1}$,
set
\[
L_{xy}=C_{xy}.
\]
Otherwise, if $C_{xy}$ is of form (c) and no edge of
$yxv_1v_2$ has been oriented, let
$$L_{xy}=yxv_1v_2.$$
Moreover,  choose an auxiliary vertex $v_3^*$ satisfying:
\[
v_3^*\in
\begin{cases}
N(v_2)\cap I_{f(v_2)}, & \text{if } f(v_2)  \text{ is an integer},\\
N(v_2)\cap I_{f(v_2)-\frac12}, & \text{if } f(x)\ge f(v_2),\\
N(v_2)\cap I_{f(v_2)+\frac12}, & \text{if } f(x)\le f(v_2).
\end{cases}
\]
In all remaining cases, let
\[
L_{xy}=yxv_1\cdots v_p
\]
be the maximal initial subpath of $C_{xy}$ beginning with $yx$ such
that every edge of $L_{xy}$ is unoriented in $H_{i-1}$. In this
case, set
\[
v_{p+1}^*=v_{p+1},
\]
where $v_{p+1}$ is the vertex immediately following $v_p$ in
$C_{xy}$. 
We call $C_{xy}$ the \emph{considered} cycle of $L_{xy}$.

In the cycle steps, put $(\alpha_1,\ldots,\alpha_5)=(2,3,2,1,0)$ and $(\beta_1,\ldots,\beta_5)=(0,1,2,3,2)$. Thus, for $a\in I_k$, it holds that $\mu_H(a)\le\alpha_k$ and $\nu_H(a)\le\beta_k$ in every extension $H$ of  $G^{O_{\ref{step:F12}}}$. 

For every $c\in F_{13}$ with $f(c)=k$ an integer, we keep one choice of direction for $[c,I_k]$ throughout the cycle steps. Since $|N(c)\cap F_0|\ge2$ and $N(c)\cap F_0\subseteq I_k$, while $I_1=\{u\}$ and $I_5=\{v\}$, it holds that $k\in\{2,3,4\}$.
The choice is made when the first edge of $[c,I_k]$ is oriented. Every later edge of this set is oriented in the same direction.
This convention only concerns edges prescribed by the current operation. 
Suppose that the first such operation belongs to a considered cycle containing $v$. By the classification of the considered cycles, this cycle has form $vxaczv$ with $z\in I_4$. Since $cz\in E$ and $f(c)=k$ is an integer, it follows that $k=4$.
The edge $vz$ has already been oriented as $v\to z$. We therefore choose $I_k\to c$, so that the cycle is oriented as $v\to z\to c\to a\to x\to v$. Similarly, if the first such operation belongs to a considered cycle containing $u$, then $k=2$, and we choose $c\to I_k$.

Let $H$ denote the partial orientation immediately before the current operation. For an unoriented path $P=yxv_1\cdots t$ of length $m$, where $y\in I_k$ and $t\notin F_0$, define its two \emph{scores} by \[\sigma_H(P^+)=\alpha_k+m+\nu_H(t)  \text{ and }  \sigma_H(P^-)=\mu_H(t)+m+\beta_k.\] Here $P^+$ starts at $y$ and $P^-$ ends at $y$. All the values in these scores are computed before the operation. The score of the selected direction is recorded when the operation is performed. Adding the selected arcs cannot increase the actual distance parameters, and hence $\omega(P^+)\le\sigma_H(P^+)$ and $\omega(P^-)\le\sigma_H(P^-)$ in the resulting partial orientation.

We also record the consequence for the internal vertices of a selected
dipath. Suppose that $Q=q_0\to \cdots \to q_r$, and let $W$ be
its recorded score in the pre-operation orientation $H$. 
For every subsequent orientation $K$ and every $0\le j\le r$, we know 
\[
\mu_K(q_j)\le\mu_H(q_0)+j,\qquad
\nu_K(q_j)\le r-j+\nu_H(q_r).
\]
Therefore, 
\[
\mu_K(q_j)+\nu_K(q_j)
\le\mu_H(q_0)+r+\nu_H(q_r)\le W.
\]
The last inequality uses the fixed core bound in the recorded score.
For a completed dicycle through a core vertex, use the two portions
of the cycle instead; their lengths sum to its length and give the
same vertex-sum estimate. Finally, an existing arc $a\to b$ gives
$\mu_K(a)+\nu_K(a)\le\omega_K(a\to b)$ and
$\mu_K(b)+\nu_K(b)\le\omega_K(a\to b)$.
Thus the vertex bounds used in the cycle induction follow directly
from the earlier operations and do not require a later corollary.

If $\mu_H(t)+\nu_H(t)\le9$, the two scores have sum at most $13+2m$. Thus, for $m\le2$ their minimum is at most $8$. For $m=3$, their minimum is at most $9$, and is at most $8$ if $\mu_H(t)+\nu_H(t)\le7$. We shall prove separately that the case with score $9$ has the required dicycle.

\begin{step}\label{step:cycle(auv)}
Process $C_1,\ldots,C_{d^*}$ in order. Suppose that the current
cycle is $C_i=C_{xy}$, where $i\le d^*$. If $xy$ is already
oriented, proceed to the next cycle. Otherwise, determine $L_{xy}$
and the auxiliary vertex in the current partial orientation $H$.
All scores are computed in $H$.

For each $c\in V(C_{xy})\cap F_{13}$ with
$f(c)=k\in\mathbb{Z}$, respect any previously fixed direction
for $[c,I_k]$. When this operation first fixes that direction,
choose $I_4\to c$ if $v\in V(C_{xy})$ and $k=4$, and choose
$c\to I_2$ if $u\in V(C_{xy})$ and $k=2$.
Only the edges prescribed by the current operation are oriented.
Apply the following rules in order.
Suppose that $v\in V(C_{xy})$. Then either $y=v$ or
$y\in R_{11}$. We distinguish these two cases.

\begin{enumerate}[label=(S\thestep.\arabic*)]
\item\label{item:cycle-yv}
Suppose that $y=v$.

\begin{enumerate}[label=(S\thestep.\arabic{enumi}.\arabic*)]
\item\label{item:abc1}
If $L_{xy}=C_{xy}$, orient $C_{xy}$ as a dicycle,
respecting all previously fixed directions.
If both directions are permitted, choose either one.

\item\label{item:cycle-four}
Suppose that $L_{xy}=vxv_1v_2v_3$ has length $4$.

If $C_{xy}$ is of form (d), then $v_3\in R_{11}$ and
$v\to v_3$ is already oriented. Orient $L_{xy}$ as  
$v_3\to v_2\to v_1\to x\to v$.  
When this operation first fixes the direction of
$[v_2,I_4]$ for $v_2\in F_{13}$, record $I_4\to v_2$.

If $C_{xy}$ is of form (a), choose the direction of
$L_{xy}$ with smaller score. If the scores are equal,
choose the direction that completes $C_{xy}$ as a dicycle.

\item\label{item:cycle-short}
Suppose that $L_{xy}=vxv_1\cdots v_p$ has length $m\le3$.
Its two scores are
\[
\sigma_H(L_{xy}^+)=m+\nu_H(v_p),\qquad
\sigma_H(L_{xy}^-)=\mu_H(v_p)+m+2.
\]
Choose the direction with smaller score.
If the scores are equal, first prefer a direction for
which the whole selected path is contained in a dicycle
of length at most $5$ through $v$, whenever such a direction
exists. If this does not distinguish the two choices,
prefer the direction in which
$L_{xy}\cup\{v_pv_{p+1}^*\}$ is a dipath or dicycle.
\end{enumerate}

\item\label{item:cycle-yR11}
Suppose that $y\in R_{11}$.
Then $C_{xy}=yxv_1v_2vy$ is of form (d).
\begin{enumerate}[label=(S\thestep.\arabic{enumi}.\arabic*)]
\item\label{item:cycle-r11-four}
If $L_{xy}=yxv_1v_2v$ has length $4$, orient it as $y\to x\to v_1\to v_2\to v. $ Thus $C_{xy}$ becomes a dicycle.

\item\label{item:cycle-r11-short}
Suppose that $L_{xy}=yxv_1\cdots v_p$ has length $m\le3$.
Its two scores are
\[
\sigma_H(L_{xy}^+)=1+m+\nu_H(v_p),\qquad\text{ and }
\sigma_H(L_{xy}^-)=\mu_H(v_p)+m+3.
\]
Choose the direction with smaller score.
If the scores are equal, prefer the direction in which
$L_{xy}\cup\{v_pv_{p+1}^*\}$ is a dipath or dicycle.
\end{enumerate}
\end{enumerate}

The cycles containing $u$ are treated symmetrically, with
the two cases $y=u$ and $y\in L_{11}$: interchange $u,v$
and $\mu,\nu$, replace $R_{11}=I_4$ by $L_{11}=I_2$,
and reverse all prescribed directions. 
\end{step}

The cycles disjoint from $\{u,v\}$ will be oriented in
\Cref{step:cycle(a)} using similar orientation rules; in that case,
a weight of $9$ suffices. For completeness, we describe these
orientations explicitly in that step. The following remark applies
both to the cycles oriented here and to those to be oriented in
\Cref{step:cycle(a)}.

\begin{remark}
    \label{lem:cycle-star-choice}
All the following hold. 
\begin{itemize}
    \item 
For each $c\in F_{13}$ with $f(c)\in\mathbb{Z}$, the fixed direction is consistent with all the cycle operations. If the first operation involving $c$ contains $v$, then $f(c)=4$ and $I_4\to c$. The case involving $u$ is symmetric.

\item Suppose that $C_{xy}=yxv_1tc y$ is of form (a), $L_{xy}=yxv_1t$, and the prescribed or existing directions of $tc$ and $cy$ form a dipath. Then every new edge has weight at most $9$. If a new path or edge has weight $9$, it is contained in the current dicycle $C_{xy}$. For $y\in \{u,v\}$, the corresponding bound is $7$, with a dicycle through $y$ in the equality case. 
\end{itemize}
\end{remark}
\begin{proof} 
Since $N(c)\cap F_1=\emptyset$ for $c\in F_{13}$,
each considered cycle contains at most one vertex of $F_{13}$,
and such a vertex occurs only in forms (a), (b) or (d).
The first operation incident to $c$ orients an edge in
$[c,F_0]$: it either orients a whole cycle or a four-edge
path of form (d).
Indeed, a shorter initial path can end at $c$ only when
the following edge from $c$ to $F_0$ is already oriented.
Thus, whenever an edge incident to $c$ is already oriented,
some edge in $[c,F_0]$ is already oriented as well.

A whole cycle permits either direction of its edge at
$[c,I_{f(c)}]$, and a four-edge path of form (d) is oriented
in the fixed direction of that edge.
All other selected paths contain no new edge in this set;
the additional edge in \ref{item:cycle5-short} explicitly
uses its fixed direction.
For cycles containing $v$, the only possible form is (d),
with $f(c)=4$, and the prescribed direction is $I_4\to c$.
The case of $u$ is symmetric.
This proves the first assertion.

For the second assertion, put $y\in I_k$ and $P=yxv_1t$,
and let $H$ denote the partial orientation before the operation.
By symmetry, suppose that $t\to c\to y$ is the prescribed
or existing dipath.
If $cy$ is unoriented, the operation belongs to
\Cref{step:cycle(a)}.
By \Cref{step:F12,step:F12224}, we have
$c\in F_{13}$ and $f(c)=k\in\{2,3,4\}$.
The first assertion then gives an existing arc
$c\to y'$ with $y'\in I_k$.
Thus, in either case, $\nu_H(t)\le2+\beta_k$, and
$\sigma_H(P^+)\le\alpha_k+\beta_k+5$.
The rules for equal scores prefer $P^+$, which completes
$C_{xy}$ as a dicycle.
If $P^-$ is selected, its score is therefore at most
$\alpha_k+\beta_k+4$.
Moreover, a newly oriented edge $c\to y$ has weight at most
$\mu_H(t)+2+\beta_k=\sigma_H(P^-)-1$.
If $P^+$ is selected, the completed dicycle bounds every
new edge and $P$ by $\alpha_k+\beta_k+5$.
Since $\alpha_k+\beta_k=4$ for $k\in\{2,3,4\}$ and
$\alpha_k+\beta_k=2$ for $k\in\{1,5\}$, both bounds
and their equality assertions follow. 
\end{proof}

\begin{proposition}\label{prop:cycleauv} Let $P=L_{xy}$ be a path oriented in \Cref{step:cycle(auv)} whose considered cycle contains $u$ or $v$. Then its selected score, when a score is used, and its actual weight are at most $7$.\\ If $y\in\{u,v\}$ and $\omega(\overrightarrow P)=7$, then $\overrightarrow P$ is contained in a dicycle of length $5$ through $y$.\\ Every other edge newly oriented in this step also has weight at most $7$. 
\end{proposition}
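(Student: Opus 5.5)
We argue by induction along the ordering $C_1,\ldots,C_{d^*}$, maintaining the invariant that every edge oriented so far, whether in \Cref{step:F0X31X41}, \Cref{step:F12} or in earlier operations of the present step, has weight at most $8$, and every vertex $t$ incident to such an edge satisfies $\mu_H(t)+\nu_H(t)\le 9$. This invariant holds initially by \Cref{prop:xinf0munu}, \Cref{prop:F118} and \Cref{rem:I2I4}. By the symmetry stated at the end of \Cref{step:cycle(auv)} it suffices to treat cycles containing $v$, and we split along the rules \ref{item:cycle-yv} and \ref{item:cycle-yR11}.

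First consider the cases where a whole cycle or a four-edge path is oriented, namely \ref{item:abc1}, the form-(d) part of \ref{item:cycle-four}, and \ref{item:cycle-r11-four}. In each of these the result is a dicycle of length at most $5$ through $v$. For such a dicycle, every subpath $Q$ from $q_0$ to $q_r$ satisfies
\[
\omega(\orw Q)\le \partial(v,q_0)+r+\partial(q_r,v)+2\le 5+2=7,
\]
using $\nu(q_r)\le\partial(q_r,v)+2$ and $\mu(q_0)\le\partial(v,q_0)$. Equality forces $\orw P$ to lie on a dicycle of length $5$ through $v$, which is exactly the equality claim. For \ref{item:cycle-r11-four} the dicycle passes through $v\to y$, so the same estimate applies. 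When an $F_{13}$ vertex $c$ has a pre-fixed direction, the first assertion of \Cref{lem:cycle-star-choice} shows that this direction is consistent, because $f(c)=4$ and $I_4\to c$ agrees with the cycle orientation $v\to z\to c$.

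Next consider the score-based cases \ref{item:cycle-short} and \ref{item:cycle-r11-short}, where $m\le 3$ and the endpoint $v_p$ lies on an already oriented edge. For $y=v$ we have $\alpha_5+\beta_5=2$, so the two scores sum to $\mu_H(v_p)+\nu_H(v_p)+2m+2\le 9+2m+2$. For $m\le2$ the minimum score is therefore at most $7$. For $y\in R_{11}$ we have $\alpha_4+\beta_4=4$, so the sum is at most $13+2m$, and for $m\le 1$ this again gives a minimum of at most $7$. The remaining situations ($m=3$ with $y=v$, and $m\in\{2,3\}$ with $y\in R_{11}$) are the main obstacle. For these I would use the structure of forms (a)--(d): a cycle through $v$ lies in $\{v\}\cup R_1\cup R_2$, which gives the geometric lower bounds $\mu\ge i$ and $\nu\ge i+1$ on $R_i$. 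Combined with the order in which cycles are processed, this should show that the edge at $v_p$ is one oriented towards or away from $v$ along a short dicycle, so that $\mu_H(v_p)+\nu_H(v_p)\le 7$ rather than $9$. The form-(a) subcase is exactly the setting of the second item of \Cref{lem:cycle-star-choice}, which yields the bound $7$ with a dicycle through $y$ in the equality case.

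Finally, we check that the selected score bounds the actual weight. The displayed inequality $\omega(P^{\pm})\le\sigma_H(P^{\pm})$ gives this, and the tie-breaking preference for a dicycle of length at most $5$ through $v$ supplies the equality characterization when $y=v$. The other new edges in this step are the edge $v_pv_{p+1}^*$ and edges of $[c,I_4]$ with fixed direction. Each of these lies on the selected dipath, or extends it along an existing arc, so the vertex-sum estimate $\mu_K(q_j)+\nu_K(q_j)\le W$ bounds their weight by at most $7$ as well.
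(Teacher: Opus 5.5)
The dicycle cases (\ref{item:abc1}, the form-(d) part of \ref{item:cycle-four}, \ref{item:cycle-r11-four}) are handled correctly. The proposal still does not prove the proposition. The cases you set aside as ``the main obstacle'' carry all the content, and the mechanism you sketch for them fails.

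\textbf{The vertex-sum route is too weak.} A bound $\mu_H(v_p)+\nu_H(v_p)\le7$ does not suffice for $y\in R_{11}$ and $L_{xy}=yxv_1v_2$. There the two scores are $1+3+\nu_H(v_2)$ and $\mu_H(v_2)+3+3$, which sum to $\mu_H(v_2)+\nu_H(v_2)+10$, so you only get $8$. The bound is also not available in general. Vertices reached in \Cref{step:F0X31X41} only satisfy $\mu+\nu\le8$; on the closed diwalk of length nine an $R_{12}$-vertex can have $\mu=1$, $\nu=7$. With sum $8$, the case $y=v$, $m=3$ also gives only $8$.

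What is needed is a one-sided bound matched to a direction:
\begin{itemize}
\item For $y\in R_{11}$ (form (d)), the edge $v_2v$ is already oriented. If $v\to v_2$, then $\mu_H(v_2)\le1$ and $\sigma_H(P^-)\le7$; if $v_2\to v$, then $\nu_H(v_2)\le3$ and $\sigma_H(P^+)\le7$.
\item For $y=v$, $m=3$, $v_2\in F_1$, you must prove $\mu_H(v_2)\le2$. Use a dipath $v\to v_3\to v_2$, or, when $v_3\in R_{11}$ and $v_2\to v_3$, a neighbour $w$ with $f(w)>4$ supplied by \ref{item:F12222}.
\item For $y\in R_{11}$, $m=2$, you need $\mu_H(v_1)\le2$, or a vertex sum of at most $7$ rather than $9$.
\end{itemize}

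\textbf{The equality assertion is not justified.} The tie-break in \ref{item:cycle-short} acts only when the two scores coincide. It picks a dicycle direction only if one exists. So in each case you must either show the selected score is at most $6$, or exhibit a dicycle of length at most $5$ through $v$ containing the direction of score at most $7$.
\begin{itemize}
\item For $m=2$, $y=v$, your bound $7$ is not enough; you need $6$. This follows from $\mu_H(v_1)\le2$ for $R_2$-vertices reached in \Cref{step:F0X31X41}. For vertices reached earlier in this step, use the inductive bound $\mu_H+\nu_H\le7$, i.e.\ the proposition itself applied to earlier operations.
\item For $m=3$, $y=v$, $v_2\in F_2$, the second item of \Cref{lem:cycle-star-choice} only covers the case where $v_2v_3$ and $v_3v$ form a dipath. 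Otherwise you must trace the first operation that oriented $v_2v_3$:
\begin{itemize}
\item an earlier four-edge path $vx'v_1'v_2v_3$ not completing its cycle has recorded score at most $6$, and this bound transfers to $P$;
\item an earlier three-edge path $vx'v_2v_3$ must be ruled out, using the tie-breaking rules and the cases $f(v_3)\in\{5,4.5\}$.
\end{itemize}
\end{itemize}

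\textbf{Minor points.}
\begin{itemize}
\item The four-edge path of form (a) in \ref{item:cycle-four} is omitted. It is easy: $v_3v$ is oriented, so the dicycle-completing direction has score at most $7$, and the other direction is chosen only when strictly smaller.
\item The auxiliary edge $v_pv_{p+1}^*$ is not oriented in \Cref{step:cycle(auv)}. It enters only the tie-break, and the extra oriented edge exists only in \Cref{step:cycle(a)}. Every new edge here lies on the selected path or cycle.
\end{itemize}
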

\begin{proof}
We prove by induction on the linear order of the considered cycles.
Without loss of generality, assume that $v\in V(C_{xy})$.
The proof for $u$ is symmetric.
Let $H$ be the partial orientation immediately before the current
operation. Recall that all cycles with associated vertex $y=v$
precede those with $y\in R_{11}$.

\medskip
\noindent{\bf Case 1.} $y=v$.

If $L_{xy}=C_{xy}$, then $C_{xy}$ is oriented as a dicycle
through $v$, and every new edge has weight at most
$\mu_H(v)+e(C_{xy})+\nu_H(v)\le7$.

Suppose that $P=vxv_1v_2v_3$ has $4$ edges.
If $C_{xy}$ is of form (d), then $v_3\in R_{11}$ and
$v\to v_3$ is already oriented.
By the orientation rule, $P$ is oriented as
$v_3\to v_2\to v_1\to x\to v$,
so $C_{xy}$ becomes a dicycle of length $5$ through $v$.
If $C_{xy}$ is of form (a), the direction completing
$C_{xy}$ as a dicycle has score at most $7$.
The other direction is selected only when its score is
strictly smaller, and hence at most $6$.
Thus, the required bound and equality assertion hold
in both cases.

Suppose that $P=vxv_1$ has $2$ edges.
Then $v_1\in F_2\cap R_2$.
If $v_1$ has an incident edge oriented in
\Cref{step:F0X31X41}, then $\mu_H(v_1)\le2$, and so 
$\sigma_H(P^-)=\mu_H(v_1)+2+2\le6$. 
Otherwise, $v_1$ was first reached by an earlier cycle
operation.
By induction and the vertex-sum estimate preceding
\Cref{step:cycle(auv)}, we have
$\mu_H(v_1)+\nu_H(v_1)\le7$.
Consequently,
$\sigma_H(P^+)+\sigma_H(P^-)
=\mu_H(v_1)+\nu_H(v_1)+4+2\le13$.
Thus, the selected score is at most $6$.

Therefore, it suffices to consider $P=vxv_1v_2$
with $3$ edges.

Suppose that $v_2\in F_1$.
If $v_2v\in E$, then this edge is already oriented,
and a direction completing $vxv_1v_2v$ as a dicycle
has score at most $6$.
Thus, assume that $v_2v\notin E$, and write
$C_{xy}=vxv_1v_2v_3v$.
If $v_3\in F_1$, then $f(v_3)>f(v_2)$, and the initial
rules and \Cref{step:F12} give $v\to v_3\to v_2$.
 
The same dipath exists if $v_3\in R_{11}$ and $v_3\to v_2$.
It remains to consider $v_3\in R_{11}$ and $v_2\to v_3$.
This direction is not prescribed in
\Cref{step:F0X31X41} or \ref{item:F121}.
If the edges joining $v_2$ to $F_0$ were unoriented
before the cycle steps, every such edge oriented in an
earlier cycle through $v$ would be directed towards $v_2$.
Thus, $v_2\in F_{11}$, $f(v_2)=4$, and $v_2\to v_3$
is prescribed by \ref{item:F12222}.
Hence, there is some $w\in N(v_2)\cap F_1$ with $f(w)>4$.
By \Cref{step:F0X31X41,step:F12}, we have $v\to w\to v_2$.
Since $x\in F_{12}$ and $v_1\in F_2$, it holds that
$w\notin\{x,v_1\}$.
Thus, the reverse direction of $P$ completes the dicycle
$v\to w\to v_2\to v_1\to x\to v$.

In all these cases, $\mu_H(v_2)\le2$, and hence
$\sigma_H(P^-)\le2+3+2=7$.
If the selected weight is seven, the selected score
is also seven.
By \ref{item:cycle-short}, the selected path is contained
in a dicycle of length at most five through $v$.
Its length must be five, since a shorter dicycle
would give weight at most six. 

Now, suppose that $v_2\in F_2$.
Then $v_3\in F_1$, and $v_3v$ is already oriented.
If $v_2v_3$ and $v_3v$ form a dipath, then
by \Cref{lem:cycle-star-choice}, we are done.
Otherwise, consider the first operation orienting $v_2v_3$.
\Cref{step:F0X31X41}, whole cycles in
\Cref{step:cycle(auv)}, cycles of form (d) in
\Cref{step:cycle(auv)}, and operations with designated
vertex $v_3$ would make these two edges form a dipath.
Moreover, \Cref{step:F12} orients no edge between
$F_1$ and $F_2$.
By the ordering of the cycles, the remaining possibilities
are previous paths $Q=vx'v'_1v_2v_3$ or $Q=vx'v_2v_3$
oriented in \Cref{step:cycle(auv)}.

In the first case, the selected direction of $Q$ does not
complete its considered cycle, so its recorded score
is at most $6$.
Thus, by copying the corresponding direction and using
monotonicity, the score of $P$ is also at most $6$.
 
In the second case, let $H'$ be the partial orientation
before $Q=vx'v_2v_3$ is oriented.
A direction completing $vx'v_2v_3v$ as a dicycle
has score at most six.
Since $v_3\in R_1\setminus R_{11}$, we have
$\mu_{H'}(v_3)\ge1$ and $\nu_{H'}(v_3)\ge3$,
so both scores are at least six.
Thus, the direction not completing this dicycle
can be selected only when both scores equal six.

If $f(v_3)=5$, then the auxiliary edge is $v_3v$.
By \ref{item:cycle-short}, the direction completing
the dicycle is selected.

Suppose that $f(v_3)=4.5$.
Then $v\to v_3$ by \ref{item:F121}, and the reverse
direction completes $v\to v_3\to v_2\to x'\to v$.
The forward direction could belong to a dicycle of
length at most five through $v$ only if there were
a dipath $v_3\to w\to v$.
Since $N(u)\cap N(v)=\emptyset$, we have $w\ne u$.
Also, $w\notin R_{11}$, since $v\to R_{11}$.
Thus, $w\in R_1\setminus R_{11}$.
If $f(w)=9/2$, then $v\to w$ by \ref{item:F121}.
If $f(w)=5$, then $w\to v_3$ by
\Cref{step:F0X31X41} and \ref{item:F123}.
Both cases are impossible.
Therefore, \ref{item:cycle-short} selects the reverse
direction, again contradicting the assumption that
$v_2v_3$ and $v_3v$ do not form a dipath. 

\medskip
\noindent{\bf Case 2.} $y\in R_{11}$.

Then $C_{xy}=yxv_1v_2vy$ is of form (d), with
$v\to y$ already oriented.
If $P=yxv_1v_2v$, its prescribed direction completes
$C_{xy}$ as a dicycle through $v$, giving weight at most $7$.

If $P=yxv_1$, then, as above, either $\mu_H(v_1)\le2$
or $\mu_H(v_1)+\nu_H(v_1)\le7$.
Accordingly, $\sigma_H(P^-)\le7$ or
$\sigma_H(P^+)+\sigma_H(P^-)\le15$.
Thus, the selected score is at most $7$.

If $P=yxv_1v_2$, then $v_2v$ is already oriented.
When $v\to v_2$, we have $\mu_H(v_2)\le1$ and
$\sigma_H(P^-)\le1+3+3=7$.
When $v_2\to v$, we have $\nu_H(v_2)\le3$ and
$\sigma_H(P^+)\le1+3+3=7$.

Every new edge belongs to the selected path or cycle,
so the same bounds apply to all new edges.
This completes the induction.
\end{proof}

 Because of cycles of type (c), to orient cycles that disjoint with $\{u,v\}$, we thus next deal with the  unoirented edges in $[F_{11},F_0]$. 
\begin{step}\label{step:F12224}
  
First, for every $x\in F_{11}$ such that some, but not all,
edges in $[x,F_0]$ are oriented, orient the remaining edges
in the same direction as the already oriented edges.

Then perform the following operations one at a time,
checking the conditions in the current partial orientation. 

For every $x\in F_{11}$ with $f(x)\in\{2,3,4\}$
satisfying that no edge in $[x,F_0]$ has been oriented
and $f(x)=f(x')$ for all $x'\in N(x)\cap F_1$,
fix some $z\in N(x)\cap F_1$, and orient the unoriented
edges in the paths and cycles of form
$I_{f(x)}xzI_{f(x)}$ so that every resulting path is a
dipath and every resulting cycle is a dicycle,
all with the same direction. 
\end{step}

\begin{proposition}\label{prop:F12224}
For $e\in E_{\ref{step:F12224}}$, it holds that
$\omega(e)\le7$ in $G^{O_{\ref{step:F12224}}}$.
\end{proposition}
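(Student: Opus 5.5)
We split according to the two kinds of operations in \Cref{step:F12224}. For every edge $e\in E_{\ref{step:F12224}}$ we bound $\omega(e)$ using $\mu$ and $\nu$ computed in the orientation existing before $e$ is oriented. This suffices, since adding arcs never increases $\mu$ or $\nu$.

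\textbf{First operation.} Let $x\in F_{11}$ with some, but not all, edges of $[x,F_0]$ already oriented. By \Cref{rem:afterstep2}(3), the already oriented edges of $[x,I_{f(x)}]$ all have one direction; for non-integral $f(x)$ they were all fixed in \ref{item:F121}. Suppose the existing arc is $x\to w$ and the new arc is $x\to w'$ with $w,w'\in I_k$, $k=f(x)$. Then
\[
\omega(x\to w')=\mu(x)+1+\nu(w')\le \mu(x)+1+\beta_k.
\]
The arc $x\to w$ was oriented either in \Cref{step:F12} or in a cycle operation of \Cref{step:cycle(auv)}. In both cases \Cref{prop:F118} or \Cref{prop:cycleauv} gives $\omega(x\to w)=\mu(x)+1+\nu(w)\le7$. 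Since $\nu(w)$ and $\nu(w')$ are controlled by the same core bound $\beta_k$, we get $\omega(x\to w')\le 7$. We sharpen with \Cref{rem:I2I4} where needed, and argue the reverse direction $w\to x$ symmetrically with $\alpha_k$.

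The step to check here is that when $x\to w$ came from a dicycle of length $5$ through $v$ or $u$ (the equality case of \Cref{prop:cycleauv}), the vertex $w$ is $v$ or $u$ itself. In that case $I_k$ is a singleton, so no second edge $xw'$ exists. Alternatively, $\nu(w)$ already equals the maximal core value, and the bound transfers unchanged.

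\textbf{Second operation.} Let $x\in F_{11}$ with $k=f(x)\in\{2,3,4\}$, no edge of $[x,F_0]$ oriented, and all $F_1$-neighbours $z$ having $f(z)=k$. The operation orients paths and cycles of form $I_kxzI_k$ consistently.
\begin{itemize}
\item For a completed dicycle $y\to x\to z\to y'$ with $y,y'\in I_k$, every new arc lies on a dipath from $I_k$ to $I_k$ of length $3$. This gives weight at most $\alpha_k+3+\beta_k=7$, since $\alpha_k+\beta_k=4$ for $k\in\{2,3,4\}$.
\item If $[z,I_k]$ was already oriented, say $z\to y'$, the new arcs extend to a dipath $I_k\to x\to z\to y'$, with the same bound $7$.
\item If $xz$ was already oriented, it was oriented in \ref{item:F123}. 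This is impossible because $f(x)=f(z)$, or the edge is otherwise consistent via the "same direction" requirement.
\end{itemize}

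\textbf{Main obstacle.} The delicate point is consistency. The direction is chosen once per $x$, but $z$ may already carry oriented edges to $I_k$ from an earlier operation with a different designated vertex, or from a cycle step. We must show the chosen common direction agrees with those arcs so that each resulting path is genuinely a dipath through $I_k$. If it does not, the two-edge remnant still yields the bound $\alpha_k+2+\nu(z)$ with $\nu(z)$ small, by the earlier vertex-sum estimate. The plan is to handle this by induction on the order of operations, using the vertex-sum estimate $\mu+\nu\le\omega$ along existing arcs.
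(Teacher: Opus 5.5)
Your argument for the first operation has a genuine gap. From $\mu(x)+1+\nu(w)\le 7$ and $\nu(w),\nu(w')\le\beta_k$ you cannot conclude $\mu(x)+1+\nu(w')\le 7$; that needs $\nu(w')\le\nu(w)$. On $I_4=R_{11}$ this fails in general, since $\nu=2$ at vertices with an $L_{11}$-neighbour and $\nu=3$ at the others. Your proposed repair is also incorrect: the arcs that actually occur here are not incident to $u$ or $v$, and $I_k$ is not a singleton. In addition, \Cref{rem:afterstep2}(3) only concerns \Cref{step:F12}, which orients all or none of $[x,F_0]$, so it is not what gives a single direction here.

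The missing idea is to locate the source of the partial orientation. It must come from \Cref{step:cycle(auv)}. Running through forms (a)--(d), every edge of $[F_{11},F_0]$ on a considered cycle through $v$ falls into one of two kinds. Either it joins $v$ to a vertex $w$ with $f(w)\in\{4.5,5\}$, so that $[w,F_0]=\{wv\}$ or $[w,F_0]$ was fully oriented in \ref{item:F121}. Or it is the edge $v_3v_2$ of the four-edge path $v_3\to v_2\to v_1\to x'\to v$ of form (d) in \ref{item:cycle-four}, with $x=v_2$ and $v_3\in R_{11}$. The case through $u$ is the mirror image. Hence $f(x)=4$, every oriented edge of $[x,I_4]$ points into $x$, and the completed $5$-dicycle gives $\nu(x)\le 5$. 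So each new arc $y\to x$ has weight at most $\mu(y)+1+\nu(x)\le 1+1+5=7$. Once this direction is known, your transfer idea does work, because $\mu\equiv 1$ on $I_4$ and $\nu\equiv 1$ on $I_2$ by \Cref{rem:I2I4}; but establishing the direction is exactly the step you skipped.

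For the second operation, your weight computation $\alpha_k+3+\beta_k=7$ matches the paper. However, the consistency issue you defer is precisely what makes the operation well defined, and your fallback is not available. The step prescribes dipaths and dicycles, and a bound $\alpha_k+2+\nu(z)$ with ``$\nu(z)$ small'' has no support. You need two facts.

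\begin{itemize}
\item The edge $xz$ is unoriented. Rule \ref{item:F123} does not apply, since $f(x)=f(z)$. In \Cref{step:cycle(auv)}, the only $F_1$--$F_1$ edge on a considered cycle is the one in form (c), and one of its ends is adjacent to $u$ or $v$, so its $f$-value is not in $\{2,3,4\}$. An earlier operation of the present step using $xz$ would already have oriented $[x,I_k]$.
\item All oriented edges of $[z,I_k]$ share one direction, so the common direction of $I_k\to x\to z\to I_k$ can be chosen to agree with them.
\end{itemize}

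With these two facts, every new arc lies on a dipath of length three between vertices of $I_k$, or on a $3$-dicycle through $I_k$, and the bound $7$ follows.
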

\begin{proof} 
Suppose first that $x\in F_{11}$ and some, but not all,
edges in $[x,F_0]$ are oriented before this step.
Since \Cref{step:F12} either orients all these edges
or leaves all of them unoriented, the oriented edges
in $[x,F_0]$ are oriented in \Cref{step:cycle(auv)}.

By the forms of the considered cycles, this can only
occur in a cycle of form (d).
If the cycle contains $v$, then $f(x)=4$ and its edge
in $[x,I_4]$ is oriented towards $x$.
Every later operation in \Cref{step:cycle(auv)}
preserves this direction.
The resulting dicycle gives $\nu(x)\le5$.
Thus, the remaining edges can all be oriented from
$I_4$ to $x$, each with weight at most $1+1+5=7$.
The case of a cycle containing $u$ is symmetric.

Now, suppose that $x$ is considered in the second part
of the step, and put $k=f(x)$.
We first show that $xz$ is unoriented.
Since $x\in F_{11}$ and $f(x)=f(z)$,
the edge $xz$ is not oriented in
\Cref{step:F0X31X41,step:F12}.
Among the considered cycles containing $u$ or $v$,
only form (c) has an edge between two vertices of $F_1$.
One endpoint of that edge is adjacent to $u$ or $v$,
and hence its $f$-value is not in $\{2,3,4\}$.
Thus, $xz$ is not oriented in \Cref{step:cycle(auv)}.
An earlier operation in the present step involving $xz$
would already have oriented $[x,I_k]$.
Therefore, $xz$ is unoriented.

By \Cref{step:F0X31X41,step:F12,step:cycle(auv)}
and the preceding operations, all oriented edges in
$[z,I_k]$ have the same direction.
Since $[x,I_k]$ and $xz$ are unoriented, we can orient
$I_k\to x\to z\to I_k$, or its reverse, consistently
with these edges.
If $[z,I_k]$ is also unoriented, choose either direction.
Thus, the operation is well-defined.

Every new edge is contained in a dipath of length three
with both ends in $I_k$, or in a dicycle of length three
through $I_k$.
Since $\mu(y)\le5-k$ and $\nu(y)\le k-1$ for $y\in I_k$,
its weight is at most $(5-k)+3+(k-1)=7$. 
\end{proof} 
After \Cref{step:F12224}, all edges in $[F_{11},F_0]$
are oriented.
Indeed, the partially oriented sets are completed in
the first part of this step, and the remaining sets
are oriented in \Cref{step:F12} or the second part
of \Cref{step:F12224}.
Moreover, by
\Cref{prop:F118,prop:cycleauv,prop:F12224},
it holds that $\mu(x)+\nu(x)\le7$ for every
$x\in F_{11}$. 

Now, we orient the cycles $C_{d^*+1},\ldots, C_d$. Set $H_{d^*}=G^{O_{\ref{step:F12224}}}$. 

\begin{step}\label{step:cycle(a)}
Suppose that $d^*+1\le i\le d$ and $C_i=C_{xy}$.
Then $V(C_{xy})\cap\{u,v\}=\emptyset$.
If $xy$ is already oriented, proceed to the next cycle.
Otherwise, determine $L_{xy}$ and the auxiliary vertex in the
current partial orientation $H$, as defined for \Cref{step:cycle(auv)}. 
All scores are computed in $H$.
For every $c\in F_{13}$ with $f(c)=k\in\mathbb{Z}$,
if a direction has already been fixed for $[c,I_k]$,
use that direction for every edge of $[c,I_k]$
oriented in this step. 
Apply the following rules in order.
\begin{enumerate}[label=(S\thestep.\arabic*)]
\item\label{item:cycle5-whole}
Suppose that $L_{xy}=C_{xy}$.
Orient $C_{xy}$ as a dicycle, respecting any previously fixed
direction for $[c,I_k]$ whenever
$c\in V(C_{xy})\cap F_{13}$ and $f(c)=k\in\mathbb{Z}$.
If $C_{xy}$ is of form (a), $v_3\in F_{13}$,
$f(v_3)\in\mathbb{Z}$, and no edge incident with $v_3$ has
been oriented, the direction of $[v_3,I_{f(v_3)}]$ is being
fixed for the first time.
When this choice is free, choose among the permitted
orientations one that minimizes $\nu(v_3)$ if
$V(C_{xy})\subseteq F_0\cup R$, and one that minimizes
$\mu(v_3)$ if $V(C_{xy})\subseteq F_0\cup L$.
Here the parameters are evaluated after orienting the cycle
in the trial direction, with all other edges unchanged.
In all other cases, choose any permitted orientation.
Record each direction for $[c,I_k]$ first fixed by this operation.

\item\label{item:cycle5-four}
Suppose that $L_{xy}=yxv_1v_2v_3$ has length $4$.
If $C_{xy}$ is of form (d), then $v_3\in F_0$.
If $v_2\in F_{13}$ with $f(v_2)=k\in\mathbb{Z}$ and the
direction of $[v_2,I_k]$ has been fixed, orient $L_{xy}$
in the direction required by $v_2v_3$.
Otherwise, orient $L_{xy}$ so that $C_{xy}$ becomes a dicycle,
and record the resulting direction of $[v_2,I_k]$ when
$v_2\in F_{13}$ and $f(v_2)=k\in\mathbb{Z}$.

If $C_{xy}$ is of form (a), choose the direction of $L_{xy}$
with smaller score. If the scores are equal, choose the
direction that completes $C_{xy}$ as a dicycle.

\item\label{item:cycle5-short}
Suppose that $L_{xy}=yxv_1\cdots v_p$ has length at most $3$.
Choose its direction with smaller score.
If the scores are equal, prefer the direction in which
$L_{xy}\cup\{v_pv_{p+1}^*\}$ is a dipath or dicycle.

Furthermore, if $C_{xy}$ is of form (a), $L_{xy}=yxv_1v_2$,
and $v_3\in F_{13}$ with $f(v_3)\in\mathbb{Z}$, orient
$v_3y$ in its fixed direction when this edge is still
unoriented. This additional edge does not enter either score.
Its weight is estimated separately below.
\end{enumerate}
\end{step}

\begin{proposition}\label{prop:F12a}
Let $P=L_{xy}$ be a path oriented in \Cref{step:cycle(a)}.
Then $\omega(\orw{P})\le9$, with equality only if
$\orw{P}$ lies on a dicycle of length $5$ meeting $F_0$.\\
Moreover, every edge newly oriented in this step, including
an additional edge prescribed by \ref{item:cycle5-short},
has weight at most $9$, with equality only if it lies on
such a dicycle.
\end{proposition}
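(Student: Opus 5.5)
We argue by induction on the order $C_{d^*+1},\ldots,C_d$ in which the considered cycles are processed, in the same way as the proof of \Cref{prop:cycleauv}. Let $H$ be the partial orientation immediately before the current operation and $y\in I_k$. The induction hypothesis, combined with the vertex-sum estimate stated before \Cref{step:cycle(auv)}, gives
\[
\mu_H(w)+\nu_H(w)\le 9
\]
for every vertex $w$ already covered by an oriented edge. For a vertex first reached in \Cref{step:F0X31X41}--\Cref{step:F12224}, \Cref{prop:xinf0munu,prop:F118,prop:cycleauv,prop:F12224} give the stronger bound $\le 7$ or $\le 8$. For a core vertex we use $\alpha_k+\beta_k\le4$ when $k\in\{2,3,4\}$; since the cycle avoids $\{u,v\}$, the case $k\in\{1,5\}$ cannot occur.

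The case split follows the rules of \Cref{step:cycle(a)}.

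\textbf{Whole cycle, \ref{item:cycle5-whole}.} The cycle has length at most $5$, so every new edge lies on a dicycle through $y$ of weight at most $\alpha_k+5+\beta_k\le 9$. Equality forces length $5$, and then the edge lies on a dicycle of length $5$ meeting $F_0$.

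\textbf{Four-edge path of form (d), \ref{item:cycle5-four}.} Here $v_3\in F_0$. If the fixed direction of $[v_2,I_{f(v_2)}]$ is respected, the path runs between two core vertices, so its weight is at most $\mu(a)+4+\nu(b)$ with $a,b\in F_0$. I would check that this is at most $9$ by combining the values $\alpha,\beta$ with $\partial(v,u)\le4$ and $\partial(u,v)=1$. Equality should again correspond to completing the dicycle $C_{xy}$.

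\textbf{Four-edge path of form (a).} The two scores sum to $\alpha_k+\beta_k+8+\mu_H(v_3)+\nu_H(v_3)$, where $v_3\in F_1$. Using the bound $\le9$ for $v_3$ this gives a sum of at most $21$, which is not enough. So I will need the sharper fact that an $F_1$-vertex already touched has vertex-sum at most $7$, in which case the sum is at most $19$. Since ties are broken toward completing the dicycle, this yields the bound $9$ with equality only on a $5$-dicycle.

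\textbf{Short paths, \ref{item:cycle5-short}.} For $m\le2$ the scores sum to at most $13+2m\le17$, so the minimum is at most $8$. For $m=3$ the minimum is at most $9$. When the score equals $9$, the sum is exactly $18$ and the scores are tied, so the tie-breaking rule chooses the direction in which $L_{xy}\cup\{v_pv^*_{p+1}\}$ is a dipath. I then have to show this closes a $5$-dicycle meeting $F_0$, splitting by the form of $C_{xy}$ as in \Cref{prop:cycleauv}:
\begin{itemize}
\item For form (a) with $v_3\in F_{13}$, \Cref{lem:cycle-star-choice} gives the result directly, and the additional edge $v_3y$ has weight at most $\sigma_H(P^-)-1$ or lies on the dicycle.
\item For form (c), the auxiliary vertex $v_3^*$ is chosen in $I_{f(v_2)}$ or an adjacent $I$, so the dipath closes through $F_0$.
\end{itemize}

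The main obstacle is the equality case $m=3$ with score exactly $9$, and more generally tracking whether a vertex's vertex-sum is $\le7$ or merely $\le9$. When $t=v_p$ was itself reached through an earlier weight-$9$ operation, the induction hypothesis only says that it lies on some $5$-dicycle. I would first try to exploit that dicycle: it gives $\mu_H(t)+\nu_H(t)\le9$ together with a concrete dipath $t\to\cdots\to I_j$. I would then argue, by comparing $f$-values along $C_{xy}$ as in Case 1 of \Cref{prop:cycleauv}, that the edge $v_pv^*_{p+1}$ is already oriented consistently, so that the tie-break indeed completes $C_{xy}$ as a dicycle. If this fails, I would fall back on showing that an earlier operation with the same designated vertex forces a strict inequality between the scores.
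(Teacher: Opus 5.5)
There is a genuine gap, and it sits in the equality case. For a three-edge path the two scores sum to $\alpha_k+\beta_k+6+\mu_H(t)+\nu_H(t)\le 19$, which is your own bound $13+2m$ at $m=3$, not $18$. So a selected score of $9$ does not force a tie. When $\mu_H(t)+\nu_H(t)=9$ the scores can be $9$ and $10$; then no tie-breaking rule is invoked, and you must show directly that the selected direction lies on a $5$-dicycle.

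The same problem affects your four-edge form (a) case. The ``sharper fact'' that a touched $F_1$-vertex has $\mu_H+\nu_H\le 7$ holds for $F_{11}$ but not in general: an $F_{12}$- or $F_{13}$-vertex first reached by an earlier weight-$9$ dicycle of this same step may have $\mu_H+\nu_H=9$. Even a sum bound of $19$ would not show that a score-$9$ selection completes $C_{xy}$. No vertex-sum is needed there. The already oriented arc between $v_3$ and $y$ bounds the score of the completing direction by $\alpha_k+\beta_k+5=9$, so the other direction is selected only with score at most $8$.

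Likewise, in form (d) the relevant input is the direction of $yv_3\in[F_0,F_0]$ fixed in \Cref{step:F0X31X41}, with $v_3\in I_\ell$. It forces $\ell\le k$ if $y\to v_3$ and $\ell\ge k$ if $v_3\to y$, so a non-completing selection has weight at most $8-|k-\ell|\le 8$. The facts $\partial(v,u)\le 4$ and $\partial(u,v)=1$ play no role.

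Your form (c) bullet is also incorrect. The auxiliary vertex $v_3^*\in F_0$ is in general different from $y$, so a dipath $L_{xy}\cup\{v_2v_3^*\}$ joins two core vertices and is not a $5$-dicycle. Form (c) is harmless for a different reason: $v_2$ has the $F_1$-neighbour $v_3$, hence $v_2\notin F_{12}\cup F_{13}$. So $v_2$ is initially processed or lies in $F_{11}$, and then one score is at most $8$.

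What is missing is the mechanism for the remaining three-edge cases, which you leave as ``I would try \ldots; if this fails \ldots''. The paper uses two ingredients.

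First, an unoriented path $yxaz$ with $x\in F_{12}$, $a\in F_2$ and $f(z)=\ell\in\{2,3,4\}$ has $|k-\ell|\le 1$; this is a short check using \Cref{step:F0X31X41} and $x\in F_{12}$. In form (d) with $v_2\in F_{12}\cup F_{13}$ and $f(v_2)=\ell$, the oriented edge $v_2v_3$ then bounds the relevant score by $8+\ell-k$ or $8+k-\ell$, hence by $9$. The value $9$ occurs only for the completing direction, which the tie-break prefers.

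Second, in form (a), suppose $v_2v_3$ and the existing or prescribed $v_3y$ do not form a dipath. Then one traces the operation that first oriented $v_2v_3$:
\begin{itemize}
\item an initial arc gives explicit bounds such as $\mu_H(v_2)\le 3$ or $\nu_H(v_2)\le 3$;
\item an operation of \Cref{step:cycle(auv)} gives vertex-sum at most $7$;
\item an earlier four-edge path of this step has recorded score at most $8$, because it did not complete its cycle;
\item an earlier three-edge path $y'x'v_2v_3$ with $y'\in I_\ell$ is controlled by its own tie-break towards the auxiliary arc into $v_3$. This gives recorded score at most $7+\ell-k$, and hence a current score at most $8$ both for $\ell\le k$ and for $\ell\ge k+1$.
\end{itemize}
The same tracing yields the separate bounds $6$, $5$ and $7$ for the additional edge $v_3y$ of \ref{item:cycle5-short}. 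Your proposal covers that edge only in the dipath case, via \Cref{lem:cycle-star-choice}.
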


\begin{proof}
We prove by induction on the linear order of the cycles.
Let $H$ be the partial orientation before the current
operation, and let $y\in I_k$.
Since $C_{xy}$ avoids $u$ and $v$, we have
$k\in\{2,3,4\}$.
Recall that $\alpha_k=5-k$, $\beta_k=k-1$, and all scores
are computed before the corresponding operation.

By \Cref{prop:xinf0munu,prop:F118,prop:F12224,prop:cycleauv}
and induction, every vertex $t$ incident to an oriented
edge in $H$ satisfies $\mu_H(t)+\nu_H(t)\le9$.
For a path $P$ from $y$ to $t$, denote its two directions
by $P^+$ and $P^-$, where $P^+$ starts at $y$.
Then
$\sigma_H(P^+)=\alpha_k+e(P)+\nu_H(t)$ and
$\sigma_H(P^-)=\mu_H(t)+e(P)+\beta_k$.
In particular, if $e(P)=3$ and
$\mu_H(t)+\nu_H(t)\le7$, their sum is at most $17$,
and the selected score is at most $8$.

We first note that if $yxaz$ is an unoriented path with
$x\in F_{12}$, $a\in F_2$, and
$f(z)=\ell\in\{2,3,4\}$, then $|k-\ell|\le1$.
Otherwise, by symmetry, $k=2$ and $\ell=4$.
Thus, $x\in L_2$, $z\in R_2$, and
$a\in L_2\cup R_2\cup X_3$.
In the first two cases, $az$ or $xa$ is already oriented
in \Cref{step:F0X31X41}.
In the last case, $a\in X_{31}$ and $x\in L_{21}$,
contrary to $x\in F_{12}$.

Suppose that $L_{xy}=C_{xy}$.
Then $C_{xy}$ becomes a dicycle, and every new edge has
weight at most $\alpha_k+e(C_{xy})+\beta_k\le9$,
with equality only if $e(C_{xy})=5$.

Suppose that $P=yxv_1v_2v_3$ has four edges.
If $C_{xy}$ is of form (a), the direction completing
$C_{xy}$ as a dicycle has score at most nine.
The other direction is selected only if its score is
smaller, and hence at most eight. 
Suppose that $C_{xy}$ is of form (d).
If the selected direction completes $C_{xy}$ as a
dicycle, the result follows as above.
Otherwise, $P$ and the already oriented edge $yv_3$
have the same direction.
Let $v_3\in I_\ell$.
If $y\to v_3$, then $k\ge\ell$ by
\Cref{step:F0X31X41}, and
$\omega(P^+)\le\alpha_k+4+\beta_\ell
=8+\ell-k\le8$.
If $v_3\to y$, then $\ell\ge k$, and
$\omega(P^-)\le\alpha_\ell+4+\beta_k
=8+k-\ell\le8$.
Thus, both assertions hold for paths of length four.

Suppose that $e(P)\le2$.
The two scores have sum at most $4+4+9=17$,
so the selected score is at most $8$.

Therefore, assume that $P=yxv_1v_2$ and
$C_{xy}=yxv_1v_2v_3y$.

\medskip
\noindent{\bf Case 1.} $v_1\in F_2$ and $v_2\in F_1$.

If $v_2y$ is already oriented, one direction of $P$
completes a dicycle of length $4$ and has score at most $8$.
Thus, assume that this does not occur.

Suppose that
$v_2\in V(G^{O_{\ref{step:F0X31X41}}})$.
Since $v_2\in F_1$, we have $v_2\in L\cup R\cup X_3$.
By \Cref{step:F0X31X41}, if $v_2\in R$, then
$\mu_H(v_2)\le2$, and hence
$\sigma_H(P^-)\le2+3+\beta_k=k+4\le8$.
If $v_2\in L$, then $\nu_H(v_2)\le2$, and hence
$\sigma_H(P^+)\le\alpha_k+3+2=10-k\le8$.
If $v_2\in X_3$, then
$\mu_H(v_2)+\nu_H(v_2)\le7$,
so one score is at most eight.
Thus, assume that
$v_2\notin V(G^{O_{\ref{step:F0X31X41}}})$.

If $v_2\in F_{11}$, then
$\mu_H(v_2)+\nu_H(v_2)\le7$ by
\Cref{prop:F118,prop:cycleauv,prop:F12224}.
Thus, one score is at most $8$.
The same holds if $v_2\in F_{13}$ and $f(v_2)$ is not
an integer, since \ref{item:F121} gives
$\mu_H(v_2)+\nu_H(v_2)\le5$.

Thus, $v_2\in F_{12}\cup F_{13}$ and
$f(v_2)=\ell$ is an integer.
Since $N(v_2)\cap F_1=\emptyset$, the considered cycle
is of form (d).
Then $v_3\in I_\ell$, $v_2v_3$ is already oriented,
and $|k-\ell|\le1$.
If $v_2\to v_3$, then
$\sigma_H(P^+)\le\alpha_k+4+\beta_\ell=8+\ell-k$.
If $v_3\to v_2$, then
$\sigma_H(P^-)\le\alpha_\ell+4+\beta_k=8+k-\ell$.
A bound of $9$ is possible only for the direction
completing $C_{xy}$.
The rule for equal scores prefers this direction.
Hence, the selected score is at most $9$, and equality
gives the required dicycle.

\medskip
\noindent{\bf Case 2.} $v_1,v_2\in F_2$.

Then $v_3\in F_1$ and $v_2v_3$ is already oriented.
If $v_3y$ is still unoriented, the preceding steps and
\Cref{lem:cycle-star-choice} imply that
$v_3\in F_{13}$, $f(v_3)=k$, and an edge in
$[v_3,I_k]$ is already oriented.
The present operation prescribes the same direction
on $v_3y$.

Suppose that $v_2v_3$ and the existing or prescribed
direction on $v_3y$ form a dipath.
By \Cref{lem:cycle-star-choice}, both the assertion
for $P$ and that for the possible additional edge hold.

Thus, assume that these two edges do not form a dipath.
If $f(v_3)=k+\frac12$, then
$I_{k+1}\to v_3\to y$ and $v_3\to v_2$.
Consequently,
$\sigma_H(P^-)\le\alpha_{k+1}+2+3+\beta_k=8$.
The case $f(v_3)=k-\frac12$ is symmetric.
No additional edge is prescribed when $f(v_3)$ is
not an integer. 
Therefore, assume that $f(v_3)=k$.
By symmetry, we may assume that the oriented edges in
$[v_3,I_k]$ point towards $v_3$ and that $v_2\to v_3$.

Suppose that $v_2v_3$ was oriented in
\Cref{step:F0X31X41}.
Then we have the following cases:
\begin{itemize}
\item If $v_3\in X_3$ and
$v_2\in R_{21}\cup R_{31}$, then $k=3$ and
$\mu_H(v_2)\le3$.
\item If $v_3\in R_{21}$ and
$v_2\in(R_{21}\cup R_{23})\cap F_2$, then
$\mu_H(v_2)\le2$.
\item If $v_3\in L_{21}$, its incoming edges from $F_0$
give $k=3$.
Since $\nu_H(v_3)\le2$ and $v_2\to v_3$,
we have $\nu_H(v_2)\le3$.
\end{itemize}
Thus, one score is at most $8$. 
No additional edge is prescribed in this case,
since $v_3\notin F_{13}$. 

Now, consider the operation that first orients $v_2v_3$,
and let $H'$ be the partial orientation before it.
This edge is not oriented in
\Cref{step:F12,step:F12224}.
 
If $v_2v_3$ is oriented in \Cref{step:cycle(auv)}, then
$\mu_H(v_2)+\nu_H(v_2)\le7$,
and one score is at most eight.
If, in this case, $y\to v_3$ is newly oriented, then
$v_3\in F_{13}$.
Since the fixed direction is towards $v_3$, the earlier
cycle contains $v$ and $k=4$; a cycle through $u$
would fix the opposite direction.
By \Cref{prop:cycleauv}, we have
$\mu_H(v_2)+1+\nu_H(v_3)\le7$.
Since $v_2\in F_2$, it holds that $\mu_H(v_2)\ge2$,
and hence $\nu_H(v_3)\le4$.
Thus, the additional edge has weight at most
$\alpha_4+1+\nu_H(v_3)\le6$.

Therefore, assume that $v_2v_3$ is oriented in
\Cref{step:cycle(a)}.
If the earlier operation considers $C_{v_3w}$ for some
$w\in I_k$, or orients a whole cycle or a four-edge path
of form (d), then $v_2v_3$ and the oriented edges in
$[v_3,I_k]$ have opposite directions at $v_3$,
a contradiction. 

Here form (c) cannot be a whole unoriented cycle,
by the preceding steps and the order of the cycles.
It remains to consider the following two cases.

\medskip
\noindent{\bf Subcase 2.1.}
The earlier path is $y'x'v'_1v_2v_3$ with four edges.

Then $y'\in I_k$.
The earlier direction is
$y'\to x'\to v'_1\to v_2\to v_3$, which does not
complete its considered cycle.
Hence, its score at that operation satisfies
$S=\alpha_k+4+\nu_{H'}(v_3)\le8$.
Since $\nu_H(v_2)\le1+\nu_{H'}(v_3)$, it follows that
$\sigma_H(P^+)\le S\le8$.
If $y\to v_3$ is an additional edge, its weight is at most
$\alpha_k+1+\nu_{H'}(v_3)=S-3\le5$.

\medskip
\noindent{\bf Subcase 2.2.}
The earlier path is $y'x'v_2v_3$ with three edges.

Let $y'\in I_\ell$.
Since the earlier operation belongs to
\Cref{step:cycle(a)}, we have $\ell\in\{2,3,4\}$. 
Its direction is $y'\to x'\to v_2\to v_3$,
and its auxiliary edge is $w\to v_3$ for some $w\in I_k$.
Put $S=\alpha_\ell+3+\nu_{H'}(v_3)$.
The two parts of this earlier dipath give
$\mu_H(v_2)+\nu_H(v_2)\le S$.

Since $\mu_{H'}(v_3)\le\alpha_k+1$, the reverse
direction has score at most $\alpha_k+4+\beta_\ell$.
The rule for equal scores prefers that reverse direction.
Thus, $S\le\alpha_k+3+\beta_\ell=7+\ell-k$.
 
If $y\to v_3$ is newly oriented, then $v_3\in F_{13}$.
Applying the observation at the beginning to
$y'x'v_2v_3$, which is unoriented in $H'$, we have
$|k-\ell|\le1$.
Since $S=\alpha_\ell+3+\nu_{H'}(v_3)
\le\alpha_k+3+\beta_\ell$, it follows that
$\nu_H(v_3)\le\alpha_k+\beta_\ell-\alpha_\ell$.
Thus, the additional edge has weight at most
$\alpha_k+1+\nu_H(v_3)
\le2\alpha_k+1+\beta_\ell-\alpha_\ell
=5+2(\ell-k)\le7$. 

If $\ell\le k$, then $S\le7$, and the current two
scores have sum at most $17$.
If $\ell\ge k+1$, the earlier path gives
$\mu_H(v_2)\le\alpha_\ell+2$, and hence
$\sigma_H(P^-)\le\alpha_\ell+5+\beta_k
=9+k-\ell\le8$.
In both cases, the selected score is at most $8$.
 
Therefore, every selected path has weight at most nine,
with equality only if it is contained in a dicycle
of length five meeting $F_0$.
Every new edge on the selected path has weight at most
the weight of that path.
The possible additional edge has been considered above.
Thus, both assertions hold. 
\end{proof}

The next step is to orient all the remaining edges in $[F_0,F_1]$.

\begin{step}\label{step:F13} 
For each $x\in F_{13}$ with an unoriented edge in $[x,F_0]$,
choose $y_1\in N(x)\cap F_0$ such that $xy_1$ is oriented
whenever possible; otherwise choose $y_1$ arbitrarily.
Process the remaining unoriented edges $xy\in[x,F_0]$ with $y\ne y_1$
one at a time. If $xy_1$ is oriented, orient $xy$ in the
opposite direction at $x$; otherwise, orient $y_1\to x\to y$.
 
\end{step}

\begin{proposition}\label{prop:E5}
    For $e\in E_{\ref{step:F13}}$, it holds that  $\omega(e)\le 6$ in $G^{O_{\ref{step:F13}}}$.  
\end{proposition}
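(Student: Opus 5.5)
Every edge oriented in \Cref{step:F13} is an edge $xy$ with $x\in F_{13}$ and $y\in F_0$. The plan is to write down the dipath through $x$ that these edges create, and then bound its weight using only the core bounds from \Cref{obs:munu}: for $a\in I_k$ we have $\mu(a)\le\alpha_k=5-k$ (or $\alpha_k$ as recorded) and $\nu(a)\le\beta_k$. Since $N(x)\cap F_1=\emptyset$, all neighbours of $x$ in $F_0$ lie in $I_{\lfloor f(x)\rfloor}\cup I_{\lceil f(x)\rceil}$. The argument therefore splits according to whether $f(x)$ is an integer.

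First suppose $f(x)$ is not an integer, say $f(x)=k+\frac12$. By \ref{item:F121}, all of $[x,F_0]$ is already oriented, as $I_{k+1}\to x\to I_k$. So no edge at $x$ is processed in this step and this case is vacuous.

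Now suppose $f(x)=k$ is an integer. Because $|N(x)\cap F_0|\ge 2$ and $I_1,I_5$ are single vertices, we have $k\in\{2,3,4\}$. Every oriented edge of $[x,I_k]$ was oriented either in \ref{step:cycle(auv)} or in \ref{step:cycle(a)}, and by \Cref{lem:cycle-star-choice} these edges all point the same way. This is the fixed direction for $[x,I_k]$.

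We then split on whether $xy_1$ was already oriented. If it was, the rule orients each new edge $xy$ opposite to $xy_1$ at $x$, giving a dipath $y_1\to x\to y$ or $y\to x\to y_1$ with both ends in $I_k$. Its weight is at most
\[
\alpha_k+2+\beta_k=(5-k)+2+(k-1)=6,
\]
and each of its edges has weight at most the weight of the dipath. If $xy_1$ was not oriented, then no edge of $[x,F_0]$ was oriented and the rule orients $y_1\to x\to y$ directly, with the same bound $6$.

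The main obstacle is a mismatch between the claim and the rule in the first case. The step orients $xy$ opposite to $xy_1$ at $x$, whereas \Cref{lem:cycle-star-choice} fixes a common direction for $[x,I_k]$ that these new edges would not match. That is in tension with the consistency statement in \Cref{lem:cycle-star-choice}. The quantity that matters is still $\omega$ of the new edge, which the bound above controls, but a new edge $xy$ pointing opposite to $xy_1$ would break that lemma's consistency convention. It must therefore be checked that the convention does not constrain edges oriented after the cycle steps.

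The one remaining computation is the weight of the single new edge $\orw{xy}$ itself. Say $y_1\to x$ already exists and we orient $x\to y$. Then $\omega(x\to y)=\mu(x)+1+\nu(y)$, where $\mu(x)\le\mu(y_1)+1\le\alpha_k+1$ and $\nu(y)\le\beta_k$. This gives $\omega\le(\alpha_k+1)+1+\beta_k=6$. The reverse orientation is handled symmetrically.
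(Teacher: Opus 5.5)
Your argument is correct and is essentially the paper's proof. The nonintegral case is vacuous by \ref{item:F121}. For $f(x)=k\in\{2,3,4\}$, every new edge lies on a dipath $y'\to x\to y$ (or its reverse) with $y,y'\in I_k$, whose weight is at most $(5-k)+2+(k-1)=6$. The tension you flag with the fixed-direction convention is not an issue: that convention is imposed only throughout the cycle steps, which precede \Cref{step:F13}, and your weight bound does not use it anyway.
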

\begin{proof}
 
Let $e\in E_{\ref{step:F13}}$ be incident to $x\in F_{13}$.
Put $k=f(x)$.
By \ref{item:F121}, $k$ is an integer, and
$N(x)\cap F_0\subseteq I_k$ with $k\in\{2,3,4\}$.
By \Cref{step:F13}, there are distinct $y,y'\in I_k$
such that $e$ belongs to $y'\to x\to y$ or its reverse.
Since $\mu(y),\mu(y')\le5-k$ and
$\nu(y),\nu(y')\le k-1$, either dipath has weight at most
$(5-k)+2+(k-1)=6$.
Thus, $\omega(e)\le6$. 
\end{proof}
Briefly, our next step is to repeatedly orient any remaining path  $P$  that currently admits an orientation as a directed path with  $\omega$-value at most  8.

\begin{step}\label{step:sumatmost8}
Repeatedly choose a path $P$  whose edges have not yet been oriented and which admits an orientation  $\overrightarrow{P}$  as a directed path with  $\omega(\overrightarrow{P})\le 8 $. Among the two possible orientations of $P $, orient $P $ in the direction that minimizes $\omega(\overrightarrow{P})$. After each such orientation, update the current oriented graph. Continue this process until no such path $P $ remains.
\end{step}

Now,  we show that in $G^{O_{\ref{step:sumatmost8}}}$, vertices in $ F_1$ satisfy the following property.
\begin{corollary}\label{claimcenter}
    Let $e$ be an oriented edge in $G^{O_{\ref{step:sumatmost8}}}$. Then $\omega(e)\le 9$ in $G^{O_{\ref{step:sumatmost8}}}$ with equality only if   there is a dicycle containing $e$ passing   $F_0$ of length  5. Especially, $\omega(e)= 9$ only possible when it is oriented in \Cref{step:cycle(a)} and lies on a dicycle of length 5 that intersects $F_0$ at $I_2\cup I_3\cup I_4$. 
It follows that $\min \{\nu(x),\mu(x)\}\leq 4$ for each $x\in V(G^{O_{\ref{step:sumatmost8}}})$.
\end{corollary}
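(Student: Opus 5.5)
The plan is to partition the oriented edges of $G^{O_{\ref{step:sumatmost8}}}$ by the step in which they were oriented. For each step I will quote the weight bound already established at the moment of orientation. The bounds only improve later, because weights can only decrease as arcs are added: $\mu$ and $\nu$ are monotone non-increasing under extension, since every dipath survives. Concretely:
\begin{itemize}
\item Edges of $E_{\ref{step:F0X31X41}}$ have $\omega\le 8$ by \Cref{prop:xinf0munu}.
\item Edges of $E_{\ref{step:F12}}$ have $\omega\le7$ by \Cref{prop:F118}.
\item Edges oriented in \Cref{step:cycle(auv)} have $\omega\le7$ by \Cref{prop:cycleauv}.
\item Edges of $E_{\ref{step:F12224}}$ have $\omega\le7$ by \Cref{prop:F12224}.
\item Edges of $E_{\ref{step:F13}}$ have $\omega\le6$ by \Cref{prop:E5}.
\item Edges of \Cref{step:sumatmost8} have $\omega\le8$ by construction. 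Each is on a dipath $\orw{P}$ with $\omega(\orw P)\le 8$, and the observation $\omega(\orw e)\le\omega(\orw P)$ applies.
\end{itemize}
This leaves \Cref{step:cycle(a)}. There \Cref{prop:F12a} gives $\omega\le9$, with equality only if the edge lies on a dicycle of length $5$ meeting $F_0$. Since $C_{xy}$ avoids $\{u,v\}$ in that step, the dicycle meets $F_0$ only in $I_2\cup I_3\cup I_4$. This yields the first two assertions, including the ``especially'' clause: every other step gives weight at most $8$, so equality $9$ forces the edge to come from \Cref{step:cycle(a)}.

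One point needs care. The equality statement of \Cref{prop:F12a} is phrased at the time of orientation, while the corollary concerns $G^{O_{\ref{step:sumatmost8}}}$. I will argue that the dicycle persists, since arcs are never reversed, and that the weight can only drop. So if $\omega(e)=9$ in the final orientation, then $\omega(e)=9$ already held at orientation time, and the dicycle produced then is still present. I must also make sure the ``additional edge'' of \ref{item:cycle5-short} is covered; \Cref{prop:F12a} explicitly includes it.

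For the final consequence, take $x\in V(G^{O_{\ref{step:sumatmost8}}})$. Then $x$ is an endpoint of some oriented edge $e$, say $e=\orw{xy}$ or $\orw{yx}$. In either case the vertex-sum estimate recorded before \Cref{step:cycle(auv)} gives $\mu(x)+\nu(x)\le\omega(e)\le 9$. For the arc $a\to b$ this is $\mu(a)\le\mu(a)$ and $\nu(a)\le 1+\nu(b)$, and symmetrically for $b$. Since $\mu$ and $\nu$ are integers, $\min\{\mu(x),\nu(x)\}\le 4$.

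The main obstacle is essentially bookkeeping rather than new mathematics. I must confirm that every oriented edge belongs to exactly one $E_i$ covered by the cited propositions, including the edges of \Cref{step:cycle(auv)} and \Cref{step:cycle(a)}, which are indexed through the $H_i$ rather than as named $E_i$. I also need to check that the monotonicity argument justifies transferring each ``at time of orientation'' bound to the final graph $G^{O_{\ref{step:sumatmost8}}}$.
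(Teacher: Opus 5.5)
Your proposal is correct and takes the same approach as the paper. The paper states this corollary without a separate proof because it is exactly the compilation of \Cref{prop:xinf0munu,prop:F118,prop:cycleauv,prop:F12224,prop:F12a,prop:E5} with the stopping rule of \Cref{step:sumatmost8}, transferred to $G^{O_{\ref{step:sumatmost8}}}$ by monotonicity as you do.

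One point to tighten is the ``especially'' clause. \Cref{prop:F12a} as stated does not say that the dicycle is $C_{xy}$, so cite this instead: a dicycle of length $5$ through $u$ containing $e$ gives $\omega(e)\le\mu(u)+\nu(u)+5\le7$, and one through $v$ gives $\omega(e)\le\mu(v)+\nu(v)+5\le7$. Hence a weight-$9$ dicycle meets $F_0$ only in $I_2\cup I_3\cup I_4$.
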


Also, the following lemma holds. 
\begin{lemma}\label{lem:gold-deep-layer}
 In $G^{O_{\ref{step:sumatmost8}}}$, every vertex $z\in V(G^{O_{\ref{step:sumatmost8}}})$ satisfies  
 $\mu(z),\nu(z)\le8-\mathrm{dist}_G(z,F_0)$.  
Moreover, every arc $x\to y$ of $G^{O_{\ref{step:sumatmost8}}}$ satisfies 
 $\mu(x)\le7-\mathrm{dist}_G(y,F_0)$, 
 $\nu(y)\le7-\mathrm{dist}_G(x,F_0)$,$\mu(x)+\nu(y)\le8$. 
\end{lemma}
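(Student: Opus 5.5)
The natural route is to derive the lemma from \Cref{claimcenter}, i.e.\ from the edge-weight bound $\omega(e)\le 9$. To avoid depending on it directly, I will instead set up an induction over the sequence of orientation operations in \Cref{step:F0X31X41}--\Cref{step:sumatmost8}. The inductive hypothesis is that, in the current partial orientation $H$, every arc $x\to y$ satisfies
\[
\mu_H(x)+\nu_H(y)\le 8,\qquad \mu_H(x)\le 7-\mathrm{dist}_G(y,F_0),\qquad \nu_H(y)\le 7-\mathrm{dist}_G(x,F_0).
\]
The vertex statement follows from the arc statement. If $z$ is the head of an arc $x\to z$, then $\mu(z)\le\mu(x)+1\le 8-\mathrm{dist}_G(z,F_0)$. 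For $\nu(z)$, I use $\mathrm{dist}_G(x,F_0)\ge\mathrm{dist}_G(z,F_0)-1$ together with an outgoing arc of $z$, which exists in every dicycle or dipath through an internal vertex. For endpoint vertices I fall back on the score bounds. I also use monotonicity: adding arcs never increases $\mu$ or $\nu$, so each inequality, once true, remains true.

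The key steps follow the order of the construction.
\begin{itemize}
\item \Cref{step:F0X31X41}. Every arc lies on a closed diwalk through $uv$ of length at most $9$. For an arc $x\to y$ on it, $\mu(x)$ is at most the length of the diwalk segment from $v$ to $x$ (or from $u$ to $x$, plus $2$), and $\nu(y)$ is at most the length of the segment from $y$ back to $u$. Hence $\mu(x)+\nu(y)\le 9-1=8$. The individual bounds come from the layer structure: arcs leaving $F_0$ toward $F_1$, $F_2$ and $F_3$ occur at depths where $\mu(x)\le \mathrm{dist}+\dots$, and I check this directly using \Cref{obs:munu}.
\item \Cref{step:F12}, \Cref{step:F12224} and \Cref{step:F13}. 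Here \Cref{prop:F118,prop:F12224,prop:E5} give $\omega(e)\le 7$, which means $\mu(x)+1+\nu(y)\le 7$, so the sum bound is immediate. All endpoints lie in $F_0\cup F_1$, so the individual bounds reduce to $\mu(x)\le 7-\mathrm{dist}_G(y,F_0)\in\{6,7\}$, which follows from the sum bound together with $\nu\ge 0$.
\item \Cref{step:cycle(auv)} and \Cref{step:cycle(a)}. Each arc lies on a selected dipath $Q=q_0\to\cdots\to q_r$ with recorded score $W\le 9$. For an arc $q_j\to q_{j+1}$ this gives $\mu(q_j)+\nu(q_{j+1})\le W-1\le 8$. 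When $W=9$, \Cref{prop:F12a} puts $Q$ on a dicycle of length $5$ through $I_2\cup I_3\cup I_4$, and the same bound follows from the two arcs of that dicycle.
\item Distance terms. The selected paths have the form $yxv_1\cdots$ with $y\in F_0$, $x\in F_1$, $v_1\in F_2$. An arc ending at $F_2$ has its tail at most $3$ steps from $F_0$ along the path, so $\mu(x)\le \alpha_k+j\le 5\le 7-2$. I verify this separately for forms (a)--(d).
\item \Cref{step:sumatmost8}. Every arc added here has $\omega\le 8$, so $\mu(x)+\nu(y)\le 7$ in that direction. For the depth bounds, a path with $\omega\le 8$ starting at a vertex $a$ with $\mu(a)$ known gives each internal arc $x\to y$ the bound $\mu(x)\le 8-1-\nu(y)$. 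I then need $\nu(y)\ge \mathrm{dist}_G(y,F_0)$, which I expect to establish from the lower bounds on $\nu$ in terms of $\mathrm{dist}_G(y,\{u,v\})$. Since $\mathrm{dist}_G(y,F_0)\le\mathrm{dist}_G(y,\{u,v\})$, this gives exactly $\mu(x)\le 7-\mathrm{dist}_G(y,F_0)$.
\end{itemize}

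The sum inequality thus follows from edge weights uniformly, since $\omega\le 9$ in every case, and I record the dicycle refinement wherever equality occurs. The separate depth bounds then follow from the sum inequality and the lower bound $\nu(y)\ge \mathrm{dist}_G(y,F_0)$ (and symmetrically $\mu(x)\ge\mathrm{dist}_G(x,F_0)$). This lower bound holds because any dipath from $y$ to $u$ or $v$ passes through $F_0$, and the $+2$ penalty dominates in the cases where it matters.

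The main obstacle is the equality case $\omega=9$ in \Cref{step:cycle(a)}. There the weight of the selected path exceeds $9-1$, so the arc-level bound must come from the length-$5$ dicycle through $I_k$ rather than from the path score. I would handle this by splitting the dicycle at the core vertex and bounding $\mu(x)\le\alpha_k+j$ and $\nu(y)\le (4-j)+\beta_k$, using $\alpha_k+\beta_k=4$.
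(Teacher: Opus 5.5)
Your skeleton is the paper's: bound $\omega(e)\le 9$ for every arc, then split into the cases $\omega\le 8$ and $\omega=9$. The paper simply cites \Cref{claimcenter} for the weight bound. Your step-by-step induction re-derives it from the same propositions, and is unnecessary because all the bounds are monotone. As written, however, you do not actually obtain the depth bounds $\mu(x)\le 7-\mathrm{dist}_G(y,F_0)$ and $\nu(y)\le 7-\mathrm{dist}_G(x,F_0)$.

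Your summary claims they follow from $\mu(x)+\nu(y)\le 8$ and $\nu(y)\ge \mathrm{dist}_G(y,F_0)$. That is off by one: it only gives $8-\mathrm{dist}_G(y,F_0)$. What works is $\mu(x)+\nu(y)\le 7$, i.e.\ $\omega(x\to y)\le 8$, which is the computation you do correctly in your last bullet.

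In \Cref{step:F0X31X41} you already have this bound. Both $u\to v$ and $x\to y$ lie on the closed diwalk of length at most $9$, so $\mu(x)+\nu(y)\le 9-2=7$; this is \Cref{prop:xinf0munu}. Your $9-1=8$ is too weak, and the vague layer-structure check can then be dropped. Your \emph{distance terms} bullet should also go. It only treats selected paths directed away from $y\in F_0$. For the opposite direction, the tail's $\mu$ is controlled by the far endpoint, not by $\alpha_k+j$.

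In the equality case $\omega=9$, splitting the $5$-dicycle at a core vertex $a$ is the right move. But $\alpha_k+\beta_k=4$ only reproduces $\mu(x)+\nu(y)\le 8$, which you already know. The missing inequality is about the lengths of the two directed portions:
\begin{itemize}
\item the portion from $a$ to $x$ has length $j\ge \mathrm{dist}_G(x,F_0)$;
\item the portion from $y$ to $a$ has length $4-j\ge \mathrm{dist}_G(y,F_0)$.
\end{itemize}
Combined with $\mu(a),\nu(a)\le 3$ for every $a\in F_0$, this gives $\mu(x)\le 3+j\le 7-\mathrm{dist}_G(y,F_0)$ and $\nu(y)\le 3+(4-j)\le 7-\mathrm{dist}_G(x,F_0)$. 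That is exactly the paper's argument.

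The vertex bounds need neither an outgoing arc nor a fallback to scores. For any arc $x\to y$, use the two arc bounds together with $\mu(y)\le\mu(x)+1$, $\nu(x)\le\nu(y)+1$ and $|\mathrm{dist}_G(x,F_0)-\mathrm{dist}_G(y,F_0)|\le 1$. This gives $\mu,\nu\le 8-\mathrm{dist}_G(\cdot,F_0)$ at both ends, and every vertex of $G^{O_{\ref{step:sumatmost8}}}$ is incident to an arc.
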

\begin{proof} 
Put $H=G^{O_{\ref{step:sumatmost8}}}$ and
$d(z)=\mathrm{dist}_G(z,F_0)$.
All parameters below are computed in $H$.
Since $u,v\in F_0$, we have $\mu(z),\nu(z)\ge d(z)$.
Moreover, $\mu(a),\nu(a)\le3$ for every $a\in F_0$:
by \Cref{obs:munu} on $I_2\cup I_3\cup I_4$ and
$\mu(u)\le2$, $\nu(u)=0$, $\mu(v)=0$, $\nu(v)\le2$.

Let $x\to y$ be an arc of $H$.
By \Cref{claimcenter},
$\mu(x)+\nu(y)=\omega(x\to y)-1\le8$.
If $\omega(x\to y)\le8$, then 
$\mu(x)\le7-\nu(y)\le7-d(y)$, and 
$\nu(y)\le7-\mu(x)\le7-d(x)$. 
Otherwise, \Cref{claimcenter} gives a dicycle $C$ of length five
containing $x\to y$ and a vertex $a\in F_0$.
Let $p,q$ be the lengths of the directed portions of
$C-(x\to y)$ from $a$ to $x$ and from $y$ to $a$,
respectively, allowing length zero.
Then $p+q=4$, $p\ge d(x)$ and $q\ge d(y)$, so 
$\mu(x)\le3+p\le7-d(y)$,  and 
$\nu(y)\le3+q\le7-d(x)$. 
This proves all the asserted arc bounds.
Finally, $\mu(y)\le\mu(x)+1$, $\nu(x)\le\nu(y)+1$ and
$|d(x)-d(y)|\le1$ give
$\mu(z),\nu(z)\le8-d(z)$ for both $z=x$ and $z=y$.
Every vertex of $H$ is incident to an arc, so the vertex bounds follow. 
\end{proof}
So far, we have already oriented some edges in $[F_0,F_0]\cup [F_1,F_1]\cup [F_1,F_2]\cup [F_0,F_1]$. Our next strategy is to partition $V$ into two subsets, one having small $\nu$-value and the other having small $\mu$-value. By \Cref{claimcenter}, $\min{\nu(x),\mu(x)}\leq 4$ for every $x\in V(G^{O_{\ref{step:sumatmost8}}})$. Then, for $x\in F_2\setminus V(G^{O_{\ref{step:sumatmost8}}})$, by the definition of $F_2$, there exists $y\in N(x)\cap F_1$. Since $F_1\subseteq V(G^{O_{\ref{step:sumatmost8}}})$, \Cref{claimcenter} implies that $\min{\nu(y),\mu(y)}\leq 4$. Thus, one may potentially orient $xy$ as $x\to y$ if $\nu(y)$ is small, or as $y\to x$ if $\mu(y)$ is small, so as to ensure that at least one of $\nu(x)$ and $\mu(x)$ is small. However, when $x\in (F_3\cup F_4)\setminus V(G^{O_{\ref{step:sumatmost8}}})$, the situation is more delicate than in the case $x\in F_2$. Formally, after the next specific orientation step, our goal is to prove that every $x\in V$ satisfies at least one of the following properties:

\begin{itemize}
    \item[(a1)] $\nu(x)\leq4$;
\item[(a2)] $\mu(x)\leq4$;
\item[(b1)] there is $y\in N(x)$ such that $y$  meets (a1), and either $xy$ is not oriented or $x\to y$;
\item[(b2)]  there is $y\in N(x)$ such that $y$ meets (a2), and either $xy$ is not oriented or $y\to x$.
\end{itemize}

 Based on \Cref{claim3},  an orientation on edges incident to  vertices in  $L_2\cup R_2$ is given.

\begin{step}\label{step:L23R23} 
The following edges are oriented in order, as shown in \Cref{fig:step6}. After each operation below, we immediately update the current oriented graph.
\begin{enumerate}[label=(S\thestep.\arabic*)]
\item\label{s6-1} For $z\in L_3\cap F_3$ that satisfies neither (b1) nor (b2) in the current oriented digraph, choose $y\in N(z)\cap L_2$. For each $x\in N(y)\cap L_1$, orient $y\to x$ if $\nu(x)\le 2$; otherwise, orient $x\to y$. Then update the current oriented digraph.
\item\label{s6-2} For $z\in R_3\cap F_3$ that satisfies neither (b1) nor (b2) in the current oriented digraph, choose $y\in N(z)\cap R_2$. For each $x\in N(y)\cap R_1$, orient $x\to y$ if $\mu(x)\le 2$; otherwise, orient $y\to x$. Then update the current oriented digraph.
\end{enumerate}
\end{step}

\begin{figure}[htbp]
\begin{center}
    
\begin{tikzpicture}[
    rect/.style={rectangle, draw, minimum width=1.2cm, minimum height=0.8cm, align=center, thick, inner sep=2pt},
    circ/.style={circle,
    draw,
    fill=black,
    minimum size=0.15cm,
    inner sep=0pt},
    arr/.style={-{Stealth[scale=1.2]}, shorten >=1pt, shorten <=1pt}, scale=0.8
]

\node[circ] (v) at (3,-0.2) {};
\node[circ] (u) at (-3,-0.2) {};
\node[circ] (y) at (-6.3,1.5) {};
\node[circ] (x) at (-6.3,3.5) {};
\node[circ] (z) at (-4.5,5.2) {};
\node[circ] (y1) at (-3.7,1.5) {};
\node[circ] (x1) at (-5.7,3.5) {};
\node[circ] (z1) at (-3.5,5.2) {};
\node[circ] (y') at (6.3,1.5) {};
\node[circ] (x') at (6.3,3.5) {};
\node[circ] (z') at (4.5,5.2) {};
\node[circ] (y'1) at (3.7,1.5) {};
\node[circ] (x'1) at (5.7,3.5) {};
\node[circ] (z'1) at (3.5,5.2) {};

\node[rect] (r1) at (-4,1.5) {};
\node[rect] (r2) at (-2,1.5) {};
\node[rect] (l5) at (-6,1.5) {};
\node[rect] (r3) at (2,1.5) {};
\node[rect] (r4) at (4,1.5) {};
\node[rect] (l6) at (6,1.5) {};

\node[rect] (r5) at (-4,3.5) {};
\node[rect] (r6) at (-2,3.5) {};
\node[rect] (r7) at (2,3.5) {};
\node[rect] (r8) at (4,3.5) {};
\node[rect] (l7) at (-6,3.5) {};
\node[rect] (l8) at (6,3.5) {};


\node[rect] (r9) at (-4,5.5) {};
\node[rect] (r10) at (-2,5.5) {};
\node[rect] (r11) at (2,5.5) {};
\node[rect] (r12) at (4,5.5) {};

\node[rect] (r13) at (0,2.5) {};
\node[rect] (r14) at (0,4.5) {};
\node[rect] (r15) at (0,6) {};

 \draw[arr] (u)--(v);

\node[xshift=-0.3cm] at (r1) {$L_{12}$};
\node at (r3) {$R_{11}$};
\node at (r2) {$L_{11}$};
\node[xshift=0.3cm] at  (l5) {$L_{13}$};
\node[xshift=0.3cm] at   (r4) {$R_{12}$};
\node[xshift=-0.3cm] at  (l6) {$R_{13}$};

\node at (r6) {$L_{21}$};
\node at (r7) {$R_{21}$};
\node at (r5) {$L_{22}$};
\node at (r8) {$R_{22}$};
\node[xshift=-1.2cm] at (l7) {$L_{23}$};
\node[xshift=1.2cm] at (l8) {$R_{23}$};

\node at (r10) {$L_{31}$};
\node at(r11) {$R_{31}$};
\node at (r9) {$L_{32}$};
\node  at (r12) {$R_{32}$};

\node  at (r13) {$X_2$};
\node  at (r14) {$X_{3}$};
\node at (r15)   {$X_{4}$};

\draw[arr](u)--(y);
\draw[arr](y)--(x);
\draw (z)--(x);
\draw[arr] (y1)--(u);
\draw[arr](x1)--(y1);
\draw (z1)--(x1);

\draw[arr](y')--(v);
\draw[arr](x')--(y');
\draw (z')--(x');
\draw[arr] (v)--(y'1);
\draw[arr](y'1)--(x'1);
\draw (z'1)--(x'1);

\node[below] at (v)   {$v$};
\node[below]  at(u) {$u$};

\node[left]at (y)  {$y$};
\node[left] at(x) {$x$};
\node[above]at (z) {$z$};
\node[right]at (y1) {$y$};
\node[right]at (x1) {$x$};
\node[above]at (z1)  {$z$}; 
\node[right]at (y')  {$y$};
\node[right] at(x') {$x$};
\node[above]at (z') {$z$};
\node[left]at (y'1) {$y$};
\node[left]at (x'1) {$x$};
\node[above]at (z'1)  {$z$}; 
\end{tikzpicture}
\caption{An example for   \Cref{step:L23R23} with $z\in F_3\cap (L_{32}\cup R_{32})$, $x\in L_{23}\cup R_{23}   $ and $y\in (L_1\cup R_1)\setminus(L_{11}\cup R_{11}) $.}\label{fig:step6}
\end{center}
\end{figure}
Now, we show that the desired result holds. 
\begin{proposition}
    \label{claimcenter2}
   After \Cref{step:L23R23}, for each $x\in V $, 
 if some edge incident to $x$ has been oriented, then $x$ satisfies (a1) or (a2), otherwise, $x$ satisfies (b1) or (b2).
\end{proposition}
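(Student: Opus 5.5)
The argument splits along the dichotomy in the statement: vertices incident to an oriented edge, and vertices not yet touched.

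\textbf{Vertices incident to an oriented edge.} Edges oriented before \Cref{step:L23R23} are handled by \Cref{claimcenter}. Any such edge $e=x\to y$ has $\omega(e)\le 9$, so $\mu(x)+\nu(x)\le 9$ and $\mu(y)+\nu(y)\le 9$. Hence $\min\{\mu,\nu\}\le 4$ at both endpoints, that is, (a1) or (a2) holds. Since adding arcs never increases $\mu$ or $\nu$, this survives \Cref{step:L23R23}.

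It remains to treat the edges $xy$ with $x\in L_1$, $y\in L_2$ oriented in \ref{s6-1}; the $R$ side in \ref{s6-2} is symmetric.
\begin{itemize}
\item The vertex $x\in L_1$ lies in $F_0\cup F_1$, so it is already covered.
\item For $y$, I argue from the rule itself. If some neighbour $x\in N(y)\cap L_1$ has $\nu(x)\le 2$, we oriented $y\to x$, so $\nu(y)\le 3$.
\item Otherwise every $x\in N(y)\cap L_1$ has $\nu(x)\ge 3$, and we oriented all of them $x\to y$. If some such $x$ has $\mu(x)\le 3$, then $\mu(y)\le 4$ and we are done.
\item The remaining case is $\nu(x)\ge 3$ and $\mu(x)\ge 4$ for every $x\in N(y)\cap L_1$. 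By the lower bounds $(\mu,\nu)\ge(2,1)$ on $L_1$ and by \Cref{claimcenter}, this forces $\mu(x)+\nu(x)\ge 7$ together with a rather specific structure. I expect to rule it out by using $\nu(x)\le 4$ in \Cref{claimcenter}, combined with the $L_{12},L_{13}$ orientations from \ref{item:step:F0X31X415} and \Cref{step:F12}. Those orientations give $\nu(x)\le 2$ for every $x\in L_1$ that has an out-arc toward $u$ or $L_{11}$, which would contradict the case assumption.
\end{itemize}
This last case is the main obstacle: I must confirm that every $x\in L_1$ has either $\nu(x)\le 2$ or $\mu(x)\le 3$ after the earlier steps. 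I would try to derive this from $\nu(x)\le \mathrm{dist}(x,u)+1$ along the $u$-side orientations.

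\textbf{Untouched vertices.} Let $x$ have no oriented incident edge after \Cref{step:L23R23}. Every vertex of $F_0\cup F_1$ is incident to an oriented edge, since all edges in $[F_0,F_1]$ are oriented. So $x\in F_2\cup F_3\cup F_4$.
\begin{itemize}
\item If $x\in F_2$, take a neighbour $w\in F_1$. The edge $xw$ is unoriented because $x$ is untouched, and $w$ satisfies (a1) or (a2) by the first part. This gives (b1) or (b2).
\item If $x\in F_3$, first note that $x\notin X_3$: by \Cref{claim3}, $X_{31}\subseteq F_1\cup F_2\cup F_3$, and every vertex of $X_3$ is joined to $X_2$ or $L_2\cup R_2$, so it lies in $F_1\cup F_2$. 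Hence $x\in L_3\cup R_3$. If $x$ already had (b1) or (b2), further orientations cannot destroy this: a witness $w$ only improves, and the edge $xw$ stays unoriented or receives the right direction, because \ref{s6-1} and \ref{s6-2} touch only $[L_1,L_2]$ and $[R_1,R_2]$. Otherwise \ref{s6-1} (or \ref{s6-2}) processed $x$ and chose $y\in N(x)\cap L_2$. By the first part, $y$ now satisfies (a1) or (a2), and $xy$ is unoriented, so $x$ gets (b1) or (b2).
\item If $x\in F_4$, then $x\in X_4$ by \Cref{claim3}. I would show that $x$ has a neighbour in $L_3\cup R_3\cup X_3$ that became touched with $\min\{\mu,\nu\}\le 4$. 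If it is $X_{42}$ with a neighbour in $X_3\subseteq F_1\cup F_2$, that neighbour is touched. Here I am less sure of the details, and I would check distances to $F_0$ directly.
\end{itemize}
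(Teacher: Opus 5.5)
There are two genuine gaps.

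\textbf{First gap: the case you flag as the main obstacle.} You never close it, and the route you sketch would not close it. The inequality $\nu(x)\le\mathrm{dist}_G(x,u)+1$ is not a valid bound in a partial orientation. Also, \Cref{claimcenter} gives only $\min\{\mu(x),\nu(x)\}\le4$, not $\nu(x)\le4$.

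The missing observation is a one-liner. For every $x\in N(y)\cap L_1$, the edge $xu$ is already oriented before \Cref{step:L23R23}: $L_{11}\to u$ in \Cref{step:F0X31X41}, and the edge from a vertex of $L_1\cap F_1$ to $u$ is oriented in one of \Cref{step:F0X31X41}--\Cref{step:F13}. If $x\to u$, then $\nu(x)\le1$. If $u\to x$, then $\mu(x)\le\partial(u,x)+2\le3$. Hence the case ``$\nu(x)\ge3$ and $\mu(x)\ge4$'' is empty, and \ref{s6-1} yields $\nu(y)\le3$ or $\mu(y)\le4$. Your treatment of $x\in(L_3\cup R_3)\cap F_3$ relies on this conclusion for $y$, so it is incomplete until this observation is supplied.

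\textbf{Second gap: untouched vertices in $F_3\cup F_4$.} Your classification is wrong.
\begin{itemize}
\item The inclusion $X_3\subseteq F_1\cup F_2$ is false. A vertex of $X_{31}$ without an $X_2$-neighbour may have all its $L_2$- and $R_2$-neighbours in $F_2$, since $L_{21},R_{21}\subseteq F_1\cup F_2$. Indeed, \Cref{claim3}, which you cite, explicitly allows $X_{31}\cap F_3\neq\emptyset$.
\item You omit $X_4\cap F_3$ altogether. So ``untouched $x\in F_3$ implies $x\in L_3\cup R_3$'' does not follow.
\item The $F_4$ case is left unfinished, and it rests on the same false inclusion.
\end{itemize}

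The correct argument runs as follows. A vertex of $X_{31}\cap F_3$ has no $X_2$-neighbour, so shortest paths to $u$ and $v$ give it neighbours in $L_2$ and $R_2$, and these lie in $L_{21}$ and $R_{21}$. Thus it is touched by $R_{21}\to X_{31}\to L_{21}$ in \Cref{step:F0X31X41}. Every vertex of $X_{41}$ is also touched in that step, via $R_{31}\to X_{41}\to L_{31}$. Hence an untouched $x\in(F_3\cup F_4)\setminus(L_3\cup R_3)$ lies in $X_{42}$.

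By \Cref{claim1}, such an $x$ has a neighbour $y\in X_3$. Since $x\in F_3\cup F_4$, $y$ has no $X_2$-neighbour. The same reasoning then gives $R_{21}\to y\to L_{21}$, so $\mu(y),\nu(y)\le3$. As $xy$ is unoriented, $x$ satisfies both (b1) and (b2).

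With these two repairs, the rest of your argument is sound and agrees with the paper. This includes the $F_2$ case and the persistence of (b1)/(b2) for vertices of $L_3\cup R_3$, which \Cref{step:L23R23} cannot disturb.
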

 
\begin{proof}
By \Cref{claimcenter}, every vertex incident to an oriented edge
satisfies (a1) or (a2) before \Cref{step:L23R23}.
We show that this remains true after each operation of that step.

Suppose that the current operation considers $z\in L_3\cap F_3$
which satisfies neither (b1) nor (b2), and chooses
$y\in N(z)\cap L_2$.
If $z$ satisfied (a1) or (a2), the first or last edge of a
dipath attaining the corresponding adjusted distance would give
(b1) or (b2), respectively.
Thus, no edge incident to $z$ is oriented.
If $y$ were incident to an oriented edge, the induction hypothesis
and the unoriented edge $zy$ would again give (b1) or (b2) for $z$.
Hence, all edges incident to $y$ are unoriented.

Since $z\in F_3$, we have $y\in F_2$ and
$N(y)\cap L_1\subseteq F_1$.
For each $x\in N(y)\cap L_1$, the edge $xu$ is already oriented.
If $\nu(x)\le2$, orienting $y\to x$ gives $\nu(y)\le3$.
Otherwise, $u\to x$, so $\mu(x)\le3$, and orienting
$x\to y$ gives $\mu(y)\le4$.
Thus, the operation is well-defined, its only newly incident
vertex $y$ satisfies (a1) or (a2), and the unoriented edge $zy$
gives (b1) or (b2) for $z$.
The case $z\in R_3\cap F_3$ is symmetric.
This proves the assertion for every vertex incident to an oriented edge.

Now, let $x$ have no incident oriented edge after this step.
Since $F_0\cup F_1$ is already contained in the oriented graph,
we have $x\in F_2\cup F_3\cup F_4$.
If $x\in F_2$, a neighbor in $F_1$ gives (b1) or (b2).
If $x\in (L_3\cup R_3)\cap F_3$, the preceding argument and
the rules of \Cref{step:L23R23} give the same conclusion.

It remains to consider $x\in(F_3\cup F_4)\setminus(L_3\cup R_3)$.
By \Cref{claim3}, such a vertex belongs to $X_{31}\cup X_4$.
A vertex of $X_{31}\cap F_3$ has no neighbor in $X_2$,
so shortest paths to $u$ and $v$ force neighbors in both
$L_2$ and $R_2$; it is therefore incident to an edge oriented
in \Cref{step:F0X31X41}.
Vertices of $X_{41}$ are also incident to edges oriented in that step.
Hence, $x\in X_{42}$.
By \Cref{claim1}, choose $y\in N(x)\cap X_3$.
Since $x\in F_3\cup F_4$, we have $N(y)\cap X_2=\emptyset$.
Thus, $y$ has neighbors in both $L_2$ and $R_2$, and
\Cref{step:F0X31X41} gives $\mu(y),\nu(y)\le3$.
The edge $xy$ is unoriented, so $x$ satisfies both (b1) and (b2).
\end{proof}

\begin{remark} 
 \Cref{claimcenter2} indicates that if some vertex $x$  meet neither (a1) nor (a2), then all the edges incident  $x$  are unoriented after \Cref{step:L23R23}.
\end{remark}




\section{Potentials and completion of the orientation}\label{sec:F234}
 
 The preliminary orientation gives each vertex either a small value of at least one distance parameter or an unoriented edge to a vertex with such a bound, as shown in \Cref{claimcenter2}. We now refine the vertex partition, orient the crossing edges, and introduce potentials to guide the final step.  

\subsection{Refined vertex partitions and orienting the crossing edges}\label{sec:refined-partitions}
Now, by \Cref{claimcenter2}, we  partition the vertices into two subsets $A$ and $B$ such that each vertex $x\in A$ has small  $\nu(x)$ or $\mu(x)$ while each vertex $y\in B$ is adjacent to a vertex $x_y$ in $A$ where the edge $x_yy$ is unoriented. Let

     \begin{align*}
A&=\{x\in V : x \text{~meets~} (a1) \text{~or } (a2)\}; \\
A'&=\{x\in V : x \text{~meets~} (a1)  \text{ and } (a2)\}; \\
A_1&=  A'\cap \left(\{x\in R_1:\mu(x)\le 2 \}\cup  \{x\in L_1: \nu(x)\ge 3\}\cup \{x\in (L_2\cup L_3)\setminus F_1: \nu(x)\le 4\}\right);\\
A_2&=  A'\cap (\{x\in L_1:\nu(x)\le 2 \}\cup  \{x\in R_1:  \mu(x)\ge 3\}\cup \{x\in (R_2\cup R_3)\setminus F_1: \mu(x)\le 4\});\\
D_a&=\{x\in A:x \text{~meets~} (a1)\}\setminus   A_1;\\
U_a&=\{x\in A:x \text{~meets~} (a2)\}\setminus A_2;\\
M_a&=D_a\cap  U_a.\label{partitionA}\tag{$\mathcal{A}$} 
\end{align*}

\begin{remark}\label{partitionAB}
All the following hold. 
\begin{itemize}
\item $A_1\cap A_2=\emptyset$ and so $D_a\cup U_a=A$. 
    \item 
If $x\in L_1\cup R_1$, then $x\notin M_a$. 
\end{itemize}
\end{remark}
Then we continue to consider the set $V\setminus A$. Let  
    \begin{align*}
    B&=V \setminus A; &~&\\
 D'_b&=\{x\in B: N(x)\cap D_a\neq\emptyset\}; & U'_b &=\{x\in B: N(x)\cap U_a\neq\emptyset\};\\
 M'_b&=D'_b\cap U'_b; & M_b&=\{x\in M'_b:|N(x)\cap A|\ge 2\};\\
 B_D&=D'_b\setminus M'_b, & B_U&=U'_b\setminus M'_b. \tag{$\mathcal{B}$}\label{eq:partitionB}
 \end{align*}  

It looks like that vertices in $M'_b$ have strong properties, however, considering our next orientation step, some of them are not so easy to handle, specifically, vertices in the set $$C:=\{x\in M'_b:|N(x)\cap A|=1\}=M'_b\setminus M_b. $$ Thus, our target is to define a partition $C_D\cup C_U$ of   $C$ so that each vertex in $C_D$ has a neighbor in $B_U\cup M_b\cup C_U$ and each vertex in $C_U$ has a neighbor in $B_D\cup M_b\cup C_D$. Then orient edges in $[M_b,A]$ so that for each $x\in M_b$, we have $\mu(x)\le 5$ and $\nu(x)\le 5$ since $|N(x)\cap A|\ge 2$. Then for each $x\in C_U$ suppose that $y\in N(x)\cap (B_D\cup M_b\cup C_D) $. Then orient $U_a\to x\to y\to D_a$ when edges in $[y,D_a]$ are unoriented or $ U_a\to x\to y$ when $y\in M_b$. Then $\nu(x)\le 6$ and $\mu(x)\le 5$. The partition $C_U\cup C_D$ is not so natural because of the proofs in the last section. We emphasis here that  $X_{42}$ could be write  as $$X_{42}=X^{L}_{42}\cup X^{R}_{42}\cup X^0_{42},$$
where each vertex in $X^{L}_{42}\subseteq X_{42}$ has a neighbor in $L_{3}$, each vertex in $X^{R}_{42}\subseteq X_{42}$ has a neighbor in $R_{3}$, each vertex in $X^0_{42}\subseteq X_{42}$ has no neighbor in $R_{3}\cup L_3$.

\begin{align*}
M_r&=\{x\in  C\cap     (X^R_{42}\cup X^0_{42})   : N(x)\cap  B_U\neq\emptyset\}\\
&\cup \{x\in  C  \cap    X^L_{42}  :   N(x)\cap  B_D=\emptyset\text{ and } N(x)\cap  B_U\neq\emptyset\}
\\ &\cup \{x\in C\cap R : N(x)\cap A\subseteq R,  N(x)\cap B_D=\emptyset\text{ and }N(x)\cap B_U \neq\emptyset\}
\\&\cup \{x\in C\cap R : N(x)\cap A\subseteq V\setminus R \text{ and }  N(x)\cap B_U\neq\emptyset \}\\&\cup \{x\in C\cap L : N(x)\cap A\subseteq L \text{ and } N(x)\cap B_U\neq\emptyset \} \\&\cup \{x\in C\cap L : N(x)\cap A\subseteq V\setminus L \text{ and } N(x)\cap B_D=\emptyset  \text{ and } N(x)\cap B_U\neq \emptyset\};\\
M_\ell&=\{x\in  C  \cap    X^L_{42}  :   N(x)\cap B_D\neq\emptyset\}\\&\cup \{x\in C\cap     (X^R_{42}\cup X^0_{42})   : N(x)\cap B_U=\emptyset \text{ and }  N(x)\cap B_D\neq\emptyset\} \\ &\cup \{x\in C\cap L : N(x)\cap A\subseteq L, N(x)\cap B_D\neq\emptyset\text{ and } N(x)\cap B_U=\emptyset\}  
\\ &\cup \{x\in C\cap L : N(x)\cap A\subseteq V\setminus L \text{ and } N(x)\cap B_D\neq\emptyset\}\\   &\cup \{x\in C\cap R : N(x)\cap A\subseteq R \text{ and } N(x)\cap B_D\neq \emptyset\}\\&\cup \{x\in  C\cap R : N(x)\cap A\subseteq V\setminus R \text{ and } N(x)\cap  B_U=\emptyset \text{ and }N(x)\cap  B_D\neq \emptyset \}; \\ 
M'_1&=\left\{x\in M'_b\setminus (M_b\cup M_r\cup M_\ell): x \text{ is~isolated~in~} M'_b\setminus (M_b\cup M_r\cup M_\ell)\right\};\\
M'_2&=M'_b\setminus (M_b \cup M_r\cup M_\ell\cup M'_1).\tag{$\mathcal{M}$}\label{partitionM}
\end{align*}  
Every vertex $x\in M'_1$ has a neighbor in $M_b\cup M_r\cup M_\ell$.
Indeed, $B\subseteq L\cup R\cup X_{42}$ by the preceding construction,
so the definitions of $M_r$ and $M_\ell$ give
$N(x)\cap(B_D\cup B_U)=\emptyset$.
Since $|N(x)\cap A|=1$ and $d_G(x)\ge2$,
the vertex $x$ has a neighbor in $M'_b$.
Its isolation in the remaining set forces this neighbor into
$M_b\cup M_r\cup M_\ell$.  
Thus, let $M'_{11}=\{x\in M'_1:N(x)\cap  (M_b\cup M_\ell) \neq\emptyset\}$, and $M'_{12}=M'_1\setminus M'_{11}$. Also, let $M'_{21}\cup M'_{22}$ be a partition of $M'_2$ satisfying that each vertex has a neighbor in the other part since such a partition could be found by considering a spanning forest of $M'_2$. 
Now, we define the final partition of set $B$ as follows:
$$\begin{aligned}
D_b&=(D'_b\setminus M'_b)\cup M_b\cup M_r\cup M'_{11}\cup M'_{21};\\
U_b&=(U'_b\setminus M'_b)\cup M_b\cup M_\ell\cup M'_{12}\cup M'_{22}. 
\end{aligned}$$ 
Observe that $D_b\cup U_b=D'_b\cup U'_b=B$ and $D_b\cap U_b=M_b$.
 Write $D=D_a\cup D_b$ and $U=U_a\cup U_b$. 
\begin{remark}\label{rm:edgeanticross}
   All the following hold.
   \begin{itemize}
       \item By definition, $[U_a\setminus M_a, D_b\setminus M_b]=\emptyset$ and $[D_a\setminus M_a, U_b\setminus M_b]=\emptyset$. 
       
       Suppose not, that there are $x\in U_a\setminus M_a=U_a\setminus D_a$ and $y\in D_b\setminus M_b$ with $xy\in E$. Since $D_b\subseteq D'_b$, by definition of $D'_b$, it holds that $N(y)\cap D_a\neq\emptyset$. Then $|N(y)\cap A|\ge 2$ and so $y\in M_b$, a contradiction. 
       \item If $x\in (M'_b\setminus M_b)\cap U_b$, then $N (x)\cap D_b\neq\emptyset$. If $x\in (M'_b\setminus M_b)\cap D_b$, then $N(x)\cap U_b\neq\emptyset$.

 Since $x\in (M'_b\setminus M_b)\cap U_b$, it holds that $x\in M_\ell\cup M'_{12}\cup M'_{22}$. If $x\in M'_{22}\cup M'_{12}$, then $N(x)\cap D_b\neq\emptyset$. Thus, assume that $x\in M_\ell$. By the definition of $M_\ell$, again, we have that $N(x)\cap D_b\neq\emptyset$. 
   \end{itemize} 
\end{remark}

Now, we deal with unoriented edges related to $[D_b,U_p]$ and $[U_b,D_q]$ where $p,q\in \{a,b\}$.

\begin{figure}[htbp]
\begin{center}
\begin{tikzpicture}[
    rect/.style={
        rectangle,
        draw,
        minimum width=1cm,
        minimum height=0.9cm,
        align=center,
        thick,
        inner sep=2pt
    },
    circ/.style={
        circle,
        draw,
        fill=black,
        minimum size=0.16cm,
        inner sep=0pt
    },
    arr/.style={
        -{Stealth[scale=1.2]},
        shorten >=2pt,
        shorten <=2pt,
        line width=0.9pt
    },
    scale=0.6
]

\node[rect] (DbBox) at (-4.5, 1.6) {};
\node[rect] (MbBox) at ( 0.0, 2.8) {};
\node[rect] (UbBox) at ( 4.5, 1.6) {};

\node[rect] (DaBox) at (-4.5,-2.0) {};
\node[rect] (MaBox) at ( 0.0,-4) {};
\node[rect] (UaBox) at ( 4.5,-2.0) {};

\node[circ] (db) at (DbBox.center) {};
\node[circ] (mb) at (MbBox.center) {};
\node[circ] (ub) at (UbBox.center) {};

\node[circ] (da) at (DaBox.center) {};
\node[circ] (ma) at (MaBox.center) {};
\node[circ] (ua) at (UaBox.center) {};


\node[above=3pt] at (DbBox.north)
    {$D_b\setminus M_b$};

\node[above=3pt] at (MbBox.north)
    {$M_b$};

\node[above=3pt] at (UbBox.north)
    {$U_b\setminus M_b$};

\node[below=3pt] at (DaBox.south)
    {$D_a\setminus M_a$};

\node[below=3pt] at (MaBox.south)
    {$M_a$};

\node[below=3pt] at (UaBox.south)
    {$U_a\setminus M_a$};
\draw[arr] (mb) -- (db);
\draw[arr] (ub) -- (mb);

\draw[arr] (ub) to[bend left=13] (db);

\draw[arr] (db) -- (da);

\draw[arr] (mb) -- (da);
\draw[arr] (ua) -- (ub);

\draw[arr] (ua) -- (mb);
\draw[arr] (ua) -- (da);

\draw[arr] (ua) -- (ma);
\draw[arr] (ma) -- (da);

\end{tikzpicture}

\caption{An  illustration of the orientations in
\Cref{step:DUb}.}
\label{fig:stepDUb}
\end{center}
\end{figure}

\begin{step}\label{step:DUb}
The following edges are oriented in order (the final structure see \Cref{fig:stepDUb} for an example). 
\begin{enumerate}[label=(S.\thestep.\arabic*)] 
\item\label{S7-1} Let $x\in M_b$. Orient $x\to D_a\setminus M_a$ and $ U_a\setminus M_a\to x$. When   $N(x)\cap (U_a\setminus M_a)=\emptyset$ and $N(x)\cap (D_a\setminus M_a)=\emptyset$, orient $y\to x\to y'$ for some $y\in N(x)\cap M_a$ and each $y'\in (N(x)\cap M_a)\setminus \{y\}$.  When   $N(x)\cap (U_a\setminus M_a)=\emptyset$ and $N(x)\cap (D_a\setminus M_a)\neq\emptyset$, orient   $M_a\to x$.  When     $N(x)\cap (U_a\setminus M_a)\neq\emptyset$, orient   $  x\to M_a$. 
\item\label{S7-2} For all unoriented edges $xy$ with $x\in D_b\setminus M_b$ and $y\in N(x)\cap (U_b\setminus M_b)$,  orient the unoriented edges in $[U_a,y]\cup \{xy\}\cup [x,D_a]$ in the direction that $U_a\to y\to x\to D_a$.

\item\label{S7-3} For all unoriented edges $xy$ with $x\in  M_b$ and $y\in N(x)\cap (U_b\setminus M_b)$,  orient the unoriented edges in $[U_a,y]\cup \{xy\} $ in the direction that $U_a\to y\to x $.  For all unoriented edges $xy$ with $x\in  M_b$ and $y\in N(x)\cap (D_b\setminus M_b)$,  orient the unoriented edges in $[D_a,y]\cup \{xy\} $ in the direction that $ x\to y\to D_a$.  
    \item\label{S7-4} For all unoreinted edges $xy$ with $x\in U_a$ and $y\in D_a\setminus M_a$   or $x\in U_a\setminus M_a$ and $y\in  M_a$, orient $x\to y$.   
\end{enumerate}
\end{step}
\begin{remark}\label{rm:crossing}
All the following hold. 
\begin{itemize}
    \item   For each   $xy\in [D_b\setminus M_b,D_a]$ oriented in  \Cref{step:DUb}  with $x\in D_b$ and $y\in D_a$, it holds that $xy$ is oriented as $x\to y$.  For each $xy\in [U_b\setminus M_b,U_a]$ oriented in  \Cref{step:DUb} with $x\in U_b$ and $y\in U_a$, it holds that $xy$ is oriented as $y\to x$.
    \item The orientation in \ref{S7-2} and \ref{S7-3} has no conflict.     
    \item If $x\in M'_b$, by \Cref{rm:edgeanticross}, some edge incident to $x$ must be oriented in \Cref{step:DUb}. 
\end{itemize}
\end{remark}

Then the following result holds.
\begin{proposition}\label{claim:xinDyinU}
After \Cref{step:DUb}, all the following hold.
\begin{itemize}
\item[(1)] Every vertex of $M_b$ has an incoming edge from $U_a$
and an outgoing edge to $D_a$, and satisfies $\mu,\nu\le5$.
\item[(2)] If $x\in D_b\setminus M_b$ is incident to an edge
oriented in this step, then all its $D_a$-edges are oriented
from $x$ to $D_a$, and $\nu(x)\le5$, $\mu(x)\le6$.
The symmetric assertion holds for $U_b\setminus M_b$.
\item[(3)] Every vertex of $C$ is incident to an edge oriented in
this step. Every vertex of $A$ incident to an edge oriented
in this step satisfies $\mu,\nu\le7$.
\item[(4)] Every edge in $[D,U]\setminus([M_a,M_a]\cup[M_b,M_b])$
has been oriented. In particular, an unoriented edge with
one endpoint in $D\setminus U$ has both endpoints in
$D\setminus U$; an unoriented edge with one endpoint in
$U\setminus D$ has both endpoints in $U\setminus D$.
\item[(5)] For $x\in D_a\cup D_b$ and $y\in U_a\cup U_b $ with $xy\in E$, if $xy$ is oriented as  $y\to x$ in $G^{O_{\ref{step:DUb}}}$,  then  $\nu(x)\leq 5$, $\nu(y)\leq6$, $\mu(y)\leq 5$ and $\mu(x)\leq6$. 
\end{itemize}
  
\end{proposition}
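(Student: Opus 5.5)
We will verify the five assertions in order. The input is the definitions of the partitions ($\mathcal A$), ($\mathcal B$), ($\mathcal M$), the rules of \Cref{step:DUb}, and the basic bounds: $\nu\le4$ on $D_a$ and $\mu\le4$ on $U_a$ from \Cref{claimcenter2}; every $x\in B$ is untouched before \Cref{step:DUb} by the remark after \Cref{claimcenter2}. Each parameter bound will come from exhibiting a short dipath through one new arc to a vertex of $A$ with a known bound. We use throughout the elementary facts that an arc $a\to b$ gives $\mu(b)\le\mu(a)+1$ and $\nu(a)\le\nu(b)+1$.

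\textbf{Assertions (1) and (2).} Take $x\in M_b$. Since $|N(x)\cap A|\ge2$ and $x\in D'_b\cap U'_b$, the case distinction in \ref{S7-1} always produces an arc from $U_a$ into $x$ and an arc from $x$ into $D_a$. There are four cases: both $U_a\setminus M_a$ and $D_a\setminus M_a$ are met; only $D_a\setminus M_a$ is met, where $M_a\to x$ is used and $M_a\subseteq U_a$; only $U_a\setminus M_a$ is met, where $x\to M_a$ is used; neither is met, where the $y\to x\to y'$ rule uses two $M_a$-neighbours, which exist because $|N(x)\cap A|\ge 2$. Hence $\mu(x)\le 4+1=5$ and $\nu(x)\le 5$.

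For $x\in D_b\setminus M_b$ incident to a new edge, the edge comes from \ref{S7-2} or \ref{S7-3}, and both rules orient all of $[x,D_a]$ as $x\to D_a$. Since $[x,D_a]\neq\emptyset$ and all these edges were unoriented before this step, this gives $\nu(x)\le5$. For the bound on $\mu(x)$: in \ref{S7-2} the vertex $y\in U_b\setminus M_b$ satisfies $U_a\to y$, which is nonempty by the definition of $U'_b$, so $\mu(y)\le5$ and $\mu(x)\le6$. In \ref{S7-3} we have $M_b\ni x'\to x$, and part (1) gives $\mu(x)\le6$. The $U_b$ case is symmetric.

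\textbf{Assertion (3).} For the first claim, we use the second bullet of \Cref{rm:edgeanticross}. A vertex of $C\cap D_b$ has a neighbour in $U_b$, and that neighbour lies in $U_b\setminus M_b$ or in $M_b$. Either \ref{S7-2} or \ref{S7-3} applies to this edge unless the edge was already oriented, and the latter means it was oriented earlier in this same step. For the second claim, a vertex $a\in A$ receives new arcs only as an endpoint in $[U_a,\cdot]$ or $[\cdot,D_a]$. If $a\in D_a$, then it only gains in-arcs, from $M_b$, $D_b$ or $U_a$, while $\nu(a)\le4$. Its $\mu$-bound comes from the tail of such an arc: the tail has $\mu\le 6$ by (1) and (2), or $\mu\le4$ if it lies in $U_a$, so $\mu(a)\le7$. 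The case $a\in U_a$ is symmetric, and for $a\in M_a$ both parameters are at most $4$ already.

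\textbf{Assertions (4) and (5), and the main obstacle.} For (4), list the pairs of parts: $[U_a\setminus M_a,D_b\setminus M_b]=\emptyset$ and $[D_a\setminus M_a,U_b\setminus M_b]=\emptyset$ by \Cref{rm:edgeanticross}. Every other type is covered by one of \ref{S7-1}--\ref{S7-4}, except the excluded $[M_a,M_a]$ and $[M_b,M_b]$; edges with both ends in $D$ or both in $U$ are not cross edges. The ``in particular'' part follows because neither endpoint of an excluded edge lies in $D\setminus U$ or in $U\setminus D$. For (5), an arc $y\to x$ with $x\in D$ and $y\in U$ was produced by one of the rules above. In each case, the head has an outgoing path to $D_a$ of length at most $1$, giving $\nu(x)\le5$ and hence $\nu(y)\le6$. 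Symmetrically, $\mu(y)\le5$ and hence $\mu(x)\le6$.

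The main obstacle is the well-definedness issue hidden in (2), (3) and (5): an edge may already have been oriented before \Cref{step:DUb} in the opposite direction, or earlier within the same rule, since the rules only orient unoriented edges. We would first show that every edge incident to $B$ is unoriented before this step, which holds by \Cref{claimcenter2}. Then we would check that all the rules prescribe $U\to D$ consistently on shared edges, which is the second bullet of \Cref{rm:crossing}. For \ref{S7-4}, the edges inside $A$ may have been oriented earlier. In that case we would need the bounds from \Cref{claimcenter} and \Cref{lem:gold-deep-layer} to handle arcs $x\to y$ with $x\in D_a\setminus M_a$ and $y\in U_a\setminus M_a$. We expect this to require a short argument that such arcs already satisfy $\omega\le9$, so we would accept weaker bounds there and state (5) only for arcs in the direction $y\to x$.
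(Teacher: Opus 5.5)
Your route is the paper's: you read (1)--(4) off the rules of \Cref{step:DUb}, using $\nu\le4$ on $D_a$, $\mu\le4$ on $U_a$, and the fact that no edge at a vertex of $B$ is oriented before this step. Your verification of (1)--(3) is correct and more explicit than the paper's one-line appeal.

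There is, however, a missing case in (5). You assert that every arc $y\to x$ with $x\in D$, $y\in U$ was produced by a rule of \Cref{step:DUb}. That is false as stated, since arcs inside $A$ may predate this step. The paper treats this case separately, and the fix is one line. Every vertex of $B$ is untouched before \Cref{step:DUb}, so such an arc has $x\in D_a$ and $y\in U_a$. Hence $\nu(x)\le4$ and $\mu(y)\le4$, and the arc gives $\mu(x)\le5$ and $\nu(y)\le5$. Your closing worry about pre-existing arcs from $D_a\setminus M_a$ to $U_a\setminus M_a$ is beside the point. (5) never concerns the $D\to U$ direction, and (4) only requires an edge to be oriented. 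So no appeal to \Cref{claimcenter} or \Cref{lem:gold-deep-layer} is needed.

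In (4), the sentence ``every other type is covered by one of \ref{S7-1}--\ref{S7-4}'' is not literally true for edges in $[M_a,U_b\setminus M_b]$ and $[D_b\setminus M_b,M_a]$, since no rule orients these unconditionally. The missing step is short. A vertex of $B\setminus M_b$ with a neighbour in $M_a=D_a\cap U_a$ lies in $D'_b\cap U'_b$, hence in $C$. By \Cref{rm:edgeanticross} it then has a neighbour on the opposite side of $B$. So \ref{S7-2} or \ref{S7-3} fires on that edge and orients all of its $A$-edges. This is exactly the argument you gave for the first claim of (3), so it suffices to invoke it here.
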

\begin{proof}
By \Cref{step:DUb} and \Cref{rm:crossing}, (1)--(4) holds. 

  For (5), suppose that $y\to x$ is oriented in \Cref{step:DUb}. Then $\nu(x)\leq5$ and $\mu(y)\leq5$ by \Cref{step:DUb}. Thus,   $\nu(y)\leq1+\nu(x)\leq6$ and $\mu(x)\leq1+\mu(y)\leq6$ since $y\to x$. 

     Suppose that  $y\to x$ is  oriented before \Cref{step:DUb}.  Then $x\in D_a$ and $y\in U_a$  since vertices in $B$ are disjoint with $V(G^{O_{\ref{step:L23R23}}})$ by \Cref{claimcenter2}. By the definition of $D_a$ and $U_a$, we have that $\nu(x)\leq4$ and $\mu(y)\leq4$. Thus, $\mu(x)\leq1+\mu(y)\leq5$ and $\nu(y)\leq1+\nu(x)\leq5$ by $y\to x$.      
\end{proof}

\subsection{Good paths and potential bounds}\label{sec:good-paths}
 Note that all the edges of form $[D_a\cup D_b,U_a\cup U_b]\setminus [M_a,M_a]\cup [M_b,M_b]$ have been  oriented after \Cref{step:DUb}. However, there may still have some edges of $G$ which are not oriented yet. By \Cref{claimcenter2}, if $x\in D_a\cup D_b$  (resp. $x\in U_a\cup U_b$), then $\nu(x)$ (resp. $\mu(x)$) may be small, and so it is enough to consider $\mu(x)$ (resp. $\nu(x)$). In order to reach the goal that each $x\in V $ has both small $\nu(x)$ and $\mu(x)$, we propose the definition of ``good path".

Recall that $M^O_i$ is the resulting  mixed graph  after Step $i$. 
For $x\in D_a\cup D_b$, if there is a $tx$-mixpath $P$ in $M^O_{\ref{step:DUb}}$ where $t\in \{u,v\}$ satisfying that 
\begin{itemize}
 \item  there exists a vertex $w_x\in V(P)$ such that the vertices in sub $w_xx$-path  are in $D_a\cup D_b$,  
 \item edges in $w_xx$-path are either oriented  towards $x$ or unoriented, 
and 
    \item  the subpath, $tw_x$-path, is oriented as a dipath from $t$ to $w_x$ in $G^{O_{\ref{step:DUb}}}$. 
\end{itemize}
then we call $P$ a  {\it good path} of $x$ in $M^O_{\ref{step:DUb}}$.   Denote it by $P_{vx}$ if $t=v$, and $P_{ux}$ if $t=u$. 
Let $\ell(P_{vx})=e(P_{vx})$ and $\ell(P_{ux})=e(P_{ux})+2$. Then define  $$
\U(x)=\min\{\ell(P): P \text{ is a good path of } x
\text{ in } M^O_{\ref{step:DUb}}\}.
$$ 
Note that for some $x$, it is possible that there is no good path in $M^O_{\ref{step:DUb}}$, then let $\U(x)= \infty$.
Similarly, for $y\in U_a\cup U_b$, if there is  a $yt$-mixpath $P$ in $M^O_{\ref{step:DUb}}$ where $t\in \{u,v\}$ satisfying that 
\begin{itemize}
 \item   there exists a vertex $w_y\in V(P)$ such that the vertices in sub $yw_y$-path
 are in $U_a\cup U_b$,  
 \item edges in $yw_y$-path are either towards $w_y$ or unoriented, 
and 
    \item the subpath, $w_yt$-path  is oriented as a dipath from $w_y$ to $t$ in $G^{O_{\ref{step:DUb}}}$.  
   \end{itemize}
then we call $P$ a  good path  of $y$  in $M^O_{\ref{step:DUb}}$.   Denote it by $P_{yv}$ if $t=v$, and $P_{yu}$ if $t=u$. 
Let $\ell(P_{yv})= e(P_{yv}) +2$ and $\ell(P_{yu})= e(P_{yu}) $. 
Then define  $$
\D(y)=\min\{\ell(P): P \text{ is a good path of } y
\text{ in } M^O_{\ref{step:DUb}}\}.
$$
Also, for some $y$, it is possible that there is no good path in $M^O_{\ref{step:DUb}}$, then let $\D(y)=\infty$.

In the above definitions, a subpath is allowed to have length zero.
All good paths and the values of $\U$ and $\D$ are defined in
$M^O_{\ref{step:DUb}}$ and remain fixed in the subsequent
orientation steps. 

\begin{remark}\label{lem:good-path-gluing}
By the definition of good path, all the following hold.
\begin{itemize}
    \item[(1)] For $x\in D_a\cup D_b$ and $y\in U_a\cup U_b$,
    we have that $\U(x)\leq\mu(x)$ and $\D(y)\leq\nu(y)$,
    where $\mu$ and $\nu$ are computed in
    $G^{O_{\ref{step:DUb}}}$.

    \item[(2)] Let $x,y\in D_a\cup D_b$ with $xy\in E$.
    If $xy$ is unoriented or oriented as $x\to y$ in
    $M^O_{\ref{step:DUb}}$ and $\U(x)<\infty$, then
    $\U(y)\leq\U(x)+1$.

    \item[(3)] Let $x,y\in U_a\cup U_b$ with $xy\in E$.
    If $xy$ is unoriented or oriented as $x\to y$ in
    $M^O_{\ref{step:DUb}}$ and $\D(y)<\infty$, then
    $\D(x)\leq\D(y)+1$.
\end{itemize}
Consequently, if $xy$ is unoriented in
$M^O_{\ref{step:DUb}}$ and $x,y\in D_a\cup D_b$, then
$\U(x)$ is finite if and only if $\U(y)$ is finite. 
When both values are finite, it holds that
$  |\U(x)-\U(y)|\leq1.$ 
The same argument holds for $\D$ when $x,y\in U_a\cup U_b$.
\end{remark}

Now, we show that actually in $M^O_{\ref{step:DUb}}$,  $\U(s)$ for $s\in D_a\cup D_b$ and $\D(t)$ for $t\in U_a\cup U_b$  are actually finite, in  \Cref{claimcontrol1}, \Cref{calim:boundmunuV-GO1}, \Cref{claim:X43F3bdd} and \Cref{claim:munuF2} according to the locations of $s$ and $t$.   

\begin{lemma}\label{lem:saturation-stability}
Let $H=G^{O_{\ref{step:sumatmost8}}}$, and suppose that
$\mu_H(s)+\nu_H(s)\le9$ for every $s\in V(H)$, as asserted
in \Cref{claimcenter}.
For every orientation $\overrightarrow{G}$ extending $H$
and every $s\in V(H)$, it holds that
$\mu_{\overrightarrow{G}}(s)=\mu_H(s)$ and
$\nu_{\overrightarrow{G}}(s)=\nu_H(s)$.
Consequently, $\U(s)=\mu_H(s)$ for $s\in V(H)\cap D$,
and $\D(s)=\nu_H(s)$ for $s\in V(H)\cap U$.
\end{lemma}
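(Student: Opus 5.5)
The plan is to prove the two inequalities $\mu_{\overrightarrow{G}}(s)\le\mu_H(s)$ and $\mu_{\overrightarrow{G}}(s)\ge\mu_H(s)$ separately; the claims for $\nu$ are symmetric, reversing all arcs and exchanging $u,v$. The first inequality is just monotonicity, as recorded in the Convention: every dipath of $H$ survives in $\overrightarrow{G}$. The content of the lemma is therefore the reverse inequality. I would derive it from the saturation property of \Cref{step:sumatmost8}: when that step terminates, no path all of whose edges are unoriented admits an orientation $\overrightarrow{P}$ with $\omega(\overrightarrow{P})\le8$, where $\omega$ is computed in $H$.

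For the reverse inequality, suppose some extension $\overrightarrow{G}$ and some $s\in V(H)$ have $\mu_{\overrightarrow{G}}(s)<\mu_H(s)$. Among all such pairs, choose a dipath $P$ in $\overrightarrow{G}$ realizing $\mu_{\overrightarrow{G}}(s)$. This means $P$ starts at $v$, or starts at $u$ with an extra $2$ added, and it contains the minimum possible number of arcs not in $H$. Since $P$ is not a dipath of $H$, I split $P$ at the vertices of $V(H)$. The segments whose edges lie in $H$ are dipaths of $H$. Every other segment is a maximal subpath $Q$ from $a$ to $b$ with $a,b\in V(H)$, with internal vertices outside $V(H)$, and with all edges unoriented in $H$. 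Here one has to check that the endpoints of $P$, namely $u$ or $v$ and $s$, lie in $V(H)$. One also has to check that an arc of $P$ cannot lie outside $H$ while both its ends are in $V(H)$ unless it forms such a segment of length one. Both points follow from the definition of $V(H)$.

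Take the last such segment $Q$, from $a$ to $b$. The part of $P$ after $b$ is a dipath of $H$, so $\mu_H(s)\le\mu_H(b)+\partial_H(b,s)$ and the prefix of $P$ up to $b$ has adjusted length $<\mu_H(b)$. Combining this with the minimality of $P$, I claim that $\mu_H(a)+e(Q)<\mu_H(b)$. The reason is that if the prefix up to $a$ already had adjusted length $\ge\mu_H(a)$, we could replace it by an $H$-dipath and get fewer new arcs. Using the hypothesis $\nu_H(b)\le9-\mu_H(b)$, I then get
\[
\omega(\overrightarrow{Q})=\mu_H(a)+e(Q)+\nu_H(b)<\mu_H(b)+\nu_H(b)\le9 .
\]
Hence $\omega(\overrightarrow{Q})\le8$, which means $Q$ was still available for \Cref{step:sumatmost8} when that step terminated. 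This contradicts saturation.

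The main obstacle I expect is the minimality bookkeeping needed to justify $\mu_H(a)+e(Q)<\mu_H(b)$. If the prefix before $a$ itself uses new arcs, I would first apply induction on the number of new segments to the pair $(\overrightarrow{G},a)$. This replaces that prefix by an $H$-dipath of adjusted length at most $\mu_{\overrightarrow{G}}(a)$, and the replacement is legitimate because $a\in V(H)$. There is also the $+2$ offset for paths starting at $u$, which is absorbed uniformly into the definition of $\mu$.

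For the consequences, the inequality $\U(s)\le\mu(s)$ holds in $G^{O_{\ref{step:DUb}}}$ by \Cref{lem:good-path-gluing}(1), and that value equals $\mu_H(s)$ by the first part. Conversely, I would orient every unoriented edge of a good path of $s$ towards $s$, and orient the remaining edges arbitrarily. This gives an extension of $H$ in which the good path is a dipath of adjusted length $\U(s)$. The first part then yields $\mu_H(s)\le\U(s)$, so $\U(s)=\mu_H(s)$. The identity $\D(s)=\nu_H(s)$ for $s\in V(H)\cap U$ follows symmetrically.
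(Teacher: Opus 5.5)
Your argument is correct and is essentially the paper's. Both proofs isolate a segment $Q$ from $a$ to $b$ of a short dipath whose edges are unoriented in $H$ and show $\mu_H(a)+e(Q)<\mu_H(b)$. Both then use $\mu_H(b)+\nu_H(b)\le 9$ to get $\omega(\overrightarrow{Q})\le 8$, which contradicts the termination of \Cref{step:sumatmost8}. The consequence for $\U$ and $\D$ is proved exactly as you do.

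The only difference is bookkeeping. The paper takes $b$ to be the first vertex of $H$ on $P$ whose prefix has adjusted length below $\mu_H(b)$, and $a$ the last vertex of $H$ before $b$. This gives $\mu_H(a)\le$ the adjusted length of the prefix $P_a$ for free, with no induction, and the case that $Q$ is an arc of $H$ is dismissed via $\mu_H(b)\le\mu_H(a)+1$. Your last-segment version instead needs the induction from your final paragraph. It should be stated for all dipaths from $\{u,v\}$ to a vertex of $V(H)$ with fewer new segments, not for dipaths realizing $\mu_{\overrightarrow{G}}(a)$, since a realizing dipath for $a$ may contain more new arcs. Your first minimality sentence is also inverted. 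When the prefix to $a$ has adjusted length at least $\mu_H(a)$ the claim is immediate; the case needing work is when it is shorter than $\mu_H(a)$.
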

\begin{proof}
Let $\overrightarrow{G}$ be an orientation extending $H$.
By monotonicity, $\mu_{\overrightarrow{G}}(s)\le\mu_H(s)$
and $\nu_{\overrightarrow{G}}(s)\le\nu_H(s)$ for every
$s\in V(H)$.

Suppose that $\mu_{\overrightarrow{G}}(s)<\mu_H(s)$
for some $s\in V(H)$.
Choose a dipath $P$ from $t\in\{u,v\}$ to $s$.
Put $\varepsilon=0$ if $t=v$ and $\varepsilon=2$ if $t=u$,
so that $e(P)+\varepsilon=\mu_{\overrightarrow{G}}(s)$.
For each $x\in V(P)$, let $P_x$ be the subpath of $P$
from $t$ to $x$. Let $b$ be the first vertex of $H$ on $P$ such that
$e(P_b)+\varepsilon<\mu_H(b)$, 
and let $a$ be the last vertex of $V(H)$ before $b$ on $P$.
Such a vertex exists, since $\mu_H(v)=0$ and $\mu_H(u)\le2$
imply that $b\ne t$.
Let $Q$ be the $ab$-subpath of $P$.
Then $Q$ has no internal vertex in $V(H)$. 
By the choice of $b$, we have that
$\mu_H(a)\le e(P_a)+\varepsilon$ and
$e(P_b)+\varepsilon\le\mu_H(b)-1$.
Consequently, $\mu_H(a)+e(Q)\le\mu_H(b)-1$.

If $Q$ is an arc of $H$, this contradicts
$\mu_H(b)\le\mu_H(a)+1$.
Otherwise, all edges of $Q$ are unoriented in $H$,
since $Q$ has no internal vertex in $H$.
By \Cref{claimcenter}, it follows that
$\mu_H(a)+e(Q)+\nu_H(b)
\le\mu_H(b)+\nu_H(b)-1\le8$,
contrary to the stopping condition of
\Cref{step:sumatmost8}.
Hence, $\mu_{\overrightarrow{G}}(s)=\mu_H(s)$.

The proof for $\nu$ is symmetric.
 
Finally, let $s\in V(H)\cap D$.
By \Cref{lem:good-path-gluing} and monotonicity,
$\U(s)\le\mu_H(s)$.
Choose a good path $P$ of $s$ with $\ell(P)=\U(s)$,
and orient its unoriented edges along $P$.
Extend the resulting partial orientation to an orientation
$\overrightarrow{G}$ of $G$.
By the first assertion,
$\mu_H(s)=\mu_{\overrightarrow{G}}(s)\le\ell(P)=\U(s)$.
Thus, $\U(s)=\mu_H(s)$.
The assertion for $\D$ is symmetric. 
\end{proof}

\begin{proposition}\label{claimcontrol1}
 Let $s\in V(G^{O_{\ref{step:F0X31X41}}})$. Then all of the following hold in $G^{O_{\ref{step:DUb}}}$. 
\begin{itemize}
    \item [(1)]If $s\in L_{11}$, then  $\nu(s)\leq1$ and $\mu(s)\leq 3$. If $s\in R_{11}$, then $\nu(s)\leq 3$ and $\mu(s)\leq1$.
    \item [(2)] If $s\in L_{21}$, then $\nu(s)\leq2$ and $\mu(s)\leq6$. If $s\in R_{21}$, then $\nu(s)\leq6$ and $\mu(s)\leq2$.
    \item [(3)] If $s\in L_{31}$, then $\nu(s)\leq3$ and $\mu(s)\leq5$. If  $s\in R_{31}$, then $\nu(s)\leq5$ and $\mu(s)\leq3$.
    \item [(4)]  If  $s\in X_2$, then  $\nu(s)\leq2$ and $\mu(s)\leq2$.
    \item [(5)] If $s\in X_{31}\cap V(G^{O_{\ref{step:F0X31X41}}})\setminus\tilde{X}_3$, then $\nu(s)\leq3$ and $\mu(s)\leq3$. If  $s\in V(G^{O_{\ref{step:F0X31X41}}})\cap\tilde{X}_3$, then $\nu(s)\leq3$ and $\mu(s)\leq4$ or $\nu(s)\leq4$ and $\mu(s)\leq3$.
    \item [(6)] If  $s\in X_{41}$, then $\nu(s)\leq4$ and $\mu(s)\leq 4$.
\end{itemize}
\end{proposition}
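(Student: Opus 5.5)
The plan is to read off explicit short dipaths from the arcs fixed in \ref{item:step:F0X31X411} and \ref{item:step:F0X31X412}, together with $u\to v$. By monotonicity (arcs are never reversed), any bound valid in $G^{O_{\ref{step:F0X31X41}}}$ remains valid in $G^{O_{\ref{step:DUb}}}$. So we only need, for each class, one dipath from $s$ to $u$ (or to $v$, at cost $+2$) and one dipath from $v$ (or from $u$, at cost $+2$) to $s$. The symmetric $R$-side claims follow by interchanging $u,v$, $L,R$ and $\mu,\nu$.

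\textbf{Parts (1)--(4).} These come from the core cycle $u\to v\to R_{11}\to X_2\to L_{11}\to u$ and the arcs $R_{11}\to L_{11}$, $R_{11}\to R_{21}$, $L_{21}\to L_{11}$, $R_{21}\to X_2\to L_{21}$, $R_{21}\to R_{31}\to L_{31}\to L_{21}$.
\begin{itemize}
\item For $s\in L_{11}$: the arc $s\to u$ gives $\nu(s)\le1$, and the path $v\to R_{11}\to s$ gives $\mu(s)\le2$. One subtlety is that $s$ need not be adjacent to $R_{11}$. Then $s$ either has an $X_2$-neighbour, giving $v\to R_{11}\to X_2\to s$ and $\mu(s)\le3$, or $s$ is handled by \ref{item:step:F0X31X413}, which yields $y\to s$ with $y\in L_{11}$ adjacent to $R_1$, again $\mu(s)\le3$. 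This is consistent with \Cref{obs:munu}.
\item For $s\in X_2$: the paths $s\to L_{11}\to u$ and $v\to R_{11}\to s$ give $\nu(s),\mu(s)\le2$.
\item For $s\in L_{21}$: the path $s\to L_{11}\to u$ (or $s\to L_{12}\to u$) gives $\nu(s)\le2$.
\item For $\mu(s)$ with $s\in L_{21}$: the definition of $L_{21}$ provides a neighbour in $R_2\cup X_{31}\cup L_{31}$, and the rules orient each such edge toward $s$.
\begin{itemize}
\item If the neighbour is in $R_{21}$, use $v\to R_1\to R_{21}\to s$, giving $\mu\le3$.
\item If it is in $X_{31}$, use $v\to R_1\to R_{21}\to X_{31}\to s$, giving $\mu\le4$.
\item If it is in $L_{31}$, that vertex has a neighbour in $R_3\cup X_{41}\cup\tilde X_3$, fed from $R_{31}$. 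The resulting chain $v\to R_1\to R_{21}\to R_{31}\to(X_{41}\cup\tilde X_3)\to L_{31}\to s$ has length $6$.
\end{itemize}
\item For $s\in L_{31}$: the path $s\to L_{21}\to L_{1}\to u$ gives $\nu(s)\le3$, and the same chain truncated gives $\mu(s)\le5$.
\end{itemize}

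\textbf{Parts (5) and (6).} These use \ref{item:step:F0X31X412}.
\begin{itemize}
\item For $s\in X_{31}\setminus\tilde X_3$: $s$ has at least two neighbours in $L_2\cup X_2\cup R_2$. If it has an $R_2$-neighbour (necessarily in $R_{21}$, since $[X_3,R_{22}\cup R_{23}]=\emptyset$), then $v\to R_1\to R_{21}\to s$ gives $\mu\le3$. For $\nu$, either $s\to L_{21}\to L_1\to u$ or $s\to X_2\to L_{11}\to u$ gives $\nu\le3$. The case with no $R_2$-neighbour is symmetric, using $X_2\to s$.
\item For $\tilde X_3$: by \Cref{claim1}, $s$ has an $X_2$-neighbour, and the same two-sided argument yields the asymmetric bounds $(3,4)$ or $(4,3)$, depending on whether an $R_{31}$-neighbour exists.
\item For $X_{41}$: the paths $v\to R_1\to R_{21}\to R_{31}\to s$ and $s\to L_{31}\to L_{21}\to L_1\to u$ give $4,4$.
\end{itemize}

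\textbf{Main obstacle.} The delicate point is verifying, in each case, that the intermediate vertices invoked actually exist with the needed adjacencies. Examples are an $L_{31}$-neighbour of $s\in L_{21}$ realising the chain through $R_{31}$ or $\tilde X_3$, and vertices of $X_{31}\cap\tilde X_3$. For these I would rely on the defining conditions of the $L_{ij},R_{ij},X_{ij}$ together with \Cref{claim1}, and check that no earlier arc conflicts with the directions used.
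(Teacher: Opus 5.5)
Your approach is the paper's: its proof just says the bounds are read off from the arcs of \Cref{step:F0X31X41}, together with monotonicity, and your case-by-case dipaths spell that out. However, two of the links you write down are not always available as stated, and they are exactly the points you flagged as the main obstacle.

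First, take $s\in L_{21}$ with a neighbour $x\in X_{31}$. The vertex $x$ need not have an $R_2$-neighbour. In that case $\mathrm{dist}(x,v)=3$ forces a neighbour of $x$ in $X_2$, since $x$ cannot be adjacent to $L_1$. Because $s\in N(x)\cap L_2$, the first rule of \ref{item:step:F0X31X412} orients $X_2\to x$. Hence $\mu(x)\le 3$ and $\mu(s)\le 4$ anyway.

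Second, in the chain through $L_{31}$, a vertex $x\in\tilde X_3$ adjacent to $w\in L_{31}$ is not necessarily ``fed from $R_{31}$''. If $N(x)\cap R_{31}=\emptyset$, use the $\tilde X_3$-rule of \ref{item:step:F0X31X412}; its hypothesis $N(x)\cap L_{31}\neq\emptyset$ is witnessed by $w$. Together with the $X_2$-neighbour from \Cref{claim1}, it gives $X_2\to x$, so $\mu(w)\le 4$. Your length-$6$ chain is only needed when the link is an $X_{41}$-vertex, or a $\tilde X_3$-vertex that does have an $R_{31}$-neighbour.

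In (5) you should also say where the hypothesis $s\in V(G^{O_{\ref{step:F0X31X41}}})$ enters. Two kinds of vertices receive no arc in \Cref{step:F0X31X41}: a vertex of $X_{31}\setminus\tilde X_3$ with no neighbour in $L_2\cup R_2$ (both of its $\ge 2$ neighbours may lie in $X_2$), and a vertex of $\tilde X_3$ with no neighbour in $L_{31}\cup R_{31}$. Your ``symmetric'' case (no $R_2$-neighbour, use $X_2\to s$) needs an $L_2$-neighbour, respectively an $L_{31}$-neighbour. The hypothesis excludes precisely the leftover case.

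The remaining existence facts are routine distance checks:
\begin{itemize}
\item Every $X_2$-vertex has neighbours in $L_1$ and in $R_1$, and these lie in $L_{11}$ and $R_{11}$.
\item Every $R_{21}$-vertex has an $R_1$-neighbour, necessarily in $R_{11}\cup R_{12}$, so $\mu\le2$ on $R_{21}$.
\item Every $R_{31}$-vertex has a neighbour in $R_2$, since its neighbours at distance $2$ from $v$ cannot lie in $X_2\cup L_1$; that neighbour lies in $R_{21}$.
\item $R_3$-neighbours of vertices in $L_3$, $X_{41}$ or $\tilde X_3$ lie in $R_{31}$ by definition.
\end{itemize}
With these patches your argument is complete.
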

\begin{proof}
    We can obtain the value of $\nu(s)$ and $\mu(s)$ for $s\in V(G^{O_{\ref{step:F0X31X41}}})$ by \Cref{step:F0X31X41}.
\end{proof}

\begin{proposition}\label{calim:boundmunuV-GO1}
    The following hold in $M^{O_{\ref{step:DUb}}}$. 
    \begin{itemize}
        \item[(1)] If $s\in L_{12}\cup L_{13}$, then $\nu(s)\le 5$ and $\mu(s)\leq 7$, furthermore, if $\mu(s)\in \{6,7\}$, then $R_{2}\neq\emptyset$ or $s$ lies on a dicycle which passes   $u$. If $s\in R_{12}\cup R_{13}$, then $\mu(s)\le 5$ and $\nu(s)\leq 7$, furthermore, if $\nu(s)\in \{6,7\}$, then $L_{2}\neq\emptyset$  or $s$ lies on a dicycle which passes   $v$.
\item[(2)] If $s\in L_{22} \cup ( L_{23}\cap F_1)$, then  $\nu(s)\leq6$ and $\mu(s)\leq7$. If $s\in R_{22} \cup (R_{23}\cap F_1)$, then $\nu(s)\leq7$ and $\mu(s)\leq6$. 
        \item[(3)] If $s\in X_{32}$, then $\nu(s)\leq6$ and $\mu(s)\leq6$. 
 \end{itemize}
\end{proposition}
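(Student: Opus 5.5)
The plan is to treat each part by locating $s$ in $F_1$ and tracing the steps that orient edges at $s$. By \Cref{claim3}, every vertex in the listed sets lies in $F_1$, and by \Cref{lem:saturation-stability} the values $\mu(s),\nu(s)$ computed in $G^{O_{\ref{step:sumatmost8}}}$ do not change later. So it suffices to work in $G^{O_{\ref{step:sumatmost8}}}$ and use the arc and weight bounds of \Cref{claimcenter} and \Cref{lem:gold-deep-layer}. The $R$-statements follow from the $L$-statements by the $u\leftrightarrow v$, $\mu\leftrightarrow\nu$ symmetry, so I only treat the $L$ side.

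For (1), let $s\in L_{12}\cup L_{13}$. Then $su\in E$ and $f(s)\in\{1,1.5\}$. If $su$ is oriented $s\to u$, then $\nu(s)=1$. Moreover, the edge was oriented in \Cref{step:F0X31X41}, \Cref{step:F12} or a cycle step, and \Cref{claimcenter} gives $\mu(s)+1+\nu(u)\le 9$, hence $\mu(s)\le 8$. To improve this to $7$, I would split by the step that oriented $su$.
\begin{itemize}
\item In \ref{item:step:F0X31X411} or \ref{item:step:F0X31X415}, the closed diwalk through $uv$ has length at most $9$, so $\mu(s)\le 7$; equality forces a path $v\to R\to R_{21}\to L_{21}\to L_{12}$, hence $R_2\neq\emptyset$.
\item In \Cref{step:F12}, \Cref{prop:F118} gives weight at most $7$.
\item In \Cref{step:cycle(auv)}, \Cref{prop:cycleauv} gives weight at most $7$, and equality gives a dicycle through $u$.
\end{itemize}
If instead $u\to s$, then $\mu(s)\le 3$, and $\nu(s)$ is bounded via the weight of the arc $u\to s$: $\mu(u)+1+\nu(s)\le 9$, with $\mu(u)\le 2$, gives $\nu(s)\le 6$. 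I must sharpen this to $5$. I would argue that $u\to s$ arises only in \ref{item:F12221}, \ref{item:F12224}, \Cref{step:F12224} or a cycle through $u$, each of which completes a dicycle through $u$ of length at most $5$, or a path of weight at most $7$, giving $\nu(s)\le 5$.

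For (2), the vertices in $L_{22}\cup(L_{23}\cap F_1)$ are not adjacent to $u$. Their edges to $F_0$ go to $L_{11}=I_2$, so $f(s)\in\{2,2.5\}$. Using $\alpha_2=3$ and $\beta_2=1$, an arc $I_2\to s$ gives $\mu(s)\le 4$, and the weight bound $9$ gives $\nu(s)\le 9-1-3=5$. An arc $s\to I_2$ gives $\nu(s)\le 2$ and $\mu(s)\le 9-1-1=7$. In both cases $\nu(s)\le 6$ and $\mu(s)\le 7$ follow, with slack in $\nu$. For (3), $s\in X_{32}$ has a neighbour in $X_2=I_3$ by \Cref{claim1}, with $\alpha_3=\beta_3=2$. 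Either orientation of that edge gives one parameter at most $3$ and the other at most $9-1-2=6$.

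The main obstacle is the sharp bound $\nu(s)\le 5$ in (1), together with the characterisation of the case $\mu(s)\in\{6,7\}$. This requires the case analysis of the cycle steps, because the generic weight-$9$ bound of \Cref{claimcenter} gives only $6$ and $8$. I would use the refined statements of \Cref{prop:cycleauv} for cycles through $u$, together with the fact (from \Cref{claimcenter}) that weight $9$ occurs only for cycles meeting $F_0$ in $I_2\cup I_3\cup I_4$, which cannot contain the edge $su$.
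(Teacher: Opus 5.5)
Your overall mechanism is the paper's: edge weights from \Cref{claimcenter} and the stepwise propositions, combined with distance lower bounds. As written, however, several steps do not go through.

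\textbf{Inequalities used in the wrong direction.} From an arc $y\to s$ you get $\nu(s)\le 8-\mu(y)$, so bounding $\nu(s)$ needs a \emph{lower} bound on $\mu(y)$. The quantities $\alpha_k,\beta_k$ are upper bounds.
\begin{itemize}
\item In (2) this produces the unjustified $\nu(s)\le 9-1-3=5$ (your ``slack''). The correct bound $\mu(y)\ge 2$ for $y\in L_{11}$ gives only $\nu(s)\le 6$, which happens to suffice.
\item Your identification of the $F_0$-neighbours in (2) is also wrong. A vertex of $L_{23}\cap F_1$ has $f(s)=3$, and all its $F_0$-neighbours lie in $X_2$, not $L_{11}$.
\item In (1) you need $\mu(u)=2$, not $\mu(u)\le 2$. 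In (3) your numbers are right only because $\mu=\nu=2$ exactly on $X_2$.
\end{itemize}
The paper avoids all of this by applying $\mu(s)+\nu(s)\le 9$ to $s$ itself, together with the layer lower bounds ($\mu\ge 3$, $\nu\ge 2$ on $L_2$; $\mu,\nu\ge 3$ on $X_3$).

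\textbf{The ``main obstacle'' in (1) is not one.} The fact you quote at the end already settles it. Weight $9$ occurs only for edges oriented in \Cref{step:cycle(a)}, and that step never orients an edge at $u$. Hence $\omega(su)\le 8$ and $\mu(s)+\nu(s)\le 8$. Now $\nu(s)\ge 1$, and $\mu(s)\ge 3$ because $s\notin L_{11}$ has no neighbour in $R_1$ and $u\to v$. This gives exactly $\mu(s)\le 7$ and $\nu(s)\le 5$, with no case analysis.

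\textbf{The ``furthermore'' clause is not established.}
\begin{itemize}
\item When $su$ is oriented in \Cref{step:F12}, \Cref{prop:F118} gives only $\omega(s\to u)\le 7$, i.e.\ $\mu(s)\le 6$. You say nothing about $\mu(s)=6$. This is where the paper's proof does its real work: using $f(s)=1$, a neighbour $t\in F_{11}$ with integral $f(t)$ and $t\to I_{f(t)}$, and $r$ with $f(r)=3$ and $X_2\to r$, it shows $\mu(s)\le 5$ in this case.
\item Your reason in the \Cref{step:F0X31X41} case is incoherent. A dipath $v\to R\to R_{21}\to L_{21}\to L_{12}$ has length $4$, which is incompatible with $\mu(s)=7$, and the case $\mu(s)=6$ is not treated at all.
\end{itemize}
For the clause exactly as stated there is a one-line repair. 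If $\mu(s)\ge 4$, then $s\to u$, since $u\to s$ would give $\mu(s)\le 3$. As $\mu(s)<\infty$ and $u\to v$, a shortest $(u,s)$-dipath closed by $s\to u$ is a dicycle through $u$.

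Be aware that the paper's stepwise argument proves more than this. When $su$ is not oriented in \Cref{step:F0X31X41}, it yields $s\in F_{12}$ and a dicycle of length five through $u$. That finer information is what is later invoked ``by the proof'' of this proposition in \Cref{cl:f3-4}, and your outline would not supply it.
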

\begin{proof} 
To see (1), by symmetry, suppose that $s\in L_{12}\cup L_{13}$.
The edge $su$ is oriented before \Cref{step:sumatmost8}
and is not oriented in \Cref{step:cycle(a)}.
By \Cref{claimcenter}, its weight is at most eight.
Thus, $\mu(s)+\nu(s)\le8$.
Since $\mu(s)\ge3$ and $\nu(s)\ge1$, it follows that
$\mu(s)\le7$ and $\nu(s)\le5$. 

    Now, suppose that $\mu(s)\in\{6,7\}$. We consider where $su$ is oriented.   If $\orw{su}\in E_{\ref{step:F0X31X41}}$, then $su$ is oriented in \ref{item:step:F0X31X411} and $s\in L_{12}$  with a dipath from $v$ to $s$ of form
$vR_1R_2R_3L_3L_2s$ or
$vR_1R_2R_3(X_4\cup\widetilde X_3)L_3L_2s$.
In either case, $R_2\ne\emptyset$.  Thus, assume that $s\in F_1\setminus V(G^{O_{\ref{step:F0X31X41}}})$.   
If $\orw{su}\in E_{\ref{step:cycle(auv)}}$, then
$su$ is oriented as $s\to u$, since $u\to s$ would give
$\mu(s)\le3$.
By \Cref{prop:cycleauv}, $\mu(s)+1\le7$, and hence
$\mu(s)=6$ and $\omega(s\to u)=7$.
If the operation orients a whole cycle, the required
dicycle is already obtained.
Otherwise, its selected path also has weight seven.
When its associated vertex is $u$, the equality assertion
of \Cref{prop:cycleauv} gives the required dicycle.
When its associated vertex is in $I_2$, a newly oriented
edge incident to $u$ occurs only on the four-edge path
of form (d), and that operation completes its considered
cycle as a dicycle through $u$.
Thus, this case is proved.
  If  $\orw{su}\in  E_{\ref{step:F13}}$, then $\mu(s)\le 5$ by \Cref{prop:E5}, a contradiction.  Thus, assume that $\orw{su}\in E_{\ref{step:F12}}$, where  $s\in F_{11}$ with $f(s)=1$ and there is some $t\in N(s)\cap F_1$ such that $f(t)$ is integer with $f(s)<f(t)$, otherwise, $\mu(s)\le5$.
Indeed, if all neighbors of $s$ in $F_1$ have $f$-value one,
\ref{item:F12224} gives $\mu(s)\le4$.
Otherwise, a neighbor $t$ with nonintegral $f(t)>1$ has
$f(t)\in\{1.5,2.5\}$, and \ref{item:F121} and \ref{item:F123}
give $\mu(s)\le\mu(t)+1\le5$.
Both bounds contradict $\mu(s)\in\{6,7\}$. 
Then $t\notin L_{21}$ since all the edges in $[L_{21},L_1]$ has been oriented in \Cref{step:F0X31X41}. Since $su$ is oriented in \Cref{step:F12}, we know that $ts$ is not oriented in \Cref{item:step:F0X31X414},  and so $t\in F_{11}$.   Then, we may further assume that $t\to I_{f(t)}$, otherwise, $\mu(s)\le 5$, a contradiction.  Since $t\to I_{f(t)}$ and $t\in F_{11}$,  there is $r\in F_1$ such that $f(r)>f(t)$ by \ref{item:F123}. Since the existence of $r$ and $\mu(s)\in \{6,7\}$,  we have $t\in L_{22}$ with   $f(t)=2$ and $f(r)=3$. Since $f(t)<f(r)$, if edges in $[r,X_2]$ are oriented in \Cref{step:F12}, then $X_2\to r$ by \ref{item:F12221}. Consequently, $\mu(s)\le \mu(r)+e(r\to t\to s)\le 5$, a contradiction. Thus, $r\in L_{21}$. Then again $X_2\to r$, a contradiction. 
    

 To see (2), we only prove the case when $s\in L_{22} \cup (L_{23}\cap F_1)$. Let $s\in L_{22} \cup (L_{23}\cap F_1)$, then $\nu(s)\ge 2$, $\mu(s)\ge 3$. Since $\nu(s)+\mu(s)\le 9$ by \Cref{claimcenter}, we are done.

To see (3), for $s\in X_{32}$, by the definition of $X_{3}$ and  \Cref{claim1}, it holds that $\mu(s)\ge 3$ and $\nu(s)\ge 3$. By \Cref{claimcenter}, we have that $\nu(s)\leq6$ and $\mu(s)\leq 6$.
\end{proof}

\begin{corollary}
\label{prop:boundF1}
            If $s\in F_1\cap  L $, then $\nu(s)\leq6$ and $\mu(s)\leq7$. If $s\in F_1\cap  R $, then $\nu(s)\leq7$ and $\mu(s)\leq6$.    
\end{corollary}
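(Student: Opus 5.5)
This corollary is a bookkeeping consequence of \Cref{claimcontrol1} and \Cref{calim:boundmunuV-GO1}. My plan is to split $F_1\cap L$ into the subclasses of the refined partition that can meet $F_1$ and read off the bounds from those results. By \Cref{claim3}, $L_3\subseteq F_2\cup F_3$. Also $L_{11}\subseteq F_0$. Hence
\[
F_1\cap L\subseteq L_{12}\cup L_{13}\cup L_{22}\cup (L_{21}\cap F_1)\cup (L_{23}\cap F_1).
\]
I would treat the case $s\in F_1\cap L$ and then invoke the $L\leftrightarrow R$, $u\leftrightarrow v$, $\mu\leftrightarrow\nu$ symmetry for $F_1\cap R$.

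The cases go as follows.
\begin{itemize}
\item If $s\in L_{12}\cup L_{13}$, \Cref{calim:boundmunuV-GO1}(1) gives $\nu(s)\le5\le6$ and $\mu(s)\le7$.
\item If $s\in L_{22}\cup(L_{23}\cap F_1)$, \Cref{calim:boundmunuV-GO1}(2) gives $\nu(s)\le6$ and $\mu(s)\le7$.
\item If $s\in L_{21}$, every vertex of $L_{21}$ is incident to an edge oriented in \ref{item:step:F0X31X411}, so $s\in V(G^{O_{\ref{step:F0X31X41}}})$. Then \Cref{claimcontrol1}(2) gives $\nu(s)\le2$ and $\mu(s)\le6$.
\end{itemize}
The bounds hold in $G^{O_{\ref{step:DUb}}}$ and persist in later orientations by monotonicity.

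There is one place where I would be careful. The value for $L_{21}$ in \Cref{claimcontrol1} is stated for vertices already oriented in Step 1, so I would check that no vertex of $L_{21}$ escapes \ref{item:step:F0X31X411}. By definition, $s\in L_{21}$ has a neighbour in $R_2\cup X_{31}\cup L_{31}$. The rule $R_{21}\to L_{21}$, $R_{21}\to X_{31}\to L_{21}$, $L_{31}\to L_{21}$ orients an edge at $s$ in every case. The argument is that such a neighbour in $R_2$ is forced into $R_{21}$ by the symmetric definition.

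As a fallback in case some subclass were missed, I would use \Cref{claimcenter}: $\mu(s)+\nu(s)\le9$ together with the lower bounds $\mu(s)\ge3$, $\nu(s)\ge2$ on $L_2$ (and $\mu\ge 2$, $\nu\ge1$ on $L_1$). This gives $\mu(s)\le7$ and $\nu(s)\le6$ directly for $s\in L_2$. For $L_1$, $\nu(s)\le6$ is immediate, and $\mu(s)\le7$ comes from (1). I expect the only real obstacle to be confirming this exhaustive case split; the bounds themselves are immediate.
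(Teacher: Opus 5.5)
Your proof is correct and is essentially the paper's argument. The paper likewise combines \Cref{claim3}, which keeps $L_{11}$ and $L_3$ out of $F_1$, with the case bounds of \Cref{claimcontrol1} for $L_{21}$ and of \Cref{calim:boundmunuV-GO1} for $L_{12}\cup L_{13}$ and $L_{22}\cup(L_{23}\cap F_1)$. Your check that every vertex of $L_{21}$ is incident to an arc from \ref{item:step:F0X31X411} is exactly what makes \Cref{claimcontrol1}(2) applicable. The only loose point is the unused fallback for $L_1$: $\mu+\nu\le9$ with $\mu\ge2$ yields only $\nu\le7$, unless you also use $\mu(s)\ge3$ on $L_{12}\cup L_{13}$.
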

\begin{proof}
  It follows  from  \Cref{claim3}, \Cref{prop:xinf0munu}, \Cref{claimcontrol1}, and  \Cref{calim:boundmunuV-GO1}.\end{proof}

Now,   we bound   $\U(s)$  or  $\D(s)$ for $s\in F_3$. For $s\in L_{32}\cap F_3$,   fix a  shortest path between $s$ and $R_{11}$ in $G$ and denote it  by $L_s=s v_1v_2\cdots$.   
Similarly, for $s\in R_{32}\cap F_3$ which is not adjacent to a vertex oriented in \Cref{step:L23R23}, fix a shortest path between $s$ and $L_{11}$ in $G$ and denote it by $R_s=s v_1v_2\cdots$.  

 \begin{proposition}\label{claim:X43F3bdd}
 The following hold in  $M^{O_{\ref{step:DUb}}}$:   
\begin{itemize}
    \item[(1)] If $s\in F_3\cap  (L_3\cup R_3) $, then  $\D(s)\leq8$ when $s\in U$;  $\U(s)\leq8$ when $s\in D$.  
      \item[(2)] If  $s\in (F_3\cup F_4) \cap X_{4}$, then $\D(s)\leq6$ when $s\in U$ and $\U(s)\leq6$ when $s\in D$. 
\end{itemize}
\end{proposition}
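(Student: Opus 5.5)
By the symmetry between $L,u,\mu,\U$ and $R,v,\nu,\D$, I treat only the cases $s\in D$ and $\U(s)$; the $U$/$\D$ cases are identical after interchanging roles. The method is to exhibit a good path of $s$ in $M^O_{\ref{step:DUb}}$: a directed initial segment from $v$ (or from $u$, at cost $+2$) to some $w_s\in D$, followed by a $D$-path to $s$ whose edges are unoriented or directed towards $s$.

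\textbf{Part (2), $s\in X_4\cap(F_3\cup F_4)$.}
\begin{itemize}
\item If $s\in X_{41}$, it is incident to an edge from \Cref{step:F0X31X41}. By \Cref{claimcontrol1}(6), $\mu(s)\le4$, so $\U(s)\le\mu(s)\le4$ by \Cref{lem:good-path-gluing}(1).
\item Otherwise $s\in X_{42}$, and by \Cref{claim1} it has a neighbour $y\in X_3$. Since $s\in F_3\cup F_4$, $y$ has no neighbour in $X_2$. As in the last paragraph of the proof of \Cref{claimcenter2}, $y$ then has neighbours in both $L_2$ and $R_2$, so \Cref{step:F0X31X41} gives $\mu(y),\nu(y)\le3$. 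Hence $y\in M_a\subseteq D_a\cap U_a$.
\item If $sy$ is unoriented, or oriented $y\to s$, then the directed path from $v$ to $y$ of length at most $3$, followed by $ys$, is a good path (with $w_s=y$ when $sy$ is oriented, or $w_s$ the last vertex of the dipath when it is unoriented). This gives $\U(s)\le4$.
\item If $sy$ was oriented $s\to y$ in \Cref{step:DUb}, then $s\in M_b$ or $s\in D_b$ with out-edges to $D_a$. In that case I use \Cref{claim:xinDyinU}(1)--(2): $s$ has an in-arc from $U_a$ or a directed route giving $\mu(s)\le6$, so $\U(s)\le\mu(s)\le6$.
\end{itemize}

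\textbf{Part (1), $s\in F_3\cap L_3$.} Then $\mathrm{dist}(s,u)=3$, so there is a shortest path $s\,y\,x\,u$ with $y\in L_2$, $x\in L_1$.
\begin{itemize}
\item If $s$ is incident to an oriented edge before \Cref{step:DUb}, then $\mu(s)\le8-3=5$ by \Cref{lem:gold-deep-layer} (or \Cref{claimcontrol1}(3) when $s\in L_{31}$), and we are done.
\item Otherwise $s\in B$, or $s\in A$ only via \Cref{step:L23R23}. Consider the operation \ref{s6-1} or the (b1)/(b2) witness that placed $s$. In \ref{s6-1}, $y$ gets $\mu(y)\le4$ or $\nu(y)\le3$. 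If $\mu(y)\le4$ and $sy$ is unoriented, then the dipath from $v$ to $y$ plus $ys$ is a good path with $\ell\le5$.
\item The difficult case is $y\in D$ while the $v$-side reaches $y$ only slowly. Here I use the fixed path $L_s$ towards $R_{11}$ (or $R_s$ towards $L_{11}$): from $s$ it reaches $F_0\setminus\{u\}$ within $3$ steps, since $s\in F_3$. I would argue that its first vertex $a$ with $\mu(a)$ controlled (an $A$-vertex in $U_a$, or a vertex of $F_1$ with $\mu\le6$ by \Cref{prop:boundF1}) is followed on $L_s$ by a segment lying in $D$ that is unoriented or directed towards $s$. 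This uses \Cref{claim:xinDyinU}(4): an unoriented edge with one endpoint in $D\setminus U$ has both endpoints there. The resulting good path has length $\mu(a)+\mathrm{dist}\le 6+2$, i.e.\ $\U(s)\le8$.
\item If an edge of $L_s$ crosses from $U$ to $D$, it is oriented $U\to D$ by \ref{S7-4}, or via the arcs $U_a\to U_b\to D_a\to D_b$. Either way the segment is still directed towards $s$, and \Cref{claim:xinDyinU}(5) gives $\mu\le6$ at the $D$-endpoint.
\end{itemize}

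The main obstacle is this last bookkeeping: guaranteeing that, along $L_s$, no edge is oriented away from $s$ inside $D$. I would handle it by induction on the position along $L_s$, using the orientation rules of \Cref{step:DUb} and \Cref{lem:saturation-stability} to replace $\U$ by $\mu_H$ on vertices already oriented in $H$. The bounds $\mu\le6$ at a vertex adjacent to $F_1$, plus $2$, then give the constant $8$.
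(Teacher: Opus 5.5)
You handle Part (2) essentially as the paper does, using an $X_{31}$-neighbour $y$ of $s$ with $\mu(y),\nu(y)\le3$. One case is missing: $sy$ may already be oriented $s\to y$ before \Cref{step:DUb}, because the paths in \Cref{step:sumatmost8} can pass through vertices of $F_3\cup F_4$ (e.g.\ $y_1\to s\to y_2$ for two such neighbours). This is easily patched: then $s\in V(G^{O_{\ref{step:sumatmost8}}})$, and \Cref{claimcenter} together with $\mu(s),\nu(s)\ge4$ gives $\mu(s)\le5$.

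The genuine gap is in Part (1). After the routine reductions, everything rests on the case where no neighbour of $s$ lies in $V(G^{O_{\ref{step:sumatmost8}}})$. The routine reductions are: $s$ is touched by \Cref{step:sumatmost8} or \Cref{step:DUb}; or $s$ has a neighbour $t\in V(G^{O_{\ref{step:sumatmost8}}})$, where $t\notin F_0\cup F_1$ forces $\mu(t),\nu(t)\le7$ (a case you never state). In the remaining case you only write ``I would argue that\dots''.

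Your easy subcase never occurs when $s\in D_b\setminus M_b$. A neighbour $y\in L_2\cap F_2$ oriented in \ref{s6-1} with $\mu(y)\le4$ lies in $U_a\setminus D_a$ (it is put into $A_1$ if also $\nu(y)\le4$), and \Cref{rm:edgeanticross} forbids such a $y$ from being adjacent to $s$.

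The numerical inputs you cite are also wrong:
\begin{itemize}
\item \Cref{prop:boundF1} gives only $\mu\le7$ on $F_1\cap L$, which yields $\U(s)\le9$, not $8$.
\item $L_s$ need not reach $F_0\setminus\{u\}$ within three steps, since $\mathrm{dist}(s,F_0)=3$ may be attained only at $u$.
\end{itemize}

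The missing idea is the paper's. The shortest $(s,R_{11})$-path $L_s=sv_1v_2v_3v_4$ has length exactly four, because length three puts $v_1\in L_{21}\cup X_{31}$, which is touched in \Cref{step:F0X31X41}. One then proves $\mu(v_2)\le6$ \emph{and} $\nu(v_2)\le6$ by cases on $v_1\in(L_{32}\cup X_{42}\cup L_{23})\cap(F_2\cup F_3)$.

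The crux is the case $v_2\in L_2\setminus L_{21}$, $v_3\in L_{11}\cup X_2$, where generic bounds give only $7$. Suppose $\mu(v_2)+\nu(v_2)=9$. Then the arc on $v_2v_3$ has weight nine and lies on a dicycle of length five. Also $\mu(v_3),\nu(v_3)\le2$: for $v_3\in L_{11}$ the bound on $\mu$ comes from $v\to v_4\to v_3$ with $v_4\in R_{11}$, which is exactly why the path is aimed at $R_{11}$. The two directed portions of that dicycle between $v_2$ and $v_3$ (lengths $1$ and $4$) then give $\mu(v_2),\nu(v_2)\le6$.

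With this bound, the direction bookkeeping you call the main obstacle disappears:
\begin{itemize}
\item $sv_1$ is unoriented, so $v_1$ lies on the side of $s$ by \Cref{claim:xinDyinU}(4).
\item If $v_1v_2$ is unoriented, then $\U(s)\le\mu(v_2)+2\le8$.
\item Otherwise $v_1v_2$ was oriented in \Cref{step:DUb}, and \Cref{claim:xinDyinU}(2) bounds $\mu(v_1)$ directly.
\end{itemize}
Your reading of that step as $U_a\to U_b\to D_a\to D_b$ is also off. \ref{S7-2} orients $D_b\to D_a$, so such a crossing edge points away from $s$.

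Finally, bounding both parameters of $v_2$ is what covers $s\in L_3\cap U$. The $L\leftrightarrow R$ symmetry you invoke only transfers $L_3\cap D$ to $R_3\cap U$.
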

\begin{proof}

    For (1), the proof for  the case that $s\in F_3\cap L_3 $ is provided.  Since $s\in F_3\cap L_3$, no edge incident to $s$ is oriented in \Cref{step:L23R23}. Furthermore, $\mu(s),\nu(s)\ge 3$ with $\mu(x)+\nu(s)\le 9$ when $s\in V(G^{O_{\ref{step:sumatmost8}}})$.  Thus,  by \Cref{claimcenter},  assume that  $s\in B$. Moreover,  by   \Cref{claim:xinDyinU},    no edge incident to $s$ oriented in \Cref{step:DUb}.   Then  $s\in L_{32}\cap (U_b\setminus M_b)$ or $s\in L_{32}\cap (D_b\setminus M_b)$.

Suppose that there is some
$t\in N(s)\cap V(G^{O_{\ref{step:sumatmost8}}})$.
Since $s\in F_3$, it holds that
$\mathrm{dist}(t,F_0)\ge2$, and so $\mu(t),\nu(t)\ge2$.
Let $e$ be an oriented edge incident to $t$ in
$G^{O_{\ref{step:sumatmost8}}}$.
By the definition of $\omega$ and \Cref{claimcenter},
we have that $\mu(t)+\nu(t)\leq\omega(e)\leq9$ in
$G^{O_{\ref{step:sumatmost8}}}$.
It follows that $\mu(t)\leq7$ and $\nu(t)\leq7$.
These bounds also hold in $G^{O_{\ref{step:DUb}}}$.

Recall that all the edges incident to $s$ are unoriented
after \Cref{step:DUb}.
If $s\in U_b\setminus M_b$, then $t\in U$, otherwise,
$st$ would have been oriented in \Cref{step:DUb}.
By \Cref{lem:good-path-gluing}, it holds that
$\D(s)\leq\D(t)+1\leq\nu(t)+1\leq8$.
Similarly, if $s\in D_b\setminus M_b$, then $t\in D$,
and $\U(s)\leq\U(t)+1\leq\mu(t)+1\leq8$.
Thus, assume that
$N(s)\cap V(G^{O_{\ref{step:sumatmost8}}})=\emptyset$.

By \Cref{claimcenter2}, $s$ satisfies (b1) or (b2)
after \Cref{step:L23R23}.
Hence, there is some
$t\in N(s)\cap
(V(G^{O_{\ref{step:L23R23}}})\setminus
V(G^{O_{\ref{step:sumatmost8}}}))$.
All the edges oriented in \Cref{step:L23R23} lie in
$[L_2,L_1]\cup[R_2,R_1]$.
Since $s\in L_3$ has no neighbor in
$L_1\cup R_1\cup R_2$, it follows that $t\in L_2$
and there is some $r\in N(t)\cap L_1$ such that
$tr$ is oriented in \Cref{step:L23R23}.
We continue by considering a shortest $(s,R_{11})$-path
in $G$.

     

Let $L_s=sv_1v_2\cdots v_k$ be a shortest
$(s,R_{11})$-path in $G$. Then $v_1\in F_2\cup F_3$ and $k\in\{3,4\}$. Moreover, $v_1\notin V(G^{O_{\ref{step:sumatmost8}}})$ by the above assumption.  If $k=3$, then $L_s$ must be of form
$s(L_{21}\cup X_{31})R_{21}R_{11}$. Thus, $v_2\to v_1$ is oriented in \Cref{step:F0X31X41}, which implies that $v_1\in V(G^{O_{\ref{step:F0X31X41}}})$, a contradiction. Hence, $k=4$.  Since $sv_1$ is unoriented after \Cref{step:DUb}, we have that $v_1\in U$ when $s\in U_b\setminus M_b$, and $v_1\in D$ when $s\in D_b\setminus M_b$. We show that the following claim holds.

     \begin{claim}\label{cl:F3UD}
If  $v_2$ has  $\mu(v_2)\le 6$ and $\nu(v_2)\le 6$ in $G^{O_{\ref{step:DUb}}}$, then the result holds. 
     \end{claim}
     \begin{proofofclaim} 
We first show that $v_1v_2$ could not be oriented in  \Cref{step:L23R23}.  Suppose not that $v_1v_2$ is oriented in  \Cref{step:L23R23}. Then $v_1\in L_{23}\cap F_2$ and $v_2\in L_{12}\cup L_{13}$. However, $N(v_2)\cap N(R_{11})=\emptyset$ by the definition of $L_{12}$ and $R_{11}$, which implies that $\mathrm{dist}(v_2,R_{11})\ge 3$, a contradiction. 

Suppose that $s\in U_b\setminus D_b$. Then $v_1\in U$, otherwise, $\orw{sv_1}\in E(G^{O_{\ref{step:DUb}}})$.   If $v_1v_2$ is unoriented after \Cref{step:DUb}, then $v_2\in U$ since $v_1\in U$.  
By \Cref{lem:good-path-gluing}, it holds that
$\D(s)\le\D(v_1)+1\le\D(v_2)+2\le\nu(v_2)+2\le8$,
where $\nu$ is computed in $G^{O_{\ref{step:DUb}}}$. 

Hence, assume that  $v_1v_2$ is  oriented in \Cref{step:F0X31X41}--\Cref{step:sumatmost8} or \Cref{step:DUb}, then $\nu(v_1)\le 7$ in $G^{O_{\ref{step:DUb}}}$ since $v_1\in (F_2\cup F_3)\cap U$.  
By \Cref{lem:good-path-gluing}, it holds that
$\D(s)\le\D(v_1)+1\le\nu(v_1)+1\le8$. 
 The same argument works for $s\in D_b\setminus U_b$.
\end{proofofclaim}

     By \Cref{claim3} and $v_1\notin V(G^{O_{\ref{step:sumatmost8}}})\cup F_1$, it holds that $v_1\notin L_{21}\cup L_{22}\cup L_{31}\cup X_3\cup X_{41}$ and so    
     $v_1\in (L_{32}\cup X_{42}\cup L_{23})\cap (F_2\cup F_3)$.  We proceed by considering the exact location of $v_1$.

First, suppose that  $v_1\in L_{32}\cap F_3$. Then $v_2\in F_2$ and $\mathrm{dist}(v_2,R_{11})=2$. Then $v_2\notin X_{32}\cup L_{22}\cup L_3\cup X_4\cup (L_{23}\cap F_2)$ since $X_{32}\cup L_{22}\subseteq F_1$ and $\mathrm{dist}(X_4\cup L_{3}\cup (L_{23}\cap F_2),R_{11})\ge 3$. Thus, $v_2\in  L_{21}\cup X_{31}$ and $v_3\in R_{21}\cap F_1$, which implies that $v_3\to v_2$ by \Cref{step:F0X31X41}. Then $\nu(v_2)\le 6$ and $\mu(v_2)\le 6$ with $v_1v_2$ not oriented in \Cref{step:L23R23} by \Cref{claimcenter}. Thus, by \Cref{cl:F3UD}, we are done.

Then, suppose that $v_1\in  L_{32}\cap F_2$. 
Since   $\mathrm{dist}(v_2,R_{11})=2$ and $v_1v_2\in E$, it holds that $v_2\notin L_3\cup R_3 \cup X_4$, which implies that $v_2\in L_2\cup X_3$. If  $v_2\in L_{21}\cup X_3$, then $\nu(v_2)\le 6$ and $\mu(v_2)\le 6$ by \Cref{prop:xinf0munu} and \Cref{claimcenter}.   


Thus, assume that $v_2\in L_2\setminus L_{21}$. Then $v_3\in L_{11}\cup X_2$, and so $v_2v_3$ is oriented in  \Cref{step:F0X31X41}--\Cref{step:F13}. In the following argument, the values of $\mu$, $\nu$ and $\omega$ are computed in
$G^{O_{\ref{step:sumatmost8}}}$.  Let $e$ denote the oriented edge corresponding to $v_2v_3$. By the definition of $\omega$ and \Cref{claimcenter}, it holds that $\mu(v_2)+\nu(v_2)\leq\omega(e)\leq9$. Since $v_2\in L_2$, we have that $\mu(v_2),\nu(v_2)\ge2$. If $\mu(v_2)+\nu(v_2)\leq8$, then $\mu(v_2)\leq6$ and $\nu(v_2)\leq6$, and we are done by \Cref{cl:F3UD}.   
Hence, assume that $\mu(v_2)+\nu(v_2)=9$. Then $\omega(e)=9$. By \Cref{claimcenter}, there is a dicycle $C$ of length 5 containing $e$ in  $G^{O_{\ref{step:sumatmost8}}}$.  We claim that $\mu(v_3),\nu(v_3)\leq2$. If $v_3\in X_2$, this follows from \Cref{obs:munu}. If $v_3\in L_{11}$, since $v_4\in R_{11}$ and $v_3v_4\in E$, the path $v\to v_4\to v_3\to u$ is a dipath in $G^{O_{\ref{step:F0X31X41}}}$. Thus, $\mu(v_3)\leq2$ and $\nu(v_3)\leq1$. The claim follows.   Since $C$ contains $v_2v_3$, the two directed subpaths  of $C$ between $v_2$ and $v_3$ have lengths 1 and 4,  in some order. It follows that $\mu(v_2)\leq\mu(v_3)+4\leq6$ and  $\nu(v_2)\leq4+\nu(v_3)\leq6$. These bounds also hold in  $G^{O_{\ref{step:DUb}}}$. Therefore, by \Cref{cl:F3UD}, we are done.

Now, suppose that $v_1\in X_{42}$.  Since $s\in L$ and $sv_1\in E$, it follows that  $v_2\in X_{31}$ and $v_3\in X_2\cup R_{21}$ or $v_2\in X_{32}$ and $v_3\in X_2$. By \Cref{claimcenter}, it holds that $\mu(v_2)\le 6$ and $\nu(v_2)\le 6$, again, we are done.

Finally, suppose that  $v_1\in F_2\cap   L_{23} $. 
Since   $\mathrm{dist}(v_2,R_{11})=2$,  $v_1v_2\in E$ and the definition of $L_{11}, L_{12}, L_{13}$, it holds that $v_2\notin L_1\cup  L_3\cup R_3\cup X_3\cup X_4$, which implies that $v_2\in L_2$. By the same argument as in the case 
$v_1\in F_2\cap L_{32}$, we are done.

Therefore, (1) holds.

\medskip 

  For (2), let $s\in (F_3\cup F_4)\cap X_4$. It is enough to assume that $s\in X_{42}$ and $N(s)\cap X_{31}\neq\emptyset$. Let $t\in N(s)\cap X_{31}$. Then  $t\notin F_1$. Thus,  $N(t)\cap L_{21}\neq\emptyset$ and $N(t)\cap R_{21}\neq\emptyset$, which implies that $\nu(t),\mu(t)\le 3$. Since $st$ could not be oriented in \Cref{step:L23R23}, it holds that either $\mu(s)+\nu(s)\le 9$ or $s\in D_b$ or $s\in U_b$. Hence, if $st$ is oriented in \Cref{step:DUb}, then $\mu(s)\le 6$ when $s\in D_b$ and $\nu(s)\le 6$ when $s\in U_b$. If $st$ is unoriented after  \Cref{step:DUb}, then $\U(s)\le \mu(t)+e(st)\le 4$  when $s\in D_b$  and $\D(s)\le  \nu(t)+e(st)\le 4$, we are done. 
\end{proof} 
\begin{proposition}
\label{claim:munuF2}
  The following hold in  $M^{O_{\ref{step:DUb}}}$:   
\begin{itemize}
    \item[(1)] Suppose $s\in F_2\cap (L_2\cup L_3)$. Then  $\D(s)\leq 9$ when $s\in U$ with equality  possible only when $s$ satisfies the following \begin{itemize}
        \item[$\bullet$]  $s\in F_2\cap L_2\cap (U_a\setminus D_a)$,  
        \item[$\bullet$]  there is an edge $st$ with $t\to s$ being oriented in \Cref{step:L23R23}, and
        \item[$\bullet$] 
   there is $r\in N(s)\cap L_3\cap F_3\cap (U_b\setminus D_b)$ with $\D(r)=8$ and $rs$ is unoriented after \Cref{step:DUb}.   
    \end{itemize} 
Also,  $\U(s)\leq9$ when $s\in D$ with equality  possible only when $s$ satisfies the following \begin{itemize}
        \item[$\bullet$]  $s\in F_2\cap L_2\cap (D_a\setminus U_a)$,  
        \item[$\bullet$]  there is an edge $st$ with $s\to t$ being oriented in \Cref{step:L23R23}, and
        \item[$\bullet$] 
   there is $r\in N(s)\cap L_3\cap F_3\cap (D_b\setminus U_b)$ with $\U(r)=8$  and $rs$ is unoriented after \Cref{step:DUb}.      
    \end{itemize} 
By repalcing $L_2$ with $R_2$ and $L_3$ with $R_3$, the same result holds. 
    
     \item[(2)] If  $s\in (F_2\cup F_3) \cap X_3 $, then  $\nu(s)\leq3$ and $\mu(s)\leq3$.
      \item[(3)] If  $s\in F_2 \cap  X_{4} $, then  $\D(s)\leq7$ when $s\in U$ and $\U(s)\leq7$ when $s\in D$. 
\end{itemize}
\end{proposition}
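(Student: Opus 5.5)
The plan is to handle the three items separately, in each case following the template of \Cref{claim:X43F3bdd}. Either $s$ already lies in $V(G^{O_{\ref{step:sumatmost8}}})$, and then \Cref{claimcenter} together with \Cref{lem:saturation-stability} gives $\U(s)=\mu(s)$ or $\D(s)=\nu(s)$; or $s$ is reached through a neighbour whose parameters are already controlled, and then the gluing inequalities of \Cref{lem:good-path-gluing} apply.

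For (1), by the $L$/$R$ and $U$/$D$ symmetry it suffices to take $s\in F_2\cap(L_2\cup L_3)$ with $s\in U$ and bound $\D(s)$. We split into three cases.
\begin{itemize}
\item \textbf{Case $s\in V(G^{O_{\ref{step:sumatmost8}}})$.} Since $s\in F_2$, we have $\mu(s)\ge2$. \Cref{claimcenter} gives $\mu(s)+\nu(s)\le9$, hence $\nu(s)\le7$, and \Cref{lem:saturation-stability} yields $\D(s)=\nu(s)\le7$.
\item \textbf{Case $s$ incident to an edge oriented in \Cref{step:L23R23}.} Then $s$ plays the role of $y$ in \ref{s6-1}, so $s\in L_2$. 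If some $s\to x$ is oriented, then $\nu(s)\le3$. Otherwise every edge to $L_1$ is oriented $x\to s$, so $\mu(s)\le4$ but $\nu(s)$ is not yet controlled. In that situation the only way for $s$ to have a good path to $u$ or $v$ is through its remaining neighbours. Its $F_1$-neighbours have $\nu\le7$ by \Cref{prop:boundF1}, giving $\D(s)\le8$ whenever the edge to such a neighbour is usable. The remaining possibility is a path through the unoriented edge $sr$ to the vertex $z=r\in L_3\cap F_3\cap U_b$ that triggered the operation. Then $\D(s)\le\D(r)+1\le9$ by \Cref{claim:X43F3bdd}(1), and equality forces exactly the three listed conditions: $s\in U_a\setminus D_a$, the arc $t\to s$ from \Cref{step:L23R23}, and $\D(r)=8$.
\item \textbf{Case $s\in B$ (no edge oriented before \Cref{step:DUb}).} Here $s\in D_b\cup U_b$. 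If an edge at $s$ was oriented in \Cref{step:DUb}, then \Cref{claim:xinDyinU}(2) gives $\nu(s)\le5$ or $\nu(s)\le6$, and $\D(s)\le\nu(s)$. If not, take a neighbour $t\in F_1$ (it exists since $s\in F_2$). The edge $st$ is unoriented after \Cref{step:DUb}, so \Cref{claim:xinDyinU}(4) puts $t\in U$. Then $\D(s)\le\D(t)+1\le\nu(t)+1\le8$ by \Cref{prop:boundF1}.
\end{itemize}

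For (2), take $s\in X_3\setminus F_1$. Then $s$ has no neighbour in $X_2$, so by \Cref{claim1} we have $s\in X_{31}$, with neighbours in both $L_2$ and $R_2$ lying on shortest paths to $u$ and $v$. We then argue that these neighbours lie in $L_{21}$ and $R_{21}$, whence \ref{item:step:F0X31X411} orients $R_{21}\to s\to L_{21}$, and \Cref{claimcontrol1}(5) gives $\mu(s),\nu(s)\le3$. The point to check is that the relevant neighbours are in $L_{21}\cup R_{21}$; this follows because $|N(s)\cap(L_2\cup R_2)|\ge2$ forces membership in $X_{31}$, and then the definitions of $L_{21}$ and $R_{21}$ apply.

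For (3), take $s\in F_2\cap X_4$ and a neighbour $t\in F_1\cap X_3$; such a neighbour exists because $L_3\cup R_3$ is disjoint from $F_1$ by \Cref{claim3}. Then $t\in X_{32}\cup X_{31}$ has $\mu(t),\nu(t)\le6$ by \Cref{claimcontrol1} and \Cref{calim:boundmunuV-GO1}(3). If $st$ is unoriented, gluing gives $\D(s),\U(s)\le7$. If $st$ is oriented, then $s\in V(G^{O_{\ref{step:sumatmost8}}})$ and lies at distance $2$ from $F_0$, so \Cref{lem:gold-deep-layer} gives $\mu(s),\nu(s)\le6$; alternatively $st$ was oriented in \Cref{step:DUb}, and \Cref{claim:xinDyinU} applies. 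X41 vertices are covered by \Cref{claimcontrol1}(6).

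The main obstacle is the equality characterization in (1). It requires ruling out every other route that could give $\D(s)=9$, in particular showing that when $s\in F_2\cap L_2$ has all its $L_1$-edges oriented inward, none of its other neighbours already provides $\D\le8$. I would first try to show that any neighbour of $s$ in $F_1\cup F_2$ that is joined to $s$ by a usable edge has $\nu\le7$, using \Cref{lem:gold-deep-layer}, which for a vertex at distance $2$ gives $\nu\le6$. That would leave only the $F_3$-neighbour $r$ as a possible source of the value $9$.
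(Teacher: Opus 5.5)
Your route is the paper's. For (1) you split into vertices already in $G^{O_{\ref{step:sumatmost8}}}$, vertices first touched in \Cref{step:L23R23}, and untouched vertices. In the middle case you pass through $r\in N(s)\cap L_3\cap F_3$ with \Cref{claim:X43F3bdd}(1), and in the last case you glue through an $F_1$-neighbour. For (2) and (3) you use the $L_{21}/R_{21}$ neighbours and the $X_3\cap F_1$ neighbours, as the paper does.

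The gap is in the \Cref{step:L23R23} case, which is exactly where the equality clause is decided. You assert without proof that $sr$ is unoriented after \Cref{step:DUb} and that $r\in U_b$. From $\D(s)=9$ you only extract $\D(r)=8$, but the third bullet also claims $r\in U_b\setminus D_b$ and that $rs$ is unoriented. None of your cases covers an $A$-vertex $s$ whose edge $sr$ is oriented in \Cref{step:DUb}; your third case is restricted to $s\in B$. The missing step, which the paper supplies, is short:
\begin{enumerate}
\item If some $L_1$-edge at $s$ points out of $s$, then $\nu(s)\le3$. If $\nu(s)\le4$, then $\D(s)\le4$. So in the branch where $\D(s)$ can exceed $4$, all $L_1$-edges point into $s$ and $s\in U_a\setminus D_a\subseteq U\setminus D$.
\item If $rs$ is oriented in \Cref{step:DUb}, then \Cref{claim:xinDyinU}(3) gives $\nu(s)\le7$, hence $\D(s)\le7$.
\item Otherwise \Cref{claim:xinDyinU}(4) gives $r\in U$, hence $\D(s)\le\D(r)+1\le9$.
\item If equality holds, then $\D(r)=8$. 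This rules out $r\in A$: then $r\in V(G^{O_{\ref{step:sumatmost8}}})$, because $r\in F_3$ is untouched by \Cref{step:L23R23}, and \Cref{claimcenter} gives $\nu(r)\le6$. It also rules out $r\in M_b$, where $\nu(r)\le5$. So $r\in U_b\setminus D_b$.
\end{enumerate}

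The ``main obstacle'' you describe does not exist, and your plan for it would not work as stated. The equality clause lists only necessary conditions. Once every other branch gives $\D(s)\le8$, the value $9$ can occur only in the branch above. There the bullets follow from the case conditions together with $\D(s)\le\D(r)+1$ and $\D(r)\le8$. You never need to exclude other good paths of $s$, since they can only lower $\D(s)$. Reasoning of the form ``the only way for $s$ to have a good path is \dots'' runs in the wrong direction for an upper bound.

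Your auxiliary claim, that every neighbour joined to $s$ by a usable edge has $\nu\le7$, is unnecessary. It is also not available in general, because \Cref{lem:gold-deep-layer} applies only to vertices of $G^{O_{\ref{step:sumatmost8}}}$, not to $F_2$-neighbours lying in $B$.

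Finally, a citation fix for the untouched case of (1) and for (3). \Cref{prop:boundF1}, \Cref{claimcontrol1} and \Cref{calim:boundmunuV-GO1}(3) do not cover a neighbour $t\in X_{31}\cap F_1$ outside $G^{O_{\ref{step:F0X31X41}}}$. Use \Cref{claimcenter} instead: $\mu(t),\nu(t)\ge3$ and $\mu(t)+\nu(t)\le9$ give both at most $6$.
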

\begin{proof}
For (1), we only prove the case for $s\in F_2\cap (L_2\cup L_3)$.  If some edge incident to  $s$ is oriented in \Cref{step:F0X31X41}--\Cref{step:sumatmost8} or \Cref{step:DUb}, then $\nu(s)\le 7$ and $\mu(s)\le 7$. Thus, assume that no such edge exists. 

Now, suppose that some edge incident to  $s$ is oriented in \Cref{step:L23R23}. Then   $N(s)\cap L_3\cap F_3\neq \emptyset$ by \Cref{step:L23R23}. Moreover,   $\mu(s)\le 4$ or $\nu(s)\le 3$. If both of them are at most 4, we are done. Hence,   assume that   $s\in U_a\setminus D_a$ or $s\in D_a\setminus U_a$. Let $r\in N(s)\cap L_3\cap F_3$. 
If $rs$ is oriented in \Cref{step:DUb}, then $\mu(s)\le 7$ and $\nu(s)\le 4$ or $\nu(s)\le 7$ and $\mu(s)\le 4$. Thus, assume that $rs$ is unoriented after \Cref{step:DUb}. Then $r\in U$ when $s\in U_a\setminus D_a$ and $r\in D$ when $s\in D_a\setminus U_a$.   By \Cref{claim:X43F3bdd}, we know that $\D(s)\le \D(r)+1\le 9$ when $s\in U_a$ or   $\U(s)\le \U(r)+1\le 9$ when $s\in D_a$. 
Equality nine requires the corresponding potential of $r$ to be eight.
Since $r\in F_3$ is untouched by \Cref{step:L23R23},
if $r\in A$, then $r\in V(G^{O_{\ref{step:sumatmost8}}})$
and $\mu(r),\nu(r)\le6$ by \Cref{claimcenter}.
Also, $r\in M_b$ gives $\mu(r),\nu(r)\le5$
by \Cref{claim:xinDyinU}.
Thus, $r$ lies in the stated exclusive part of $B$.
Moreover, $s\in L_2$, and its incident edge from
\Cref{step:L23R23} points into $s$ in the $U$-case
and out of $s$ in the $D$-case; the opposite direction
would give the corresponding distance parameter at most four. 

Therefore, assume that $st$ is unoriented after \Cref{step:DUb} for each $t\in N(s)\cap F_1$.   Let $t\in N(s)\cap F_1$. 
By \Cref{prop:boundF1}, and by \Cref{claimcenter} when $t\in X_3$,
we have $\mu(t),\nu(t)\le7$ in $G^{O_{\ref{step:DUb}}}$.
Since $st$ is unoriented, \Cref{claim:xinDyinU}(4)
and \Cref{lem:good-path-gluing} give the required potential at most eight.  Hence, the result holds.   

For (2), since $s\notin F_1$, it holds that  $N(s)\cap L_{21}\neq\emptyset$ and $N(s)\cap R_{21}\neq\emptyset$ and so $\mu(s),\nu(s)\le 3$ by \Cref{step:F0X31X41}. 

 For (3),   we may assume that $s\in X_{42}$ since  $\mu(s),\nu(s)\le4$ for $s\in X_{41}$. If  some edge incident to  $s$ is oriented in \Cref{step:cycle(a)} or \Cref{step:sumatmost8} or \Cref{step:DUb}, then  $\nu(s),\mu(s)\le 7$ and so we are done. Therefore, assume that all the edges incident to  $s$ are unoriented  after \Cref{step:DUb}.  Let $t\in N(s)\cap F_1$. Then  $t\in X_3\cap F_1$.  By \Cref{claimcenter}, it holds that $\mu(t)\le 6$ and $\nu(t)\le 6$ since $t\in X_3$.   Since $st$ is unoirented after \Cref{step:DUb}, there is a desired good  path $P$ for $s$ with $\ell(P)\le 7$, and so we are done. 
\end{proof}

 \begin{corollary}
  For each $s\in U_a\cup U_b$, it holds that $\D(s)\le 9$ is finite.  For each $s\in  D_a\cup D_b$, it holds that $\U(s)\le 9$ is finite. Equality nine is possible only at vertices of $A$, with
the corresponding equality conditions in
\Cref{claim:munuF2}(1).
 \end{corollary}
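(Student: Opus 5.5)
The plan is a case split on the layer containing $s$. Since $F_0,\dots,F_4$ partition $V$, and by \Cref{claim3} together with the refined partition, every vertex lies in exactly one of the classes treated in \Cref{claimcontrol1}, \Cref{calim:boundmunuV-GO1}, \Cref{claim:X43F3bdd} and \Cref{claim:munuF2}. Fix $s\in D_a\cup D_b$; the case $s\in U_a\cup U_b$ is symmetric, swapping $\mu,\U$ with $\nu,\D$. We need only $\U(s)\le 9$, and finiteness then follows at once.

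\emph{Step 1: vertices touched before \Cref{step:DUb}.} If $s\in V(G^{O_{\ref{step:DUb}}})$, then \Cref{lem:good-path-gluing}(1) gives $\U(s)\le\mu(s)$, with $\mu$ computed in $G^{O_{\ref{step:DUb}}}$. For $s\in F_0\cup F_1$ we get $\mu(s)\le 7$ from \Cref{claimcontrol1}, \Cref{prop:boundF1} and \Cref{claimcenter}. For vertices incident to an arc from \Cref{step:L23R23} or \Cref{step:DUb}, \Cref{claimcenter2} and \Cref{claim:xinDyinU} give $\mu(s)\le 7$; for vertices in $V(G^{O_{\ref{step:sumatmost8}}})$ outside $F_0\cup F_1$, \Cref{lem:gold-deep-layer} gives $\mu(s)\le 8-\mathrm{dist}(s,F_0)\le 6$. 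In every case the value is at most $8$.

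\emph{Step 2: untouched vertices, split by layer.} By the remark after \Cref{claimcenter2}, these vertices lie in $F_2\cup F_3\cup F_4$, and by \Cref{claim3} they lie in $L_2\cup L_3\cup R_2\cup R_3\cup X_3\cup X_4$.
\begin{itemize}
\item For $s\in F_2\cap(L_2\cup L_3\cup R_2\cup R_3)$, apply \Cref{claim:munuF2}(1). It gives $\U(s)\le9$, with equality only under the listed conditions.
\item For $s\in X_3\setminus F_1$, apply \Cref{claim:munuF2}(2), which gives $\mu\le3$.
\item For $s\in F_3\cap(L_3\cup R_3)$, apply \Cref{claim:X43F3bdd}(1), which gives $\U(s)\le 8$.
\item For $s\in X_4\cap F_2$ or $s\in X_4\cap(F_3\cup F_4)$, apply \Cref{claim:munuF2}(3) or \Cref{claim:X43F3bdd}(2) respectively, giving $\U(s)\le7$.
\end{itemize}
We then check that these classes exhaust $V$. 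For example, $L_3\cup R_3\subseteq F_2\cup F_3$ and $X_4\subseteq F_2\cup F_3\cup F_4$ by \Cref{claim3}.

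\emph{Step 3: the equality case, and the main obstacle.} Every bound above is at most $8$ except \Cref{claim:munuF2}(1). Its equality condition requires $s\in F_2\cap L_2\cap(D_a\setminus U_a)$ (or the $R$-analogue), and in particular $s\in A$. So $\U(s)=9$ can occur only at vertices of $A$ under those conditions, which is the claimed statement.

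The delicate point is confirming that no vertex slips between the classes. Two cases need care:
\begin{itemize}
\item Vertices of $F_2$ that lie in $B$ but meet none of the hypotheses of \Cref{claim:munuF2}(1). For these, the proof of that proposition shows the potential is at most $8$ via an unoriented edge to $F_1$ and \Cref{lem:good-path-gluing}.
\item Vertices of $A$ whose bound came only from Step 1 with $\mu\le 9$ rather than $8$. Here I would use \Cref{lem:saturation-stability} to identify $\U(s)$ with $\mu_H(s)$, and \Cref{claimcenter} to bound $\mu_H(s)\le 9-\nu_H(s)\le 9-\mathrm{dist}(s,F_0)$.
\end{itemize}
I would verify these two cases first.
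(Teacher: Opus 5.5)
Your overall plan matches the paper's proof, which simply cites \Cref{claimcontrol1}, \Cref{calim:boundmunuV-GO1}, \Cref{claim:X43F3bdd} and \Cref{claim:munuF2}. The plan is to bound $\U$ (resp.\ $\D$) class by class via $\U\le\mu$ and the four location propositions, then read off the equality case from \Cref{claim:munuF2}(1). But your Step~1 contains a step that fails. You assert that a vertex incident to an arc of \Cref{step:L23R23} has $\mu\le7$ ``by \Cref{claimcenter2} and \Cref{claim:xinDyinU}''. \Cref{claimcenter2} gives only (a1) or (a2), i.e.\ one of $\mu,\nu$ is at most $4$. \Cref{claim:xinDyinU} applies only to vertices incident to an arc of \Cref{step:DUb}.

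Take $y\in F_2\cap L_2$ chosen in \ref{s6-1} whose $L_1$-neighbours all have $\nu\le2$. It receives only out-arcs, since all its other edges are unoriented, as shown in the proof of \Cref{claimcenter2}. So after \Cref{step:L23R23} we have $\nu(y)\le3$, $\mu(y)=\infty$ and $y\in D_a\setminus U_a$. Nothing guarantees that \Cref{step:DUb} later orients an arc into $y$. These are exactly the vertices picked out by the equality conditions of \Cref{claim:munuF2}(1), so your Step~1 claims $\U\le8$ precisely where the statement allows $\U=9$. Your Step~3 (``every bound above is at most $8$ except \Cref{claim:munuF2}(1)'') therefore rests on a false premise. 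Moreover, in your scheme \Cref{claim:munuF2}(1) is only ever applied to untouched vertices, which can never meet its equality conditions.

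The fix is not to split $F_2\cap(L_2\cup L_3\cup R_2\cup R_3)$ according to whether a vertex is touched. \Cref{claim:munuF2}(1) covers all of it. Its proof treats the \Cref{step:L23R23} case through an edge $rs$ to an $F_3$-neighbour $r$ that is still unoriented after \Cref{step:DUb}. This gives $\U(s)\le\U(r)+1\le9$ by \Cref{lem:good-path-gluing} and \Cref{claim:X43F3bdd}(1). Restrict your Step~1 to $V(G^{O_{\ref{step:sumatmost8}}})$, where \Cref{lem:gold-deep-layer} and \Cref{prop:boundF1} apply, and to vertices incident to arcs of \Cref{step:DUb}.

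The checks you planned for the ``delicate point'' would not have caught this. \Cref{lem:saturation-stability} and the bound $\mu_H+\nu_H\le9$ from \Cref{claimcenter} hold only on $V(G^{O_{\ref{step:sumatmost8}}})$, which excludes exactly the vertices first reached in \Cref{step:L23R23}. Your first bullet is moot, because \Cref{claim:munuF2}(1) has no hypothesis beyond location.
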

\begin{proof}
    It follows from \Cref{claimcontrol1}, \Cref{calim:boundmunuV-GO1}, \Cref{claim:X43F3bdd} and \Cref{claim:munuF2}. 
\end{proof}

\subsection{The final orientation step}\label{sec:final-orientation}

Now, we state the final construction step which  deals with all the remaining unoriented edges. Take the edges in $[D_b,D_a]\cup [D_a,D_a]\cup [D_b,D_b]$ for example.  The target is that after the final orientation, each  $x\in D_b$ has $\nu(x)\le 6$ and each   $x\in D_a\cup D_b$ has $\mu(x)\le9$.  To have $\mu(x)\le 9$, we always want  with priority  \begin{itemize}
\item $\mu(x)\le \U(x) $;
\item if $\mu(x)\le \U(x) $  fails at some vertices, for those we want that 
$\mu(x)\le \U(x)+1$ and there is a characterization of all such vertices. 
\end{itemize}
This makes the edges $xy$ with $x\in D_b$ and $y\in D_a$ hard to orient. 
We emphasis here all such  $xy$ will be oriented according to the values of $\U(x)$ in decreasing order; that is, first orient all such $xy$ with largest value of $\U(x)$, then those corresponding to the second largest value, and so on. We now explain how the orientation works and  will not care about the proof $\U(x)\le 9$. 

Now, if $\U(x)\le \U(y)$, by orienting $x\to y$, directly $\nu(x)\le 6$ and if we have  $\mu(x)=\U(x)$ then $\mu(y)\le \U(y)+1$. The hard case is that $\U(x)> \U(y)$. To make $\mu(x)=\U(x)$, the edge $xy$ should be oriented as $y\to x$, but it may lead to that $\nu(x)>6$ for which  $x\to y$ is desired.  If now    $x$ has a neighbor $x'$ in $D_b$ with $\U(x)>\U(x')$, then let $x'\to x$ to guarantee that $\mu(x)=\U(x)$ and so let $x\to y$ such that $\nu(x)\le 6$. 
Thus, assume that $\U(x')\ge \U(x)$ for each $x'\in N(x)\cap D_b$. If now there is $y'\in N(x)\cap D_a$ with $x\to y'$ which ensures  $\nu(x)\le 6$, then again let $y\to x$ to ensure $\mu(x)=\U(x)$.  
Note that now if there is some $x'\in N(x)\cap D_b$ with $\U(x')>\U(x)$, by the assumption, edges in $[x',D_a]$ are oriented earlier than $[x,D_a]$, which means that $x\to x'$ is enough since $x'\to y'$ for some $y'\in D_a$ since it ensure $\nu(x)\le 6$. 
Thus, letting  $y\to x$ makes $\mu(x)=\U(x)$ possible. Then we will  encounter the   scenario for which we have $\mu(x)=\U(x)+1$, that $\U(x)=\U(x')$ for each $x'\in N(x)\cap D_b$. Suppose that there is some $z\in N(x)\cap D_b$ such that all the edges in  $[N[z]\cap D_b,D_a]\setminus\{xy\}$ have been oriented as $ D_a\to N[z]\cap D_b$, then to let $\nu(z)\le 6$, we must have $z\to x\to y$ which would lead to $\mu(x)=\U(x)+1$. The positive aspect is that the existence of $z$ could be seen as a characterization of $x$ for which  $\mu(x)=\U(x)+1$. 

The detailed orientation and related proofs are listed \Cref{step:complicate},   \Cref{cl:complicateDmu} and \Cref{claimcenter4}.

\begin{step}\label{step:complicate}
 
\begin{enumerate}[label=(S\thestep.\arabic*)] 
The following edges are oriented in order.   

\item\label{S8-1} Let $E_k=\{xy:x\in D_b, y\in D_a, \U(x)=k\text{ and } xy \text{ unoriented}\}$.  For each finite $k$, the following edges related to $E_k$ are oriented in order. For each fixed $k$, first apply
\ref{S8-11}--\ref{S8-14} in order.
Then repeatedly apply \ref{S8-15} and
\ref{S8-16}, with priority given to \ref{S8-15}.
After each application, update the current mixed
graph and check \ref{S8-15} again for the same $k$.
Proceed to $E_{k-1}$ only when $E_k=\emptyset$.

The same procedure is used for
\ref{S8-25} and \ref{S8-26}, after applying
\ref{S8-21}--\ref{S8-24} in order.
Throughout this procedure, $E_k$ denotes the
remaining unoriented edges in the current layer.

For all $xy\in E_k$ with $x\in D_b$ and $y\in D_a$,   
\begin{enumerate}[label=(S10.1.\arabic*)] 
  
    \item\label{S8-11}  suppose $\U(x)\le \U(y)$, then let $x\to y$;   
    \item\label{S8-12} suppose $\U(x)>\U(y)$, and  \begin{itemize}
        \item $|N(x)\cap D_a|\ge 2$, or 
        \item $N(x)\cap D_b\neq \emptyset$  with $\U(x')<\U(x)$ for some $x'\in N(x)\cap D_b$,  
    \end{itemize}  
 moreover, there is no $y'\in N(x)\cap D_a$ such that $x\to y'$, then let $x\to y$; 
    \item\label{S8-13} suppose $\U(x)>\U(y)$, 
     and  \begin{itemize}
        \item $|N(x)\cap D_a|\ge 2$, or 
        \item $N(x)\cap D_b\neq \emptyset$  with $\U(x')<\U(x)$ for some $x'\in N(x)\cap D_b$,  
       \end{itemize}       
  moreover, there is $y'\in N(x)\cap D_a$ such that $x\to y'$ has been oriented, then let $y\to x$; 
    \item\label{S8-14} suppose $\U(x)>\U(y)$, $\U(x')\ge \U(x)$ for each  $x'\in N(x)\cap D_b$ and  $\U(x')>\U(x)$ for some such $x'$, then let $y\to x$; 

  \item\label{S8-15} suppose $\U(x)>\U(y)$,
$\U(x')=\U(x)$ for all $x'\in N(x)\cap D_b$, and there is some
$z_x\in N[x]\cap D_b$ such that all the edges in
$[N[z_x]\cap D_b,D_a]\setminus\{xy\}$ have been oriented as
$D_a\to N[z_x]\cap D_b$.
For every such witness distinct from $x$, orient $z_x\to x$.
If $x$ itself is a witness, choose some $z\in N(x)\cap D_b$ and
orient $z\to x$ as well.
Then orient $x\to y$.
The existence of this choice and the fact that the prescribed edges
are unoriented are proved in \Cref{lem:exceptional-vertices}.
For each such $x$, fix one of the chosen neighbors and denote it by
$\hat{x}$.
   \item\label{S8-16} Suppose that $E_k\neq\emptyset$ and no edge in
$E_k$ satisfies the conditions in \ref{S8-15}.
Choose one unoriented edge $xy\in E_k$ and
let $y\to x$.
\end{enumerate}

\item\label{S8-2} Let $E_k=\{xy:x\in U_b, y\in U_a, \D(x)=k\text{ and } xy \text{ unoriented}\}$.  For each finite $k$, the following edges related to $E_k$ are oriented in order. For all $xy\in E_k$ with $x\in U_b$ and $y\in U_a$,   
\begin{enumerate}[label=(S10.2.\arabic*)] 
  \item\label{S8-21}  suppose $\D(x)\le \D(y)$, then let $ y\to x$;   
    \item\label{S8-22} suppose $\D(x)>\D(y)$ and 
    \begin{itemize}
        \item  $|N(x)\cap U_a|\ge 2$, or 
        \item $N(x)\cap U_b\neq \emptyset$ and there is some $x'\in N(x)\cap U_b$ with $\D(x')<\D(x)$,  
    \end{itemize}  
moreover, there is no $y'\in N(x)\cap U_a$ such that $ y'\to x$,   then let $ y\to x$; 
    \item\label{S8-23} suppose $\D(x)>\D(y)$ and  \begin{itemize}
        \item  $|N(x)\cap U_a|\ge 2$, or 
        \item $N(x)\cap U_b\neq \emptyset$ and there is some $x'\in N(x)\cap U_b$ with $\D(x')<\D(x)$,  
    \end{itemize}  
moreover, there is  $y'\in N(x)\cap U_a$ such that $ y'\to x$ has been oriented, then let $ x\to y$; 

    \item\label{S8-24} suppose $\D(x)>\D(y)$, $\D(x')\ge \D(x)$ for each  $x'\in N(x)\cap U_b$ and  $\D(x')>\D(x)$ for some such $x'$, then let $  x\to y$; 

  \item\label{S8-25} suppose  $\D(x)>\D(y)$,
$\D(x')=\D(x)$ for all $x'\in N(x)\cap U_b$, and there is some
$z_x\in N[x]\cap U_b$ such that all the edges in
$[N[z_x]\cap U_b,U_a]\setminus\{xy\}$ have been oriented as
$N[z_x]\cap U_b\to U_a$.
For every such witness distinct from $x$, orient $x\to z_x$.
If $x$ itself is a witness, choose some $z\in N(x)\cap U_b$ and
orient $x\to z$ as well.
Then orient $y\to x$.
For each such $x$, fix one of the chosen neighbors and denote it by
$\hat{x}$.

   \item\label{S8-26}  Suppose that $E_k\neq\emptyset$ and no edge in
$E_k$ satisfies the conditions in \ref{S8-25}.
Choose one unoriented edge $xy\in E_k$ and
let $x\to y$. 
\end{enumerate}
\item\label{S8-3} The following edges are oriented in order.\begin{enumerate}[label=(S10.3.\arabic*)]
    \item\label{S8-31} For  unoriented $x_1x_2\in [D_a,D_a]\cup [D_b,D_b]$ with $\U(x_1)\neq  \U(x_2)$, if $\U(x_1)> \U(x_2)$, let  $x_2\to x_1$; if $\U(x_1)<\U(x_2)$, let  $x_1\to x_2$.
    \item\label{S8-32}  For unoriented $\hat{p}z\in [D_b,D_b]$ with $\U(z)=\U(\hat{p})$ and $z\neq \hat{q}$ for any $p$ and $q$  playing the role of $x$ in \ref{S8-15}, let $z\to \hat{p}$. 
     \item\label{S8-33} For unoriented $x_1x_2\in   [D_b,D_b]$ satisfying  $\U(x_1)=\U(x_2)$ and there is $y\in N(x_2)\cap D_a$ such that $x_2\to y$, let $x_1\to x_2$.  
      \item\label{S8-34} For unoriented $x_1x_2\in [D_a,D_a]\cup [D_b,D_b]$ and $\U(x_1)=\U(x_2)$, let $x_1\to x_2$. 
\end{enumerate}

\item\label{S8-4} The following edges are oriented in order.
\begin{enumerate}[label=(S10.4.\arabic*)]
    \item\label{S8-41} For   unoriented  $x_1x_2\in E(U_a)\cup E(U_b)$  with $\D(x_1)\neq  \D(x_2)$, if $\D(x_1)> \D(x_2)$, let  $x_1\to x_2$; if $\D(x_1)< \D(x_2)$, let $x_2\to x_1$.
    \item\label{S8-42}  For unoriented $\hat{p}z\in [U_b,U_b]$ with $\D(z)=\D(\hat{p})$ and $z\neq \hat{q}$ for  for any $p$ and $q$  playing the role of $x$ in \ref{S8-25}, let $\hat{p}\to z$. 
     \item\label{S8-43} For unoriented $x_1x_2\in   [U_b,U_b]$ satisfying  $\D(x_1)=\D(x_2)$ and there is $y\in N(x_2)\cap U_a$ such that $  y\to x_2$, let $x_2\to x_1$.  
    \item\label{S8-44} For unoriented $x_1x_2\in [U_a,U_a]\cup [U_b,U_b]$ and $\D(x_1)=\D(x_2)$, let $x_1\to x_2$. 
\end{enumerate}
\end{enumerate}
\end{step}

\begin{remark}\label{rm:step834}
    The orientation  on an edge $x_1x_2$ in \ref{S8-34} and \ref{S8-44} means that both $x_1\to x_2$ and $x_2\to x_1$ work. We just arbitrarily choose one. 
\end{remark}
Let $T_D$ and $T_U$ be the sets of vertices playing the role of $x$
in \ref{S8-15} and \ref{S8-25}, respectively.  Write $H_D=\{\hat p:p\in T_D\}$ and $H_U=\{\hat p:p\in T_U\}$.
The exclusion in \ref{S8-32} means $z\notin H_D$, and that in
\ref{S8-42} means $z\notin H_U$.

\begin{lemma}\label{lem:exceptional-vertices}
For each $x\in T_D$ with $\U(x)=k$, all the following hold.
\begin{itemize}
\item $x\in D_b\setminus U_b$, no edge incident to $x$ is oriented
in $G^{O_{\ref{step:DUb}}}$, and $N(x)\cap U=\emptyset$.
\item There is a unique vertex $y\in N(x)\cap D_a$.
Moreover, $\U(y)=k-1$, and $\U(r)=k$ for every
$r\in N(x)\cap D_b$.
\item $\hat{x}\in D_b\setminus U_b$, $\U(\hat{x})=k$ and
$\hat{x}\notin T_D$.
All its $D_a$-edges are oriented towards $\hat{x}$.
There is some $w\in N(\hat{x})\cap D_a$ with $\U(w)=k-1$
such that $w\to\hat{x}\to x\to y$ in the final orientation.
\end{itemize}
Consequently, if $r,s\in D$ with $rs\in E$ and
$\U(r)<\U(s)$, then $r\notin T_D$.
The symmetric assertions hold for $T_U$.
\end{lemma}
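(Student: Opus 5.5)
The plan is to read off the conditions of \ref{S8-15} at the moment $x$ is processed, and combine them with the layered order of \ref{S8-1} and \Cref{lem:good-path-gluing}. We only treat $T_D$; the $T_U$ case follows by reversing all arcs and exchanging $\U,D,\mu$ with $\D,U,\nu$.

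\emph{First assertion.} Since $xy\in E_k$ with $x\in D_b$ and $y\in D_a$, we have $x\in B$, and $xy$ is unoriented at this moment. If $x\in M_b$, then \ref{S7-1} orients $x\to D_a\setminus M_a$, and every edge of $[x,M_a]$ is also oriented there. This contradicts $xy$ being unoriented, so $x\in D_b\setminus U_b$. By \Cref{rm:edgeanticross} and \Cref{claim:xinDyinU}(4), every edge from $x$ to $U$ was oriented in \Cref{step:DUb}, and by \Cref{claim:xinDyinU}(2) such an edge forces all $D_a$-edges of $x$ to be oriented away from $x$. That again contradicts $xy$ being unoriented. Hence no edge at $x$ is oriented in $G^{O_{\ref{step:DUb}}}$. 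Moreover $N(x)\cap U=\emptyset$, since any neighbour in $U$ would give such an oriented edge.

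\emph{Second assertion.} Because \ref{S8-12}--\ref{S8-14} were all checked for $x$ before \ref{S8-15} applies, and $\U(x')=\U(x)$ for every $x'\in N(x)\cap D_b$, the only way \ref{S8-12} and \ref{S8-13} both fail is $|N(x)\cap D_a|=1$. This gives the unique $y$.

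\emph{The value $\U(y)=k-1$.} Here we use the witness $z_x$. Any edge $x'y'$ with $x'\in N[z_x]\cap D_b$ and $y'\in D_a$ was oriented $y'\to x'$. Such an arc comes from \ref{S8-13}, \ref{S8-14} or \ref{S8-16} within the layers processed so far, so $\U(x')\ge k$. Since $\U(y)<\U(x)$, \Cref{lem:good-path-gluing}(2) applied along the unoriented edge $yx$ gives $\U(x)\le\U(y)+1$. Therefore $\U(y)=k-1$.

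\emph{Third assertion, choice of $\hat x$.} If the witness $z_x\neq x$, take $\hat x=z_x$. Otherwise pick a neighbour $z\in N(x)\cap D_b$. Such a $z$ exists: $x\in B$ has degree at least two, all its neighbours lie in $D$, and it has only one neighbour in $D_a$. The fact that $z$ is itself a witness for $x$ gives $z\in D_b\setminus U_b$ with all its $D_a$-edges oriented $D_a\to z$; membership in $U_b$ would contradict $N(x)\cap U=\emptyset$. We have $\U(\hat x)=k$ because $\hat x$ is a $D_b$-neighbour of $x$.

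\emph{Third assertion, $\hat x\notin T_D$ and the dipath.} The edge $\hat x x$ is still unoriented: no earlier \ref{S8-15} application oriented it, since by the first assertion $x$ had no oriented edges before \ref{S8-1}. So we can set $\hat x\to x\to y$. Now $\hat x\notin T_D$, because by then every $D_a$-edge of $\hat x$ is already oriented, so $\hat x$ never again plays the role of $x$. For the vertex $w$, take an arc $w\to\hat x$ with $w\in D_a$. It was created in layer $k$ by \ref{S8-13}, \ref{S8-14} or \ref{S8-16}, all of which require $\U(w)<\U(\hat x)=k$. The unoriented-edge bound from \Cref{lem:good-path-gluing} then forces $\U(w)\ge k-1$, so $\U(w)=k-1$.

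\emph{Consequence.} Every neighbour of $x\in T_D$ has $\U\ge k-1$, and $y$ is the unique one with smaller value. So suppose $r\in T_D$ and $rs\in E$ with $\U(r)<\U(s)$. Every $D_b$-neighbour of $r$ has the same $\U$-value as $r$, so $s\notin D_b$. Hence $s\in D_a$, which means $s=y$, and then $\U(s)=\U(r)-1<\U(r)$, a contradiction.

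The main obstacle will be \emph{showing the witness's $D_a$-arcs all come from the current layer $k$}, so that $\U(w)$ and $\U(y)$ are pinned down. This needs careful tracking of the processing order of \ref{S8-1} and of which substeps can create arcs into $D_b$. I would first try to prove the invariant that an arc $D_a\to D_b$ created in layer $j$ always ends at a vertex with $\U=j$.
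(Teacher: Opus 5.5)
Your route is the paper's. You read off the hypotheses of \ref{S8-15} at the moment $x$ is selected. You use \Cref{claim:xinDyinU}(2),(4) to see that $x$ has no oriented edge in $G^{O_{\ref{step:DUb}}}$ and no neighbour in $U$. You then pin $\U(y)$ and $\U(w)$ between the upper bound from \ref{S8-15} or \ref{S8-11} and the lower bound from \Cref{lem:good-path-gluing}. Your uniqueness argument for $y$ is correct and slightly more direct than the paper's: if $|N(x)\cap D_a|\ge2$, the edge $xy$ itself would already have been oriented by \ref{S8-12} or \ref{S8-13}. The paper instead first shows $\U(y')<k$ for every $y'\in N(x)\cap D_a$.

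There is a gap where you claim that $\hat x x$ is still unoriented when \ref{S8-15} is applied to $x$. This is exactly what the conclusion $w\to\hat x\to x\to y$ needs, and the rule \ref{S8-15} explicitly leaves it to this lemma. Your reason is that $x$ has no oriented edge in $G^{O_{\ref{step:DUb}}}$, but that only describes the state before \Cref{step:complicate}. Edges of $[D_b,D_b]$ are oriented inside that step, by earlier applications of \ref{S8-15} in higher layers or earlier in layer $k$. For instance, if $\hat x$ had been selected earlier with $x$ among its witnesses, the edge would already be $x\to\hat x$, and the arc $\hat x\to x$ could not be added.

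The missing observation has two parts:
\begin{itemize}
\item Before the present application, only \ref{S8-15} orients edges of $[D_b,D_b]$, and each application orients only edges incident to its selected vertex. The other rules \ref{S8-11}--\ref{S8-14} and \ref{S8-16} orient only the current $D_b$--$D_a$ edge.
\item Neither endpoint can have been selected before, because a selection orients an arc from the selected vertex into $D_a$. For $x$ this arc would be its only $D_a$-edge $xy$, which is still unoriented. For $\hat x$ it would contradict the fact that all its $D_a$-edges point into $\hat x$.
\end{itemize}
The same observation also gives the half of $\hat x\notin T_D$ you did not address: you show that $\hat x$ is never selected later, but not that it was not selected earlier.

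Two smaller points. First, the ``main obstacle'' you name is not one: an arc created in layer $j$ ends at a $D_b$-vertex with $\U$-value $j$ by the definition of $E_j$. What $\U(w)\ge k-1$ actually needs is that $w\hat x$ is unoriented in $M^O_{\ref{step:DUb}}$. This follows from \Cref{claim:xinDyinU}(2), since $\hat x\in D_b\setminus M_b$ lies in $B$ and all its $D_a$-edges point into it. Second, the sentence about the witness's arcs in your $\U(y)$ paragraph is unnecessary. The hypothesis $\U(y)<\U(x)$ together with gluing along the unoriented edge $xy$ already gives $\U(y)=k-1$.
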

\begin{proof}
     
We prove the assertions for $T_D$.
Let $H=G^{O_{\ref{step:DUb}}}$, and consider the moment
when $x$ is selected in \ref{S8-15} for an unoriented edge $xy$.
By \ref{S7-1}, all edges in $[M_b,A]$ are oriented in $H$,
so $x\notin M_b$.
If some edge incident to $x$ were oriented in
\Cref{step:DUb}, then all its $D_a$-edges would already
be oriented by \Cref{claim:xinDyinU}(2), a contradiction.
Since no edge incident to a vertex of $B$ is oriented before
that step, no edge incident to $x$ is oriented in $H$.
Thus, $x\in D_b\setminus U_b$, and
\Cref{claim:xinDyinU}(4) gives $N(x)\cap U=\emptyset$.

The witness condition requires all edges in
$[x,D_a]\setminus\{xy\}$ to be oriented towards $x$.
If some $y'\in N(x)\cap D_a$ satisfies $\U(y')\ge k$,
then \ref{S8-11} would orient $x\to y'$.
For $y'=y$, this contradicts the fact that $xy$ is unoriented;
otherwise, it contradicts the witness condition.
Thus, $\U(y')<k$ for every $y'\in N(x)\cap D_a$.
If $|N(x)\cap D_a|\ge2$, then \ref{S8-12} would give
an outgoing $D_a$-edge before \ref{S8-15}, again a contradiction.
Hence, $N(x)\cap D_a=\{y\}$.
Since $xy$ is unoriented in $H$, by
\Cref{lem:good-path-gluing}, $\U(y)=k-1$.
The equality $\U(r)=k$ for every $r\in N(x)\cap D_b$
is a condition of \ref{S8-15}. 

For each witness $z_x\ne x$, all its $D_a$-edges
are already oriented towards $z_x$.
If $x$ itself is a witness, then
$N(x)\cap D_a=\{y\}$, $N(x)\cap U=\emptyset$ and
$d_G(x)\ge2$ give $N(x)\cap D_b\ne\emptyset$.
Choose $z\in N(x)\cap D_b$.
Since $[z,D_a]\subseteq
[N[x]\cap D_b,D_a]\setminus\{xy\}$,
all the edges in $[z,D_a]$ are already oriented
towards $z$.

Let $z$ be any of the neighbors chosen in \ref{S8-15}.
Then $\U(z)=k$.
Since $N(z)\cap D_a\ne\emptyset$ and all these edges
are oriented towards $z$, \Cref{claim:xinDyinU} gives
$z\in D_b\setminus U_b$ and no edge incident to $z$
is oriented in $H$.
Thus, $N(z)\cap U=\emptyset$.
Choose $w\in N(z)\cap D_a$.
If $\U(w)\ge k$, then \ref{S8-11} would give $z\to w$,
a contradiction.
Since $zw$ is unoriented in $H$,
\Cref{lem:good-path-gluing} gives $\U(w)=k-1$.

Neither $x$ nor $z$ has been selected earlier in
\ref{S8-15}: such a selection would have oriented
$xy$ or given an edge from $z$ to $D_a$, respectively.
Before the present application, only \ref{S8-15}
can have oriented an edge in $[D_b,D_b]$ after
\Cref{step:DUb}, and each such edge is incident to
the vertex selected in that application.
Thus, $zx$ is still unoriented, and the rule gives
$z\to x\to y$.
Moreover, $z$ cannot be selected later, since all
its $D_a$-edges are already oriented.
Hence, $z\notin T_D$.
Taking $z=\hat{x}$ proves the third assertion.

Every neighbor in $D$ of a vertex of $T_D$ has
$\U$-value at most that of the vertex.
This proves the last assertion.

For $T_U$, the same argument first shows that the
selected vertex and its chosen neighbors have no
neighbor in $D$.
Thus, their incident edges are not affected by
\ref{S8-1}, and the proof with $\D$ and reversed
directions is symmetric.
\end{proof}
\begin{lemma}\label{lem:overlap-potentials}
For each $x\in M_a\cup M_b$, it holds that
$\mu(x)=\U(x)$ and $\nu(x)=\D(x)$ in
$G^{O_{\ref{step:DUb}}}$.
\end{lemma}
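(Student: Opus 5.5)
The inequalities $\U(x)\le\mu(x)$ and $\D(x)\le\nu(x)$ already hold by \Cref{lem:good-path-gluing}(1). So the plan is to prove the reverse inequalities $\mu(x)\le\U(x)$ and $\nu(x)\le\D(x)$ in $G^{O_{\ref{step:DUb}}}$. By symmetry between $\mu,\U,D$ and $\nu,\D,U$, I only treat $\mu$ and $\U$. I split into $x\in M_a$ and $x\in M_b$.

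\emph{Case $x\in M_a$.} First I check that $x\in V(H)$, where $H=G^{O_{\ref{step:sumatmost8}}}$.
\begin{itemize}
\item Since $x\in A$, \Cref{claimcenter2} shows that $x$ is incident to an oriented edge after \Cref{step:L23R23}.
\item The only vertices that \Cref{step:L23R23} newly adds lie in $(L_2\cup R_2)\setminus F_1$. Such a vertex $y\in L_2$ with both $\nu(y)\le4$ and $\mu(y)\le4$ lies in $A'$ and hence in $A_1$. The set $A_1$ is removed from $D_a$, so $y\notin M_a$.
\item Symmetrically, a new vertex on the $R$-side with both bounds lies in $A_2$ and is removed from $U_a$.
\end{itemize}
Hence $x\in V(H)$, and \Cref{lem:saturation-stability} gives $\U(x)=\mu_H(x)$. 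Now $G^{O_{\ref{step:DUb}}}$ extends $H$ and can itself be extended to a full orientation. Monotonicity together with the stability statement of \Cref{lem:saturation-stability} then sandwiches the value:
\[
\mu_H(x)\ge\mu_{G^{O_{\ref{step:DUb}}}}(x)\ge\mu_H(x).
\]
Therefore $\mu(x)=\U(x)$ in $G^{O_{\ref{step:DUb}}}$.

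\emph{Case $x\in M_b$.} I argue by induction on $\U(x)$. Let $P$ be a good path of $x$ with $\ell(P)=\U(x)$.
\begin{itemize}
\item If $w_x=x$, then $P$ is a dipath, so $\mu(x)\le\ell(P)$ and we are done.
\item Otherwise, let $z$ be the predecessor of $x$ on $P$. Then $z\in D$, and the initial segment of $P$ ending at $z$ is a good path of $z$, so $\U(z)\le\U(x)-1$.
\end{itemize}
Next I locate $z$ using the orientations of \ref{S7-1} and \ref{S7-3}.
\begin{itemize}
\item $z\notin D_a\setminus M_a$, because \ref{S7-1} orients $x\to D_a\setminus M_a$, which points away from $x$ and contradicts the definition of a good path.
\item $z\notin D_b\setminus M_b$, because \ref{S7-3} orients $x\to D_b\setminus M_b$.
\end{itemize}
So $z\in M_a\cup M_b$.
\begin{itemize}
\item If $z\in M_a$, the edge $zx$ is oriented by \ref{S7-1}. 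It must be $z\to x$, since $P$ allows no edge pointing away from $x$. The first case gives $\mu(z)=\U(z)$, so $\mu(x)\le\U(z)+1\le\U(x)$.
\item If $z\in M_b$ and $zx$ is oriented $z\to x$, the induction hypothesis gives $\mu(z)=\U(z)$, and the same estimate applies.
\end{itemize}

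The main obstacle is the remaining subcase: $z\in M_b$ and the edge $zx\in[M_b,M_b]$ is unoriented in $G^{O_{\ref{step:DUb}}}$. Then the good path does not correspond to a dipath into $x$. My first attempt would avoid this edge altogether by exploiting \Cref{claim:xinDyinU}(1): $x$ has an in-neighbour $y'\in U_a$, and the structure of \ref{S7-1} bounds $\mu(x)$ through $y'$ or through an in-neighbour in $M_a$. I would then show that $\U(x)$ is at least this bound. The idea is to follow $P$ backwards through its maximal run of $M_b$ vertices to the first vertex outside $M_b$. That vertex lies in $M_a$ and enters the run by an oriented arc, which yields the lower bound.

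If that comparison fails, the fallback is to strengthen the induction. Among all good paths of minimum length, I would choose one whose $M_b$-tail is as short as possible, and then replace its last $[M_b,M_b]$ segment by the arc from $M_a$ or $U_a$ that \ref{S7-1} guarantees into $x$.
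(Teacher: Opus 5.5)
Your $M_a$ case is correct and matches the paper. One small correction: the incident oriented edge comes from the finite bound $\mu(x)\le4$ or $\nu(x)\le4$ forcing a dipath through $x$. It does not come from \Cref{claimcenter2}, which states the converse implication.

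The gap is the $M_b$ case. No edge of $[M_b,M_b]$ is oriented in $G^{O_{\ref{step:DUb}}}$: edges at $B$ are untouched before \Cref{step:DUb}, and \ref{S7-1}--\ref{S7-4} only orient edges meeting $A$, $D_b\setminus M_b$ or $U_b\setminus M_b$. So your subcase ``$z\in M_b$ and $z\to x$'' is empty. The induction on $\U(x)$ therefore contributes nothing, and what you call the remaining subcase is the whole lemma. Neither of your suggestions closes it.
\begin{itemize}
\item Tracing $P$ back through a run of $M_b$-vertices to the vertex $r$ entering the run only gives $\ell(P)\ge\mu(r)+2$. But $r$ may lie on the dipath part of $P$ (for instance in $U_a$), and nothing local prevents $\mu(r)\in\{1,2\}$, whereas \ref{S7-1} only yields $\mu(x)\le5$.
\item Your fallback gives $\mu(x)\le\mu(y')+1$ for an in-neighbour $y'$, again with no comparison to $\ell(P)$.
\end{itemize}
Local information alone cannot exclude a short good path that reaches $x$ through unoriented $M_b$--$M_b$ edges.

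The missing idea is a global lower bound, $\U(x)\ge4$ for every $x\in M_b$, coming from the termination of \Cref{step:sumatmost8}. Put $H=G^{O_{\ref{step:sumatmost8}}}$ and suppose $P$ is a good path of $x$ with $\ell(P)\le3$.
\begin{itemize}
\item Append the arc $x\to q$ with $q\in D_a$ from \Cref{claim:xinDyinU}(1). If $q\notin V(H)$, also append the type-II arc $q\to c$ with $c\in L_1\cup R_1$. This gives a dipath $W$ ending at some $p\in V(H)$ with $e(W)+\nu_H(p)\le5$.
\item After checking that $PW$ is a path, let $h$ be its last vertex in $V(H)$ before $p$, and let $Q$ be the $hp$-subpath. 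Since $x$ is internal to $Q$ and no internal vertex of $Q$ lies in $V(H)$, every edge of $Q$ is unoriented in $H$.
\item Orient $PW$ along itself and invoke \Cref{lem:saturation-stability}. This gives $\mu_H(h)\le\ell(P)+e(W)-e(Q)$, hence $\mu_H(h)+e(Q)+\nu_H(p)\le8$. That contradicts the stopping rule of \Cref{step:sumatmost8}.
\end{itemize}
Together with $\U(x)\le\mu(x)\le5$ (from \Cref{claim:xinDyinU}(1) and \Cref{lem:good-path-gluing}(1)), only $\U(x)=4$ needs work. If an optimal good path is not a dipath, the predecessor $r$ of $x$ has $\U(r)\le3$, so $r\notin M_b$ by the bound just proved. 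Your own elimination of $D_a\setminus M_a$ and $D_b\setminus M_b$ then gives $r\in M_a$ with $r\to x$, whence $\mu(x)\le\U(r)+1\le4$. This is how the unoriented $[M_b,M_b]$ edge is excluded as the last edge of an optimal good path.
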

\begin{proof}
Let $H=G^{O_{\ref{step:sumatmost8}}}$, and compute $\mu,\nu$
after \Cref{step:DUb}.
By \Cref{step:L23R23} and (\ref{partitionA}),
$(A\setminus V(H))\cap A'\subseteq A_1\cup A_2$.
Thus, $M_a\subseteq V(H)$, and the assertion for $M_a$
follows from \Cref{lem:saturation-stability}.

Let $x\in M_b$.
By \Cref{claim:xinDyinU} and \Cref{lem:good-path-gluing},
$\U(x)\le\mu(x)\le5$.
Suppose that $\U(x)\le3$, and choose a good path $P$
of $x$ with $\ell(P)\le3$.
Let $x\to q$ be an arc with $q\in D_a$.

If $q\in V(H)$, $W=xq$ and $p=q$.  Then $\nu_H(q)\le4$ and $\mu_H(q)\ge2$:
otherwise, since $q\notin F_0$, we would have
$q\in R_1\cap A_1$, contrary to $q\in D_a$. 
If $q\notin V(H)$, then $q\in(L_2\cup R_2)\cap F_2$.
Since $\nu(q)$ was finite when $D_a$ was defined,
\Cref{step:L23R23} gives an arc $q\to c$ with
$c\in(L_1\cup R_1)\cap F_1$.
If $c\in L_1$, then $\nu_H(c)\le2$ and
$c\notin L_{11}$ gives $\mu_H(c)\ge3$.
If $c\in R_1$, the rule gives $\mu_H(c)\ge3$,
so $c\to v$ and $\nu_H(c)\le3$. 
Put $W=xqc$ and $p=c$. 
In either case, $e(W)+\nu_H(p)\le5$.

Since $x\to q$ is already oriented, $q$ cannot immediately
precede $x$ on $P$.
An earlier occurrence of $q$ would give
$\mu_H(q)\le\ell(P)-2\le1$ in the first case,
or contradict $q\in F_2$ in the second.
Similarly, an occurrence of $c$ would give
$\mu_H(c)\le\ell(P)-1\le2$.
Here we use \Cref{lem:saturation-stability}.
Thus, $PW$ is a path.
Let $h$ be its last vertex in $V(H)$ before $p$, and let
$Q$ be the $hp$-subpath.
Such a vertex $h$ exists, since $P$ starts at a vertex
of $\{u,v\}\subseteq V(H)$.
Since $V(W)\cap V(H)=\{p\}$ and $x\notin V(H)$,
the vertex $x$ is an internal vertex of $Q$.
By the choice of $h$, no internal vertex of $Q$ belongs
to $V(H)$.
Thus, every edge of $Q$ has an endpoint outside $V(H)$,
and hence is unoriented in $H$.

Since $P$ is a good path and $W$ is a dipath from $x$
to $p$, we may orient the unoriented edges of $PW$
along the path and extend the resulting partial
orientation to an orientation $\overrightarrow{G}$ of $G$.
By \Cref{lem:saturation-stability}, it holds that
$\mu_H(h)=\mu_{\overrightarrow{G}}(h)
\le\ell(P)+e(W)-e(Q)$.
Consequently,
$\mu_H(h)+e(Q)+\nu_H(p)
\le\ell(P)+e(W)+\nu_H(p)\le8$.
Thus, $Q$ could still be oriented from $h$ to $p$
with weight at most eight at the end of
\Cref{step:sumatmost8}, a contradiction.
Hence, $\U(x)\ge4$.

 If $\U(x)=5$, then $\U(x)\le\mu(x)\le5$ gives equality.
Thus, assume that $\U(x)=4$. 
Let $P$ be a good path of $x$ with $\ell(P)=4$.
If $P$ is a dipath, then $\mu(x)\le4$.
Otherwise, its predecessor $r$ belongs to $D$ and
$\U(r)\le\ell(P-x)=3$, so $r\notin M_b$.
By \Cref{claim:xinDyinU}, $rx$ is already oriented
as $r\to x$.
The rules in \Cref{step:DUb} then give $r\in M_a$.
Thus, $\mu(x)\le\mu(r)+1=\U(r)+1\le4$,
and consequently $\mu(x)=\U(x)$. 

The assertion for $\nu$ is symmetric.
\end{proof}
 The following two results state that the target could be achieved by the above  orientation procedure.

\begin{proposition}
    \label{cl:complicateDmu}
 In $G^{O_{\ref{step:complicate}}}$, 
  $\nu(s)\le 6$ for each $s\in D_a\cup D_b$ while
     $\mu(s)\le 6$ for each $ s\in U_a\cup  U_b$. 
\end{proposition}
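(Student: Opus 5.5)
By the symmetry between \ref{S8-1},\ref{S8-3} and \ref{S8-2},\ref{S8-4} (interchange $D\leftrightarrow U$, $\mu\leftrightarrow\nu$, reverse all arcs), we only prove $\nu(s)\le6$ for $s\in D_a\cup D_b$. The plan is to find, for each such $s$, a short dipath from $s$ to a vertex whose $\nu$-value is already small, using only arcs present after \Cref{step:complicate}.

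\emph{Vertices of $D_a$.} By definition, $s$ meets (a1), so $\nu(s)\le4$ already in $G^{O_{\ref{step:L23R23}}}$, and by monotonicity this persists. For the few vertices of $D_a$ that first entered through \Cref{step:DUb}, \Cref{claim:xinDyinU} gives $\nu\le5$.

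\emph{Vertices of $D_b$ with an outgoing $D_a$-arc.} If $s\in D_b$ has an arc $s\to y$ with $y\in D_a$, then $\nu(s)\le\nu(y)+1\le5$. This covers:
\begin{itemize}
\item $M_b$, by \Cref{claim:xinDyinU}(1);
\item the vertices of $D_b\setminus M_b$ touched in \Cref{step:DUb}, by \Cref{claim:xinDyinU}(2);
\item vertices whose $D_a$-edge receives $x\to y$ in \ref{S8-11}, \ref{S8-12} or \ref{S8-15};
\item vertices for which \ref{S8-13} is applied, since that rule presupposes an existing arc $x\to y'$.
\end{itemize}

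\emph{Remaining vertices $s\in D_b$.} These have all their $D_a$-edges oriented towards $s$, through \ref{S8-14}, \ref{S8-16} or the witness structure. The goal is an arc $s\to s'$ with $s'\in D_b$ and $s'\to y'\in D_a$, giving $\nu(s)\le6$.

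In case \ref{S8-14}, there is $x'\in N(s)\cap D_b$ with $\U(x')>\U(s)$. Since the layers $E_k$ are processed in decreasing $k$, the $D_a$-edges of $x'$ were handled earlier. Because $\U(s)<\U(x')$, rule \ref{S8-31} orients $s\to x'$, unless the edge $sx'$ was oriented earlier inside \ref{S8-15}. The latter is excluded by the last assertion of \Cref{lem:exceptional-vertices}. It remains to show that $x'$ itself has an outgoing $D_a$-arc, or to iterate this argument along strictly increasing $\U$; the iteration terminates because $\U\le9$.

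The witnesses $\hat x$ are handled by \Cref{lem:exceptional-vertices}, which gives $\hat x\to x\to y$ and hence $\nu(\hat x)\le\nu(y)+2\le6$.

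In case \ref{S8-16}, the key point is that the rule is applied only when \ref{S8-15} is inapplicable. Suppose $s$ ended with all $D_a$-edges incoming and all $D_b$-neighbours of equal $\U$-value. Then at the moment its last $D_a$-edge was oriented, $s$ would serve as a witness $z_x$ for a neighbour, so \ref{S8-15} should have fired. The exception is when no such neighbour $x$ has an unoriented $D_a$-edge left, in which case each neighbour already has an outgoing $D_a$-arc. Then \ref{S8-33} orients $s\to x_2$ towards a neighbour $x_2$ with $x_2\to y$, giving $\nu(s)\le6$.

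This last case analysis is the main obstacle. First, one must check that no earlier rule (\ref{S8-15} or \ref{S8-32}) has already oriented the relevant $[D_b,D_b]$ edge the wrong way; the exclusion $z\notin H_D$ in \ref{S8-32} handles this. Second, the iteration in case \ref{S8-14} must be shown to end at a vertex with an outgoing $D_a$-arc within one step. For this I would argue that $x'$ was itself processed in some rule that gives it an outgoing $D_a$-arc, or that $x'$ is a $\hat{\cdot}$-vertex with $\hat x\to x\to y$. Since the latter would yield only $\nu(s)\le7$, I would first try to rule it out using $\U(\hat x)=\U(x)$ together with $\U(s)<\U(x')$ and \ref{S8-31}.
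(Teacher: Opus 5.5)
Your case split matches the paper's: an outgoing $D_a$-arc, the vertices $\hat x$, a $D_b$-neighbour of larger $\U$-value, and all $D_b$-neighbours of equal $\U$-value. However, two of these cases are not actually proved.

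\textbf{The larger-$\U$ case.} When $s$ has a neighbour $x'\in D_b$ with $\U(x')>\U(s)$, you correctly get $s\to x'$ from \ref{S8-31}. But you leave open whether $x'$ has an outgoing $D_a$-arc, and any iteration through two or more $D_b$-vertices gives at best $\nu(s)\le7$. The missing observation is that $s$ itself certifies the second bullet of \ref{S8-12} at $x'$, since it is a $D_b$-neighbour of $x'$ with strictly smaller $\U$-value. Because $x'\in D_b\subseteq D'_b$, it has a $D_a$-neighbour, and there are two possibilities:
\begin{itemize}
\item some edge at $x'$ was oriented in \Cref{step:DUb}, and then \Cref{claim:xinDyinU}(1),(2) give an arc from $x'$ to $D_a$;
\item all $D_a$-edges of $x'$ are unoriented members of $E_{\U(x')}$, each satisfying the hypothesis of \ref{S8-11} or of \ref{S8-12}/\ref{S8-13}; then the first one handled by \ref{S8-11} or \ref{S8-12} is oriented out of $x'$.
\end{itemize}
Either way $s\to x'\to y'$ with $y'\in D_a$. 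There is nothing to iterate, and no need to exclude $x'\in H_D$.

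\textbf{The equal-$\U$ case.} Here the witness argument is run at the wrong moment. When the last $D_a$-edge of $s$ is oriented, edges in $[N(s)\cap D_b,D_a]$ may still be unoriented, so $s$ need not be a witness yet. If it is not, you can only conclude that \emph{some} neighbour, not each, has an outgoing $D_a$-arc, or nothing at all if some neighbour still has unoriented $D_a$-edges.

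The paper instead assumes that every edge of $E_s=[N[s]\cap D_b,D_a]$ ends up pointing into $D_b$, and takes $xy$ to be the last such edge oriented. The edge $st$ is oriented in the \ref{S8-15}/\ref{S8-16} phase of layer $k=\U(s)$, because $N(s)\cap D_a=\{t\}$ and all $D_b$-neighbours of $s$ have $\U$-value $k$. Hence $xy$ is also oriented in that phase, and necessarily by \ref{S8-16}. Since $x$ has no outgoing $D_a$-arc and $xy$ was not handled in \ref{S8-11}--\ref{S8-14}, one gets $\U(y)<k$ and $\U(z)=k$ for every $z\in N(x)\cap D_b$. So \ref{S8-15} applies to $xy$ with $z_x=s$, and by its priority it would have oriented $x\to y$, a contradiction.

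Therefore some $r\in N(s)\cap D_b$ has an arc into $D_a$, and $s\to r$ then follows from \ref{S8-33}. Note that what excludes $r\to s$ via \ref{S8-15} or \ref{S8-32} is $s\notin T_D\cup H_D$, not the exclusion $z\notin H_D$ you cite.
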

\begin{proof}
We prove that $\nu(s)\le6$ for $s\in D$.
By the definition of $D_a$ and \Cref{step:DUb},
$\nu(s)\le5$ for
$s\in D_a\cup(D_b\cap V(G^{O_{\ref{step:DUb}}}))$.
Thus, assume that
$s\in D_b\setminus V(G^{O_{\ref{step:DUb}}})$
and there is no $t\in N(s)\cap D_a$ with $s\to t$.
It suffices to find a dipath of length two from $s$ to $D_a$.

By \ref{S8-11} and \ref{S8-12}, we have that
$N(s)\cap D_a=\{t\}$, $\U(t)<\U(s)$, and
$\U(r)\ge\U(s)$ for every $r\in N(s)\cap D_b$.
Since $N(s)\cap U=\emptyset$ and $d_G(s)\ge2$,
it follows that $N(s)\cap D_b\neq\emptyset$.
If $s=\hat{x}$ for some $x\in T_D$, then
\Cref{lem:exceptional-vertices} gives $s\to x\to y$
for some $y\in D_a$, and we are done.
Thus, assume that $s\neq\hat{x}$ for every $x\in T_D$.

Suppose that some $r\in N(s)\cap D_b$ satisfies
$\U(r)>\U(s)$.
By \Cref{step:DUb}, \ref{S8-11} and \ref{S8-12},
there is some $r'\in N(r)\cap D_a$ with $r\to r'$,
since $s$ is a $D_b$-neighbor of $r$ with $\U(s)<\U(r)$.
Moreover, $s\to r$ by \ref{S8-31}, since \ref{S8-15}
only orients edges between $D_b$-vertices with equal
$\U$-values.
Thus, $s\to r\to r'$, and we are done.

Hence, $\U(r)=k=\U(s)$ for every $r\in N(s)\cap D_b$.
We show that some such $r$ has an edge $r\to r'$
with $r'\in D_a$.
Otherwise, every edge in
$E_s=[N[s]\cap D_b,D_a]$ is oriented towards $D_b$.
Let $xy$, with $x\in D_b$ and $y\in D_a$, be the last
edge of $E_s$ to be oriented.
Since all the $D_b$-ends of these edges have $\U$-value $k$,
and $st$ is not oriented in \ref{S8-11}--\ref{S8-14},
the edge $xy$ is oriented in \ref{S8-16}.

Since no edge is oriented from $x$ to $D_a$,
\ref{S8-11} and \ref{S8-12} imply that $\U(y)<k$
and $\U(z)\ge k$ for every $z\in N(x)\cap D_b$.
Since $xy$ is not oriented in \ref{S8-14},
we have $\U(z)=k$ for every such $z$.
Immediately before $xy$ is oriented, all the edges in
$E_s\setminus\{xy\}$ are oriented towards $D_b$.
Thus, \ref{S8-15} applies with $z_x=s$ and gives
$x\to y$ before \ref{S8-16}, a contradiction.

Choose $r\in N(s)\cap D_b$ and $r'\in D_a$ with $r\to r'$.
Since $s$ has no edge directed from $s$ to $D_a$ and
$s\neq\hat{x}$ for every $x\in T_D$,
none of \ref{S8-15}, \ref{S8-32} and \ref{S8-33}
can orient $sr$ as $r\to s$.
Thus, $s\to r$ already holds, or follows from \ref{S8-33}.
Consequently, $\nu(s)\le2+\nu(r')\le6$.

For $s\in U_b\setminus V(G^{O_{\ref{step:DUb}}})$,
we have $N(s)\cap D=\emptyset$.
Thus, \ref{S8-1} and \ref{S8-3} do not orient any edge
incident to $s$, and the same proof with reversed
directions gives $\mu(s)\le6$ for $s\in U$.
\end{proof}

\begin{proposition}\label{claimcenter4}
In $G^{O_{\ref{step:complicate}}}$, all the following hold.
\begin{itemize}
\item For each $s\in D$, it holds that $\mu(s)\le\U(s)$ if
$s\notin T_D$, and $\mu(s)\le\U(s)+1$ if $s\in T_D$.
\item For each $s\in U$, it holds that $\nu(s)\le\D(s)$ if
$s\notin T_U$, and $\nu(s)\le\D(s)+1$ if $s\in T_U$.
\end{itemize}
\end{proposition}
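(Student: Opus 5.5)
We prove only the assertion for $D$; the assertion for $U$ is symmetric, with $\D$ in place of $\U$, reversed directions, and \ref{S8-2}, \ref{S8-4} in place of \ref{S8-1}, \ref{S8-3}.

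\textbf{Setup.} Proceed by induction on $k=\U(s)$, which is finite and at most $9$ by the preceding corollary. For $s\in D$, fix a good path $P$ of $s$ in $M^O_{\ref{step:DUb}}$ with $\ell(P)=\U(s)$. Since $P$ is a good path, it has a last vertex $w_s$ up to which it is a dipath from $u$ or $v$. After $w_s$, every edge is unoriented or oriented towards $s$, and every vertex lies in $D$.

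\textbf{Base cases.}
\begin{itemize}
\item If $w_s=s$, then $\mu(s)\le\ell(P)=\U(s)$ immediately.
\item If $s\in M_a\cup M_b$, the claim follows from \Cref{lem:overlap-potentials}.
\item If $s\in V(G^{O_{\ref{step:sumatmost8}}})$, the claim follows from \Cref{lem:saturation-stability}.
\end{itemize}

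\textbf{Inductive step.} Otherwise, let $r$ be the predecessor of $s$ on $P$. Then $r\in D$, and by \Cref{lem:good-path-gluing} we have $\U(r)\le k-1$. The subpath $P-s$ is a good path of $r$, so $\U(r)=k-1$ exactly; otherwise $\U(s)<k$. The key claim is:
\begin{itemize}
\item if $s\notin T_D$, the edge $rs$ ends up oriented $r\to s$;
\item if it is oriented $s\to r$, then $s\in T_D$ and another in-neighbour compensates.
\end{itemize}
Given the key claim, induction gives $\mu(r)\le\U(r)$, since $r\notin T_D$ by the last assertion of \Cref{lem:exceptional-vertices} (as $\U(r)<\U(s)$). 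Hence $\mu(s)\le\mu(r)+1\le\U(s)$.

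\textbf{Proof of the key claim.} Check each step that can orient $rs$.
\begin{itemize}
\item If $rs$ was already oriented in $M^O_{\ref{step:DUb}}$, the good-path condition forces $r\to s$.
\item If $rs\in[D_a,D_a]\cup[D_b,D_b]$ is oriented in \ref{S8-3}, then \ref{S8-31} gives $r\to s$, because $\U(r)<\U(s)$.
\item If $rs$ is oriented in \ref{S8-15} (as $z\to x$), both ends have equal $\U$-value, which is impossible here.
\item Suppose $rs\in[D_b,D_a]$ is oriented in \ref{S8-1} with $s\in D_b$ and $r\in D_a$, so $\U(s)>\U(r)$. Rule \ref{S8-11} does not apply. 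Rules \ref{S8-13}, \ref{S8-14} and \ref{S8-16} orient $r\to s$. Rule \ref{S8-12} orients $s\to r$, but only when $s$ has another $D_a$-neighbour or a $D_b$-neighbour $x'$ with $\U(x')<\U(s)$. In that case we instead route through that other neighbour. A $D_b$-neighbour $x'$ with $\U(x')=k-1$ is oriented $x'\to s$ by \ref{S8-31}, since $[D_b,D_b]$-edges are untouched by \ref{S8-1} except in \ref{S8-15}. The second $D_a$-neighbour $y'$ is handled by \ref{S8-12} and \ref{S8-13}: its edge gets $y'\to s$ when $\U(y')<\U(s)$. So we need a neighbour $q$ with $\U(q)=k-1$, $\mu(q)\le k-1$ and $q\to s$; we choose $q$ to be a predecessor on some optimal good path, re-selecting $P$ among all optimal good paths.
\item Rule \ref{S8-15} orients $x\to y$ with $x=s$. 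Then $s\in T_D$, and \Cref{lem:exceptional-vertices} gives $w\to\hat s\to s$ with $\U(w)=k-1$. By induction $\mu(w)\le k-1$, hence $\mu(s)\le k+1$.
\end{itemize}

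\textbf{Main obstacle.} The \ref{S8-12} case is the hardest. If $s$ has two $D_a$-neighbours both of value $k-1$, \ref{S8-12} orients the first one out of $s$ and \ref{S8-13} the second one into $s$. The plan is to show that at least one in-neighbour of $s$ with $\U$-value $k-1$ always survives. For this, argue by the processing order within $E_k$, and show that \ref{S8-12} is never applied to the last remaining edge unless \ref{S8-13} or \ref{S8-14} already supplied an in-arc. Strong induction on $k$ also needs the vertices of $D_a$ with a good path ending in $D_b$ to be covered; these are handled by the same case analysis.
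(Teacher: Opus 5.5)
You have the paper's skeleton: induction on $\U(s)$, the base cases (your extra one via \Cref{lem:saturation-stability} is valid), the predecessor $r$ on an optimal good path with $\U(r)=k-1$ and $r\notin T_D$, a rule-by-rule check of how $rs$ is oriented, and the route $w\to\hat s\to s$ when $s\in T_D$. The gap is the one non-routine case, where \ref{S8-12} orients $s\to r$. There you only outline a plan, and the sentence ``its edge gets $y'\to s$ when $\U(y')<\U(s)$'' presupposes what has to be shown. Your key claim is also false as stated: \ref{S8-12} can orient $s\to r$ with $s\notin T_D$, as your own case list acknowledges. The invariant you propose is not the right one either, namely that \ref{S8-12} is never applied to the last remaining edge unless \ref{S8-13} or \ref{S8-14} already supplied an in-arc. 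Within a layer those rules run after \ref{S8-12}. Moreover, when the trigger is a $D_b$-neighbour with smaller $\U$, \ref{S8-12} may orient the only $D_a$-edge at $s$ outward, and the compensating arc then comes later from \ref{S8-31}.

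The missing argument is short, and the paper supplies it.
\begin{itemize}
\item No edge at $s$ is oriented before \Cref{step:complicate}. Indeed, $s\in D_b\setminus U_b\subseteq B$ has no arc before \Cref{step:DUb}. It gets none in that step either, since by \Cref{claim:xinDyinU}(2) this would orient $s\to r$ in $M^O_{\ref{step:DUb}}$, contradicting that $P$ is good. So all of $[s,D_a]$ lies in $E_k$, and the $D_b$-edges at $s$ are still free for \ref{S8-31}.
\item Every $y\in N(s)\cap D_a$ has $\U(y)<k$. Otherwise \ref{S8-11}, applied to $E_k$ before \ref{S8-12}, would create $s\to y$ and block \ref{S8-12} at $s$.
\item \ref{S8-12} fires at most once at $s$, and afterwards \ref{S8-13} orients every remaining $D_a$-edge at $s$ inward. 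So if $|N(s)\cap D_a|\ge2$ you get $y'\to s$ with $\U(y')<k$. Otherwise your \ref{S8-31} branch supplies an in-arc from a $D_b$-neighbour with smaller $\U$.
\end{itemize}
No re-selection of $P$ is needed. For any in-neighbour $q\in D$ of $s$ with $\U(q)<\U(s)$, \Cref{lem:exceptional-vertices} gives $q\notin T_D$. The induction hypothesis alone then yields $\mu(s)\le\mu(q)+1\le\U(q)+1\le k$.

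Two minor omissions remain. First, the case $r\in D_b$, $s\in D_a$ is handled by \ref{S8-11} in layer $k-1$, which gives $r\to s$. Second, \ref{S8-2} and \ref{S8-4} never touch $rs$, because \Cref{claim:xinDyinU}(4) puts $r,s\in D\setminus U$.
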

\begin{proof}
We prove the first assertion by induction on $\U(s)$.
For $s\in M_a\cup M_b$, we have $s\notin T_D$, and the
assertion follows from \Cref{lem:overlap-potentials}.
The case $\U(s)=0$ is clear.
Thus, assume that $s\in D\setminus U$ and $\U(s)=k\ge1$.

Suppose that $s\in T_D$.
By \Cref{lem:exceptional-vertices}, there is some $w\in D_a$
with $\U(w)=k-1$ such that $w\to\hat{s}\to s$.
Since $w\notin T_D$, the induction hypothesis gives
$\mu(s)\le\mu(w)+2\le k+1$.
Thus, assume that $s\notin T_D$.

Let $P$ be a good path of $s$ with $\ell(P)=k$.
If $P$ is a dipath in $G^{O_{\ref{step:DUb}}}$, then
$\mu(s)\le k$.
Otherwise, let $r$ be the vertex immediately preceding
$s$ on $P$.
Since $P-s$ is a good path of $r$, by
\Cref{lem:good-path-gluing}, it holds that
$k-1\le\U(r)\le\ell(P)-1=k-1$.
By \Cref{lem:exceptional-vertices}, $r\notin T_D$.
Thus, the induction hypothesis gives $\mu(r)\le k-1$.
If $r\to s$, then $\mu(s)\le\mu(r)+1\le k$.

Thus, assume that $s\to r$.
Since $P$ is a good path, $rs$ is unoriented after
\Cref{step:DUb}, and hence $r,s\in D\setminus U$.
By \ref{S8-11}, \ref{S8-31} and
\Cref{lem:exceptional-vertices}, we have that
$r\in D_a$ and $s\in D_b$.
Since $s\notin T_D$, the edge $sr$ is oriented in
\ref{S8-12}.

We show that there is some $r'\in N(s)\cap D$ with
$\U(r')<k$ and $r'\to s$.
If some $r'\in N(s)\cap D_b$ satisfies $\U(r')<k$,
then $r'\to s$ by \ref{S8-31}, since \ref{S8-15}
only orients edges between $D_b$-vertices with equal
$\U$-values.
Otherwise, $|N(s)\cap D_a|\ge2$ by \ref{S8-12}.
All the edges in $[s,D_a]$ are unoriented after
\Cref{step:DUb}, since otherwise that step would also
have oriented $sr$.
Moreover, $\U(y)<k$ for every $y\in N(s)\cap D_a$;
otherwise, \ref{S8-11} would give $s\to y$, preventing
the application of \ref{S8-12}.
Thus, after $s\to r$, \ref{S8-13} gives $r'\to s$
for some $r'\in(N(s)\cap D_a)\setminus\{r\}$. 
In either case, $r'\notin T_D$ by
\Cref{lem:exceptional-vertices}.
By the induction hypothesis, it follows that
$\mu(s)\le\mu(r')+1\le\U(r')+1\le k$.

For the second assertion, vertices in $M_a\cup M_b$
are covered by \Cref{lem:overlap-potentials}.
By \Cref{step:DUb}, every edge incident to $U\setminus D$
that remains unoriented has both ends in $U\setminus D$.
Thus, these edges are not affected by \ref{S8-1} or
\ref{S8-3}, and the same proof with $\D$ and reversed
directions gives the assertion.
\end{proof}

\section{Proof of the diameter bound}\label{sec:finalmain}

We now show that the completed orientation $G^{O_{\ref{step:complicate}}}$ has directed diameter at most $16$, as required by \Cref{lemless5}. The argument reduces the problem to noncritical endpoints and then examines a shortest path between them, using the distance bounds already established.

Throughout this section, $\mu_{\ref{step:DUb}}$ and $\nu_{\ref{step:DUb}}$ denote the values in $G^{O_{\ref{step:DUb}}}$, while $\mu_{\ref{step:L23R23}}$ and $\nu_{\ref{step:L23R23}}$ denote the values in $G^{O_{\ref{step:L23R23}}}$ when this distinction is needed. The values of $\U$ and $\D$ remain fixed. Unless a different stage is specified, $\mu$, $\nu$ and $\partial$ are computed in $G^{O_{\ref{step:complicate}}}$. Adding arcs cannot increase a directed distance. In particular, an upper bound in an earlier stage also holds in a later stage.

\subsection{Edge types and special vertices}\label{sec:edge-types}
\begin{definition}

For $e\in E(G^{O_{\ref{step:DUb}}})$, if $e$ is oriented in \Cref{step:F0X31X41}--\Cref{step:sumatmost8}, or \Cref{step:DUb}, then call $e$   a  \emph{type-I} edge; if $e$ is oriented in \Cref{step:L23R23}, then call  $e$  a \emph{type-II} edge. 
\end{definition}

\begin{proposition}\label{prop:t1edge}
  If $s\to t$ is  a \emph{type-I} edge with $s,t\notin F_0$, then $\mu(s)\le 6$, $\nu(t)\le 6$, and $\mu(s)+\nu(t)\le 10$ in $G^{O_{\ref{step:DUb}}}$. 
 \end{proposition}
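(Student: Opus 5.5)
The plan is to split according to the step in which $s\to t$ was oriented. There are two cases: arcs from \Cref{step:F0X31X41}--\Cref{step:sumatmost8}, and arcs from \Cref{step:DUb}. All bounds are computed in $G^{O_{\ref{step:DUb}}}$, which is legitimate by monotonicity.

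\emph{Arcs from \Cref{step:F0X31X41}--\Cref{step:sumatmost8}.} Here I would invoke \Cref{lem:gold-deep-layer} directly. Put $d(z)=\mathrm{dist}_G(z,F_0)$. Since $s,t\notin F_0$, we have $d(s),d(t)\ge1$. The arc bounds of that lemma then give
\[
\mu(s)\le 7-d(t)\le 6,\qquad \nu(t)\le 7-d(s)\le 6,\qquad \mu(s)+\nu(t)\le 8\le 10 .
\]
All of this holds in $G^{O_{\ref{step:sumatmost8}}}$, and hence also in $G^{O_{\ref{step:DUb}}}$. So this case is immediate.

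\emph{Arcs from \Cref{step:DUb}.} I would go through the rules \ref{S7-1}--\ref{S7-4}. In each rule the arc runs in the direction $U\to D$: from $U_a$ or $U_b$ into $M_b$, $D_b$ or $D_a$. That is, $s$ lies on the $U$ side and $t$ lies on the $D$ side of the pattern $U_a\to U_b\to M_b\to D_b\to D_a$.

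The task is then to record the bounds from \Cref{claim:xinDyinU}. First, $\mu(s)\le5$ in every case: for $s\in U_a$ by (a2) or membership in $A$, for $s\in M_b$ by \Cref{claim:xinDyinU}(1), and for $s\in U_b\setminus M_b$ by the symmetric half of \Cref{claim:xinDyinU}(2). Second, $\nu(t)\le5$ in the same way, now using $t\in D_a$, $t\in M_b$, or $t\in D_b\setminus M_b$. Together these give $\mu(s)+\nu(t)\le10$, with each term at most $6$.

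The one subtlety is $U_a$ and $D_a$ vertices that lie in $A_1$ or $A_2$, or in $M_a$. For these I would use the definition of $D_a$ and $U_a$, which gives $\nu\le4$ on $D_a$ and $\mu\le4$ on $U_a$ directly, since (a1) and (a2) are built into the definitions. \Cref{claim:xinDyinU}(5) covers the arc $y\to x$ with $x\in D$ and $y\in U$ in the same way, giving $\mu(y)\le5$ and $\nu(x)\le5$.

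\emph{Expected obstacle.} The only delicate point is making sure no arc of \Cref{step:DUb} runs backwards, that is, $D\to U$. Such an arc could appear in \ref{S7-1}, among edges inside $M_a$ (the arcs $y\to x\to y'$ and $M_a\to x$, $x\to M_a$). If $s\in M_a$, I would use the fact that $M_a\subseteq A$ satisfies both (a1) and (a2), so $\mu(s),\nu(s)\le4$, and apply the same for $t$. This gives $\mu(s)+\nu(t)\le 8$ in that subcase as well, which completes the proof.
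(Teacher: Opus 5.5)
Your first case is correct and is essentially the paper's argument. \Cref{lem:gold-deep-layer} is the packaged form of the weight-nine dicycle reasoning from \Cref{claimcenter} that the paper invokes. With $s,t\notin F_0$ it yields $\mu(s),\nu(t)\le 6$ and $\mu(s)+\nu(t)\le 8$.

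The second case rests on a false classification: not every arc of \Cref{step:DUb} has its tail in $U$ and its head in $D$. Rules \ref{S7-2} and \ref{S7-3} also orient two other kinds of arc:
\begin{itemize}
\item arcs $a\to y$ with $a\in U_a$ and $y\in U_b\setminus M_b$;
\item arcs $x\to b$ with $x\in D_b\setminus M_b$ and $b\in D_a$.
\end{itemize}
Since $U_b\cap D_b=M_b$ and $B\cap A=\emptyset$, here $y\in U\setminus D$ and $x\in D\setminus U$. So \Cref{claim:xinDyinU}(5) does not apply to these arcs. Your list of tails ($U_a$, $M_b$, $U_b\setminus M_b$) and your list of heads ($D_a$, $M_b$, $D_b\setminus M_b$) miss exactly these cases.

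For these arcs, your assertions that $\mu(s)\le5$ in every case and $\nu(t)\le5$ are unsupported. \Cref{claim:xinDyinU}(2) and its symmetric form give only $\mu(x)\le6$ and $\nu(y)\le6$. The construction gives nothing better: along $U_a\to y\to x\to D_a$ one only gets $\nu(y)\le 2+4$.

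The repair is short. On these arcs the other endpoint lies in $U_a$ or $D_a$, so $\mu(a)\le4$ or $\nu(b)\le4$. The sum is then still at most $4+6=10$, and each term is at most $6$. Alternatively, as the paper does, note that every arc of \Cref{step:DUb} lies on a dipath of length at most three from $U_a$ to $D_a$. Its weight is therefore at most $4+3+4=11$.

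There is also a smaller slip in your last paragraph. The arcs of \ref{S7-1} you single out join $M_a$ to $M_b$, not two vertices of $M_a$. The $M_b$ endpoint only satisfies the bound $5$, so the sum there is at most $9$, not $8$. This does not affect the conclusion.
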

\begin{proof} Throughout this proof, $\mu$, $\nu$ and $\omega$ are computed in $G^{O_{\ref{step:DUb}}}$. The bounds then hold in every extension by monotonicity.
Suppose that $st$ is oriented in   \Cref{step:F0X31X41}--\Cref{step:sumatmost8}. Then  $\mu(s)+\nu(t)+1\le \omega(s\to t)\le 9$ by \Cref{claimcenter}. If $\omega(s\to t)\le8$, we are done. Thus, assume that $\omega(s\to t)=9$. Then $st$ lies on a dicycle of length 5 that intersects $I_2\cup I_3\cup I_4$. Since $s,t\notin F_0$, the result holds.

Suppose  $s\to t$  is oriented in \Cref{step:DUb}.  Then $\mu(s)+\nu(t)+1\le \omega(s\to t)\le 4+4+3\le 11$, $\mu(s)\le 6$ and $\nu(t)\le 6$ by \Cref{claim:xinDyinU}.
\end{proof}

Recall that by \Cref{claimcenter4}, for $s\in U$  it holds that $\nu(s)\le \D(s)+1$; for  $s\in D$  it holds that $\mu(s)\le \U(s)+1$. We propose the following definitions. 
\begin{definition}
  For $s\in U\setminus D$ with $\nu(s)=\D(s)+1$, or $s\in D\setminus U$ with $\mu(s)=\U(s)+1$, we call $s$ a {\it critical-I} vertex. Here the equalities refer to the final orientation. By \Cref{claimcenter4}, every such vertex belongs to $T_U$ or $T_D$, respectively. For each such $s$, fix the corresponding vertex $\hat{s}$ in \ref{S8-25} or \ref{S8-15}, and call it an {\it adherent-I} vertex.
\end{definition}

\begin{definition}
 For $s\in V$ such that the equality  in \Cref{claim:munuF2}(1) holds,   we call $s$ a  {\it critical-II} vertex and call the corresponding vertex $r$ in \Cref{claim:munuF2}(1) an   {\it adherent-II} vertex. 
\end{definition}

We list the following properties about  critical and adherent  vertices. 
\begin{proposition}\label{prop:criticaland adherent} All the following hold  in $G^{O_{\ref{step:complicate}}}$. 
     \begin{enumerate}[label=(CA\arabic*)]    \item\label{prop:critical-4}
If  $s\in D\setminus U$ has $\U(s)=9$ or  
  $s\in U\setminus D$ has $\D(s)=9$,  then $s$ is critical-II.    
       \item\label{prop:critical-1}  If  $s\in V$ is an  adherent vertex, then 
$s\in B\setminus \{t:t\text{ is adjacent to some edge oriented in  \Cref{step:DUb}}\}$ with   
$\D(s)\le 8$ when $s\in U_b\setminus D_b$ and $\U(s)\le 8$ when $s\in D_b\setminus U_b$. 
\item\label{prop:critical-3}
If $s$ is a critical vertex and $t$ is the corresponding adherent vertex, then $s\to t$ when $s\in U$ and $t\to s$ when $s\in D$. 
 \item\label{prop:critical-2} If $s \in D_b \setminus  U_b $ is adherent-I, then 
$\U(t)\ge \U(s)$  for every
$t\in N(s)\cap D_b$, and  $\U(s)>\U(t)$ where 
$\{t\}=N(s)\cap D_a$. If $s \in U_b \setminus  D_b $ is  adherent-I, then 
$\D(t)\ge \D(s)$  for every
$t \in N(s)\cap U_b$, and  $\D(s)>\D(t)$ where 
$\{t\}=N(s)\cap U_a$. 
\item \label{lem:gold-adherent-edge}
Let $s$ be adherent and $t\in N(s)$.
If $s\in U_b\setminus D_b$, $t\in U$ and $\D(t)<\D(s)$,
then $s\to t$ in the final orientation.
If $s\in D_b\setminus U_b$, $t\in D$ and $\U(t)<\U(s)$,
then $t\to s$ in the final orientation.

  \item\label{prop:cadisjoint} The set of critical vertices and the set of  adherent vertices are disjoint. 
    \end{enumerate}
\end{proposition}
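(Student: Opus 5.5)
We will verify the six items one at a time, treating the $D$-side and obtaining the $U$-side by the symmetry built into \Cref{step:complicate} (reverse all arcs, swap $\U,\D$ and $D,U$). The two kinds of critical/adherent pairs are handled separately throughout. Type I comes from \ref{S8-15} and is controlled by \Cref{lem:exceptional-vertices} and \Cref{claimcenter4}. Type II comes from the equality case of \Cref{claim:munuF2}(1).

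For \ref{prop:critical-4}, we first combine the corollary following \Cref{claim:munuF2} with the case analysis behind it. There, $\U(s)=9$ can only arise in the equality case of \Cref{claim:munuF2}(1). Every other location gives at most $8$: \Cref{claimcontrol1}, \Cref{calim:boundmunuV-GO1}, \Cref{claim:X43F3bdd}, \Cref{claim:munuF2}(2)(3), together with \Cref{lem:saturation-stability} and \Cref{claimcenter}, which gives $\mu+\nu\le9$ on $V(G^{O_{\ref{step:sumatmost8}}})$. So $s$ is critical-II by definition.

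For \ref{prop:critical-1}, we split by type. An adherent-I vertex is $\hat x$ for some $x\in T_D$. \Cref{lem:exceptional-vertices} gives $\hat x\in D_b\setminus U_b$ with no incident edge oriented in $G^{O_{\ref{step:DUb}}}$, hence $\hat x\in B$ untouched by \Cref{step:DUb}. We bound $\U(\hat x)$ by showing $\U(\hat x)=\U(x)\le 8$. Since $x\in T_D$ has no edge oriented before \Cref{step:complicate} and $x\in B$, the equality case of \Cref{claim:munuF2}(1), which needs $x\in A$, is excluded, and \ref{prop:critical-4} gives $\U(x)\le8$. An adherent-II vertex is the $r$ of \Cref{claim:munuF2}(1). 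That statement places $r\in L_3\cap F_3\cap(D_b\setminus U_b)$ with $\U(r)=8$ and $rs$ unoriented. The proof of \Cref{claim:X43F3bdd}(1) shows that such $r$ has no edge oriented in \Cref{step:DUb}.

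For \ref{prop:critical-3}, the type-I case is exactly $w\to\hat x\to x$ from \Cref{lem:exceptional-vertices}. For type II, we have $\U(r)=8<9=\U(s)$, and $rs$ is an unoriented edge inside $D$ after \Cref{step:DUb}. Here $s\in D_a$ and $r\in D_b$. It is oriented either in \ref{S8-1}, where \ref{S8-11} gives $r\to s$ since $\U(r)\le\U(s)$, or, if already handled, by \ref{S8-31}, giving $r\to s$ as well.

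For \ref{prop:critical-2}, we read the required facts off the proof of \Cref{lem:exceptional-vertices}. The chosen neighbour $z=\hat x$ has all $D_a$-edges oriented inward, each $D_a$-neighbour $w$ has $\U(w)=k-1<k$, and it was not processed by \ref{S8-12}/\ref{S8-14}. Absence from \ref{S8-12} together with inward $D_a$-edges forces a unique $D_a$-neighbour and no $D_b$-neighbour of smaller $\U$. This gives both assertions.

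For \ref{lem:gold-adherent-edge}, there are several cases. If $t\in D_a$, the edge $ts$ was oriented inward, by \ref{prop:critical-2} for type I and by \ref{prop:critical-1}/\ref{prop:critical-3} for type II. If $t\in D_b$ with $\U(t)<\U(s)$, it cannot occur for type I by \ref{prop:critical-2}. For type II, the edge is oriented by \ref{S8-31} as $t\to s$, unless it was fixed earlier in \ref{S8-15}. But that step only joins vertices with equal $\U$.

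For \ref{prop:cadisjoint}, we compare the two sets. Critical-I vertices lie in $T_D$, while adherent-I vertices avoid $T_D$ by \Cref{lem:exceptional-vertices}. Critical-II vertices lie in $A$, while all adherent vertices lie in $B$ by \ref{prop:critical-1}. It remains to rule out that a critical-I vertex is adherent-II. This follows because the adherent-II vertex $r$ has $\U(r)=8$ with neighbour $s\in D_a$, $\U(s)=9>8$. A vertex of $T_D$ has its unique $D_a$-neighbour of smaller $\U$ (\Cref{lem:exceptional-vertices}), a contradiction.

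The main obstacle is the bookkeeping in \ref{prop:critical-1} and \ref{lem:gold-adherent-edge}. There, one must check that no rule of \Cref{step:DUb} or earlier touched the adherent vertex, and that the order of rules inside \ref{S8-1} cannot reverse the expected edges. We would first try to extract everything from the proofs of \Cref{lem:exceptional-vertices} and \Cref{claim:munuF2}, rather than re-running the orientation rules.
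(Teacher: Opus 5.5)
Your plan follows the paper's proof item by item, and \ref{prop:critical-4}, \ref{prop:critical-1}, \ref{prop:critical-2} and \ref{prop:cadisjoint} go through essentially as you describe. The genuine gap is in \ref{lem:gold-adherent-edge}, in the type-II case with $t\in D_a$. You assert that $ts$ is oriented inward ``by \ref{prop:critical-1}/\ref{prop:critical-3}'', but neither item says anything about this edge.

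Write $r$ for the adherent-II vertex. \ref{prop:critical-1} only tells you that $r\in D_b\setminus U_b$, that $r$ is untouched up to \Cref{step:DUb}, and that $\U(r)=8$. \ref{prop:critical-3} gives $r\to c$ for the critical-II vertex $c\in D_a$ with $\U(c)=9$. That is an \emph{outgoing} $D_a$-edge at $r$, the opposite of what you need.

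The missing idea is the rule-order argument the paper uses (on the $U$-side, via \ref{S8-21}--\ref{S8-23}):
\begin{enumerate}
\item Every edge of $[r,D_a]$ lies in the layer $E_8$ and is untouched before that layer. No $D_b$-vertex has $\U$-value $9$, and \ref{S8-15} only orients edges at its own layer.
\item Inside this layer, \ref{S8-11} is applied first and orients $r\to c$. This disables \ref{S8-12} at $r$.
\item We have $|N(r)\cap D_a|\ge 2$, since it contains both $c$ and $t$, and $t\ne c$ because $\U(t)<8$. Together with $\U(r)>\U(t)$, rule \ref{S8-13} then orients $t\to r$.
\end{enumerate}
Without invoking \ref{S8-13}, nothing you cite excludes $r\to t$.

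There are also some smaller inaccuracies.
\begin{enumerate}
\item In \ref{prop:critical-3}, the fallback ``by \ref{S8-31}'' is inapplicable. That rule only treats $[D_a,D_a]\cup[D_b,D_b]$, while $rc\in[D_b,D_a]$. The \ref{S8-11} branch alone suffices once you note that $rc$ is still unoriented in layer $8$.
\item In the type-I case of \ref{lem:gold-adherent-edge}, the inward orientation of the $D_a$-edges of $\hat x$ comes from \Cref{lem:exceptional-vertices}. It does not come from \ref{prop:critical-2}, which only records $\U$-comparisons.
\item In \ref{prop:critical-2}, your remark that $\hat x$ was not processed by \ref{S8-14} is unjustified, though you never use it.
\item For the adherent-II part of \ref{prop:critical-1}, the proof of \Cref{claim:X43F3bdd}(1) only reduces to the untouched case; it does not prove it. The direct argument is this: an edge at $r$ oriented in \Cref{step:DUb} would give $\U(r)\le\mu(r)\le 6$ by \Cref{claim:xinDyinU}(2) and \Cref{lem:good-path-gluing}(1), contradicting $\U(r)=8$.
\end{enumerate}
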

\begin{proof}

To see \ref{prop:critical-4}, suppose that $s\in D\setminus U$ and $\U(s)=9$. By \Cref{prop:boundF1},   \Cref{claim:X43F3bdd} and \Cref{claim:munuF2},   the result holds.
 
To see \ref{prop:critical-1}, suppose first that $s$ is adherent-I.
By \Cref{lem:exceptional-vertices}, $s$ belongs to the same
exclusive $B$-side as its critical vertex.
By symmetry, assume that $s\in U_b\setminus D_b$.
Then no edge incident to $s$ is
 oriented in $G^{O_{\ref{step:DUb}}}$, by
\Cref{lem:exceptional-vertices}.
Moreover, $\D(s)\le8$, since the preceding potential bounds
allow equality nine only at vertices in $A$.

If $s$ is adherent-II, then by \Cref{claim:munuF2}(1)
and symmetry, assume that $s\in U_b\setminus D_b$
and $\D(s)=8$.
If an edge incident to $s$ were oriented in
\Cref{step:DUb}, then \Cref{claim:xinDyinU}and \Cref{lem:good-path-gluing}
would give $\D(s)\le6$, a contradiction.
No edge incident to a vertex of $B$ is oriented before
that step. Thus, the assertion follows.

To see \ref{prop:critical-3}, suppose $s\in  D_b\setminus U_b$ is critical-I. Then $t\to s$ by  \ref{S8-15}.
 Suppose $s\in D_a\setminus M_a$ is critical-II. Then $\U(s)=9$ and $\U(t)=8$ with $t\in D_b\setminus M_b$. By \ref{S8-11}, it holds that $t\to s$. The other way is similar. 
 
To see \ref{prop:critical-2}, by symmetry,
assume that $s\in D_b\setminus U_b$ is adherent-I.
By \Cref{lem:exceptional-vertices}, all its $D_a$-edges
are oriented towards $s$, and all were unoriented in
$G^{O_{\ref{step:DUb}}}$.
Thus, $\U(t)<\U(s)$ for every $t\in N(s)\cap D_a$,
as otherwise \ref{S8-11} would give $s\to t$.
If $|N(s)\cap D_a|\ge2$, or if some
$t\in N(s)\cap D_b$ satisfies $\U(t)<\U(s)$,
then \ref{S8-12} would give an outgoing $D_a$-edge,
a contradiction.
Since $N(s)\cap D_a\ne\emptyset$, the assertion follows. 

To see \ref{lem:gold-adherent-edge}, by symmetry,
assume that $s\in U_b\setminus D_b$. 
By \ref{prop:critical-1} and \Cref{claim:xinDyinU}(4),
all edges incident to $s$ are unoriented after
\Cref{step:DUb}, and all its neighbors belong to $U\setminus D$.
Thus, \ref{S8-1} and \ref{S8-3} do not orient these edges. 
If $t\in U_b$, then $s\to t$ by \ref{S8-41},
since $\D(s)>\D(t)$ and \ref{S8-25} only orients
edges between $U_b$-vertices with equal $\D$-values.

Thus, assume that $t\in U_a$.
If $s$ is adherent-I, then $s\to t$ by
\Cref{lem:exceptional-vertices}.
Otherwise, by \Cref{claim:munuF2}(1), there is some
$r\in N(s)\cap U_a$ with $\D(r)=9$ and $\D(s)=8$.
Since $\D(t)<8$, we have $r\ne t$.
By \ref{S8-21}, $r\to s$, so \ref{S8-22}
does not apply at $s$.
Then \ref{S8-23} gives $s\to t$, as required.

To see \ref{prop:cadisjoint}, since adherent vertices  are in $B$ by \ref{prop:critical-1} and critical-II vertices are in $A$, they are disjoint. It is sufficient to consider the case that $s\in D_b\setminus U_b$ is critical-I.  Then there is some $t\in N(s)\cap D_a$ with $\U(s)>\U(t)$ and $s\to t$ by \ref{S8-15} and so $s$ is not adherent-I.  Now, suppose that $s$ is adherent-II.  Then there is $r\in N(s)\cap D_a$ with $\U(r)>\U(s)$ by the definition of  adherent-II. Thus, $s\to r$ by \ref{S8-11}, which implies that $s$  does not meet the conditions in \ref{S8-15}.     
\end{proof}

\begin{corollary}\label{cor:noncritical-bounds}
    Suppose that $s\in V$ is not critical. Then $\mu(s)\le \U(s)\le 8$ and $\nu(s)\le 6$ when $s\in D$ and $\nu(s)\le \D(s)\le 8$ and $\mu(s)\le 6$ when $s\in U$ in $G^{O_{\ref{step:complicate}}}$. 
\end{corollary}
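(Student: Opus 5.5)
We combine \Cref{claimcenter4}, \Cref{cl:complicateDmu} and the potential bounds of \Cref{sec:good-paths}. By symmetry we treat $s\in D$; the case $s\in U$ is identical with $\mu,\nu,\U$ replaced by $\nu,\mu,\D$ and $T_D$ by $T_U$. Since $D\cup U=V$, every vertex falls into one of the two cases. For $s\in D$ we have $\nu(s)\le6$ by \Cref{cl:complicateDmu}, so it remains to show $\mu(s)\le\U(s)\le8$.

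For the inequality $\mu(s)\le\U(s)$ we first need $s\notin T_D$, and then apply \Cref{claimcenter4}. Suppose instead that $s\in T_D$. By \Cref{lem:exceptional-vertices}, $s\in D\setminus U$. Let $k=\U(s)$; the same lemma gives a unique $y\in N(s)\cap D_a$ with $\U(y)=k-1$, and an adherent vertex $\hat s$ with $w\to\hat s\to s$ for some $w\in D_a$ with $\U(w)=k-1$. Since $w\notin T_D$, \Cref{claimcenter4} gives $\mu(w)\le k-1$, and hence $\mu(s)\le k+1$. If equality $\mu(s)=\U(s)+1$ holds, then $s$ is critical-I by definition, contradicting our assumption. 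So it suffices to rule out $\mu(s)\le\U(s)$ for $s\in T_D$, and this is the delicate point.

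I would handle it as follows. The definition of critical-I only requires the equality $\mu(s)=\U(s)+1$. Hence if $s\in T_D$ but $\mu(s)\le\U(s)$, the bound $\mu(s)\le\U(s)$ holds anyway. Thus in every non-critical case $\mu(s)\le\U(s)$: either $s\notin T_D$ and \Cref{claimcenter4} applies, or $s\in T_D$ and non-criticality forces $\mu(s)\ne\U(s)+1$. Together with $\mu(s)\le\U(s)+1$ this gives $\mu(s)\le\U(s)$. For $s\in M_a\cup M_b$ the bound is immediate from \Cref{lem:overlap-potentials}, which gives equality.

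Next, $\U(s)\le8$. The corollary after \Cref{claim:munuF2} gives $\U(s)\le9$, with equality possible only at vertices of $A$ satisfying the conditions of \Cref{claim:munuF2}(1), i.e. critical-II vertices. We must check that $\U(s)=9$ indeed forces $s$ to be critical-II. For $s\in D\setminus U$ this is \ref{prop:critical-4}. For $s\in M_a\cup M_b$, \Cref{lem:overlap-potentials} gives $\U(s)=\mu_{\ref{step:DUb}}(s)$. For $s\in M_b$, \Cref{claim:xinDyinU}(1) gives $\mu\le5$. For $s\in M_a\subseteq V(G^{O_{\ref{step:sumatmost8}}})$, \Cref{lem:gold-deep-layer} gives $\mu\le8$. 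Hence $\U(s)\le8$ in all non-critical cases.

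The main obstacle is the bookkeeping in the middle step: non-criticality is defined through the final equality, whereas \Cref{claimcenter4} is phrased via membership in $T_D$. The argument is just the trichotomy $\mu(s)\le\U(s)$ or $\mu(s)=\U(s)+1$. The remaining care is confirming that the overlap sets $M_a,M_b$ are not excluded from the case analysis, which \Cref{lem:overlap-potentials} handles.
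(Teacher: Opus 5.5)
Your proposal is correct and follows the paper's proof, which simply combines \Cref{cl:complicateDmu}, \Cref{claimcenter4} and \ref{prop:critical-4}. You additionally spell out bookkeeping the paper leaves implicit: since $T_D\subseteq D\setminus U$, non-criticality together with $\mu(s)\le\U(s)+1$ forces $\mu(s)\le\U(s)$, and $\U(s)=9$ cannot occur on $M_a\cup M_b$. Your remark that it ``suffices to rule out $\mu(s)\le\U(s)$ for $s\in T_D$'' is a slip, but the trichotomy argument you give immediately afterwards supersedes it and is sound.
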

\begin{proof}
    It follows from \Cref{cl:complicateDmu}, \Cref{claimcenter4} and \Cref{prop:criticaland adherent}. 
\end{proof}
 
\subsection{Reductions and path classification}\label{sec:path-reductions}
 
The bounds in \Cref{cor:noncritical-bounds} lead to the following reduction: it suffices to control noncritical pairs on the two exclusive sides, with one unit saved for each adherent endpoint. We first justify this reduction, then dispose of paths of length at most three and classify the remaining four-edge paths according to their intersections with $F_0$. 

\begin{theorem}\label{prop:adhernettocritical}
Let $s\in U\setminus D$ and $t\in D\setminus U$ where $s,t$ are not critical vertices. Then  $\partial(s,t)\le 16-\alpha$ in $G^{O_{\ref{step:complicate}}}$, where $\alpha$ is the number of adherent vertices in $\{s,t\}$.
\end{theorem}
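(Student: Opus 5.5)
We will bound $\partial(s,t)$ for $s\in U\setminus D$ and $t\in D\setminus U$ noncritical by concatenating a shortest dipath from $s$ to the edge $uv$ with one from $uv$ to $t$, and, when that is not enough, by following a shortest path of $G$ from $s$ to $t$. By \Cref{cor:noncritical-bounds}, we have $\nu(s)\le\D(s)\le 8$ and $\mu(t)\le\U(t)\le8$. So \Cref{lem:Gxy16} gives $\partial(s,t)\le \D(s)+\U(t)+1$. This already yields the bound $16-\alpha$ whenever $\D(s)+\U(t)\le 15-\alpha$.

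We will also use \ref{prop:critical-1}, which says that an adherent endpoint has its potential bounded by $8$ and lies in $B$ with all incident edges untouched before \Cref{step:complicate}. Together with \Cref{claim:X43F3bdd} and \Cref{claim:munuF2}, this locates the vertices of large potential. A potential equal to $8$ occurs only for vertices in $F_3\cap(L_3\cup R_3)$, in $F_2$, or in $F_1\cap(L\cup R)$ near the bound of \Cref{prop:boundF1}. Hence the remaining cases are $\D(s)+\U(t)\in\{15,16\}$ with $\alpha=0$, and $\D(s)+\U(t)\ge 15-\alpha$ with $\alpha\ge1$. In each of these, both endpoints lie in specific sets $F_i\cap L_j$ or $F_i\cap R_j$ with $i\ge 1$.

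For these residual cases we take a shortest $st$-path $s=x_0x_1\cdots x_\ell=t$ in $G$, where $\ell\le 4$. Since $s\in U$ and $t\in D$, some edge $x_ix_{i+1}$ crosses from $U$ to $D$. By \Cref{claim:xinDyinU}(4) and (5), this edge is oriented $x_i\to x_{i+1}$ unless both endpoints lie in $M_a$ or in $M_b$; in those cases the endpoints carry two-sided bounds by \Cref{lem:overlap-potentials}. We then route $s\leadsto x_i\to x_{i+1}\leadsto t$. The first part is handled within $U$ using \Cref{prop:t1edge} and the $\D$-gluing of \Cref{lem:good-path-gluing} together with the monotone rules \ref{S8-41}--\ref{S8-44}; the last part is handled in the same way within $D$. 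Concretely, we aim for estimates of the form $\partial(s,x_i)\le \nu(s)-\nu(x_i)+c$. When $x_i\in F_0$ we can instead pass directly through the core, using the bounds of \Cref{obs:munu}.

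Paths with $\ell\le3$ will be dispatched quickly, because the endpoint locations forced by large potentials make $s$ and $t$ close to each other. We will then classify the four-edge paths by $|V(P)\cap F_0|$ and by which $F_i$ the endpoints lie in. For an adherent endpoint, we save one unit using \ref{lem:gold-adherent-edge}: the adherent vertex has an arc to or from a neighbour of strictly smaller potential, which gives the $-\alpha$ improvement.

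The main obstacle is the case $\D(s)=\U(t)=8$ with $s\in L_3\cap F_3$ and $t\in R_3\cap F_3$ (or the mirror case), together with the adherent-II situation where a critical-II neighbour has potential $9$. Here the root bound gives $17$. The plan is to use the specific shortest paths $L_s$ and $R_s$ fixed before \Cref{claim:X43F3bdd}, and the type-II arcs from \Cref{step:L23R23}, to exhibit a dipath that avoids the second traversal of the $u$--$v$ gap. For example, we route via $L_3\to X_4$ or $L_3$--$R_3$ adjacency, whose edges were oriented $R_{31}\to L_{31}$ in \Cref{step:F0X31X41}. We expect this to require a careful subcase analysis on whether the middle vertex of the path lies in $X_{42}$, $L_{32}$ or $L_{23}$.
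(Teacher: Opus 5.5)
Your opening reduction $\partial(s,t)\le\D(s)+\U(t)+1$ via \Cref{lem:Gxy16} is also the paper's starting point. However, the mechanism you propose for the residual cases rests on a step that fails.

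You assert that on a shortest $st$-path the edge $x_ix_{i+1}$ passing from $U$ to $D$ is oriented $x_i\to x_{i+1}$ by \Cref{claim:xinDyinU}(4),(5). Part (4) only says that such an edge is oriented. Part (5) is a bound that presupposes the direction $U\to D$. Arcs created in \Cref{step:F0X31X41}--\Cref{step:L23R23} carry no such direction, and \ref{S7-4} only touches edges that are still unoriented. Indeed, the paper has to treat exactly this configuration in \Cref{lem:short-main-path}: a single type-I arc $v_2\to v_1$ with $v_1\in U\setminus D$ and $v_2\in D\setminus U$.

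The same problem affects your routing $s\leadsto x_i$ inside $U$. Good-path gluing (\Cref{lem:good-path-gluing}) bounds potentials but produces no dipath between vertices of the path. Arcs oriented before \Cref{step:complicate} may run against the path. Also, \ref{S8-41} orients from larger to smaller $\D$, and $\D$ need not decrease along a shortest path of $G$. So nothing supports an estimate like $\partial(s,x_i)\le\nu(s)-\nu(x_i)+c$, and your claim that paths with $\ell\le3$ are dispatched quickly has no argument behind it.

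The missing idea is to exploit these \emph{anti-edges} rather than route through them. The paper classifies the shortest path by the number, position and type of arcs of $G^{O_{\ref{step:DUb}}}$ directed against it. Paths of length at most $3$ are handled in \Cref{lem:short-main-path}; paths of length $4$ are split according to whether they avoid $F_0$, meet it at $v_1$ or $v_3$, or meet it at $v_2$. The anti-edges then give bounds as follows.
\begin{itemize}
\item A type-I anti-edge $q\to p$ gives $\mu(q)+\nu(p)\le10$ (\Cref{prop:t1edge}). If it was oriented before \Cref{step:L23R23}, it has weight at most $9$, with a $5$-dicycle in the equality case (\Cref{claimcenter}, \Cref{lem:gold-deep-layer}).
\item A type-II anti-edge has an endpoint whose parameter is at most $3$ (\Cref{cl:root-threshold}).
\end{itemize}
Combined with gluing on the remaining segments, these bound $\D(s)+\U(t)$ directly. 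Only a few extremal configurations (\Cref{cl:f-1}, \Cref{cl:f-7}, \Cref{cl:f3-3}) require exhibiting an explicit dipath.

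Your $-\alpha$ saving is also unsupported. \ref{lem:gold-adherent-edge} gives an arc between an adherent vertex and each neighbour of smaller potential. But all edges at an adherent vertex are unoriented after \Cref{step:DUb}, so by \Cref{lem:good-path-gluing} such a neighbour has potential exactly one less, and the arc alone gains nothing. The paper gets the unit from \ref{prop:critical-2}, or by forcing all of $L_{st}$ to be a dipath.

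Finally, two smaller points. For $\alpha=0$ only $\D(s)+\U(t)=16$ is residual, not $15$. And your fix for $s\in L_3\cap F_3$, $t\in R_3\cap F_3$ with potentials $8$ cannot work as stated. Such vertices lie outside $V(G^{O_{\ref{step:sumatmost8}}})\supseteq L_{31}\cup R_{31}$, hence in $L_{32}\cup R_{32}$. By definition these have no neighbours in $R_3\cup X_{41}$ (respectively $L_3\cup X_{41}$), so the arcs of \Cref{step:F0X31X41} you intend to use are not incident to them.
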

Assuming \Cref{prop:adhernettocritical} is true,  the proof of \Cref{lemless5} based on the construction of orientation $G^{O_{\ref{step:complicate}}}$ is as follows. 
\begin{proof} [Proof of \Cref{lemless5}]Let $s,t\in G^{O_{\ref{step:complicate}}}$. If $s\in D$ or $t\in U$, then $\nu(s)\le 6$ or $\mu(t)\le 6$, respectively, in $G^{O_{\ref{step:complicate}}}$ by \Cref{cl:complicateDmu}.  Note that by \ref{prop:critical-4} and \Cref{claimcenter4}, we have $\nu(s),\mu(s)\le 9$ for each $s\in V$. Then $\partial(s,t)\le 16$ by \Cref{lem:Gxy16}. Therefore, we may assume that $s\in U\setminus D$ and $t\in D\setminus U$. If neither $s$  nor  $t$ is critical, then we are done by \Cref{prop:adhernettocritical}. Thus, assume that $s$ and $t$ are critical (the proof for one of them being critical is similar). Let $s'$ and $t'$ be the corresponding adherent vertices, respectively. By  \ref{prop:cadisjoint}, $s'$ and $t'$ are not critical.  Then $s\to s'$ and $t'\to t$ by \ref{prop:critical-3} with $\partial(s',t')\le 16-2\le 14$ by \Cref{prop:adhernettocritical}. Thus, $\partial(s,t)\le \partial(s, s')+\partial(s',t')+\partial(t',t)\le 16$.
\end{proof}

Therefore,  it remains to prove  \Cref{prop:adhernettocritical}.  
\begin{proof}[Proof of \Cref{prop:adhernettocritical}]
We fix 
\begin{itemize}
    \item 
 $s\in U\setminus D$   and   $t\in D\setminus U$  where $s,t$ are not critical vertices, with $\alpha$ being the number of adherent vertices in $\{s,t\}$.
 \end{itemize}  
By \Cref{cor:noncritical-bounds},
$\nu(s)\le\D(s)\le8$ and $\mu(t)\le\U(t)\le8$.
If $s\in F_0$, then $\nu(s)\le4$ by \Cref{obs:munu},
and $s$ is not adherent by \ref{prop:critical-1}.
Thus, $\partial(s,t)\le4+1+8=13\le16-\alpha$.
The case $t\in F_0$ is symmetric.
Hence, we may assume throughout the remaining argument that
$s,t\notin F_0$.
We proceed by considering a shortest $st$-path in $G$. 

\begin{claim}\label{cl:root-threshold}
The following holds in $G^{O_{\ref{step:sumatmost8}}}$
and every extension.
If $x\in R_1$ and $\mu(x)\ge3$, then
$x\in F_{12}$, $N(x)\cap F_0=\{v\}$,
$N(x)\cap F_1=\emptyset$ and $x\to v$.
Moreover, $\mu(x)\ge3$ after any further orientation.
The symmetric assertions hold for $x\in L_1$
with $\nu(x)\ge3$.
\end{claim}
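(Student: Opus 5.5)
The plan is to pin down the location of $x$ step by step, using the fact that every alternative location forces $\mu(x)\le 2$ already in $H:=G^{O_{\ref{step:sumatmost8}}}$. Stability then carries the conclusion to every extension. Since adding arcs never increases $\mu$, the hypothesis $\mu(x)\ge3$ in any extension implies $\mu_H(x)\ge3$. So it suffices to argue in $H$. Conversely, since $x\in R_1\subseteq F_0\cup F_1\subseteq V(H)$, \Cref{lem:saturation-stability} gives $\mu_{\overrightarrow G}(x)=\mu_H(x)$ for every extension. This proves the ``moreover'' part.

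Now assume $\mu_H(x)\ge3$.
\begin{itemize}
\item If $x\in R_{11}=I_4$, then $v\to x$ by \ref{item:step:F0X31X411}, so $\mu(x)\le1$. Hence $x\in R_{12}\cup R_{13}\subseteq F_1$.
\item If $x\in V(G^{O_{\ref{step:F0X31X41}}})$, then inspecting \Cref{step:F0X31X41} (\ref{item:step:F0X31X411}, \ref{item:step:F0X31X414}, \ref{item:step:F0X31X415}) shows that $v\to x$ or $v\to x'\to x$, so $\mu(x)\le2$. Hence $x\in F_{11}\cup F_{12}\cup F_{13}$.
\item Since $xv\in E$ and $N(u)\cap N(v)=\emptyset$, we have $f(x)\in\{4.5,5\}$. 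If $f(x)=4.5$, then \ref{item:F121} gives $v\to x$, again a contradiction. So $f(x)=5$, which means $N(x)\cap F_0\subseteq I_5=\{v\}$, i.e.\ $N(x)\cap F_0=\{v\}$.
\item This rules out $F_{13}$, which requires $|N(x)\cap F_0|\ge2$.
\end{itemize}

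To exclude $F_{11}$, I would go through the rules that can orient $[x,I_5]=\{xv\}$ when $f(x)=5$ is maximal.
\begin{itemize}
\item Rules \ref{item:F12222} and \ref{item:F12223} need a neighbour with larger $f$-value, which is impossible.
\item Rule \ref{item:F12221} orients $v\to x$.
\item Rule \ref{item:F12224} produces a dicycle $vxzv$. Either $v\to x$, or $v\to z\to x$, giving $\mu(x)\le2$.
\item The edge $xv$ could also be oriented in \Cref{step:cycle(auv)}, in a cycle through $v$ containing the $F_1$-edge of form (c). There I would check that either the cycle is completed as a dicycle through $v$, giving a path $v\to\cdot\to x$ of length at most $2$, or $x$ has a neighbour $w$ with $v\to w\to x$ by \ref{item:F123}.
\end{itemize}
This is the step I expect to be the most delicate: making sure no cycle operation from \Cref{step:cycle(auv)}, or the completion part of \Cref{step:F12224}, can orient $xv$ as $x\to v$ while leaving $\mu(x)\ge3$. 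I would handle it using the equality clause of \Cref{prop:cycleauv} together with $\nu(x)\ge 3$ (since $x\notin F_0$ and $x\in R_1$).

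Thus $x\in F_{12}$, and $N(x)\cap F_1=\emptyset$ holds by the definition of $F_{12}$. Finally, $xv$ lies in $[F_1,F_0]$, and all such edges are oriented by the end of \Cref{step:F13}. Since $v\to x$ would give $\mu(x)=1$, we must have $x\to v$. The statement for $x\in L_1$ with $\nu(x)\ge3$ follows by the symmetric argument, interchanging $u,v$ and $\mu,\nu$ and reversing all arcs.
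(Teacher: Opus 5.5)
Your proposal is correct and follows the paper's proof essentially line by line. In both, you rule out $V(G^{O_{\ref{step:F0X31X41}}})$, force $f(x)=5$ via \ref{item:F121}, exclude $F_{13}$ and $F_{11}$, and read off $x\to v$ because $[F_1,F_0]$ is fully oriented after \Cref{step:F13}; the ``moreover'' part then comes from monotonicity and \Cref{lem:saturation-stability}.

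The step you flag as delicate is vacuous, so you do not need \Cref{prop:cycleauv} there. For $x\in F_{11}$ with $f(x)=5$, every $F_1$-neighbour has $f$-value at most $5$, so \ref{item:F12221} or \ref{item:F12224} applies to $x$. Alternatively, $xv$ was already oriented by \ref{item:F12224} applied to a neighbour, again on a directed triangle through $v$. Either way $xv$ is oriented in \Cref{step:F12} with $\mu(x)\le2$, and neither \Cref{step:cycle(auv)} nor \Cref{step:F12224} can touch it.
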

\begin{proofofclaim}
By symmetry, assume that $x\in R_1$.
By \Cref{step:F0X31X41} and monotonicity, every vertex in
$R_1\cap V(G^{O_{\ref{step:F0X31X41}}})$ has $\mu$-value
at most two.
Thus, $x\notin V(G^{O_{\ref{step:F0X31X41}}})$ and
$f(x)\in\{4.5,5\}$.
If $f(x)=4.5$, then $v\to x$ by \ref{item:F121},
a contradiction.
Hence, $f(x)=5$ and $N(x)\cap F_0=\{v\}$.

Suppose that $x\in F_{11}$.
If some $z\in N(x)\cap F_1$ satisfies $f(z)<5$,
then $v\to x$ by \ref{item:F12221}.
Otherwise, $f(z)=5$ for every $z\in N(x)\cap F_1$,
and \ref{item:F12224} gives either $v\to x$ or
$v\to z\to x$ for some such $z$.
In both cases, $\mu(x)\le2$, a contradiction.
Thus, $x\in F_{12}$ and $N(x)\cap F_1=\emptyset$.

Since $vx$ is oriented by the end of \Cref{step:F13}
and $\mu(x)\ge3$, its direction is $x\to v$.
Finally, $x\in V(G^{O_{\ref{step:sumatmost8}}})$,
so the assertion concerning further orientations follows
from \Cref{lem:saturation-stability}.
\end{proofofclaim}

\begin{claim}\label{lem:short-main-path}
Let $s\in U\setminus D$ and $t\in D\setminus U$ be
noncritical vertices with $\mathrm{dist}_G(s,t)\le3$,
and let $\alpha$ be the number of adherent vertices
in $\{s,t\}$.
Then $\partial(s,t)\le16-\alpha$.
Moreover, if some shortest $st$-path is disjoint from $F_0$,
then $\partial(s,t)\le\min\{15,16-\alpha\}$.
\end{claim}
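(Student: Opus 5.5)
Fix a shortest $st$-path $P=sx_1\cdots x_{m-1}t$ in $G$ with $m\le3$. The plan is to walk along $P$ and locate the first arc at which the orientation switches from the ``$U$-side'' to the ``$D$-side''. Then we combine three pieces: a bound on $\nu$ or $\partial$ from $s$ to that arc, the arc itself, and a bound on $\mu$ or $\partial$ from the arc to $t$.

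The basic tools are the following. We use \Cref{cor:noncritical-bounds}: $\nu(s)\le\D(s)\le8$, $\mu(s)\le6$, $\mu(t)\le\U(t)\le8$ and $\nu(t)\le6$. We use \Cref{lem:Gxy16} in the form $\partial(a,b)\le\nu(a)+\mu(b)+1$. We use \Cref{prop:t1edge} for type-I arcs. Finally, we use \Cref{lem:good-path-gluing} together with \Cref{claimcenter4} to pass $\D$ and $\U$ along unoriented $U$-edges and $D$-edges.

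\textbf{Step 1: the crossing edge.} Since $s\in U\setminus D$ and $t\in D\setminus U$, by \Cref{claim:xinDyinU}(4) some edge $ab$ of $P$, with $a$ on the $s$-side, has $a\in U$ and $b\in D$. All edges of $P$ before $ab$ lie in $U$, and all edges after it lie in $D$. Every such crossing edge was oriented by \Cref{step:DUb}, or earlier when $a,b\in A$.

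If $a\to b$, the argument runs as follows. Along the $U$-segment, $\D$ increases by at most one per edge, so $\nu(a)\le\D(a)\le\D(s)-j$ or similar, where $j$ is the distance from $s$ to $a$. The subtlety is that $\nu(s)\le\nu(a)+j$ requires the segment to be directed toward $a$, which is not guaranteed. I would therefore instead bound $\partial(s,t)\le\nu(s)+\mu(t)+1$ directly whenever $\nu(s)+\mu(t)\le15$. This leaves only the case $\nu(s)=\D(s)=8=\U(t)=\mu(t)$.

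\textbf{Step 2: the extremal case.} In that case $s,t$ are exactly the vertices described in \Cref{claim:X43F3bdd}(1) or \Cref{claim:munuF2}. Since $\D(s)=8$ and $\mathrm{dist}(s,t)\le3$, the layers of $s$ and $t$ are constrained. I would split by the location of $ab$.

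If $ab$ is type-I with $a,b\notin F_0$, \Cref{prop:t1edge} gives $\mu(a)\le6$ and $\nu(b)\le6$, with sum at most $10$. If $a\to b$, then
\[\partial(s,t)\le\partial(s,a)+1+\partial(b,t),\]
and $\partial(s,a)$ is bounded by the first part of $P$ once its $U$-edges are oriented toward $a$. That direction is guaranteed when $\D$ decreases toward $a$ by \ref{S8-41} and \ref{lem:gold-adherent-edge}. Otherwise, I would bound $\partial(s,a)\le\nu(s)+\mu(a)+1$ and use $\mu(a)\le6$ against short lengths.

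If $b\to a$ (possible only for $[M_a,M_a]$ or earlier edges), I would use (5) of \Cref{claim:xinDyinU}, which gives $\nu(b)\le5$ and $\mu(a)\le5$, to reroute.

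For $\alpha>0$, \ref{prop:critical-1} gives $\D(s)\le8$ or $\U(t)\le8$ with an extra unit saved. Together with \ref{prop:critical-2}, which gives the neighbour structure of adherent vertices, this supplies the saved unit.

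\textbf{Step 3: paths avoiding $F_0$.} For the second assertion, suppose the shortest path avoids $F_0$. Then every arc on it falls under \Cref{prop:t1edge}, and type-II arcs only occur in $[L_2,L_1]\cup[R_2,R_1]$, where \Cref{cl:root-threshold} controls the $\mu,\nu$ values of the $L_1$/$R_1$ endpoint. This yields one unit of slack, giving $15$.

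\textbf{Main obstacle.} The main obstacle is the extremal case $\D(s)=\U(t)=8$ with the crossing edge near $F_0$ or of type-II. There the generic bound gives $17$, and one must exploit the specific layer structure from \Cref{claim:X43F3bdd}(1), namely $s,t\in L_{32}\cup R_{32}$ in $F_3$ or $F_2$-vertices adjacent to them. Since $\mathrm{dist}(s,t)\le3$, both should lie on the same side, $L$ or $R$. I would first try to show that both lie in $L_3\cap F_3$ or both in $R_3\cap F_3$, and that the connecting path passes through $L_2$. A direct dipath of length at most $3+$small through $u$ then exists, using $\nu\le2$ on $L_{21}$ from \Cref{claimcontrol1}.
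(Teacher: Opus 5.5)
Your proposal has a genuine gap at its first reduction. You treat every case with $\nu(s)+\mu(t)\le15$ as finished by \Cref{lem:Gxy16}, but that only gives $\partial(s,t)\le16$. The claim asks for $16-\alpha$, and, for the second assertion, for $15$.

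So two families of cases are still open. The first is any case with $\nu(s)+\mu(t)\in\{14,15\}$ and an adherent endpoint. The second is any case with sum $15$ when a shortest path avoids $F_0$.

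The "saved unit" you attribute to \ref{prop:critical-1} does not exist. That item gives only $\D(s)\le8$ (resp. $\U(t)\le8$), which every noncritical vertex already has. Moreover, an adherent-II vertex $r$ has $\D(r)=8$ exactly, by \Cref{claim:munuF2}(1). For instance, an adherent $s$ with $\nu(s)=8$ and $\mu(t)=7$ is never treated.

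The later steps do not close this. The fallback $\partial(s,a)\le\nu(s)+\mu(a)+1$ already costs up to $15$ for the first segment alone. The plan to force both extremal endpoints into $L_3\cap F_3$ or $R_3\cap F_3$ is unproved, and the available bounds also permit $\D=8$ on $F_2$. Step 3 asserts a unit of slack without deriving it, and type-II arcs are not covered by \Cref{prop:t1edge}. Also, in Step 1 the path may switch between $U$ and $D$ more than once, and vertices of $M_a\cup M_b$ lie on both sides.

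You abandoned the route that actually works, for the wrong reason. \Cref{lem:good-path-gluing} gives $\D(s)\le\D(a)+j\le\nu_H(a)+j$ along unoriented-or-forward $U$-edges, with the inequality running in that direction. No directedness of the segment is needed, because $\nu(s)\le\D(s)$ for noncritical $s$ by \Cref{claimcenter4}.

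The saved unit for adherent endpoints comes from two other facts, with $H=G^{O_{\ref{step:DUb}}}$.
\begin{enumerate}
\item An endpoint with $\nu_H(s)\le6$ or $\mu_H(t)\le6$ lies in $V(H)$, so it cannot be adherent. This gives $\alpha\le1$ and $\partial\le15$.
\item When both endpoints are adherent with $\D(v_0)=\U(v_3)=7$, \ref{lem:gold-adherent-edge} orients $v_0\to v_1$ and $v_2\to v_3$, which makes $L_{st}$ a dipath.
\end{enumerate}

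With these in hand, the argument is a case analysis on the anti-edges of $L_{st}$ in $H$. Once $\nu_H(v_0),\mu_H(v_m)>6$, the following hold.
\begin{enumerate}
\item A forward first edge forces $v_1\in U\setminus D$.
\item An anti-edge at an endpoint must be type-II, by \Cref{prop:t1edge}.
\item A type-II arc has an end with $\nu_H\le3$ or $\mu_H\le3$, by \Cref{step:L23R23} and \Cref{cl:root-threshold}.
\item Two consecutive type-II anti-edges cannot occur.
\end{enumerate}

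If $L_{st}$ meets $F_0$, only one configuration survives: $m=3$, $v_1\in F_0$, $\nu(v_0)=7$, and $v_3$ adherent with $\U(v_3)=8$. It is ruled out because $\mu_H(v_2)\le6$, which forces $\U(v_3)\le7$.
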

\begin{proofofclaim}
Let $H=G^{O_{\ref{step:DUb}}}$, and let
$L_{st}=v_0v_1\cdots v_m$ be a shortest $st$-path,
where $v_0=s$, $v_m=t$ and $m\le3$.
Choose $L_{st}$ disjoint from $F_0$ whenever possible.
Call $v_iv_{i+1}$ an \emph{anti-edge} if $v_{i+1}\to v_i$ in $H$.
Recall that $\nu(v_0)\le\D(v_0)\le8$,
$\mu(v_m)\le\U(v_m)\le8$, and
$\partial(v_0,v_m)\le\nu(v_0)+1+\mu(v_m)$.

Suppose that $V(L_{st})\cap F_0\neq\emptyset$.
If an endpoint belongs to $F_0$, then it is not adherent
and the corresponding parameter is at most four,
so $\partial(v_0,v_m)\le13$.
Otherwise, the bounds for $F_1$ and the fact that
$F_1\subseteq A$ give the assertion unless, up to symmetry,
$m=3$, $v_1\in F_0$, $\nu(v_0)=7$, and $v_3$ is adherent
with $\mu(v_3)=\U(v_3)=8$.
Here $v_0\in R$ by \Cref{prop:boundF1}, since
$v_0\in X_3$ would give $\nu(v_0)\le6$ by \Cref{claimcenter}.
Thus, $v_1\in\{v\}\cup R_{11}\cup X_2$, and
$\mu_H(v_1)\le2$, $\nu_H(v_1)\ge2$.

Since $v_3$ is adherent, $v_2v_3$ is unoriented in $H$
and $v_2\in F_1\cap(D\setminus U)$.
The edge $v_1v_2$ is oriented before \Cref{step:L23R23}.
If $v_1\to v_2$, then $\mu_H(v_2)\le3$.
Otherwise, \Cref{claimcenter} and monotonicity give
$\mu_H(v_2)\le8-\nu_H(v_1)\le6$.
By \Cref{lem:good-path-gluing},
$\U(v_3)\le\mu_H(v_2)+1\le7$, a contradiction.

Thus, assume that $V(L_{st})\cap F_0=\emptyset$.
If $\nu_H(v_0)\le6$ or $\mu_H(v_m)\le6$, the corresponding
endpoint belongs to $V(H)$ and is not adherent.
Hence, $\alpha\le1$ and
$\partial(v_0,v_m)\le15\le16-\alpha$.
Therefore, assume that $\nu_H(v_0)>6$ and $\mu_H(v_m)>6$.

If $v_0v_1$ is not an anti-edge and $v_1\in D$,
then $v_0\to v_1$ in $H$ and $\nu_H(v_0)\le6$ by the
definition of $D_a$ and \Cref{claim:xinDyinU},
a contradiction.
The other end is symmetric.
Thus, $v_1\in U\setminus D$ whenever $v_0v_1$ is not
an anti-edge, and $v_{m-1}\in D\setminus U$ whenever
$v_{m-1}v_m$ is not an anti-edge.
Moreover, every anti-edge incident to an endpoint is
type-II by \Cref{prop:t1edge}.

Suppose that there is no anti-edge.
Then $m=3$, $v_1\in U\setminus D$, $v_2\in D\setminus U$,
and $v_1\to v_2$ in $H$.
By \Cref{claim:xinDyinU,lem:good-path-gluing},
$\D(v_1),\U(v_2)\le6$ and $\D(v_0),\U(v_3)\le7$.
The assertion follows unless $\alpha=2$ and
$\D(v_0)=\U(v_3)=7$.
In this case, by  \ref{lem:gold-adherent-edge}, it holds that 
$v_0\to v_1$ and $v_2\to v_3$, so
$\partial(v_0,v_3)\le3$.

Suppose that there is exactly one anti-edge and it is type-I.
Then $m=3$ and this edge is $v_2\to v_1$.
By \Cref{prop:t1edge,lem:good-path-gluing},
$\D(v_0)+\U(v_3)\le\nu_H(v_1)+\mu_H(v_2)+2\le12$,
and we are done.

By \Cref{step:L23R23} and \Cref{cl:root-threshold},
every type-II arc $q\to p$ has both ends in $A$ and
satisfies $\nu_H(p)\le3$ or $\mu_H(q)\le3$.
These rules also imply that a type-II anti-edge incident
to an endpoint of $L_{st}$ has its other endpoint in
$L_1\cup R_1$, since $\nu_H(v_0),\mu_H(v_m)>6$.     

Suppose that $v_{i+1}\to v_i$ is the only anti-edge
and it is type-II.
By symmetry, assume that $\nu_H(v_i)\le3$.
Since $i\le2$, we have $v_0,\ldots,v_{i-1}\in U$.
If $v_i\in U$, then $\D(v_0)\le\nu_H(v_i)+i\le5$.
Otherwise, $i\ge1$ and $v_i\in D_a$.
The edge $v_{i-1}v_i$ is a crossing arc
$v_{i-1}\to v_i$ in $H$, so $\nu_H(v_{i-1})\le4$.
Again, $\D(v_0)\le\nu_H(v_{i-1})+i-1\le5$.
The other case gives $\U(v_m)\le5$ symmetrically.
Thus, $\partial(v_0,v_m)\le14$.

It remains to consider at least two anti-edges.
Two consecutive type-II anti-edges are impossible,  since $m\le3$ and  a vertex in $L_1\cup R_1$ cannot be both
the head and the tail of type-II edges by \Cref{step:L23R23}. 

Suppose that both end edges are type-II anti-edges.
Then $m=3$ and $v_1,v_2\in L_1\cup R_1$.
If they belong to different sides, their adjacency gives
$v_1,v_2\in F_0$, a contradiction.
If $v_1,v_2\in R_1$, the edge $v_3\to v_2$ requires
$\mu(v_2)\ge3$ when it is oriented.
By \Cref{cl:root-threshold}, $N(v_2)\cap F_1=\emptyset$,
contrary to $v_1\in N(v_2)\cap F_1$.
If $v_1,v_2\in L_1$, the edge $v_1\to v_0$ requires
$\nu(v_1)\ge3$, and the same claim gives a contradiction.

Thus, up to symmetry, the only remaining case is $m=3$,
where $v_1\to v_0$ is type-II, $v_2\to v_1$ is type-I,
and $v_2v_3$ is not an anti-edge.
By \Cref{step:L23R23}, \Cref{cl:root-threshold} and
(\ref{partitionA}), we have $v_1\in U_a\setminus D_a$
and $\nu_H(v_1)\ge3$.
Here, when $v_1\in R_1$, the latter inequality follows
from $v_1\notin F_0$.

Since \Cref{step:DUb} orients no edge towards
$U_a\setminus D_a$, the edge $v_2\to v_1$ is oriented
before \Cref{step:L23R23}.
Thus, \Cref{claimcenter} and monotonicity give
$\mu_H(v_2)+1+\nu_H(v_1)\le9$, so $\mu_H(v_2)\le5$.
By \Cref{lem:good-path-gluing},
$\U(v_3)\le\mu_H(v_2)+1\le6$.
Since $v_0$ is not adherent, $\alpha\le1$, and hence
$\partial(v_0,v_3)\le8+1+6=15\le16-\alpha$.

These cases exhaust all possibilities and also give
$\partial(v_0,v_m)\le15$ whenever
$V(L_{st})\cap F_0=\emptyset$.
The assertion follows.
\end{proofofclaim}
By \Cref{lem:short-main-path}, it remains to consider $\mathrm{dist}_G(s,t)=4$. Let $L_{st}=v_0v_1v_2v_3v_4 $ be a shortest   $st$-path with $s=v_0$ and $t=v_4$.  Recall, if $v_{i}v_{i+1}$ is oriented as $v_{i+1}\to v_i$ 
in $G^{O_{\ref{step:DUb}}}$, then call it an  anti-edge. Among all the anti-edges in $L_{st}$ the anti-edge $v_iv_{i+1}$ with the smallest $i$ is called the \emph{first} and the one with the largest $i$ is called the \emph{last}.   
Let $H=G^{O_{\ref{step:DUb}}}$.
For each $i$ with $\nu_H(v_i)<\infty$, choose a dipath
$Q_{v_i}$ in $H$ from $v_i$ to $u$ or $v$ with adjusted
length $\nu_H(v_i)$, and let $Q_i=v_0\cdots v_{i-1}Q_{v_i}$.
For each $i$ with $\mu_H(v_i)<\infty$, choose a dipath
$P_{v_i}$ in $H$ from $u$ or $v$ to $v_i$ with adjusted
length $\mu_H(v_i)$, and let $P_i=P_{v_i}v_{i+1}\cdots v_4$. 

Note that $s,t\notin F_0$. Then either $L_{st}\cap F_0=\emptyset$ or  $ \{v_1,v_2,v_3\}\cap F_0\neq \emptyset$. Thus, it is sufficient to consider the following cases (\ref{SF-1}--\ref{SF-3}).
\begin{enumerate}[label=(F\arabic*)]
    \item\label{SF-1} $V(L_{st})\cap F_0=\emptyset$. Then at least one of the following holds. 
    \begin{enumerate}[label=(F1.\arabic*)] 
       \item\label{Final-1} There is no anti-edge, proved in \Cref{cl:f-1}. 
        \item\label{Final-2} Exactly one anti-edge and it is  type-I, proved in \Cref{cl:f-2}. 
         \item\label{Final-3}  Exactly one anti-edge and it is  type-II, proved in \Cref{cl:f-3}.
         \item\label{Final-4}  The first anti-edge or the last anti-edge is  type-II, except for the case that $v_0v_1$ and $v_3v_4$ are anti-edges and they are type-II,  proved in \Cref{cl:f-4}.
         \item\label{Final-7}  $v_0v_1$ and $v_3v_4$ are anti-edges and they are type-II, proved in \Cref{cl:f-7}.
          \item\label{Final-6}  $v_0v_1$ or  $v_3v_4$ is a  type-I anti-edge, proved in \Cref{cl:f-6}. 
         \item\label{Final-5} Only $v_1v_2$ and $v_2v_3$ are anti-edges and they are type-I, proved in \Cref{cl:f-5}.
    \end{enumerate} 

    By \ref{Final-1}--\ref{Final-7}, assume that there are at least two anti-edges with the first anti-edge and the last anti-edge being type-I.  Then either encounters \ref{Final-6} or \ref{Final-5}. 
   \item\label{SF-2} $v_1\in F_0$ or $v_3\in F_0$. 

Without loss of generality, assume that $v_1\in F_0$. Then it is enough to consider the following cases.
 \begin{enumerate}[label=(F2.\arabic*)] 
       \item\label{Final2-1} $v_1\in F_0$,  $v_0\in R_{12}\cap F_1$ with $\nu(v_0)=7$ and $\mu(v_4)=8$ with  $v_4$ being adherent, proved in \Cref{cl:f2-1}. 
       \item\label{Final2-2} $\nu(v_0)=7$, $v_1\in F_0$, $\mu(v_4)=8$ with  $v_4$ being adherent,  and there is $v_0'\in N(v_0)\cap F_0$ such that $v_0'\in R_{11}$, $v_0'\to v_0$ is oriented in \Cref{step:cycle(a)} and  $\omega(v_0'\to v_0)=9$, proved in \Cref{cl:f2-2}.
\end{enumerate}
Since $v_0\in F_1\subseteq A$, it is not adherent.
By \Cref{prop:boundF1}, $\nu(v_0)\le6$ when $v_0\in L$
and $\nu(v_0)\le7$ when $v_0\in R$.
If $v_0\in X_3$, then $\mu(v_0)\ge3$, and
\Cref{claimcenter} gives $\nu(v_0)\le6$.
If $v_4$ is not adherent, then $\alpha=0$ and
$\partial(v_0,v_4)\le7+1+8=16$.
If $\nu(v_0)\le6$ or $\mu(v_4)\le7$, then
$\partial(v_0,v_4)\le15\le16-\alpha$.
Thus, assume that $v_0\in R\cap F_1$ with
$\nu(v_0)=7$, $\mu(v_4)=8$, and $v_4$ is adherent.

If $v_0\in V(G^{O_{\ref{step:F0X31X41}}})$, then
$v_0\in R_{12}$ by \Cref{step:F0X31X41}
and \Cref{claimcontrol1}.
Thus, assume that
$v_0\notin V(G^{O_{\ref{step:F0X31X41}}})$.
Since $\mu(v_0)\ge1$ and $\nu(v_0)=7$, no edge
incident to $v_0$ can have weight at most seven. 
By \Cref{prop:F118,prop:cycleauv,prop:F12224,prop:E5},
none of the edges in $[v_0,F_0]$ is oriented in
\Cref{step:F12}, \Cref{step:cycle(auv)},
\Cref{step:F12224} or \Cref{step:F13}.
Thus, all these edges are oriented in \Cref{step:cycle(a)}.
By \ref{item:F121}, $f(v_0)$ is an integer.
If $f(v_0)=5$, then $N(v_0)\cap F_0=\{v\}$,
and the considered cycle containing $v_0v$ belongs to
\Cref{step:cycle(auv)}, a contradiction.
Hence, $f(v_0)=4$.

Suppose that $v_0\in F_{13}$, and consider the first
operation incident to $v_0$.
By the proof of \Cref{lem:cycle-star-choice}, it orients
a whole cycle or a four-edge path of form (d).
In forms (b) and (d), the completed dicycle gives a
dipath from $v_0$ to $F_0$ of length at most three,
and hence $\nu(v_0)\le6$, a contradiction.
In form (a), both vertices of $F_1$ belong to $R_{22}$.
A neighbor of either one in $F_2$ cannot lie in $L_2$
by the definition of $R_{21}$, or in $X_3$ by \Cref{claim1}.
It cannot lie in $L_3\cup X_4$, since its distance from
$v$ is at most three.
Thus, both $F_2$-vertices belong to $R$, and the cycle
lies in $F_0\cup R$.
The choice in \ref{item:cycle5-whole} gives
$\nu(v_0)\le4$, since the direction from $v_0$ to
$I_4$ is available, again a contradiction.
Therefore, $v_0\notin F_{13}$, and none of its incident
edges is an additional edge in \ref{item:cycle5-short}. 
Choose $v_0'\in N(v_0)\cap R_{11}$.
If $v_0\to v_0'$, then
$\nu(v_0)\le1+\nu(v_0')\le4$, a contradiction.
Hence, $v_0'\to v_0$.
Since $\mu(v_0')=1$ and $\nu(v_0)=7$, by
\Cref{claimcenter}, it holds that
$\omega(v_0'\to v_0)=9$.

    \item\label{SF-3} $v_2\in F_0$ with $ v_1 ,v_3\notin F_0$. 

  We may further assume that $v_0,v_4\in F_2$.
Indeed, since $v_2\in F_0$ and $v_0,v_4\notin F_0$,
we have that $v_0,v_4\in F_1\cup F_2$.
Suppose that $v_0\in F_1$.
As in \ref{SF-2}, it suffices to consider
$\nu(v_0)=7$ and $\mu(v_4)=\U(v_4)=8$,
where $v_4$ is adherent.

Since $v_1\in F_1$, it holds that
$v_0\in V(G^{O_{\ref{step:F0X31X41}}})\cup F_{11}$.
If $v_0\in F_{11}$, then
$\mu(v_0)+\nu(v_0)\le7$ by
\Cref{prop:F118,prop:cycleauv,prop:F12224}.
Since $\mu(v_0)\ge1$, we obtain $\nu(v_0)\le6$,
a contradiction.
Thus, $v_0\in V(G^{O_{\ref{step:F0X31X41}}})$.
By \Cref{step:F0X31X41} and \Cref{claimcontrol1},
$\nu(v_0)=7$ implies that $v_0\in R_{12}$.
The result follows from \Cref{cl:f2-1},
whose proof does not use $v_1\in F_0$.
The case $v_4\in F_1$ is symmetric.
  
  Since $v_1v_2,v_2v_3\in [F_0,F_1]\subseteq E(G^{O_{\ref{step:F13}}})$,   
    we focus on the orientations on $v_0v_1$ and $v_3v_4$. Then it suffices to consider the following cases. 
\begin{enumerate}[label=(F3.\arabic*)] 
\item\label{Final3-1} $\orw{v_0v_1}\in E(G^{O_{\ref{step:DUb}}})$ and is  type-I or  $\orw{v_3v_4}\in E(G^{O_{\ref{step:DUb}}})$ and is  type-I, proved in \Cref{cl:f3-1}.
\item\label{Final3-2} Both  $\orw{v_0v_1},\orw{v_3v_4}\in E(G^{O_{\ref{step:DUb}}})$  and they are type-II, proved in \Cref{cl:f3-2}
\item\label{Final3-3} Both   $v_0v_1$ and $v_3v_4$ are unoriented after \Cref{step:DUb}, proved in \Cref{cl:f3-3}.
\item\label{Final3-4} One of $v_0v_1$ and $v_3v_4$ is type-II and the other one is unoriented in $G^{O_{\ref{step:DUb}}}$, proved in \Cref{cl:f3-4}.
\end{enumerate}
    
\end{enumerate}
Therefore, the result holds. 
\end{proof}

The proofs of the mentioned lemmas are presented in order.  Recall $\nu(s)\le \D(s)\le 8$ and $\mu(t)\le \U(t)\le 8$  by \ref{prop:critical-4} and \ref{prop:cadisjoint}. 
Moreover,  by the argument in the proof of \Cref{lem:Gxy16}, if $\D(v_0)+\U(v_4)\le k$, then $$\nu(v_0) +\mu(v_4)\le k  \text{ and so }  \pt(v_0,v_4)\le k+1.$$ 

 We first show the following statements hold, which are helpful later. 
\begin{proposition}
  \label{cl:v1v4}
  Suppose that $v_0v_1$ is not an anti-edge and $v_1\in D$, or that $v_3v_4$ is not an anti-edge and $v_3\in U$. Then $\pt(v_0,v_4)\le16-\alpha$.
\end{proposition}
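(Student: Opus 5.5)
By symmetry (reversing all arcs and swapping $u\leftrightarrow v$, $\mu\leftrightarrow\nu$, $D\leftrightarrow U$, $\U\leftrightarrow\D$), it suffices to treat the case where $v_0v_1$ is not an anti-edge and $v_1\in D$. Here $v_0\in U\setminus D$ and $v_1\in D$, so $v_0v_1\in[D,U]$. Since $v_0\in U\setminus D$, \Cref{claim:xinDyinU}(4) shows that $v_0v_1$ is oriented in $H=G^{O_{\ref{step:DUb}}}$. As it is not an anti-edge, we have $v_0\to v_1$ in $H$.

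\textbf{Step 1: bound $\nu(v_0)$.} I would use \Cref{claim:xinDyinU}(5) with $x=v_1\in D$ and $y=v_0\in U$. It gives $\nu_H(v_0)\le6$ and $\nu_H(v_1)\le5$. This also shows that $v_0$ is incident to an edge oriented by \Cref{step:DUb} or earlier. By \ref{prop:critical-1}, $v_0$ is therefore not adherent, so $\alpha\le1$, with $\alpha=1$ only if $v_4$ is adherent.

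\textbf{Step 2: combine with $\mu(v_4)$.} Since $v_4$ is noncritical, \Cref{cor:noncritical-bounds} gives $\mu(v_4)\le\U(v_4)\le8$. Then \Cref{lem:Gxy16} yields
\[
\pt(v_0,v_4)\le\nu(v_0)+\mu(v_4)+1\le6+8+1=15\le16-\alpha.
\]
This already settles the statement whenever $\alpha\le1$, which by Step 1 is always the case.

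\textbf{Main obstacle.} The delicate point is applying \Cref{claim:xinDyinU}(5) correctly. It requires an arc between a $D$-vertex and a $U$-vertex, oriented from the $U$-side to the $D$-side, in $G^{O_{\ref{step:DUb}}}$. I would check two cases.
\begin{itemize}
\item If $v_1\in M_a\cup M_b$, the crossing arc into $v_1$ still fits the hypothesis, since $v_1\in D$ and $v_0\in U$.
\item If the arc $v_0\to v_1$ was created before \Cref{step:DUb}, then $v_0,v_1\in A$, and the proof of (5) gives $\nu(v_0)\le\nu(v_1)+1\le5$ directly from the definition of $D_a$.
\end{itemize}
If some configuration falls outside both cases, I would fall back on \Cref{prop:t1edge} when $v_0,v_1\notin F_0$, which gives $\nu(v_1)\le6$. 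I would then argue separately that $\nu(v_0)\le7$ and that $v_0$ is non-adherent, so that $\pt\le7+8+1=16$ with $\alpha=0$.
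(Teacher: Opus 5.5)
Your proof is correct and is essentially the paper's argument. The paper splits into $v_1\in D_a$, where $\nu(v_0)\le 1+\nu(v_1)\le 5$ so that adherence of $v_0$ does not matter, and $v_1\in D_b$, where the arc $v_0\to v_1$ comes from \Cref{step:DUb} and gives $\nu(v_0)\le 6$ with $v_0$ non-adherent. You instead get $\nu(v_0)\le 6$ uniformly from \Cref{claim:xinDyinU}(5), and non-adherence from \ref{prop:critical-1} together with the fact that a vertex incident to an arc oriented before \Cref{step:DUb} lies in $A$. The fallback in your last paragraph is unnecessary, because the two cases you list already exhaust all possibilities.
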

\begin{proof}
 By symmetry, assume that $v_0v_1$ is not an anti-edge and $v_1\in  D$. Then $v_0\to v_1\in E(G^{O_{\ref{step:DUb}}})$. If $v_1\in D_a$, then $\nu(v_0)\le 5$ and so $\nu(v_0)+\mu(v_4)\le 13$.  Thus, assume that $v_1\in D_b$. Then  $v_0\to v_1$ is oriented in \Cref{step:DUb}. It follows that  $\nu(v_0)\le 6$ and $v_0$ is not adherent. Thus, $\nu(v_0)+\mu(v_4)\le 14$.  
\end{proof}

Therefore, if we have $v_0v_1$ or $v_3v_4$ is not an anti-edge. Then it suffices to assume that $v_1\in U\setminus D$ or $v_3\in D\setminus U$ by \Cref{cl:v1v4}, respectively. 

\begin{proposition}
    \label{cl:forv4}
  Assume $\mu(v_4)=8$ and $v_4$ is adherent. Then   $v_4\notin F_1$. Moreover, if $v_4\in R$, then $v_4\in F_3$.  
\end{proposition}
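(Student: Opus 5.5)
We argue by contradiction from the adherent properties in \Cref{prop:criticaland adherent}. Since $v_4=t\in D\setminus U$ and $v_4$ is adherent, \ref{prop:critical-1} gives $v_4\in B$, no edge incident to $v_4$ is oriented in $G^{O_{\ref{step:DUb}}}$, and $\U(v_4)\le 8$. Moreover $\mu(v_4)=8$ together with \Cref{claimcenter4} gives $\U(v_4)=8$.

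\textbf{Step 1: $v_4\notin F_1$.} Every vertex of $F_1$ is incident to an edge oriented by the end of \Cref{step:F13}, so $F_1\subseteq V(G^{O_{\ref{step:sumatmost8}}})$. By \Cref{claimcenter}, every such vertex satisfies (a1) or (a2), hence lies in $A$. This contradicts $v_4\in B$.

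\textbf{Step 2: if $v_4\in R$, then $v_4\in F_3$.} By \Cref{claim3}, $R\subseteq F_1\cup F_2\cup F_3$, so it suffices to exclude $v_4\in F_2$.

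Suppose $v_4\in F_2$ and pick $w\in N(v_4)\cap F_1$. Then $w\in A$, and $wv_4$ is unoriented in $G^{O_{\ref{step:DUb}}}$. By \Cref{claim:xinDyinU}(4), an unoriented edge with one endpoint in $D\setminus U$ has both endpoints in $D\setminus U$, so $w\in D$. By \Cref{lem:good-path-gluing} and \Cref{lem:saturation-stability},
\[
\U(v_4)\le \U(w)+1=\mu(w)+1 .
\]
Hence we need $\mu(w)\ge 7$ to reach $\U(v_4)=8$.

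\textbf{Step 3: bounding $\mu(w)$.} We use $v_4\in R_2\cup R_3$ to locate $w$.
\begin{itemize}
\item If $v_4\in R_3$, then $w\in R_2\cup R_3\cup X_3\cup X_4$. Vertices of $F_1$ in these sets lie in $R_2\cup X_3$, and \Cref{prop:boundF1} and \Cref{calim:boundmunuV-GO1} give $\mu(w)\le 6$.
\item If $v_4\in R_2$, then $w\in R_1\cup R_2\cup L_2\cup X_3\cup X_2$. Within $F_1$, the $R$-vertices have $\mu\le 6$ by \Cref{prop:boundF1}, and $X_{32}$ has $\mu\le 6$ by \Cref{calim:boundmunuV-GO1}.
\end{itemize}
So the only problematic neighbour is $w\in L_2\cap F_1$, which can have $\mu(w)=7$. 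This forces $v_4\in R_{21}$. But every vertex of $R_{21}$ adjacent to $L_2$ has its edges oriented in \ref{item:step:F0X31X411}, contradicting $v_4\in B$. Alternatively, using $\mathrm{dist}(v_4,v)=2$: $v_4$ has a neighbour $w'\in R_1$, and if $w'\in F_1$ then $\mu(w')\le 5$ by \Cref{calim:boundmunuV-GO1}(1). This gives $\U(v_4)\le 6$ via the unoriented edge $v_4w'$, which must lie in $D$.

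One case remains: $w'\in R_{11}\subseteq F_0$. Then $v_4\in F_1$, which contradicts $v_4\in F_2$.

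\textbf{Main obstacle.} The hard part is Step 3, specifically checking that every $F_1$-neighbour of an $R_2$-vertex in $B$ has $\mu\le 6$, or that an $R_1$-neighbour does. We will lean on the $R_1$-neighbour argument, since $v_4\in R_2$ forces $N(v_4)\cap R_1\ne\emptyset$, and this neighbour automatically lies in $D$ by \Cref{claim:xinDyinU}(4). So the key estimate is $\mu(w')\le 5$ for $w'\in R_1\setminus R_{11}$.
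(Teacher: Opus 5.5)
Your argument is correct and is essentially the paper's. You use $v_4\in B$ to exclude $F_1$. For $v_4\in F_2\cap R$ you pick an $F_1$-neighbour $w$, which must lie in $D$ because $wv_4$ is unoriented (\Cref{claim:xinDyinU}(4)). Then $\mu(w)\le 6$ forces $\U(v_4)\le 7$, a contradiction; your $R_1$-neighbour shortcut for $v_4\in R_2$ is a valid variant of the same idea.

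One citation is loose. For $X_3$-neighbours in the $R_3$ case, for instance $w\in X_{31}\cap F_1$, neither \Cref{prop:boundF1} nor \Cref{calim:boundmunuV-GO1} applies. It is fixed by the paper's uniform observation: every vertex of $X_3$ has $\mu,\nu\ge 3$, so \Cref{claimcenter} gives $\mu(w)\le 6$.
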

\begin{proof}
By \ref{prop:critical-1}, $v_4\in B$, so $v_4\notin F_1$
and all its incident edges are unoriented after
\Cref{step:DUb}.
Suppose that $v_4\in F_2\cap R$, and choose
$r\in N(v_4)\cap F_1$.
Since $v_4\notin R_{21}$, we have $r\in R\cup X_3$.

Let $H=G^{O_{\ref{step:sumatmost8}}}$.
If $r\notin R_1$, then $\mathrm{dist}_G(r,u)\ge3$ and
$\mathrm{dist}_G(r,v)\ge2$, so $\nu_H(r)\ge3$.
If $r\in R_1$, then $r\in F_1$ implies $r\notin R_{11}$,
and hence $N(r)\cap L_1=\emptyset$.
Together with $u\to v$, this gives $\partial_H(r,u)\ge3$.
Since $\partial_H(r,v)+2\ge3$, we again have $\nu_H(r)\ge3$.
Thus, $\mu_H(r)\le6$ by \Cref{claimcenter}.

As $rv_4$ is unoriented after \Cref{step:DUb},
\Cref{claim:xinDyinU}(4) gives $r\in D$.
By \Cref{cor:noncritical-bounds} and \Cref{lem:good-path-gluing}, $
8=\mu(v_4)\le\U(v_4)\le\U(r)+1
\le\mu_H(r)+1\le7,
$ 
a contradiction.
Hence, $v_4\in R$ implies $v_4\in F_3$ by \Cref{claim3}.
\end{proof}

\subsection{\texorpdfstring{Paths avoiding  $F_0$}{Paths avoiding F0}}\label{sec:paths-outside-core}
Now, we start the proof of the mentioned lemmas, beginning with lemmas involved in \ref{SF-1}.  Recall that  a common assumption for \Cref{cl:f-6}-\Cref{cl:f-7} is that $V(L_{st})\cap F_0=\emptyset$.

\begin{lemma}
    \label{cl:f-6}
Let $v_0v_1$ or $v_3v_4$ be an anti-edge and suppose that it is  type-I,  i.e., \ref{Final-6}.  Then  $\pt(v_0,v_4)\le 16-\alpha$. Moreover, $\nu(v_0)\le 6$ and $v_0$ is non-adherent or   $\mu(v_4)\le 6$ and $v_4$ is non-adherent, respectively. 
\end{lemma}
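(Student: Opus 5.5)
By symmetry, suppose $v_0v_1$ is a type-I anti-edge, i.e.\ $v_1\to v_0$ in $H=G^{O_{\ref{step:DUb}}}$, oriented in \Cref{step:F0X31X41}--\Cref{step:sumatmost8} or in \Cref{step:DUb}. The plan is to show directly that $\nu(v_0)\le6$ and that $v_0$ is not adherent. Then \Cref{cor:noncritical-bounds} gives $\mu(v_4)\le\U(v_4)\le8$, and \Cref{lem:Gxy16} yields
\[
\partial(v_0,v_4)\le\nu(v_0)+\mu(v_4)+1\le6+8+1=15\le16-\alpha ,
\]
since $\alpha\le1$ once $v_0$ is non-adherent. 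The case where $v_3v_4$ is the type-I anti-edge is handled identically, with the roles of $\mu$ and $\nu$ interchanged.

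\emph{Non-adherence.} The edge $v_0v_1$ is already oriented in $H$. By \ref{prop:critical-1}, an adherent vertex has no incident edge oriented in \Cref{step:DUb}, and it lies in $B$. Vertices of $B$ have no incident edges oriented before \Cref{step:DUb}, by \Cref{claimcenter2}. So a type-I edge at $v_0$ rules out $v_0$ being adherent in either case.

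\emph{Bound on $\nu(v_0)$.} Since $V(L_{st})\cap F_0=\emptyset$, we have $v_0,v_1\notin F_0$. Applying \Cref{prop:t1edge} to the arc $v_1\to v_0$ (with $s=v_1$, $t=v_0$) gives $\nu_H(v_0)\le6$, and by monotonicity this bound persists in $G^{O_{\ref{step:complicate}}}$. Note that we want $\nu$ at the head $v_0$, and that is exactly what \Cref{prop:t1edge} controls.

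The main obstacle is making sure \Cref{prop:t1edge} really covers every type-I arc, in particular one oriented in \Cref{step:DUb} whose head is in $U_a$ or $M_b$. For such an arc its proof invokes \Cref{claim:xinDyinU} to get $\nu(\text{head})\le6$. If that coverage looks doubtful, I would check directly that every arc created in \Cref{step:DUb} ends at a vertex with an outgoing arc into $D_a$ (see \ref{S7-1}--\ref{S7-4}), or at a vertex of $D_a$ itself. That gives $\nu\le1+5$ in the worst case, which suffices.
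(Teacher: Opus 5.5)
Your argument is correct and matches the paper's own proof: \Cref{prop:t1edge} applied to the arc $v_1\to v_0$ gives $\nu(v_0)\le 6$, \ref{prop:critical-1} rules out $v_0$ being adherent, and $\mu(v_4)\le\U(v_4)\le 8$ together with \Cref{lem:Gxy16} gives $\partial(v_0,v_4)\le 15\le 16-\alpha$. Your optional fallback is slightly loose as stated, since the head $y\in U_b\setminus M_b$ of an arc from \ref{S7-2} or \ref{S7-3} reaches $D_a$ only via a second arc; the bound $\nu\le 1+5$ you quote still holds there, and the paper's proof does not use such a fallback anyway, because it simply cites \Cref{prop:t1edge}.
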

\begin{proof}
By symmetry, assume that $v_0v_1$ is an anti-edge of type-I. Then $\nu(v_0)\le6$ by \Cref{prop:t1edge},
and $v_0$ is not adherent by \ref{prop:critical-1}. Since $\mu(v_4)\le \U(v_4)\le 8$, we are done. 
\end{proof}

\begin{lemma}\label{cl:f-5}
Let $v_1v_2$  and  $v_2v_3$ be the only   anti-edges and they are type-I,  i.e., \ref{Final-5}.  Then $\pt(v_0,v_4)\le 16-\alpha$. Moreover,  $\nu(v_0)\le 7$ and   $\mu(v_4)\le 7$. 
\end{lemma}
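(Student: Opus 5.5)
Since $v_0v_1$ and $v_3v_4$ are not anti-edges, they are oriented $v_0\to v_1$ and $v_3\to v_4$ in $G^{O_{\ref{step:DUb}}}$ or are unoriented there. By \Cref{cl:v1v4} we may therefore assume $v_1\in U\setminus D$ and $v_3\in D\setminus U$; otherwise the conclusion $\pt(v_0,v_4)\le 16-\alpha$ already follows from that proposition. The key structural input is then the two type-I arcs $v_2\to v_1$ and $v_3\to v_2$. Since $V(L_{st})\cap F_0=\emptyset$, \Cref{prop:t1edge} applies to both of them. Applied to $v_2\to v_1$ it gives $\nu_H(v_1)\le 6$ with $H=G^{O_{\ref{step:DUb}}}$. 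Applied to $v_3\to v_2$ it gives $\mu_H(v_3)\le 6$.

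I would then propagate these bounds to the endpoints through good paths. Since $v_0,v_1\in U$ and $v_0v_1$ is either unoriented or oriented $v_0\to v_1$, \Cref{lem:good-path-gluing}(3) together with $\D(v_1)\le\nu_H(v_1)$ gives
\[\D(v_0)\le \D(v_1)+1\le \nu_H(v_1)+1\le 7.\]
The symmetric argument, using $v_3,v_4\in D$ and \Cref{lem:good-path-gluing}(2), gives $\U(v_4)\le\mu_H(v_3)+1\le 7$. Because $v_0,v_4$ are noncritical, \Cref{cor:noncritical-bounds} yields $\nu(v_0)\le\D(v_0)\le 7$ and $\mu(v_4)\le\U(v_4)\le7$, which is the ``moreover'' part. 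With \Cref{lem:Gxy16} this gives $\pt(v_0,v_4)\le 15$, so we are done unless $\alpha=2$.

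When $\alpha=2$, the main obstacle is to save one more unit. First, if $\nu_H(v_1)\le5$ or $\mu_H(v_3)\le5$, the sum drops to at most $13$ and $\pt(v_0,v_4)\le 14$ follows. If instead $\D(v_0)=\U(v_4)=7$ with both endpoints adherent, I would use \ref{lem:gold-adherent-edge}: from $\D(v_1)\le 6<\D(v_0)$ it gives $v_0\to v_1$, and from $\U(v_3)<\U(v_4)$ it gives $v_3\to v_4$. Then the dipath $v_0\to v_1\to\cdots$ can be continued from $v_1$ by a dipath attaining $\nu_H(v_1)\le 6$, and a dipath attaining $\mu_H(v_3)\le6$ can be continued by $v_3\to v_4$. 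This gives
\[\pt(v_0,v_4)\le 1+\nu(v_1)+1+\mu(v_3)+1\le 15.\]
This is not yet $14$, so I expect this final saving to be the hard point. My first attempt would be to exploit \Cref{prop:t1edge}'s joint bound $\mu(s)+\nu(t)\le 10$ on the arcs $v_2\to v_1$ and $v_3\to v_2$. Since $\nu(v_2)\le\nu(v_1)+1$ and $\mu(v_2)\le\mu(v_3)+1$, routing through $v_2$ instead might give the needed unit. If that fails, I would split according to whether these arcs were oriented in \Cref{step:DUb}; in that case \Cref{claim:xinDyinU}(5) gives a value at most $5$, which closes the case.
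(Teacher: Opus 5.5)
Your treatment of the ``moreover'' part and of the case $\alpha\le1$ is correct and matches the paper's first half. One small point: discarding $v_1\in D$ or $v_3\in U$ via \Cref{cl:v1v4} gives only the distance bound, not $\nu(v_0),\mu(v_4)\le7$ in those cases. The paper gets these directly from the crossing arc $v_0\to v_1$ and $\nu_H(v_0)\le\nu_H(v_1)+1$.

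The genuine gap is the case $\alpha=2$ with $\D(v_0)=\U(v_4)=7$ and $\nu_H(v_1)=\mu_H(v_3)=6$. You leave this case open, and neither suggested fix works.
\begin{itemize}
\item The joint bound $\mu(s)+\nu(t)\le10$ of \Cref{prop:t1edge} only gives $\mu_H(v_2),\nu_H(v_2)\le4$. Routing through $v_2$ does not help, since $v_3\to v_2\to v_1$ points against the direction from $v_0$ to $v_4$.
\item \Cref{claim:xinDyinU}(5) concerns arcs from $U$ to $D$. Here the head $v_1$ lies in $U\setminus D$ and the tail $v_3$ lies in $D\setminus U$, so it yields nothing.
\item You never treat the complementary case, where the arcs were oriented before \Cref{step:L23R23}.
\end{itemize}

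The missing idea is that this configuration cannot occur, so no shorter dipath needs to be found.
\begin{itemize}
\item Adherent-II vertices have potential $8$ by \Cref{claim:munuF2}(1), so both endpoints are adherent-I.
\item Since $\D(v_1)\le\nu_H(v_1)=6<\D(v_0)$, \ref{prop:critical-2} forbids $v_1\in U_b$, so $v_1\in U_a\setminus D_a$. Symmetrically, $v_3\in D_a\setminus U_a$.
\item \Cref{step:DUb} orients no arc into $U_a\setminus D_a$ and none out of $D_a\setminus U_a$. Hence both type-I arcs $v_2\to v_1$ and $v_3\to v_2$ were oriented before \Cref{step:L23R23}, where \Cref{claimcenter} gives the stronger bound $\omega\le9$.
\item Therefore $\mu_H(v_2)\le8-\nu_H(v_1)=2$ and $\nu_H(v_2)\le8-\mu_H(v_3)=2$. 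This forces $v_2\in F_0$, contradicting $V(L_{st})\cap F_0=\emptyset$.
\end{itemize}
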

\begin{proof} 
Let $H=G^{O_{\ref{step:DUb}}}$.
By the proof of \Cref{prop:t1edge}, we have
$\nu_H(v_1)\le6$ and $\mu_H(v_3)\le6$.
By \Cref{lem:good-path-gluing},
$\D(v_0)\le\nu_H(v_1)+1$ if $v_1\in U$.
If $v_1\notin U$, then $v_0\to v_1$ in $H$ by
\Cref{claim:xinDyinU}, and the same inequality follows
from $\D(v_0)\le\nu_H(v_0)\le\nu_H(v_1)+1$.
Similarly, $\U(v_4)\le\mu_H(v_3)+1$.
Thus, $\nu(v_0)\le\D(v_0)\le7$ and
$\mu(v_4)\le\U(v_4)\le7$.

By \Cref{lem:Gxy16}, it remains to consider
$\nu(v_0)+\mu(v_4)=14$ and $\alpha=2$.
Then $\D(v_0)=\U(v_4)=7$ and
$\nu_H(v_1)=\mu_H(v_3)=6$.
Both endpoints are adherent-I.
Their incident edges are unoriented in $H$, so
$v_1\in U\setminus D$ and $v_3\in D\setminus U$ by
\Cref{claim:xinDyinU}.
Since $\D(v_1)\le6<\D(v_0)$, by
\ref{prop:critical-2}, we have $v_1\in U_a\setminus D_a$.
Similarly, $v_3\in D_a\setminus U_a$.
Thus, neither $v_2\to v_1$ nor $v_3\to v_2$ is oriented
in \Cref{step:DUb}.
Both are type-I, so they are oriented before
\Cref{step:L23R23}.
By \Cref{claimcenter} and monotonicity,
$\mu_H(v_2)\le8-\nu_H(v_1)=2$ and
$\nu_H(v_2)\le8-\mu_H(v_3)=2$.
By \Cref{obs:munu}, it follows that $v_2\in F_0$,
a contradiction. 
\end{proof}

\begin{lemma}
   \label{cl:f-2}
Let $v_{i}v_{i+1}$ be the only anti-edge in $L_{st}$  and   it is  type-I,  i.e., \ref{Final-2}.  Then $\pt(v_0,v_4)\le 15-\alpha$. 
\end{lemma}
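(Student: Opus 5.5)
Let $H=G^{O_{\ref{step:DUb}}}$ and let $v_iv_{i+1}$ (so $v_{i+1}\to v_i$ in $H$, type-I) be the only anti-edge of $L_{st}$, where $V(L_{st})\cap F_0=\emptyset$. By \Cref{cl:f-6} we may assume $1\le i\le 2$; by symmetry (reversing all directions and exchanging $\mu,\nu$, $\U,\D$, $s,t$) it suffices to treat $i=1$, i.e.\ the anti-edge $v_2\to v_1$, with $v_0v_1$, $v_2v_3$, $v_3v_4$ not anti-edges. The plan is to bound $\D(v_0)$ through $v_1$ and $\U(v_4)$ through $v_2$ using \Cref{prop:t1edge}, then use \Cref{lem:Gxy16} in the form $\partial(v_0,v_4)\le \D(v_0)+\U(v_4)+1$.

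First, by \Cref{cl:v1v4}, we may assume $v_1\in U\setminus D$ (otherwise we already have $\partial\le 14\le 15-\alpha$ since in that proof $v_0$ is non-adherent or $\nu(v_0)\le5$; I will check that the bounds obtained there give $15-\alpha$). Then $\D(v_0)\le \nu_H(v_1)+1$ by \Cref{lem:good-path-gluing}, whether $v_0v_1$ is unoriented or oriented $v_0\to v_1$. Since $v_2\to v_1$ is type-I with $v_1,v_2\notin F_0$, \Cref{prop:t1edge} gives $\nu_H(v_1)\le6$, $\mu_H(v_2)\le6$ and $\mu_H(v_2)+\nu_H(v_1)\le10$. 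On the other side, $v_2v_3$ and $v_3v_4$ are not anti-edges, so they are unoriented or forward; if $v_2,v_3\in D$ then $\U(v_4)\le \mu_H(v_2)+2$ by \Cref{lem:good-path-gluing}(2) applied twice. If instead $v_3\in U$ with $v_3v_4$ not an anti-edge, \Cref{cl:v1v4} handles it; and if $v_2\in U$, $v_3\in D$, then $v_2\to v_3$ is a crossing arc, giving $\mu_H(v_3)\le \mu_H(v_2)+1$, and $\U(v_4)\le\mu_H(v_3)+1$, the same bound. Altogether $\D(v_0)+\U(v_4)\le \nu_H(v_1)+\mu_H(v_2)+3\le 13$, whence $\partial(v_0,v_4)\le 14$.

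This yields $14\le 15-\alpha$ whenever $\alpha\le1$, so the remaining case, which I expect to be the main obstacle, is $\alpha=2$ with equality $\nu_H(v_1)+\mu_H(v_2)=10$, i.e.\ $\mu_H(v_2)=\nu_H(v_1)\cdot$ pairs summing to 10 with each at most $6$. Here both endpoints are adherent, so by \ref{prop:critical-1} they lie in $B$ with all incident edges unoriented in $H$, forcing $v_1\in U\setminus D$, $v_3\in D\setminus U$. I would then use \ref{lem:gold-adherent-edge}: if $\D(v_1)<\D(v_0)$ then $v_0\to v_1$ in the final orientation and $\nu(v_0)\le\nu(v_1)+1$, and similarly $v_3\to v_4$; combined with \ref{prop:critical-2} this pins $v_1\in U_a$, $v_3\in D_a$. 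Then I examine where $v_2\to v_1$ was oriented: if in \Cref{step:DUb}, \Cref{claim:xinDyinU}(5) gives tighter bounds ($\nu(v_1)\le 5$ type estimates) contradicting the sum $10$; if before \Cref{step:L23R23}, then \Cref{claimcenter} gives $\mu_H(v_2)+\nu_H(v_1)\le8$, again contradicting $10$. Either way one unit is saved, giving $\partial(v_0,v_4)\le 13=15-\alpha$.
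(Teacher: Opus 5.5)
\textbf{Main gap: the \Cref{step:DUb} subcase of $\alpha=2$.} For $i=1$ your strategy matches the paper's: bound $\D(v_0)$ through $\nu_H(v_1)$ and $\U(v_4)$ through $\mu_H(v_2)$, then save one unit when both endpoints are adherent. The decisive subcase fails, though.

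When $\alpha=2$ and $v_2\to v_1$ was oriented in \Cref{step:DUb}, you claim \Cref{claim:xinDyinU}(5) contradicts $\nu_H(v_1)+\mu_H(v_2)=10$. Part (5) concerns arcs from $U$ into $D$, but here $v_1\in U\setminus D$. The only arcs of \Cref{step:DUb} entering $U\setminus D$ are the arcs $U_a\to U_b\setminus M_b$ of \ref{S7-2} and \ref{S7-3}. So $v_2\in U_a$ and $v_1\in U_b\setminus D_b$, and the available bounds are $\mu_H(v_2)\le4$ and $\nu_H(v_1)\le6$. These allow the sum $10$ exactly; this is precisely where \Cref{prop:t1edge} is tight, so there is no contradiction.

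The unit has to be saved at the endpoint instead:
\begin{itemize}
\item From $\D(v_0)\le\nu_H(v_1)+1\le7$, the vertex $v_0$ is adherent-I, since an adherent-II vertex of $U_b\setminus D_b$ has $\D$-value $8$.
\item Since $v_1\in U_b$, \ref{prop:critical-2} gives $\D(v_0)\le\D(v_1)\le\nu_H(v_1)$.
\item Hence $\D(v_0)+\U(v_4)\le\nu_H(v_1)+\mu_H(v_2)+2\le12$, and $\pt(v_0,v_4)\le13=15-\alpha$.
\end{itemize}
Your appeal to \ref{lem:gold-adherent-edge} is conditional on $\D(v_1)<\D(v_0)$. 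You never treat the complementary case $\D(v_1)\ge\D(v_0)$, and that is exactly the case needed here. Your conditional route can be completed: if $\D(v_1)<\D(v_0)$ with $v_0$ adherent-I, \ref{prop:critical-2} forces $v_1\in U_a\setminus M_a$. No arc of \Cref{step:DUb} enters that set, so $v_2\to v_1$ was oriented before \Cref{step:L23R23}, and \Cref{claimcenter} gives sum at most $8$. Otherwise $\D(v_0)\le\D(v_1)$ saves the unit as above. Either way you need the observation that $v_0$ is adherent-I, which you never state.

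\textbf{Second gap: the reduction to $1\le i\le2$.} \Cref{cl:f-6} only proves $\pt(v_0,v_4)\le16-\alpha$. From $\nu(v_0)\le6$, $v_0$ non-adherent and $\mu(v_4)\le8$ one gets only $15$, which exceeds $15-\alpha$ when $v_4$ is adherent. So $i=0$ (and symmetrically $i=3$) needs its own argument, using that $v_1v_2v_3v_4$ has no anti-edge:
\begin{itemize}
\item If $v_3\in U$, then $v_3\to v_4$ in $H$ gives $\mu_H(v_4)\le6$ with $v_4$ non-adherent.
\item If $v_3\in D\setminus U$ and $v_2\in U$, the crossing arc $v_2\to v_3$ gives $\U(v_4)\le\mu_H(v_3)+1\le7$.
\item Otherwise, gluing (or the crossing arc $v_1\to v_2$) gives $\U(v_4)\le\mu_H(v_1)+3$, so $\nu(v_0)+\mu(v_4)\le\nu_H(v_0)+\mu_H(v_1)+3\le13$.
\end{itemize}

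\textbf{Minor point.} Your appeals to \Cref{cl:v1v4} inside the case $i=1$ also yield only $16-\alpha$. For example, $v_1\in D_b$ or $v_3\in U_b$ gives a parameter at most $6$ at one end and at most $8$ at the other, so your parenthetical ``$\pt\le14$'' is not what that proof gives. This is repaired routinely by combining with $\nu(v_0)\le\nu_H(v_1)+1\le7$, or with $\U(v_4)\le\mu_H(v_2)+2$.
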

\begin{proof}
Let $H=G^{O_{\ref{step:DUb}}}$.
By the proof of \Cref{prop:t1edge}, we have
$\mu_H(v_{i+1}),\nu_H(v_i)\le6$ and
$\mu_H(v_{i+1})+\nu_H(v_i)\le10$.
All other edges of $L_{st}$ are unoriented or oriented
from $v_0$ towards $v_4$.
We use \Cref{claim:xinDyinU,lem:good-path-gluing}
for the following estimates.
By symmetry, it suffices to consider $i=0$ and $i=1$.

Suppose first that $i=0$.
Then $\nu(v_0)\le\nu_H(v_0)\le6$ and $v_0$ is not
adherent, so $\alpha\le1$.
If $v_3\in U$, then $v_3\to v_4$ in $H$.
Thus, $\mu_H(v_4)\le6$, $v_4$ is not adherent, and
$\pt(v_0,v_4)\le6+1+6=13$. 
Hence, assume that $v_3\in D\setminus U$.
If $v_2\in U$, then $v_2\to v_3$ in $H$, and
$\U(v_4)\le\mu_H(v_3)+1\le7$.
It follows that $\pt(v_0,v_4)\le14\le15-\alpha$.
Thus, assume that $v_2\in D\setminus U$.
If $v_1\in D$, then
$\U(v_4)\le\mu_H(v_1)+3$.
Otherwise, $v_1\to v_2$ in $H$, and
$\U(v_4)\le\mu_H(v_2)+2\le\mu_H(v_1)+3$.
Consequently,
$\nu(v_0)+\mu(v_4)
\le\nu_H(v_0)+\mu_H(v_1)+3\le13$,
and the result follows.

Now, suppose that $i=1$.
If $v_1\in U$, then
$\D(v_0)\le\nu_H(v_1)+1$.
Otherwise, $v_0\to v_1$ in $H$, and
$\nu_H(v_0)\le\nu_H(v_1)+1$.
In either case,
$\nu(v_0)\le\nu_H(v_1)+1\le7$. If $v_3\in U$, then $v_3\to v_4$ in $H$,
$\mu_H(v_4)\le6$, and $v_4$ is not adherent.
Thus, $\alpha\le1$ and
$\pt(v_0,v_4)\le7+1+6=14\le15-\alpha$.
Hence, assume that $v_3\in D\setminus U$.
If $v_2\in D$, then
$\U(v_4)\le\mu_H(v_2)+2$.
Otherwise, $v_2\to v_3$ in $H$, and
$\U(v_4)\le\mu_H(v_3)+1\le\mu_H(v_2)+2$.
Therefore,
$\nu(v_0)+\mu(v_4)
\le\nu_H(v_1)+\mu_H(v_2)+3\le13$.
If $\alpha\le1$, we are done. 
Thus, assume that both endpoints are adherent.
If $v_2\to v_1$ is oriented before
\Cref{step:L23R23}, then \Cref{claimcenter} gives
$\nu_H(v_1)+\mu_H(v_2)\le8$.
Hence, $\pt(v_0,v_4)\le8+3+1=12<13$. 
Otherwise, $v_2\to v_1$ is oriented in \Cref{step:DUb}.
Since $v_0$ is adherent, $v_0v_1$ is unoriented in $H$,
and so $v_1\in U\setminus D$ by
\Cref{claim:xinDyinU}(4).
Since \Cref{step:DUb} orients no edge towards
$U_a\setminus M_a$, we have $v_1\in U_b\setminus D_b$.
Moreover, $\D(v_0)\le\nu_H(v_1)+1\le7$,
so $v_0$ is adherent-I.
By \ref{prop:critical-2}, it follows that
$\D(v_0)\le\D(v_1)\le\nu_H(v_1)$.
Consequently,
$\nu(v_0)+\mu(v_4)
\le\D(v_0)+\U(v_4)
\le\nu_H(v_1)+\mu_H(v_2)+2\le12$.
By \Cref{lem:Gxy16}, we obtain
$\pt(v_0,v_4)\le13=15-\alpha$.
\end{proof}

\begin{lemma}\label{cl:f-3}
Let $v_iv_{i+1}$ be the only anti-edge in $L_{st}$
and suppose that it is type-II, i.e., \ref{Final-3}.
Then $\pt(v_0,v_4)\le16-\alpha$.
Furthermore,  either $\min\{\nu(v_0),\mu(v_4)\}\le6$,
or $\min\{\nu(v_0),\mu(v_4)\}=7$ and neither endpoint
is adherent.
\end{lemma}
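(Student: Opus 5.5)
By symmetry (reversing all arcs and exchanging $u,v$, $\mu,\nu$, $U,D$) we may assume that the unique anti-edge is $v_{i+1}\to v_i$ with $i\in\{0,1\}$. Put $H=G^{O_{\ref{step:DUb}}}$. The argument follows the one-anti-edge type-II case of \Cref{lem:short-main-path}, adapted to paths of length four. Three facts established there will be used throughout:
\begin{itemize}
\item every type-II arc $q\to p$ has both ends in $A$ and satisfies $\nu_H(p)\le 3$ or $\mu_H(q)\le 3$, by \Cref{step:L23R23} and \Cref{cl:root-threshold};
\item if a type-II anti-edge meets an endpoint whose parameter exceeds $6$, its other end lies in $L_1\cup R_1$;
\item all other edges of $L_{st}$ are unoriented or directed from $v_0$ towards $v_4$.
\end{itemize}
By \Cref{cl:v1v4} we may also assume that $v_1\in U\setminus D$ whenever $v_0v_1$ is not an anti-edge, and that $v_3\in D\setminus U$ whenever $v_3v_4$ is not an anti-edge.

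\textbf{Case $\nu_H(v_i)\le 3$.} The vertices $v_0,\dots,v_{i-1}$ precede the anti-edge along non-anti-edges. If $v_i\in U$, then \Cref{lem:good-path-gluing} gives $\D(v_0)\le \nu_H(v_i)+i\le 4$. If $v_i\in D$, then $i\ge1$, $v_{i-1}\in U$, and $v_{i-1}\to v_i$ is a crossing arc, so $\nu_H(v_{i-1})\le 4$; hence $\D(v_0)\le 4+i-1\le 4$. In either case $\nu(v_0)\le 4$, and this endpoint cannot be adherent, since an adherent vertex has $\D\ge 7$ by \ref{prop:critical-2} and \ref{prop:critical-1}. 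Together with $\mu(v_4)\le\U(v_4)\le 8$ we get $\partial(v_0,v_4)\le 13$, and the furthermore clause holds with minimum $\le 6$.

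\textbf{Case $\mu_H(v_{i+1})\le 3$.} On the $v_4$ side we again propagate along non-anti-edges, using \Cref{claim:xinDyinU}(5) and the crossing arcs of \Cref{step:DUb}. This gives $\U(v_4)\le \mu_H(v_{i+1})+(4-i-1)$, or the analogous bound through a crossing arc. For $i=1$ this yields $\U(v_4)\le 3+2+1$, so $\mu(v_4)\le 6$ and the claims follow as before.

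\textbf{Main obstacle: $i=0$ with $\mu_H(v_1)\le3$.} Here the bound is only $\U(v_4)\le 3+3+1=7$. The refinement comes from examining $v_1$. If $v_1\to v_0$ is a type-II arc with $v_1\in L_1\cup R_1$ and the $\mu$-side bound used, then \Cref{step:L23R23} forces $v_1\in R_1$ with $\mu_H(v_1)\le 2$, which improves the estimate to $\U(v_4)\le 6$. The alternative is that $v_1\in L_1$ with $\nu_H(v_0)\le3$, which is the previous case. The remaining situation is $\min\{\nu(v_0),\mu(v_4)\}=7$; there I would show that neither endpoint is adherent. For this I would use \ref{lem:gold-adherent-edge}, \ref{prop:critical-1}, and the fact that an endpoint incident to a type-II edge lies in $A$, hence not in $B$. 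Since $\alpha=0$, this gives $\partial\le 7+1+8=16$.

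I expect the $i=0$ bookkeeping, that is, pinning down which side of the type-II rule applies and excluding adherence, to be the delicate part. I would handle it by a direct case split on $v_1\in L_1$ versus $v_1\in R_1$.
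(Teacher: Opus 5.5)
Your reduction to $i\in\{0,1\}$ and the split by the type-II rule ($\nu_H(v_i)\le3$ or $\mu_H(v_{i+1})\le3$) are the same as the paper's. The cases $\nu_H(v_i)\le3$ and ($i=1$, $\mu_H(v_2)\le3$) go through, with two small repairs.

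First, for $i=1$ the propagation gives $\U(v_4)\le 3+2=5$, not $3+2+1$. So $\pt(v_0,v_4)\le 14$ and no adherence discussion is needed. This matters because your non-adherence justification, ``an adherent vertex has $\D\ge7$'', does not follow from \ref{prop:critical-1} or \ref{prop:critical-2}. Those give an upper bound and comparisons with neighbours, not an absolute lower bound.

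The genuine gap is the case $i=0$, $\mu_H(v_1)\le3$. Your dichotomy there misreads \Cref{step:L23R23}. By \ref{s6-1}, an arc $v_1\to v_0$ with $v_1\in L_1$, $v_0\in L_2$ is produced exactly when $\nu(v_1)\ge3$. Then $u\to v_1$ gives $\mu_H(v_1)\le3$, but nothing bounds $\nu_H(v_0)$. The configuration with $\nu_H(v_0)\le2$ is the reverse one, $v_1\in L_2$, $v_0\in L_1$. So $v_1\in L_1$ with $\mu_H(v_1)=3$ is neither improved nor reduced to the earlier case. More basically, you never say where the extra unit in ``$3+3+1$'' comes from, so the refinement is not aimed at any actual bad configuration. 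For the leftover case you cite \ref{lem:gold-adherent-edge} and \ref{prop:critical-1}, and neither excludes an adherent $v_4$ with $\U(v_4)=7$.

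The missing step is to propagate along $v_1v_2v_3v_4$, using \Cref{claim:xinDyinU}(4) (crossing edges are oriented, hence forward here) and \Cref{lem:good-path-gluing}.
\begin{itemize}
\item If $v_2\in D$, this gives $\U(v_4)\le\mu_H(v_1)+3\le6$ directly, or via the crossing arc $v_1\to v_2$.
\item If $v_2\in U_a$, then $\mu_H(v_2)\le4$ and the crossing arc $v_2\to v_3$ gives $\U(v_4)\le6$.
\item The only remaining configuration is $v_2\in U_b\setminus D_b$ and $v_3\in D_b\setminus U_b$ (by \Cref{rm:edgeanticross}), with $v_2\to v_3$ oriented in \ref{S7-2}.
\end{itemize}
In that configuration the paper bounds $\mu_H(v_3)\le6$ by \Cref{claim:xinDyinU}(2), giving $\U(v_4)\le7$. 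It then excludes an adherent $v_4$. An adherent-II vertex of $D_b\setminus U_b$ has $\U$-value $8$. An adherent-I vertex would, by \ref{prop:critical-2} applied to its $D_b$-neighbour $v_3$, satisfy $\U(v_4)\le\U(v_3)\le6$. Since $v_0\in A$ is not adherent either, $\alpha=0$ and $\pt(v_0,v_4)\le 8+1+7=16$.

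You can in fact do slightly better. The vertex $v_1\in A$ is adjacent to $v_2\in U_b\setminus M_b$, so $v_1\in U_a$ by \Cref{rm:edgeanticross}, and \ref{S7-2} orients $v_1\to v_2$. Hence $\U(v_4)\le\mu_H(v_1)+3\le6$ in every configuration, and the second alternative of the ``furthermore'' clause never actually occurs. Either way, this step is what your proposal lacks.
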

\begin{proof}
Let $H=G^{O_{\ref{step:DUb}}}$.
By the proof of \Cref{cl:v1v4}, both assertions hold
if $i\neq0$ and $v_1\in D$, or $i\neq3$ and $v_3\in U$;
these cases give $\nu(v_0)\le6$ or $\mu(v_4)\le6$,
respectively.
Thus, assume that $v_1\in U\setminus D$ when $i\neq0$,
and $v_3\in D\setminus U$ when $i\neq3$.

By \Cref{step:L23R23} and \Cref{cl:root-threshold}, we have
$\nu_H(v_i)\le3$ or $\mu_H(v_{i+1})\le3$.
Suppose first that $\nu_H(v_i)\le3$.

If $i=0$, then $\D(v_0)\le\nu_H(v_0)\le3$.
For $i\in\{1,2\}$, the vertices
$v_0,\ldots,v_{i-1}$ belong to $U$.
If $v_i\in U$, then \Cref{lem:good-path-gluing} gives
$\D(v_0)\le\nu_H(v_i)+i\le5$.
Otherwise, $v_{i-1}\to v_i$ in $H$, and
$\nu_H(v_{i-1})\le\nu_H(v_i)+1$ gives the same bound.
Thus, when $i\le2$, we have $\nu(v_0)\le5$ and
$\pt(v_0,v_4)\le5+1+8=14\le16-\alpha$.

Hence, assume that $i=3$.
Since $v_4$ is incident to a type-II edge, it belongs
to $A$ and is not adherent.
If $\D(v_0)\le6$, then $\nu(v_0)\le6$ and
$\pt(v_0,v_4)\le6+1+8=15\le16-\alpha$.
Thus, assume that $\D(v_0)\ge7$. 
Suppose that $v_2\in U$.
If $v_3\in U$, gluing along $v_0v_1v_2v_3$ gives
$\D(v_0)\le\nu_H(v_3)+3\le6$, a contradiction.
Otherwise, $v_2\to v_3$ in $H$, and
$\D(v_0)\le\nu_H(v_2)+2\le\nu_H(v_3)+3\le6$,
again a contradiction.
Thus, $v_2\in D\setminus U$.
If $v_2\in D_a$, then $v_1\to v_2$ in $H$, and
$\D(v_0)\le\nu_H(v_1)+1\le\nu_H(v_2)+2\le6$,
a contradiction. 
Therefore, $v_2\in D_b\setminus U_b$.
By \Cref{rm:edgeanticross}, $v_1\in U_b\setminus D_b$.
Since $v_1\to v_2$ in $H$, \Cref{claim:xinDyinU}
gives $\nu_H(v_1)\le6$.
It follows that
$\D(v_0)\le\D(v_1)+1\le\nu_H(v_1)+1\le7$,
and hence $\D(v_0)=7$.
If $v_0$ is adherent, then it is adherent-I,
since an adherent-II vertex in $U_b\setminus D_b$
has $\D$-value eight.
By \ref{prop:critical-2}, we obtain
$\D(v_0)\le\D(v_1)\le\nu_H(v_1)\le6$,
a contradiction.
Thus, neither endpoint is adherent, and
$\pt(v_0,v_4)\le7+1+8=16$. 
In the case considered, we have $\nu(v_0)\le6$,
or $\nu(v_0)\le7$ with neither endpoint adherent.
When $\mu_H(v_{i+1})\le3$, the same argument with
reversed directions gives the distance bound and
$\mu(v_4)\le7$.
Thus, the additional assertion also holds.
\end{proof}

\begin{lemma}\label{cl:f-1}
Suppose there is no anti-edge, i.e., \ref{Final-1}.  Then $\pt(v_0,v_4)\le 16-\alpha$. 
\end{lemma}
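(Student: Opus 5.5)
With no anti-edge, every edge of $L_{st}=v_0v_1v_2v_3v_4$ is either unoriented in $H=G^{O_{\ref{step:DUb}}}$ or oriented forward. I will use this to glue good paths along $L_{st}$ and bound $\D(v_0)+\U(v_4)$. By \Cref{cl:v1v4}, I may assume $v_1\in U\setminus D$ and $v_3\in D\setminus U$, since otherwise the bound $16-\alpha$ already holds. Let $j$ be the first index with $v_j\in D$; then $1<j\le 3$ and $v_0,\dots,v_{j-1}\in U$. The edge $v_{j-1}v_j$ joins $U$ to $D$, so by \Cref{claim:xinDyinU}(4) it is oriented in $H$, and as it is not an anti-edge it is $v_{j-1}\to v_j$. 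The edges $v_0v_1,\dots,v_{j-2}v_{j-1}$ lie inside $U$ and are unoriented or forward, so \Cref{lem:good-path-gluing}(3) gives $\D(v_0)\le \D(v_{j-1})+(j-1)\le \nu_H(v_{j-1})+j-1$. Symmetrically, with $i$ the last index such that $v_i\in U$, the edge $v_i\to v_{i+1}$ is a forward crossing arc and $\U(v_4)\le \mu_H(v_{i+1})+(3-i)$.

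Next I bound these crossing arcs. By \Cref{prop:t1edge} and \Cref{claim:xinDyinU}(5), or directly from $D_a,U_a$ and \Cref{claim:xinDyinU}(1),(2), a forward arc $x\to y$ with $x\in U$ and $y\in D$ has $\mu_H(x)\le 6$, $\nu_H(y)\le 6$, and in fact $\nu_H(x)\le \nu_H(y)+1$. I split by $j$. If $j=2$, then $v_1\to v_2$ gives $\D(v_0)\le \nu_H(v_1)+1$, and the definitions of $D_a$ and the rules of \Cref{step:DUb} give $\nu_H(v_1)\le 6$, hence $\D(v_0)\le 7$. If $j=3$, then $v_2\to v_3$ gives $\D(v_0)\le \nu_H(v_2)+2\le \nu_H(v_3)+3$. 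The symmetric statements hold for $v_4$. When there is only one crossing arc ($i=j-1$), the two estimates combine into $\D(v_0)+\U(v_4)\le \nu_H(v_{j-1})+\mu_H(v_j)+3\le \omega(v_{j-1}\to v_j)+2$. For arcs oriented before \Cref{step:L23R23}, \Cref{claimcenter} bounds this weight by $9$, with $9$ only on a dicycle through $F_0$, which $L_{st}$ avoids. For arcs from \Cref{step:DUb}, the weight is at most $11$. This gives $\nu(v_0)+\mu(v_4)\le 13$ in the good cases, and \Cref{lem:Gxy16} then yields $\partial\le 14\le 16-\alpha$.

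The main obstacle is the configuration $\D(v_0)=\U(v_4)=7$ with both endpoints adherent, so $\alpha=2$ and I need $\partial\le 14$. Here I expect to use \ref{prop:critical-1}: both endpoints lie in $B$, all their incident edges are unoriented in $H$, and the neighbours $v_1,v_3$ lie in $U_b\setminus D_b$ and $D_b\setminus U_b$ respectively, or in $U_a$ and $D_a$ with smaller potentials. Then \ref{lem:gold-adherent-edge} orients $v_0\to v_1$ and $v_3\to v_4$ whenever the potentials drop strictly. In the equal-potential case, \ref{prop:critical-2} for adherent-I vertices gives $\D(v_0)\le\D(v_1)$, which forces $\D(v_1)\le\nu_H(v_1)\le 6$, a contradiction analogous to the last step of \Cref{cl:f-3}. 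Adherent-II endpoints have potential $8$ and are handled by the same gluing, which would then give a value at most $7$, a contradiction. In the remaining case the path is forced into a directed $v_0\to v_1\to v_2\to v_3\to v_4$, as in \Cref{lem:short-main-path}, so $\partial(v_0,v_4)\le 4$. Otherwise one endpoint has potential at most $6$ and the sum bound gives $\partial\le 14$. I expect most of the work to be the careful verification that the middle vertex $v_2$, together with the orientation of $v_1v_2$ or $v_2v_3$ in \Cref{step:complicate}, does not spoil this directed path.
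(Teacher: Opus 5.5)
Your combination step for a single crossing arc is false, and it hides one of the two hard configurations. For an arc $x\to y$ the weight is $\omega(x\to y)=\mu(x)+1+\nu(y)$. Your gluing instead produces $\nu_H(x)+\mu_H(y)+3$, which involves the opposite pair of parameters. Nothing gives $\nu_H(x)+\mu_H(y)\le\mu_H(x)+\nu_H(y)$: from $\nu_H(x)\le\nu_H(y)+1$ and $\mu_H(y)\le\mu_H(x)+1$ you only get $\D(v_0)+\U(v_4)\le\omega(x\to y)+4$.

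That weaker bound still gives $13$ when the crossing arc has weight at most $9$. It fails for a crossing arc $v_2\to v_3$ created in \ref{S7-2}. There the honest bounds are $\D(v_0)\le\nu_H(v_2)+2\le 8$ and $\U(v_4)\le\mu_H(v_3)+1\le 7$, a sum of $15$. Your own $j=3$ estimate already allows $\D(v_0)=8$, so your later remark that an adherent-II endpoint is ``glued down to $7$'' is unjustified. Likewise, weight $9$ lying on a dicycle through $F_0$ has nothing to do with $L_{st}$ avoiding $F_0$.

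So the case $\D(v_0)=8$ is missing entirely: either $v_0$ is adherent and $\U(v_4)=7$, or both endpoints are adherent and $\U(v_4)=6$. The paper handles it as follows.
\begin{itemize}
\item $\D(v_0)=8$ forces $\D(v_1)=7$, $\D(v_2)=\nu_H(v_2)=6$ and $\nu_H(v_3)=5$, hence $v_3\in D_b\setminus U_b$ and $v_2\in U_b\setminus D_b$.
\item \ref{prop:critical-2} shows that an adherent $v_4$ has $\U(v_4)\le\U(v_3)\le6$, so $v_4$ is not adherent when $\U(v_4)=7$.
\item $L_{st}$ is then a dipath in the final orientation. The arc $v_0\to v_1$ comes from \ref{lem:gold-adherent-edge}, and $v_1\to v_2$ from \Cref{claim:xinDyinU}(2) or \ref{S8-41}. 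The arc $v_3\to v_4$ comes from \ref{S8-11} or \ref{S8-31} when $\U(v_4)=7$, and from \ref{S8-32} when $\U(v_3)=\U(v_4)=6$ with $v_4$ adherent.
\end{itemize}

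The configuration you do identify, $\D(v_0)=\U(v_4)=7$ with both endpoints adherent, is also not settled. Your contradiction via $\D(v_1)\le\nu_H(v_1)\le6$ is available only when $v_1\to v_2$ is itself a crossing arc. If $v_2\in U\setminus D$ you only know $\D(v_1)\le\nu_H(v_2)+1\le7$, so equality is not excluded, and you leave the middle edges unverified.

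The missing idea is to argue from the $v_4$ side. Take $v_2\in U$ by symmetry, so $v_2\to v_3$ is a crossing arc.
\begin{itemize}
\item $\U(v_3)\le\mu_H(v_3)\le6<\U(v_4)$, so \ref{prop:critical-2} puts $v_3\in D_a\setminus U_a$.
\item $7=\U(v_4)\le\U(v_3)+1$ gives $\mu_H(v_3)=6$, which excludes $v_2\in U_a$ (that would give $\mu_H(v_3)\le5$).
\item \Cref{rm:edgeanticross} then forces $v_2\in M_b$, so $\D(v_1)\le\nu_H(v_2)+1\le6$, and \ref{prop:critical-2} gives $v_1\in U_a\setminus D_a$.
\item \ref{S7-1} orients $v_1\to v_2\to v_3$, and \ref{lem:gold-adherent-edge} supplies $v_0\to v_1$ and $v_3\to v_4$, so $L_{st}$ is a dipath.
\end{itemize}
In particular, the subcase $v_2\in U\setminus D$ cannot occur in this configuration. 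That is exactly what your sketch needs but does not prove.
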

\begin{proof}
By \Cref{cl:v1v4}, assume that
$v_0,v_1\in U\setminus D$ and
$v_3,v_4\in D\setminus U$.
By symmetry, assume that $v_2\in U$.
In this proof, $\mu$ and $\nu$ are computed in
$G^{O_{\ref{step:DUb}}}$.

Since there is no anti-edge, $v_2\to v_3$.
By \Cref{claim:xinDyinU}(5), we have
$\nu(v_2),\mu(v_3)\le6$ and $\nu(v_3)\le5$.
By \Cref{lem:good-path-gluing}, it holds that
$\D(v_0)\le\D(v_1)+1\le\D(v_2)+2\le\nu(v_2)+2\le8$
and
$\U(v_4)\le\U(v_3)+1\le\mu(v_3)+1\le7$.
Recall that
$\pt(v_0,v_4)\le\D(v_0)+\U(v_4)+1$.
Thus, it suffices to consider $\D(v_0)=8$, or
$\D(v_0)=\U(v_4)=7$ with both endpoints adherent.

Suppose first that $\D(v_0)=8$.
Then $\D(v_1)=7$ and $\D(v_2)=\nu(v_2)=6$.
Since $v_2\to v_3$, we have $\nu(v_3)=5$,
and so $v_3\in D_b\setminus U_b$.
The bounds for $M_a\cup M_b$ and
\Cref{rm:edgeanticross} give
$v_2\in U_b\setminus D_b$.
Thus, $v_2\to v_3$ is oriented in \Cref{step:DUb}. If $v_4$ is adherent, then it is adherent-I, and
\ref{prop:critical-2} gives
$\U(v_4)\le\U(v_3)\le6$.
Hence, the remaining cases have $v_0$ adherent, and
either $\U(v_4)=7$, or $\U(v_4)=6$ with $v_4$ adherent.
We show that $L_{st}$ is a dipath in the final orientation. 

By \ref{lem:gold-adherent-edge}, we have $v_0\to v_1$.
If $v_1\in U_a$, then $v_1\to v_2$ by
\Cref{claim:xinDyinU}(2).
If $v_1\in U_b$, then $\D(v_1)>\D(v_2)$, and
\ref{S8-41} gives the same direction if this edge
is still unoriented.
Any edge already oriented after \Cref{step:DUb}
has the required direction, since there is no anti-edge.

If $\U(v_4)=7$, then $\U(v_3)=6$.
Thus, $v_3\to v_4$ follows from \ref{S8-11} or
\ref{S8-31} if this edge is still unoriented. 
Suppose that $\U(v_4)=6$ and $v_4$ is adherent.
Then $\U(v_3)=6$ by \ref{prop:critical-2}.
By \Cref{claim:xinDyinU}(2), all the edges in
$[v_3,D_a]$ are oriented from $v_3$ to $D_a$.
Hence, $v_3\neq\hat{x}$ for every $x\in T_D$ by
\Cref{lem:exceptional-vertices}.
That lemma also gives $v_3,v_4\notin T_D$.
Since $v_4$ is adherent, $v_3v_4$ is unoriented after
\Cref{step:DUb}.
Neither \ref{S8-15} nor \ref{S8-31} orients it,
and so \ref{S8-32} gives $v_3\to v_4$.
Therefore, $L_{st}$ is a dipath, and we are done.

Now, suppose that $\D(v_0)=\U(v_4)=7$ and both
endpoints are adherent.
Then both are adherent-I.
Since $\U(v_3)\le6$, by \ref{prop:critical-2},
we have $v_3\in D_a\setminus U_a$.
Also, $\mu(v_3)=6$, and so $v_2\notin U_a$,
as otherwise $\mu(v_3)\le\mu(v_2)+1\le5$.
By \Cref{rm:edgeanticross}, it follows that $v_2\in M_b$. 
Now, $\D(v_1)\le\nu(v_2)+1\le6$, and
\ref{prop:critical-2} gives $v_1\in U_a\setminus D_a$.
Thus, \ref{S7-1} gives $v_1\to v_2\to v_3$,
while \ref{lem:gold-adherent-edge} gives
$v_0\to v_1$ and $v_3\to v_4$.
Again, $L_{st}$ is a dipath, giving
$\pt(v_0,v_4)\le4\le16-\alpha$.
\end{proof}

\begin{lemma}\label{cl:f-4}
Suppose that  $v_{i}v_{i+1}$ is the first anti-edge and it is  type-II except for the case that $v_0v_1$ and $v_3v_4$ are anti-edges and they are type-II, i.e., \ref{Final-4}. Then $\pt(v_0,v_4)\le 16-\alpha$. 
\end{lemma}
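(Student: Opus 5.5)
By the symmetry $(u,\mu,\D,U)\leftrightarrow(v,\nu,\U,D)$ together with reversal of $L_{st}$, it suffices to treat the case where the first anti-edge $v_iv_{i+1}$ is type-II; the case where the last anti-edge is type-II is symmetric. Throughout, $H=G^{O_{\ref{step:DUb}}}$. The aim is to bound one endpoint parameter by exploiting the small root distance at one end of the type-II arc. By \Cref{step:L23R23} and \Cref{cl:root-threshold}, the arc $v_{i+1}\to v_i$ satisfies $\nu_H(v_i)\le3$ or $\mu_H(v_{i+1})\le3$, and both its ends lie in $A\cap(L_1\cup R_1\cup L_2\cup R_2)$.

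\textbf{Case $\nu_H(v_i)\le3$.} If $i=0$, then $\nu(v_0)\le3$, and \Cref{lem:Gxy16} gives $\pt(v_0,v_4)\le3+1+8$. If $i\ge1$, then $v_0\cdots v_i$ contains no anti-edge. By \Cref{cl:v1v4}, either we are done or $v_1\in U\setminus D$. We then glue along $v_0\cdots v_i$ exactly as in the proof of \Cref{cl:f-3}: each edge is unoriented inside $U$ or is a crossing arc $U\to D$, so \Cref{lem:good-path-gluing} gives $\D(v_0)\le\nu_H(v_i)+i$ (or $\le\nu_H(v_{i-1})+i-1$ with $\nu_H(v_{i-1})\le4$). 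Since $i\le2$ (a type-II anti-edge cannot be $v_3v_4$ while another anti-edge exists after it), this yields $\nu(v_0)\le5$. Hence $\pt\le14\le16-\alpha$.

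\textbf{Case $\mu_H(v_{i+1})\le3$.} Here we must get from $v_{i+1}$ to $v_4$ while crossing the later anti-edges. I will use the last anti-edge $v_jv_{j+1}$ with $j>i$. If it is type-II, then either $\mu_H(v_{j+1})\le3$, and we glue forward from $v_{j+1}$ to get $\U(v_4)\le5$ as in the first case; or $\nu_H(v_j)\le3$. If it is type-I, then \Cref{prop:t1edge} gives $\mu_H(v_{j+1})\le6$, and gluing gives $\U(v_4)\le6+(4-j-1)$. This suffices when $j\ge2$, which is automatic when $i\ge1$.

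\textbf{Remaining configurations.} What is left is $i=0$ with a later anti-edge that is not at $v_3v_4$ as type-II, so that $j\in\{1,2\}$. These I handle via the chain $\pt(v_0,v_4)\le\pt(v_0,v_{i+1})+\ldots$ with explicit short root paths. Since $v_1\to v_0$ and $v_1$ is a vertex of $L_1\cup R_1$ with $\mu_H(v_1)\le3$, we get $\nu(v_0)$ bounded through a type-II rule only when the other direction holds. So in the subcase $\mu_H(v_1)\le3$, $\nu_H(v_0)$ large, I will show $v_0\in L_2\cup R_2$ lies in $A$, hence is not adherent, so $\alpha\le1$. It then suffices to show $\U(v_4)\le7$: the bound $\nu(v_0)\le\D(v_0)\le8$ plus \Cref{lem:Gxy16} gives $16$, and $15$ is needed only if $v_4$ is adherent. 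For $\U(v_4)\le7$, combine $\mu_H(v_{j+1})\le6$ from type-I (or $\le3$) with gluing over the remaining $\le2$ edges, using \Cref{cl:v1v4} to force $v_3\in D\setminus U$ when $v_3v_4$ is not an anti-edge.

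\textbf{Expected obstacle.} The main obstacle is the subcase $i=0$, $j=1$ with $v_2v_1$ type-I and $v_4$ adherent with $\mu(v_4)=8$. Here gluing gives only $\U(v_4)\le\mu_H(v_2)+2\le8$. I will first try to sharpen $\mu_H(v_2)$ by using that $v_1\in L_1\cup R_1$ is in $U_a\setminus D_a$ with $\nu_H(v_1)\ge3$, as in the last case of \Cref{lem:short-main-path}. \Cref{claimcenter} then gives $\mu_H(v_2)\le5$, which yields $\U(v_4)\le7$ and closes the case. If instead $v_2\to v_1$ arose in \Cref{step:DUb}, that is impossible because no arc of that step points into $U_a\setminus D_a$.
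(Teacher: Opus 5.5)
Your proposal has a genuine gap in the adherent bookkeeping, which is exactly where this lemma is delicate. When $i=0$ you correctly note that $v_0\in A$ is not adherent, so $\alpha\le1$. But with only $\nu(v_0)\le\D(v_0)\le8$ available, you need $\U(v_4)\le6$ whenever $v_4$ is adherent. The dangerous configuration is therefore $\mu(v_4)=\U(v_4)=7$ with $v_4$ adherent, not $\mu(v_4)=8$.

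In your main subcase $i=0$, $j=1$, the bound $\mu_H(v_2)\le5$ from \Cref{claimcenter} gives only $\U(v_4)\le7$, i.e.\ $\pt(v_0,v_4)\le16$, so the case is not closed. The missing idea is to exclude $\mu_H(v_{i+2})=5$. Equality forces $\nu_H(v_{i+1})=3$ and weight nine on $v_{i+2}\to v_{i+1}$. By \Cref{claimcenter} this arc then lies on a dicycle of length five through some $y\in I_k$, $k\in\{2,3,4\}$, avoiding $u,v$.
\begin{itemize}
\item If $v_{i+1}\in R_1\setminus R_{11}$, then $\mathrm{dist}_G(v_{i+1},I_k)\ge5-k$. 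Hence the part of the dicycle from $y$ to $v_{i+2}$ has length at most $k-1$, and $\mu(v_{i+2})\le(5-k)+(k-1)=4$.
\item If $v_{i+1}\in L_1$, then \Cref{cl:root-threshold} gives $N(v_{i+1})\cap F_0=\{u\}$ and $N(v_{i+1})\cap F_1=\emptyset$. So both arcs of the cycle between $v_{i+1}$ and $y$ have length at least three, which is impossible.
\end{itemize}
This yields $\U(v_4)\le6-i$. You need the same bound for $i=1$, $j=2$. There your claim that the type-I bound $\mu_H(v_3)\le6$ ``suffices when $j\ge2$'' is false: $\U(v_4)\le7$ together with $\D(v_0)\le8$ gives only $16$, while either endpoint may be adherent.

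Three further pieces are missing.
\begin{itemize}
\item For $i=0$, $j=2$ with $v_3\to v_2$ type-I, you again reach only $\U(v_4)\le7$. To exclude $\U(v_4)=7$ with $v_4$ adherent, use \ref{prop:critical-2}: it forces $v_3\in D_a\setminus U_a$ with $\mu(v_3)=6$, so $v_3\to v_2$ was oriented before \Cref{step:L23R23}. Then either $v_1\in R_1$, which gives $\nu(v_2)\ge3$ and hence $\mu(v_3)\le5$ by \Cref{claimcenter}; or $v_1\in L_1$, which gives $\mathrm{dist}_G(v_2,F_0)\ge2$ by \Cref{cl:root-threshold} and hence $\mu(v_3)\le5$ by \Cref{lem:gold-deep-layer}.
\item Your alternative ``$\nu_H(v_j)\le3$'' for a type-II last anti-edge is left dangling. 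A small $\nu$ at $v_j$ does not help you reach $v_4$ from $v_0$, so this configuration must be shown impossible. For $j=i+1$, the thresholds in \ref{s6-1}--\ref{s6-2} forbid a type-II arc into $v_{i+1}$. For $j=i+2$, the head $v_{i+2}$ would lie in $L_1\cup R_1$ next to $v_{i+1}\in L_1\cup R_1$. That either puts both vertices in $F_0$ or contradicts \Cref{cl:root-threshold}.
\item When $v_iv_{i+1}$ is the only anti-edge, your second case has no $j$ at all, so that situation must be sent to \Cref{cl:f-3}.
\end{itemize}
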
 
\begin{proof}
If there is only one anti-edge, the result follows from
\Cref{cl:f-3}.
Thus, assume that there are at least two anti-edges.
Let $v_jv_{j+1}$ be the last anti-edge.
Then $i<j$ and $i\le2$.
In this proof, $\mu$ and $\nu$ are computed in
$G^{O_{\ref{step:DUb}}}$.

By \Cref{cl:v1v4}, assume that $v_1\in U\setminus D$
when $i\ge1$, and $v_3\in D\setminus U$ when $j\le2$.
If $\nu(v_i)\le3$, then
$\D(v_0)\le\nu(v_i)+i\le5$ by
\Cref{lem:good-path-gluing}.
Here, if $i=2$ and $v_2\notin U$, we use the crossing
arc $v_1\to v_2$ and
$\nu(v_1)\le\nu(v_2)+1$.
Thus, $\pt(v_0,v_4)\le5+8+1=14\le16-\alpha$.

Hence, assume that $\nu(v_i)\ge4$.
By \ref{s6-1}, \ref{s6-2} and \Cref{cl:root-threshold},
either $v_{i+1}\in R_1$ with $\mu(v_{i+1})\le2$,
or $v_{i+1}\in L_1$ with $\nu(v_{i+1})\ge3$.
In both cases, (\ref{partitionA}) gives
$v_{i+1}\in U_a\setminus D_a$, and
$\nu(v_{i+1})\ge3$.

\medskip
\noindent{\bf Case 1.} $j=i+1$.

The edge $v_{i+2}\to v_{i+1}$ is not type-II,
since its direction would contradict the corresponding
threshold in \ref{s6-1} or \ref{s6-2}.
Nor is it oriented in \Cref{step:DUb}, since
$v_{i+1}\in U_a\setminus D_a$.
Thus, by \Cref{claimcenter},
$\mu(v_{i+2})+\nu(v_{i+1})\le8$,
and so $\mu(v_{i+2})\le5$.

Suppose that $\mu(v_{i+2})=5$.
Then $\nu(v_{i+1})=3$, and the weight of
$v_{i+2}\to v_{i+1}$ in
$G^{O_{\ref{step:sumatmost8}}}$ is nine.
By \Cref{claimcenter}, there is a dicycle $C$ of
length five containing this edge and some
$y\in I_k$, where $k\in\{2,3,4\}$.
Moreover, $C$ avoids $u$ and $v$, since otherwise
this edge would have weight at most seven. 
Suppose that $v_{i+1}\in R_1$.
Since $v_{i+1}\notin R_{11}$, the definition of
$R_{11}$ gives
$\mathrm{dist}_G(v_{i+1},I_k)\ge5-k$.
Thus, the directed subpath of $C$ from $v_{i+1}$
to $y$ has length at least $5-k$.
The directed subpath from $y$ to $v_{i+2}$ therefore
has length at most $k-1$.
Since $\mu(y)\le5-k$, we obtain
$\mu(v_{i+2})\le(5-k)+(k-1)=4$,
a contradiction. 
Suppose that $v_{i+1}\in L_1$.
By \Cref{cl:root-threshold},
$N(v_{i+1})\cap F_0=\{u\}$ and
$N(v_{i+1})\cap F_1=\emptyset$.
Since $u\notin V(C)$, both subpaths of $C$ between
$v_{i+1}$ and $y$ have length at least three,
a contradiction.

Therefore, $\mu(v_{i+2})\le4$.
By \Cref{lem:good-path-gluing}, it follows that
$\U(v_4)\le\mu(v_{i+2})+2-i\le6-i$. 
Here, if $i=0$ and $v_2\notin D$, use the crossing arc
$v_2\to v_3$ in $G^{O_{\ref{step:DUb}}}$ to obtain
$\U(v_4)\le\mu(v_3)+1\le\mu(v_2)+2$. 
When $i=0$, the vertex $v_0$ is not adherent,
so $\alpha\le1$ and $\pt(v_0,v_4)\le15\le16-\alpha$.
When $i\ge1$, we have $\pt(v_0,v_4)\le14\le16-\alpha$.

\medskip
\noindent{\bf Case 2.} $j=i+2$.

Suppose first that $v_{i+3}\to v_{i+2}$ is type-II.
If $\mu(v_{i+3})\le3$, then
$\U(v_4)\le\mu(v_{i+3})+1-i\le4$,
and we are done.
Thus, assume that $\mu(v_{i+3})\ge4$.
By \ref{s6-1} and \ref{s6-2},
$v_{i+2}\in L_1\cup R_1$. 
If $v_{i+1}$ and $v_{i+2}$ belong to different
sets $L_1$ and $R_1$, their adjacency puts them
in $F_0$, a contradiction.
If both belong to $R_1$, then
$\mu(v_{i+2})\ge3$ by \ref{s6-2}.
By \Cref{cl:root-threshold},
$v_{i+2}$ has no neighbor in $F_1$,
contrary to $v_{i+1}v_{i+2}\in E$.
If both belong to $L_1$, the same contradiction
follows from $\nu(v_{i+1})\ge3$.
Thus, the last anti-edge is type-I.

If $i=1$, the result follows from \Cref{cl:f-6}.
Hence, assume that $i=0$.
Then $v_0$ is not adherent, and
$\U(v_4)\le\mu(v_3)+1\le7$ by
\Cref{prop:t1edge} and \Cref{lem:good-path-gluing}.
It remains to consider $v_4$ adherent with
$\U(v_4)=7$. 
Then $v_4$ is adherent-I.
Since $\U(v_3)\le6$, by \ref{prop:critical-2},
we have $v_3\in D_a\setminus U_a$.
Moreover, $\mu(v_3)=6$.
The type-I edge $v_3\to v_2$ cannot be oriented
in \Cref{step:DUb}, since its tail belongs to
$D_a\setminus U_a$.
Thus, it is oriented before \Cref{step:L23R23}. 
If $v_1\in R_1$, then
$v_2\in R_1\cup R_2$ and $\nu(v_2)\ge3$,
since $V(L_{st})\cap F_0=\emptyset$.
By \Cref{claimcenter}, we obtain
$\mu(v_3)\le8-\nu(v_2)\le5$,
a contradiction. 
If $v_1\in L_1$, then $\nu(v_1)\ge3$.
By \Cref{cl:root-threshold}, $v_2\notin F_1$,
and so $\mathrm{dist}_G(v_2,F_0)\ge2$.
Applying \Cref{lem:gold-deep-layer} to
$v_3\to v_2$ gives
$\mu(v_3)\le7-\mathrm{dist}_G(v_2,F_0)\le5$,
again a contradiction.

\medskip
\noindent{\bf Case 3.} $j=i+3$.

Then $i=0$ and $j=3$.  
By assumption, the anti-edge $v_4\to v_3$ is type-I. 
The result follows from \Cref{cl:f-6}.
\end{proof}

\begin{lemma}
\label{cl:f-7} 
 Suppose   $v_0v_1$ and $v_3v_4$ are anti-edges and they are type-II, i.e., \ref{Final-7}.  Then $\pt(v_0,v_4)\le 16-\alpha$.     
\end{lemma}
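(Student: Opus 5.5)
The plan is to mirror the two-type-II-anti-edge analysis already used in \Cref{lem:short-main-path} and Case~2 of \Cref{cl:f-4}, now on a four-edge path $L_{st}=v_0v_1v_2v_3v_4$ avoiding $F_0$, with $v_1\to v_0$ and $v_3\to v_4$ both type-II. Throughout, $\mu,\nu$ are computed in $H=G^{O_{\ref{step:DUb}}}$.

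The first step is to note that both endpoints are incident to type-II edges. Hence $v_0,v_4\in A$, and neither is adherent by \ref{prop:critical-1}, so $\alpha=0$ and it suffices to show $\nu(v_0)+\mu(v_4)\le 15$. By \Cref{step:L23R23} and \Cref{cl:root-threshold}, each type-II arc $q\to p$ satisfies $\nu(p)\le 3$ or $\mu(q)\le 3$. If $\nu(v_0)\le 3$ or $\mu(v_4)\le 3$, then with the bounds $\nu(v_0)\le \D(v_0)\le 8$ and $\mu(v_4)\le\U(v_4)\le 8$ from \Cref{cor:noncritical-bounds} we get $\partial(v_0,v_4)\le 12$. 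Otherwise $\mu(v_1)\le 3$ and $\nu(v_3)\le3$. By the structure of the rules \ref{s6-1} and \ref{s6-2}, the low-value vertex of each such arc lies in $L_1\cup R_1$. So, as in Case~2 of \Cref{cl:f-4}, the rules force $v_1,v_3\in L_1\cup R_1$, with $v_1\in R_1$ and $\mu(v_1)\le 2$, or $v_1\in L_1$ and $\nu(v_1)\ge 3$; symmetrically for $v_3$. Then (\ref{partitionA}) places $v_1\in U_a\setminus D_a$ and $v_3\in D_a\setminus U_a$.

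The next step is to bound the endpoints through the middle. Since $v_0\in F_2$ is the tail of a type-II arc into $F_1$, we have $\nu(v_0)\le\nu(v_1)+1$, and similarly $\mu(v_4)\le\mu(v_3)+1$. The key quantities are therefore $\nu(v_1)$ and $\mu(v_3)$, which are controlled by \Cref{calim:boundmunuV-GO1}(1) and \Cref{cl:root-threshold}. In the case $v_1\in R_1$ with $\mu(v_1)\le 2$, we have $\nu(v_1)\le 7$. In the case $v_1\in L_1$, \Cref{cl:root-threshold} gives $N(v_1)\cap F_1=\emptyset$, so $v_2\in F_2$, and moreover $\nu(v_1)\le 5$. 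I would then split according to the sides $(L$ or $R)$ of $v_1$ and $v_3$.

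\begin{itemize}
\item[(i)] If $v_1,v_3$ lie on different sides, they are at distance two through $v_2$. Using $\mathrm{dist}_G$ constraints and $L_{st}$ avoiding $F_0$, I expect $v_2$ to be forced into $L_2\cup R_2\cup X_3$.
\item[(ii)] If $v_1,v_3$ lie on the same side, then one of them has the threshold property of \Cref{cl:root-threshold}. This forces $v_2\notin F_1$, and \Cref{lem:gold-deep-layer} applies to the edges $v_1v_2$ and $v_2v_3$.
\end{itemize}

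In each subcase the goal is $\nu(v_1)+\mu(v_3)\le 13$, which gives $\nu(v_0)+\mu(v_4)\le 15$. The argument examines how $v_1v_2$ and $v_2v_3$ are oriented; they are not anti-edges by the assumption of \ref{Final-7} only if so, so both possibilities must be handled. Where an edge is oriented before \Cref{step:L23R23}, \Cref{claimcenter} bounds $\mu(v_2)+\nu(v_1)$ or $\nu(v_2)+\mu(v_3)$ by $8$. Where an edge is unoriented, good-path gluing (\Cref{lem:good-path-gluing}) transfers the bounds.

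The main obstacle is the extreme case $\nu(v_1)=7$ (forcing $v_1\in R_{12}\cup R_{13}$), combined with a large $\mu(v_3)$. Here I would use the equality clause of \Cref{calim:boundmunuV-GO1}(1): either $L_2\ne\emptyset$, or $v_1$ lies on a dicycle through $v$. The dicycle option gives a direct route $v_0\to v_1\to\cdots\to v$, which should improve the estimate through $\mu$ of $v_4$ via $v\to\cdots$. The other option needs to be combined with the shortest-path restriction on $v_2$ to rule out both parameters being extremal, in the same way as the five-cycle argument in Case~1 of \Cref{cl:f-4}.
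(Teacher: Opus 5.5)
\textbf{There is a genuine gap: a direction error at the centre of your reduction.} By definition, $v_0v_1$ and $v_3v_4$ being anti-edges means $v_1\to v_0$ and $v_4\to v_3$ in $G^{O_{\ref{step:DUb}}}$. (Your first paragraph writes $v_3\to v_4$, although your threshold step then uses the correct direction.) So $v_0$ is the head, not the tail, of its type-II arc. Your inequalities $\nu(v_0)\le\nu(v_1)+1$ and $\mu(v_4)\le\mu(v_3)+1$ therefore do not follow: they need the arcs $v_0\to v_1$ and $v_3\to v_4$, which are exactly the reverses of the hypotheses. Good-path gluing does not help either, since \Cref{lem:good-path-gluing}(3) requires $v_0v_1$ to be unoriented or directed $v_0\to v_1$. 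Consequently, bounding $\nu(v_1)+\mu(v_3)\le 13$ says nothing about $\nu(v_0)+\mu(v_4)$. The same error reappears in your ``main obstacle'' paragraph, where you route $v_0\to v_1\to\cdots\to v$. In the real configuration ($v_1\in R_1$ with $\mu(v_1)\le 2$), the arc $v_1\to v_0$ only gives $\mu(v_0)\le 3$ and does nothing for $\nu(v_0)$.

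\textbf{What is missing is an escape route from $v_0$ through a different neighbour.} Once $\alpha=0$, \Cref{lem:Gxy16} leaves only the extremal case $\nu(v_0)=\D(v_0)=\mu(v_4)=\U(v_4)=8$. The paper handles it as follows.
\begin{enumerate}
\item Your case (i) is vacuous. If $v_1\in L_1$ and $v_3\in R_1$, the path $v_1v_2v_3$ puts them in $L_{11}\cup R_{11}\subseteq F_0$. So, up to symmetry, $v_1,v_3\in R_1$, with $v_0\in R_2\cap F_2\cap(U_a\setminus D_a)$ and $v_4\in R_2\cap F_2\cap(D_a\setminus U_a)$.
\item Take the vertex $z\in N(v_0)\cap R_3\cap F_3$ on whose behalf $v_0$ was chosen in \ref{s6-2}. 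Show that $z\in U_b\setminus D_b$ and that no edge at $z$ is oriented in $G^{O_{\ref{step:DUb}}}$ (otherwise $\nu(v_0)\le 7$).
\item Show $\D(z)\le 8=\D(v_0)$. Then \ref{S8-21} orients $v_0\to z$, and $z$ is noncritical.
\item It remains to prove $\partial(z,v_4)\le 15$ via a shortest $zv_4$-path, which necessarily avoids $F_0$. Length at most three is handled by \Cref{lem:short-main-path}. For length four, rule out the last anti-edge in each position, then show the path is forced to be a dipath by \ref{S8-41}, \ref{S8-11} and \ref{S8-31}.
\end{enumerate}
Nothing in your plan plays the role of $z$. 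Without such a detour, the bound in the extremal case cannot be closed.
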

\begin{proof}
Let $H_0=G^{O_{\ref{step:sumatmost8}}}$ and
$H=G^{O_{\ref{step:DUb}}}$.
Since both endpoints are incident to type-II edges,
they belong to $A$ and are not adherent. Thus, $\alpha=0$.
By \Cref{lem:Gxy16}, it suffices to consider
$\nu(v_0)=\D(v_0)=\mu(v_4)=\U(v_4)=8$.

By \ref{s6-1} and \ref{s6-2}, we have
$v_0,v_4\in L_2\cup R_2$ and $v_1,v_3\in L_1\cup R_1$.  
If $v_1$ and $v_3$ belong to different parts, the path
$v_1v_2v_3$ implies that
$v_1,v_3\in L_{11}\cup R_{11}\subseteq F_0$,
a contradiction.
By symmetry, assume that $v_1,v_3\in R_1$.
Then $v_0,v_4\in R_2\cap F_2$,
$v_0\in U_a\setminus D_a$ and
$v_4\in D_a\setminus U_a$.

Let $z\in N(v_0)\cap R_3\cap F_3$ be the vertex
considered when $v_0$ is chosen in \ref{s6-2}.
We first show that $z\in B$.
If $z\in V(H_0)$, then
$\mu_{H_0}(z)+\nu_{H_0}(z)\le9$ by \Cref{claimcenter}.
A shortest corresponding dipath implies that $z$
satisfies (b1) or (b2), contrary to its choice.
Since \Cref{step:L23R23} orients no edge incident to $F_3$,
it follows that $z\in B$.
Moreover, $z\in U_b$ by \Cref{rm:edgeanticross}.

If an edge incident to $z$ is oriented in $H$, then
\ref{S7-1} and \Cref{claim:xinDyinU} give
$v_0\to z$ and $\nu_H(z)\le6$.
Thus, $\nu(v_0)\le7$, a contradiction.
Hence, $z\in U_b\setminus D_b$ and all its incident
edges are unoriented in $H$.
By \Cref{claim:X43F3bdd},
$\D(z)\le8=\D(v_0)$, and so $v_0\to z$ by \ref{S8-21}.
The neighbor $v_0\in U_a$ with $\D(v_0)\ge\D(z)$
excludes $z\in T_U$ by \Cref{lem:exceptional-vertices}.
Thus, $z$ is noncritical and
$7\le\nu(z)\le\D(z)\le8$.
It remains to show that $\pt(z,v_4)\le15$.

Let $L_{zv_4}=u_0u_1\cdots u_m$ be a shortest
$zv_4$-path, where $u_0=z$ and $u_m=v_4$.
Since $z\in F_3$ and $v_4\in F_2$, every path between
them through $F_0$ has length at least five.
Thus, $V(L_{zv_4})\cap F_0=\emptyset$.
If $m\le3$, the result follows from
\Cref{lem:short-main-path}.
Hence, assume that $m=4$.

The edge $u_0u_1$ is unoriented in $H$, and
 $u_1\in(U\setminus D)\cap(F_2\cup F_3\cup F_4)$. 
By \Cref{lem:good-path-gluing},
$7\le\D(u_0)\le\D(u_1)+1\le\nu_H(u_1)+1$.
Consequently, $\nu_H(u_1)\ge6$.  
Moreover, when $u_3u_4$ is not an anti-edge, we have
$u_3\in D\setminus U$.
Indeed, suppose instead that $u_3\in U$.
Then $u_3\to u_4$ in $H$. 
Since $u_4\in D_a\setminus U_a$,
\Cref{rm:edgeanticross} gives $u_3\in U_a\cup M_b$.
Thus, $\mu_H(u_3)\le5$, and so
$\mu(u_4)\le\mu_H(u_4)\le\mu_H(u_3)+1\le6$,
a contradiction.

Suppose that there is an anti-edge, and let
$u_{j+1}\to u_j$ be the last one.
Then $j\in\{1,2,3\}$.

\medskip
\noindent{\bf Case 1.} $j=1$.

If $u_2\to u_1$ is type-I, the bounds proved in
\Cref{prop:t1edge} give
$\mu_H(u_2)\le10-\nu_H(u_1)\le4$.
If it is type-II, then $\nu_H(u_1)\ge6$ and
\ref{s6-1}--\ref{s6-2} give $\mu_H(u_2)\le3$.
Since $u_3\in D\setminus U$, in either case
\Cref{lem:good-path-gluing} gives
$\U(u_4)\le\mu_H(u_2)+2\le6$,
using the crossing arc $u_2\to u_3$ when $u_2\notin D$.
This contradicts $\U(u_4)=8$.

\medskip
\noindent{\bf Case 2.} $j=2$.

Since $u_3\in D\setminus U$, we have
$8=\U(u_4)\le\mu_H(u_3)+1$.
Thus, $\mu_H(u_3)\ge7$, and $u_3\to u_2$ is type-II
by \Cref{prop:t1edge}.
The rules \ref{s6-1}--\ref{s6-2} then give
$u_3\in L_2\cup R_2$ and $u_2\in L_1\cup R_1$.
If $u_2\in L_1$, the path $u_0u_1u_2u$ contradicts
$\mathrm{dist}_G(u_0,u)=4$.
Hence, $u_2\in R_1$ and $u_3\in R_2$.
By \Cref{cl:root-threshold} and (\ref{partitionA}),
$u_2\in D_a\setminus U_a$,
$\mu_H(u_2)\ge3$ and $\nu_H(u_2)\le3$.

The edge $u_1u_2$ is oriented in $H$.
If $u_1\to u_2$, then $\nu_H(u_1)\le4$, a contradiction.
Thus, $u_2\to u_1$.
This arc is neither type-II, by $\mu_H(u_2)\ge3$,
nor oriented in \Cref{step:DUb}, since
$u_2\in D_a\setminus U_a$.
Therefore, it belongs to $H_0$, and \Cref{claimcenter}
gives $\nu_H(u_1)\le8-\mu_H(u_2)\le5$,
again a contradiction.

\medskip
\noindent{\bf Case 3.} $j=3$.

Since $\mu(u_4)=8$, the arc $u_4\to u_3$ is type-II.
As $u_4\in R_2$, \Cref{cl:root-threshold} gives
$u_3\in R_1\cap(D_a\setminus U_a)$,
$N(u_3)\cap F_0=\{v\}$, $N(u_3)\cap F_1=\emptyset$
and $u_3\to v$.
In particular,
$\mu_{H_0}(u_3)\ge3$, $\nu_{H_0}(u_3)\le3$,
and $u_2\in R_2\cap F_2$.

We claim that $u_2\notin U_a$.
Otherwise, (\ref{partitionA}) gives
$\mu_{\ref{step:L23R23}}(u_2)\le4$ and
$\nu_{\ref{step:L23R23}}(u_2)\ge5$.
If $u_2\notin V(H_0)$, then \ref{s6-2} gives
$u_2\to u_3\to v$, a contradiction.
Thus, $u_2\in V(H_0)$.
By monotonicity and \Cref{claimcenter},
$\nu_{H_0}(u_2)\ge5$ and $\mu_{H_0}(u_2)\le4$.

If $u_2u_3$ is unoriented in $H_0$, then
$\mu_{H_0}(u_2)+1+\nu_{H_0}(u_3)\le8$,
contrary to \Cref{step:sumatmost8}.
Its direction cannot be $u_2\to u_3$, since this gives
$\nu_{H_0}(u_2)\le4$.
Hence, $u_3\to u_2$ belongs to $H_0$ and has weight
at least $3+1+5=9$.
By \Cref{claimcenter}, its weight is nine, and it belongs
to a dicycle $C=u_3u_2w_1w_2w_3u_3$ of length five
meeting $F_0$.
Since $w_3\to u_3$ and $u_3\to v$, we have $w_3\ne v$.
The properties of $u_3$ therefore give $w_3\in F_2$.
As $u_2,w_3\in F_2$, neither $w_1$ nor $w_2$ belongs
to $F_0$, a contradiction.
This proves the claim.

By \Cref{rm:edgeanticross}, it follows that $u_2\in D$.
Thus, $u_1u_2$ is oriented in $H$.
Suppose that $u_2\to u_1$.
This arc is not type-II, since neither endpoint belongs
to $L_1\cup R_1$.
It is not oriented in \Cref{step:DUb} either, since
$u_1\in U\setminus D$ and $u_2\notin U_a$.
Thus, it belongs to $H_0$, and
\Cref{lem:gold-deep-layer} gives
$\nu_H(u_1)\le7-\mathrm{dist}_G(u_2,F_0)=5$,
a contradiction.

Hence, $u_1\to u_2$.
If $u_2\in D_a$, then $\nu_H(u_2)\le4$.
If $u_2\in D_b$, then \ref{S7-1} and
\Cref{claim:xinDyinU}(2) give $u_2\to u_3$, and again
$\nu_H(u_2)\le1+\nu_H(u_3)\le4$.
Consequently, $\nu_H(u_1)\le5$, a contradiction.

\medskip

Therefore, $L_{zv_4}$ has no anti-edge.
Recall that $u_3\in D\setminus U$.
If $u_2\in U$, the  arc $u_2\to u_3$ gives
$\mu_H(u_3)\le6$ and hence $\U(u_4)\le7$.
Thus, $u_2\in D\setminus U$.
If $u_1\in U_a$, then
$\U(u_4)\le\mu_H(u_1)+3\le7$, again a contradiction.
Hence, $u_1\in U_b\setminus D_b$, and
\Cref{rm:edgeanticross} gives $u_2\in D_b\setminus U_b$. 
By \Cref{claim:xinDyinU}, the  arc $u_1\to u_2$
gives $\nu_H(u_1),\mu_H(u_2)\le6$.
Thus,
$7\le\D(u_0)\le\D(u_1)+1\le\nu_H(u_1)+1\le7$
and
$8=\U(u_4)\le\U(u_3)+1\le\U(u_2)+2
\le\mu_H(u_2)+2\le8$.
It follows that
$\D(u_0)=7$, $\D(u_1)=6$, $\U(u_2)=6$ and $\U(u_3)=7$.
By \ref{S8-41}, we have $u_0\to u_1$.
Since $u_2\in D_b$ and $u_4\in D_a$,
\ref{S8-11} and \ref{S8-31} give
$u_2\to u_3\to u_4$, according to whether
$u_3\in D_a$ or $u_3\in D_b$.
Here any edge already oriented in $H$ has the required
direction, since there is no anti-edge.
Therefore, $u_0\to u_1\to u_2\to u_3\to u_4$ is a dipath.

In all cases, $\pt(z,v_4)\le15$.
Since $v_0\to z$, we obtain $\pt(v_0,v_4)\le16$,
as required.
\end{proof}
\begin{remark}\label{rm:f-7}
The conclusion of \Cref{cl:f-7} also holds when
$L_{st}$ meets $F_0$.
Indeed, the proof establishes $v_1,v_3\notin F_0$
from the type-II rules, without this assumption.
The auxiliary shortest path from $z\in F_3$ to
$v_4\in F_2$ is disjoint from $F_0$, since a path
between them through $F_0$ has length at least five.
Thus, the rest of the proof applies without change.
\end{remark}

If $V(L_{st})\cap F_0=\emptyset$, then $\pt(s,t)\le 16-\alpha$ by the above lemmas. 

\subsection{\texorpdfstring{Paths  intersects $F_0$}{Paths intersecting F0}}\label{sec:core-near-endpoint}
\noindent
Having settled the paths that avoid $F_0$, we turn to the case $v_1\in F_0$ or $v_3\in F_0$ first, taking $v_1\in F_0$ by symmetry.

\begin{lemma}\label{cl:f2-1}
Suppose that  $v_0\in R_{12}\cap F_1$ with $\nu(v_0)=7$ and $\mu(v_4)=\U(v_4)=8$ with  $v_4$ being adherent, i.e., \ref{Final2-1}. Then $\pt(v_0,v_4)\le 16-\alpha$.  
\end{lemma}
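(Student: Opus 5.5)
We will show that under the hypotheses of \Cref{cl:f2-1} the generic bound $\nu(v_0)+1+\mu(v_4)=16$ can be improved by one. We will not reach $v_4$ through $u$ or $v$. Instead, we will use the very rigid structure forced on $v_0$ by $\nu(v_0)=7$ and build a shorter dipath that passes near $v_4$. Since $v_4$ is adherent and $v_0\in F_1\subseteq A$, we have $\alpha=1$. Hence the target is $\pt(v_0,v_4)\le15$.

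\textbf{Step 1: pin down $v_0$.} Since $v_0\in R_{12}\subseteq V(G^{O_{\ref{step:F0X31X41}}})$, \ref{item:step:F0X31X411} gives $v\to v_0\to w$ for every $w\in N(v_0)\cap R_{21}$. So $\mu(v_0)=1$ and $\nu(v_0)\le1+\nu(w)$. Following \Cref{step:F0X31X41}, we have $\nu(w)\le3$ if $w$ has a neighbor in $L_{21}$, and $\nu(w)\le4$ if $w$ has a neighbor in $X_{31}$ (via $R_{21}\to X_{31}\to L_{21}$). Thus $\nu(v_0)=7$ forces every such $w$ to reach $L$ only through $R_{31}$, via a dipath $w\to R_{31}\to (X_{41}\cup\tilde X_3)\to L_{31}\to L_{21}\to L_{12}\to u$. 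Moreover, $\pt(v_0,v)\ge5$. We will record this: $v_0$ has an out-neighbor $w\in R_{21}$ with $\mu(w)\le2$, a dipath from $w$ into $R_{31}$ of length one, and every vertex of $R_{21}\cap N(v_0)$ has no neighbor in $L_2\cup X_{31}$.

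\textbf{Step 2: pin down $v_4$.} By \Cref{cl:forv4}, $v_4\notin F_1$, and $v_4\in F_3$ if $v_4\in R$. By \ref{prop:critical-1}, all edges at $v_4$ are unoriented after \Cref{step:DUb}. By \Cref{lem:good-path-gluing}, every neighbor $r\in D$ of $v_4$ satisfies $\U(r)\ge7$. Also $\mathrm{dist}_G(v_0,v_4)=4$ with $v_0\in F_1$. We will split into cases according to whether $v_4\in R\cap F_3$, $v_4\in L\cap(F_2\cup F_3)$, or $v_4\in X_{42}$. Using \Cref{claim1} and the distance constraints ($\mathrm{dist}(v_4,v)\le\mathrm{dist}(v_0,v)+4$, while $v_4$ is within distance $4$ of $v_0$, whose $G$-neighborhood is contained in $R_1\cup R_2\cup\{v\}$), we will locate the possible middle vertices $v_1,v_2,v_3$.

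\textbf{Step 3: short dipath.} In each case, we will exhibit an intermediate vertex $p$ on or near the shortest path with $\pt(v_0,p)\le k$ and $\mu$- or $\U$-type control showing $\pt(p,v_4)\le15-k$. The natural choice is $p=w$, or a vertex of $R_{31}\cup X_{41}\cup\tilde X_3$ reached from $w$ in at most three steps. We will then argue along the good path of $v_4$, using \Cref{claimcenter4} and \Cref{lem:good-path-gluing}, that $v_4$ is reached within $8$ or fewer further arcs from such a vertex. When $v_4$ lies on the $R$ side, $\U(v_4)=8$ together with $\mu(w)\le2$ forces the good path of $v_4$ to pass through vertices adjacent to this $R_{21}\cup R_{31}$ region. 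Otherwise, the gluing lemma gives $\U(v_4)\le7$, a contradiction.

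The main obstacle will be the case $v_4\in L\cap F_3$ or $v_4\in X_{42}$. There the shortest $v_0v_4$-path crosses from the $R$ side to the $L$ side. I would first try to show that the crossing must happen at $R_{31}\to(X_{41}\cup\tilde X_3)\to L_{31}$, which is already oriented forward in \ref{item:step:F0X31X411}. This would give $\pt(v_0,L_{31})\le4$. Combined with $\U(v_4)=8$ and the unoriented final edges, we would then bound the remaining segment by at most $11$ using \Cref{claim:X43F3bdd}. If that fails, the fallback is to derive a contradiction with $\U(v_4)=8$ through \Cref{lem:good-path-gluing} applied to a neighbor of $v_4$ lying in $V(G^{O_{\ref{step:sumatmost8}}})$.
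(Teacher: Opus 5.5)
There is a genuine gap. Your Steps 1 and 2 are correct bookkeeping: you rightly get $\alpha=1$, so the target is $15$, and you rightly extract $\nu(w)\ge 6$ and $N(w)\cap(L_2\cup X_{31})=\emptyset$ for $w\in N(v_0)\cap R_{21}$. But Step 3, where all the difficulty lies, is only a list of intentions, and the route it sketches cannot gain the missing unit.

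For every vertex $p$ we have $\nu(v_0)\le\pt(v_0,p)+\nu(p)$, hence $\pt(v_0,p)+\nu(p)+1+\mu(v_4)\ge 7+1+8=16$. Going through $w$, $R_{31}$, $X_{41}\cup\tilde X_3$ or $L_{31}$ and then applying \Cref{lem:Gxy16} therefore always returns exactly $16$. You would need an explicit dipath from such a $p$ to $v_4$ that is not routed through $u$ or $v$, and nothing in the proposal produces one. Knowing that a good path of $v_4$ passes near $R_{21}\cup R_{31}$ gives no directed route from $w$ into it. Likewise, \Cref{claim:X43F3bdd} only bounds the potentials $\U,\D$, which are adjusted distances from or to $\{u,v\}$, so it cannot give $\pt(L_{31},v_4)\le 11$.

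The missing idea is that the configuration \ref{Final2-1} cannot occur at all, and this is what the paper proves.
\begin{itemize}
\item Take $z\in N(v_0)\cap R_{21}$. Then $v_0\to z$ and $\nu(v_0)=7$ force $\nu(z)=6$ exactly.
\item The initial rules then give $N(z)\cap(L_2\cup X_2\cup X_3)=\emptyset$, so $N(z)\subseteq R$ and $z\in U_a\setminus D_a$. By \Cref{lem:saturation-stability}, $\D(z)=6$, while $\mu(z)\le 2$.
\item Every arc $q\to z$ of $G^{O_{\ref{step:DUb}}}$ already lies in $G^{O_{\ref{step:sumatmost8}}}$, with $\mu(q)\le 2$ and $\mathrm{dist}_G(q,F_0)\le 1$.
\item One then takes a shortest $zv_4$-path $u_0\cdots u_m$ (not $L_{st}$). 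The predecessor $r$ of $v_4$ lies in $D\setminus U$ with $\mu(r)\ge\U(r)\ge 7$.
\item The cases are: the path meets $F_0$; $u_{m-2}r$ is an anti-edge; it is not, with $m\in\{3,4\}$ and $u_1u_2$ anti or not.
\end{itemize}
Each case either gives $\U(v_4)\le 7$ through the small $\mu(z)$ and \Cref{lem:good-path-gluing}, or gives $\D(z)\le 5$. In the last subcase it instead forces $u_2\in M_\ell\subseteq U_b$ by (\ref{partitionM}), contrary to $u_2\in D_b\setminus M_b$.

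Your \emph{fallback} of contradicting $\U(v_4)=8$ through a neighbor of $v_4$ in $V(G^{O_{\ref{step:sumatmost8}}})$ covers only part of this. The contradictions with $\D(z)=6$ and the partition argument are indispensable. You never identify $z\in U_a\setminus D_a$ with $\D(z)=6$, and that is what drives them.
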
 
\begin{proof}
Let $H_0=G^{O_{\ref{step:sumatmost8}}}$ and
$H=G^{O_{\ref{step:DUb}}}$.
Since $v_0\in R_{12}$, choose $z\in N(v_0)\cap R_{21}$.
By \Cref{step:F0X31X41}, we have $v_0\to z$,
$\mu_{H_0}(z)\le2$ and $\nu_{H_0}(z)\le6$.
Since $\nu(v_0)=7$, it follows that
$\nu(z)=\nu_H(z)=\nu_{H_0}(z)=6$.
The initial rules then give
$N(z)\cap(L_2\cup X_2\cup X_3)=\emptyset$,
and hence $N(z)\subseteq R$.
By (\ref{partitionA}) and \Cref{lem:saturation-stability},
we have $z\in U_a\setminus D_a$ and $\D(z)=6$.

We first observe that every arc $w\to z$ of $H$
belongs to $H_0$.
Indeed, it cannot be oriented in \Cref{step:DUb},
since $z\in U_a\setminus D_a$.
It cannot be type-II either, since every edge joining
$z\in R_{21}$ to $R_1$ is oriented initially.
By \Cref{claimcenter} and \Cref{lem:gold-deep-layer}, such an arc
satisfies $\mu_{H_0}(w)\le2$ and
$\mathrm{dist}_G(w,F_0)\le1$.

Let $L_{zv_4}=u_0u_1\cdots u_m$ be a shortest
$zv_4$-path, where $u_0=z$ and $u_m=v_4$.
Since $v_4$ is adherent, all its incident edges are
unoriented in $H$.
By \Cref{claim:xinDyinU}, its neighbors belong to
$D\setminus U$, so $m\ge2$.
Put $r=u_{m-1}$.
By \Cref{lem:good-path-gluing},
$8=\U(v_4)\le\U(r)+1$, and hence
$\mu_H(r)\ge\U(r)\ge7$.

\medskip
\noindent{\bf Case 1.}
$V(L_{zv_4})\cap F_0\ne\emptyset$.

Let $q$ be the last vertex of $L_{zv_4}$ in $F_0$.
Since $v_4\notin F_0\cup F_1$, at least two edges
follow $q$.

Suppose first that $q$ immediately precedes $r$.
Then $\mathrm{dist}_G(z,q)\le2$ and $q\ne u$.
We have $\mu_H(q)\le2$ unless $q\in L_{11}$.
In the latter case, the initial subpath has form $zwq$.
Since $N(z)\subseteq R$, the edge $wq$ gives
$w\in R_{11}$.
Thus, $v\to w\to q$ also gives $\mu_H(q)\le2$.

The edge $qr$ is oriented in $H_0$.
If $q\to r$, then $\mu_H(r)\le3$, a contradiction.
Suppose that $r\to q$.
If its weight in $H_0$ is at most eight, then
$\nu_{H_0}(q)\ge1$ gives $\mu_H(r)\le6$.
Otherwise, its dicycle of length five gives
$\mu_H(r)\le\mu_H(q)+4\le6$.
Both contradict $\mu_H(r)\ge7$.

Thus, $L_{zv_4}=zqart$, where $t=v_4$,
$q\in R_{11}$ and $a\in F_1\cap(R_1\cup R_2)$.
We have $\mu_H(a)\le5$.
Indeed, $q\to a$ gives $\mu_H(a)\le2$.
If $a\to q$ has weight at most eight in $H_0$,
then $\nu_{H_0}(q)\ge2$ gives $\mu_H(a)\le5$.
If its weight is nine, its dicycle of length five
gives the same bound.

If $ar$ is not an anti-edge, then $a\in D$ gives
$\U(r)\le\mu_H(a)+1\le6$.
If $a\notin D$, the crossing arc $a\to r$ gives
$\mu_H(r)\le6$.
Thus, $r\to a$. 
If this arc belongs to $H_0$, then
$\nu_{H_0}(a)\ge2$ gives $\mu_H(r)\le6$.
If it is oriented in \Cref{step:DUb}, its tail
$r\in D\setminus U$ belongs to $D_b$, and
\Cref{claim:xinDyinU} again gives $\mu_H(r)\le6$.
Hence, $r\to a$ is type-II.
Since $a\in F_1$, we have $a\in R_1$.
By \Cref{cl:root-threshold},
$N(a)\cap F_0=\{v\}$, contrary to $aq\in E$
with $q\in R_{11}$.

\medskip

Therefore, assume that $V(L_{zv_4})\cap F_0=\emptyset$.

\medskip
\noindent{\bf Case 2.}
$u_{m-2}r$ is an anti-edge.

Since $\mu_H(r)\ge7$, this anti-edge is type-II
by \Cref{prop:t1edge}.
The rules \ref{s6-1}--\ref{s6-2} imply that
$r\in L_2\cup R_2$ and $u_{m-2}\in L_1\cup R_1$.

If $m=2$, this contradicts $u_0=z\in R_2$.
If $m=3$, then $u_1\in N(z)\cap R_1= N(z)\cap R_{12}$,
since the path avoids $F_0$.
Thus, $\mu_{H_0}(u_1)\le1$, contrary to
the condition for $r\to u_1$ in \ref{s6-2}.
Hence, $m=4$.
Since $u_1\in N(z)\subseteq R$, the possibility
$u_2\in L_1$ would give $u_1\in R_{11}$,
a contradiction.
Thus, $u_2\in R_1$.
By \Cref{cl:root-threshold} and (\ref{partitionA}),
we have $u_2\in D_a\setminus U_a$,
$\mu_{H_0}(u_2)\ge3$, $u_2\to v$,
$N(u_2)\cap F_0=\{v\}$ and
$N(u_2)\cap F_1=\emptyset$.
In particular, $u_1\in R_2\cap F_2$. 
By the observation that  every arc $w\to z$ of $H$
belongs to $H_0$, $u_1\to z$ is impossible.
Thus, $zu_1$ is not an anti-edge.
If $u_1\notin U$, then $u_1\in D\setminus U$,
and \Cref{rm:edgeanticross} gives $u_1\in D_a$.
The crossing arc $z\to u_1$ then gives
$\nu_H(z)\le5$, a contradiction.
Hence, $u_1\in U$. 

The edge $u_1u_2$ is oriented in $H$.
If $u_1\to u_2$, then $\nu_H(u_1)\le4$.
Suppose that $u_2\to u_1$.
This arc is neither type-II nor oriented in
\Cref{step:DUb}, and hence belongs to $H_0$.
Its weight cannot be nine.
Indeed, in its specified dicycle of length five,
the outneighbor of $u_2$ is $u_1\in F_2$,
and its inneighbor also belongs to $F_2$,
since $u_2\to v$.
The whole cycle would therefore avoid $F_0$,
contrary to \Cref{claimcenter}.
Thus, its weight is at most eight, and
$\mu_{H_0}(u_2)\ge3$ again gives $\nu_H(u_1)\le4$.
In either case, \Cref{lem:good-path-gluing} gives
$\D(z)\le\nu_H(u_1)+1\le5$, a contradiction.

\medskip

Thus, $u_{m-2}r$ is not an anti-edge.
Since $\mu_H(r)\ge7$,  by
\Cref{claim:xinDyinU}, we know that  $u_{m-2}\in D\setminus U$.
In particular, $m\ne2$.
If $m=3$, the edge $zu_1$ is oriented in $H$.
The direction $z\to u_1$ gives $\mu_H(u_1)\le3$,
while $u_1\to z$ gives $\mu_H(u_1)\le2$ since    every arc $w\to z$ of $H$
belongs to $H_0$ and $\nu(z)= 6$.
Thus, $\U(v_4)\le\mu_H(u_1)+2\le5$,
a contradiction.
Hence, $m=4$ and $u_2,u_3\in D\setminus U$.

Suppose that $u_1u_2$ is not an anti-edge.
If $u_1\in D$, the same argument gives $\mu_H(u_1)\le3$.
If $u_1\in U_a$, then $\mu_H(u_1)\le4$.
In either case, \Cref{lem:good-path-gluing} gives
$\U(u_4)\le\mu_H(u_1)+3\le7$.
Otherwise, $u_1\in U_b\setminus D_b$.
By \Cref{rm:edgeanticross},
$u_2\in D_b\setminus U_b$.
Then \ref{S7-2} gives $z\to u_1\to u_2$,
so $\U(u_4)\le\mu_H(z)+4\le6$.
Both are contradictions.

Therefore, $u_2\to u_1$ is an anti-edge. Note that 
$8=\U(u_4)\le\mu_H(u_2)+2$, so $\mu_H(u_2)\ge6$.
If this arc is type-II, its head must be an
$R_1$-neighbor of $z$, and the same argument as in
Case 2 gives a contradiction.
Thus, it is type-I.
By \Cref{prop:t1edge},
$\mu_H(u_2)=6$ and $\nu_H(u_1)\le4$.

If $zu_1$ is not an anti-edge and $u_1\in U$,
then $\D(z)\le\nu_H(u_1)+1\le5$.
If $u_1\notin U$, then
\Cref{rm:edgeanticross} gives $u_1\in D_a$,
and $z\to u_1$ gives $\nu_H(z)\le5$.
Thus, $u_1\to z$.

By the observation above, $u_1\to z$ belongs to $H_0$
and $\mu_{H_0}(u_1)\le2$.
By \Cref{lem:gold-deep-layer}, it holds that
$6=\nu_{H_0}(z)\le7-\mathrm{dist}_G(u_1,F_0)$.
Since $u_1\notin F_0$, we obtain $u_1\in F_1$. 
Since $u_1\in N(z)\subseteq R$,
we have $u_1\in R_1\cup R_2$.
If $u_2\to u_1$ belongs to $H_0$, its weight gives
$\nu_{H_0}(u_1)\le2$.
Together with $\mu_{H_0}(u_1)\le2$,
this contradicts $u_1\notin F_0$ by \Cref{obs:munu}.
Hence, $u_2\to u_1$ is oriented in \Cref{step:DUb}.
Its rules give $u_2\in D_b\setminus M_b$ and
$u_1\in D_a$.

If $u_1\in R_1$, then
$\mu_{\ref{step:L23R23}}(u_1)\le2$ implies
$u_1\in A_1$, contrary to $u_1\in D_a$.
Thus, $u_1\in R_2\cap F_1$, and
(\ref{partitionA}) gives $u_1\in M_a$.
Consequently, $u_2\in C=M'_b\setminus M_b$ and
$N(u_2)\cap A=\{u_1\}$.
In particular, $u_3\in D_b\setminus U_b$.

Since $\U(u_3)\ge7$, \Cref{claim:xinDyinU}
implies that no edge incident to $u_3$ is oriented in $H$.
By \Cref{rm:crossing}, $u_3\notin M'_b$,
and hence $u_3\in B_D$.
Moreover, a neighbor of $u_1\in R_2$ outside $R$
belongs to $L_{21}\cup X_2\cup X_3\subseteq A$.
Since $u_2\in B$, it follows that $u_2\in R$.
Now $N(u_2)\cap A=\{u_1\}\subseteq R$ and
$u_3\in N(u_2)\cap B_D$.
By (\ref{partitionM}), we obtain
$u_2\in M_\ell\subseteq U_b$,
contrary to $u_2\in D_b\setminus M_b$.
This completes the proof.
\end{proof}

\begin{lemma}\label{cl:f2-2}
 Suppose that  $\nu(v_0)=7$, $v_1\in F_0$, and $\mu(v_4)=8$ with  $v_4$ being adherent.  Moreover,   there is $v_0'\in N(v_0)\cap F_0$ such that $v_0'\in R_{11}$, $v_0'v_0$ is oriented in \Cref{step:cycle(a)} and  $\omega(v_0'\to v_0)=9$, i.e., \ref{Final2-2}. Then $\pt(v_0,v_4)\le 16-\alpha$. 
\end{lemma}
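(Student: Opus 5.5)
Following the proof of \Cref{cl:f2-1}, I would exhibit an intermediate vertex $z$ with $v_0\to z$, $\nu(z)\le 6$, and $\pt(z,v_4)\le 15$. This gives $\pt(v_0,v_4)\le 16$, and since $v_0\in F_1\subseteq A$ is not adherent while $v_4$ is, we have $\alpha=1$. So in fact I need $\pt(v_0,v_4)\le 15$, that is, $\pt(z,v_4)\le 14$.

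\textbf{Step 1: locate $z$.} Since $\omega(v_0'\to v_0)=9$ with $v_0'\in R_{11}$, \Cref{claimcenter} and \Cref{prop:F12a} give a dicycle $C$ of length five through $v_0'\to v_0$ that meets $F_0$. It was oriented in \Cref{step:cycle(a)}, so $C$ avoids $u,v$. Write $C=v_0'v_0aby$ with $y\in F_0$. Then $\mu(v_0')=1$ and the equality $\nu(v_0)=7$ force $\nu(a)=6$ and the remaining arcs of $C$ to be tight. I take $z=a$, the out-neighbour of $v_0$ on $C$. By the form analysis in the proof of \Cref{prop:F12a}, $C$ is of form (a) or (d), so $a\in F_2$ (form (a)) or $a\in F_1$ (form (d)). As in the $F_{13}$ discussion preceding \Cref{cl:f2-2} ($f(v_0)=4$, both $F_1$-vertices in $R_{22}$ or $R$), the cycle lies in $F_0\cup R$. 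Hence $z\in R$, $\mu_H(z)\le 3$, $\nu_H(z)=6$, and therefore $z\in U_a\setminus D_a$ by (\ref{partitionA}), with $\D(z)=6$ by \Cref{lem:saturation-stability}.

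\textbf{Step 2: bound $\pt(z,v_4)$.} I would take a shortest $zv_4$-path $u_0\cdots u_m$ and follow the case structure of \Cref{cl:f2-1}, using $\U(v_4)=8$. This gives $\mu_H(u_{m-1})\ge 7$ and $u_{m-1}\in D\setminus U$. The cases are: the path meets $F_0$ (handle the last $F_0$-vertex $q$ via $\mu_H(q)\le 2$ and weight bounds, exactly as in Case 1 there); the penultimate edge is an anti-edge (it must be type-II, and then \Cref{cl:root-threshold} applies); or there is no such anti-edge, in which case gluing good paths with \Cref{lem:good-path-gluing} yields either a contradiction $\U(v_4)\le 7$ or a dipath.

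\textbf{Main obstacle.} The key ingredient of \Cref{cl:f2-1} was that every arc into $z$ lies in $H_0$ and comes from a vertex with small $\mu$; the case analysis relied on this through $N(z)\subseteq R$. Here I would first try to recover it from $C$ and \Cref{lem:gold-deep-layer}. The arc $v_0\to z$ of weight $9$ pins $\mu(v_0)=2$, so any other arc $w\to z$ of $H_0$ gives $\mu_{H_0}(w)\le 8-\nu(z)=2$. Arcs into $U_a\setminus D_a$ are excluded in \Cref{step:DUb}, and type-II arcs into $z$ are excluded when $z\in F_1$. If $z\in F_2\cap R_2$ and type-II arcs can enter it, an additional short case split on $z$'s $R_1$-neighbours is needed, using \ref{s6-2}.

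The genuinely new difficulty is that the budget is now $14$ instead of $15$, one unit tighter. I expect the no-anti-edge subcase to require showing that the whole path from $z$ to $v_4$ becomes a dipath via \ref{S8-11}, \ref{S8-31} and \ref{lem:gold-adherent-edge}. If that fails, the fallback is to route instead through the other arc of $C$, namely $v_0\to z\to b\to y$ with $y\in I_k$, and bound $\pt(v_0,v_4)\le 3+\mu$-type estimates from $y$.
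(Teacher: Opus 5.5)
\textbf{There is a genuine gap: Step 2 is a plan, not a proof.} The whole lemma rests on that step. With $\nu(z)=6$ and $\mu(v_4)=8$, the generic estimate gives only $\pt(z,v_4)\le15$, and you need $14$. You acknowledge this and then offer an expectation and a fallback instead of an argument.

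In \Cref{cl:f2-1} the analogous step works only because every case of the shortest $zv_4$-path ends in a contradiction. Those contradictions use facts specific to $z\in R_{21}$:
\begin{itemize}
\item $N(z)\subseteq R$;
\item every arc into $z$ lies in $H_0$, with tail of $\mu$-value at most $2$ and at distance at most $1$ from $F_0$;
\item all $R_1$-edges of $z$ are oriented initially.
\end{itemize}
For your $z\in F_2$ on the dicycle $C$ you establish none of these, and you leave type-II arcs into $z$ open. So that case analysis cannot simply be copied.

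Step 1 also has inaccuracies. In form (a), $C$ has only one vertex in $F_0$. In form (d) the out-neighbour of $v_0$ on $C$ also lies in $F_2$. The $F_{13}$ discussion before the lemma concludes $v_0\notin F_{13}$; it does not show $C\subseteq F_0\cup R$.

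\textbf{The missing idea is to use the given path $L_{st}$, whose vertex $v_1$ lies in $F_0$; with it the configuration is vacuous.}
\begin{itemize}
\item Since $v_0'\in R_{11}$, a nonintegral $f(v_0)$ would give $\nu(v_0)\le4$ by \ref{item:F121}. So $f(v_0)=4$, hence $v_1\in R_{11}$ and $\mu_H(v_1)\le1$.
\item Adherence of $v_4$ makes $v_3v_4$ unoriented in $H$. So $v_3\in D\setminus U$ and $\mu_H(v_3)\ge\U(v_3)\ge\U(v_4)-1=7$.
\item If $v_2\in F_0$, then $\mu_H(v_2)\le2$ and $\nu_{H_0}(v_2)\ge1$. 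The arc on $v_2v_3$ has weight at most $8$ or lies on a $5$-dicycle, and either way $\mu_H(v_3)\le6$.
\item If $v_2\in F_1\cap(R_1\cup R_2)$, the arc on $v_1v_2$ gives $\mu_H(v_2)\le5$. If $v_2v_3$ is not an anti-edge, gluing or the crossing arc gives $\U(v_3)\le6$. Otherwise $v_3\to v_2$ is type-I, giving $\mu_H(v_3)\le6$ by \Cref{prop:t1edge}. Or it is type-II, which is impossible: it needs $v_2\in R_1$, but then $f(v_2)=4.5$ and $v\to v_2$ already holds, so \ref{s6-2} would orient $v_2\to v_3$.
\end{itemize}
Every case contradicts $\mu_H(v_3)\ge7$. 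So no detour through $C$ and no sharper bound on $\pt(z,v_4)$ is needed.
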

\begin{proof}
Let $H_0=G^{O_{\ref{step:sumatmost8}}}$ and
$H=G^{O_{\ref{step:DUb}}}$.
All edge weights below are computed in $H_0$.

Since $v_0'\in R_{11}$, a noninteger value of $f(v_0)$
would give $\nu(v_0)\le4$ by \ref{item:F121}.
Thus, $f(v_0)=4$, and so $v_1\in R_{11}$.

Since $v_4$ is adherent and $\mu(v_4)=8$,
\Cref{claimcenter4} and \ref{prop:critical-1} give
$\U(v_4)=8$, and all its incident edges are unoriented in $H$.
By \Cref{claim:xinDyinU} and \Cref{lem:good-path-gluing},
we have $v_3\in D\setminus U$ and
$\mu_H(v_3)\ge\U(v_3)\ge7$.

Suppose first that $v_2\in F_0$.
Since $v_1\in R_{11}$ and $v_1v_2\in E$,
the initial rules give $\mu_H(v_2)\le2$.
Also, $v_2\ne u$, so $\nu_{H_0}(v_2)\ge1$.
The edge $v_2v_3$ belongs to $H_0$.
If $v_2\to v_3$, then $\mu_H(v_3)\le3$, a contradiction.
Thus, $v_3\to v_2$.
If its weight is at most eight, then $\mu_H(v_3)\le6$.
If its weight is nine, \Cref{claimcenter} gives a
dicycle of length five containing $v_3\to v_2$.
Its directed subpath from $v_2$ to $v_3$ has length four,
so $\mu_H(v_3)\le\mu_H(v_2)+4\le6$.
Both are contradictions.

Hence, $v_2\in F_1\cap(R_1\cup R_2)$.
We first show that $\mu_H(v_2)\le5$.
The edge $v_1v_2$ belongs to $H_0$.
If $v_1\to v_2$, then $\mu_H(v_2)\le2$.
If $v_2\to v_1$ has weight at most eight, then
$\nu_{H_0}(v_1)\ge2$ gives $\mu_H(v_2)\le5$.
If its weight is nine, its dicycle of length five gives
$\mu_H(v_2)\le\mu_H(v_1)+4\le5$.

Suppose that $v_2v_3$ is not an anti-edge.
If $v_2\in D$, then \Cref{lem:good-path-gluing} gives
$\U(v_3)\le\mu_H(v_2)+1\le6$.
Otherwise, the crossing arc $v_2\to v_3$ gives
$\mu_H(v_3)\le\mu_H(v_2)+1\le6$.
Thus, $v_3\to v_2$ is an anti-edge.

If it is type-I, then $\mu_H(v_3)\le6$ by
\Cref{prop:t1edge}, a contradiction.
Hence, it is type-II.
Since $v_2\in R_1\cup R_2$ and $\mu_H(v_3)\ge7$,
\ref{s6-2} forces $v_2\in R_1$ and $v_3\in R_2$.
But $v_1\in R_{11}$ and $v_1v_2\in E$ give
$f(v_2)=4.5$.
Thus, \ref{item:F121} already gives $v\to v_2$,
so \ref{s6-2} cannot orient $v_3\to v_2$.
This contradiction completes the proof.
\end{proof}

 In below, we show the lemmas related to the case that    $v_2\in F_0$ with 
  $v_1\notin F_0$ and $v_3\notin F_0$. Note that  $v_0,v_4\in F_2$  and $\orw{v_1v_2},\orw{v_2v_3}\in E(G^{O_{\ref{step:F13}}})$ since they are of form $[F_0,F_1]$.  

\begin{lemma}\label{cl:f3-1}
Suppose that $v_2\in F_0$ with 
 $v_0\in F_2$, $v_1\notin F_0$, $v_3\notin F_0$ and $v_4\in F_2$. If $\orw{v_0v_1}\in E(G^{O_{\ref{step:DUb}}})$ and it is  type-I or  $\orw{v_3v_4}\in E(G^{O_{\ref{step:DUb}}})$ and it is  type-I, i.e., \ref{Final3-1}. Then $\pt(v_0,v_4)\le 16-\alpha$. 
\end{lemma}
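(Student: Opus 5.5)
By symmetry (reversing all directions and interchanging $u,v$, $\mu,\nu$, $D,U$ and $\U,\D$), I will assume that $v_0v_1$ is the type-I edge. The argument splits according to whether this edge is an anti-edge.

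\emph{Case 1: $v_1\to v_0$ (anti-edge).} The argument of \Cref{cl:f-6} applies verbatim; it does not use $V(L_{st})\cap F_0=\emptyset$. Since $v_0,v_1\notin F_0$, \Cref{prop:t1edge} gives $\nu(v_0)\le 6$, and \ref{prop:critical-1} shows that $v_0$ is not adherent, so $\alpha\le1$. Combined with $\mu(v_4)\le\U(v_4)\le8$ and \Cref{lem:Gxy16}, this yields $\pt(v_0,v_4)\le 15\le 16-\alpha$.

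\emph{Case 2: $v_0\to v_1$.} If $v_1\in D$, \Cref{cl:v1v4} finishes. So I will assume $v_1\in U\setminus D$, which means the arc is a type-I arc in $U$ leaving $v_0$. Since $v_0\in F_2$ is incident to an edge oriented by Step~\ref{step:DUb} or earlier, $v_0$ is non-adherent by \ref{prop:critical-1}, so again $\alpha\le1$ and it suffices to show $\nu(v_0)+\mu(v_4)\le 14$. The goal is $\nu(v_0)\le 6$, using $\nu(v_0)\le 1+\nu(v_1)$ and $v_1\in F_1$.

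First suppose $v_0\to v_1$ is oriented in Steps~\ref{step:F0X31X41}--\ref{step:sumatmost8}. Then \Cref{lem:gold-deep-layer} gives $\nu(v_1)\le 7-\mathrm{dist}_G(v_0,F_0)=5$, hence $\nu(v_0)\le6$. If instead it is oriented in Step~\ref{step:DUb}, then \Cref{claim:xinDyinU} gives $\nu(v_0)\le 6$ directly, since the tail lies in $U_a\cup U_b$ and all edges into $D$ point away. A third option is to use the arc bounds of \Cref{prop:t1edge} together with $\nu(v_1)\le 2+\nu(v_2)$ via the core vertex $v_2$ whenever $v_1\to v_2$. Together with $\mu(v_4)\le 8$, this gives $\pt(v_0,v_4)\le 15$.

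The main obstacle is ruling out the residual situation $\nu(v_0)=7$ with $v_4$ adherent. This can only occur if the arc comes from a Step~\ref{step:DUb} rule with $\nu(v_1)=6$. I would handle it by locating $v_1$ in $U_b\setminus D_b$ and invoking \ref{prop:critical-2} and \Cref{rm:edgeanticross}, in the same way the adherent-I subcases were handled in \Cref{cl:f-2}. That argument should force either $\D(v_0)\le 6$ or a contradiction with $v_1\in F_1\subseteq A$.
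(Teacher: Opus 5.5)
Your Case~1 (anti-edge, via \Cref{prop:t1edge}) is correct. So is the subcase of Case~2 in which the arc is oriented in \Cref{step:F0X31X41}--\Cref{step:sumatmost8}. The paper in fact handles that earlier-steps subcase for either direction at once, using the vertex bound $\nu(v_0)\le 8-\mathrm{dist}_G(v_0,F_0)=6$ from \Cref{lem:gold-deep-layer}. The gap is in the remaining subcase: $v_0\to v_1$ oriented in \Cref{step:DUb}, with $v_1\in U\setminus D$.

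Your claim that \Cref{claim:xinDyinU} gives $\nu(v_0)\le 6$ ``directly'' is not justified there. Items (1)--(2) give this only when $v_0\in B$. Item (5) needs the head $v_1$ to lie in $D$, which you have just excluded. When $v_0\in U_a\setminus D_a$, only item (3) is available, and it yields merely $\nu(v_0)\le7$. You then call the resulting possibility $\nu(v_0)=7$ with $v_4$ adherent the main obstacle, and propose to place $v_1$ in $U_b\setminus D_b$ and invoke \ref{prop:critical-2}. That plan cannot work: $v_1$ is adjacent to $v_2\in F_0$, so $v_1\in F_1\subseteq A$ and in particular $v_1\notin B$. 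As written, this case is left open.

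The missing observation is that this subcase is vacuous. Inspecting \ref{S7-1}--\ref{S7-4}, every arc oriented in \Cref{step:DUb} has its head in $B\cup D_a$; no arc is ever directed into $U_a\setminus D_a$. Since $v_1\in A$, an arc $v_0\to v_1$ from \Cref{step:DUb} therefore forces $v_1\in D_a$. Then $\nu(v_0)\le 1+\nu(v_1)\le 5$, a case already covered by your appeal to \Cref{cl:v1v4}.

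With this one line, every case gives $\nu(v_0)\le 6$ with $v_0$ non-adherent. Hence $\pt(v_0,v_4)\le 6+1+8=15\le 16-\alpha$, which is exactly the paper's argument. The residual situation you single out never arises.
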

\begin{proof} 
By symmetry, assume that $\orw{v_0v_1}$ is type-I.
Then $v_0$ is not adherent by \ref{prop:critical-1}.
If this edge is oriented before \Cref{step:L23R23},
then $v_0\in F_2$ and \Cref{lem:gold-deep-layer} give
$\nu(v_0)\le6$.

Thus, assume that it is oriented in \Cref{step:DUb}.
If $v_1\to v_0$, then $\nu(v_0)\le6$ by
\Cref{prop:t1edge}.
If $v_0\to v_1$, then $v_1\in F_1\subseteq A$.
Since \Cref{step:DUb} orients no edge with head in
$U_a\setminus D_a$, it follows that $v_1\in D_a$.
Thus, $\nu(v_0)\le1+\nu(v_1)\le5$.
In every case, $\nu(v_0)\le6$ and $\alpha\le1$.
Since $\mu(v_4)\le8$, by \Cref{lem:Gxy16}, we have
$\pt(v_0,v_4)\le6+1+8=15\le16-\alpha$.
 \end{proof}

\begin{lemma}\label{cl:f3-2}
 Suppose that $v_2\in F_0$ with 
 $v_0\in F_2$, $v_1\notin F_0$, $v_3\notin F_0$ and $v_4\in F_2$. If both  $\orw{v_0v_1},\orw{v_3v_4}\in E(G^{O_{\ref{step:DUb}}})$  and they are type-II, i.e., \ref{Final3-2}. Then $\pt(v_0,v_4)\le 16-\alpha$. 
\end{lemma}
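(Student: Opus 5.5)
The plan is to split according to whether the two type-II edges are anti-edges, and to reduce the hard case to \Cref{cl:f-7} via \Cref{rm:f-7}. First I would record the basic consequences of the hypothesis. Both $v_0v_1$ and $v_3v_4$ are oriented in \Cref{step:L23R23}, and that step only orients edges in $[L_2,L_1]\cup[R_2,R_1]$. Hence $v_1,v_3\in L_1\cup R_1$ and $v_0,v_4\in L_2\cup R_2$. Moreover, $v_0$ and $v_4$ are incident to oriented edges after \Cref{step:L23R23}, so they lie in $A$ by \Cref{claimcenter2}. By \ref{prop:critical-1}, neither is adherent, and therefore $\alpha=0$. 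Thus it suffices to show $\pt(v_0,v_4)\le16$. Throughout, the general bounds remain available: $\mu(v_4)\le8$ (from $\U(v_4)\le8$ when $v_4\in D$, or from \Cref{cl:complicateDmu} when $v_4\in U$), and symmetrically $\nu(v_0)\le8$.

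\textbf{Case 1: $v_0v_1$ is not an anti-edge}, that is, $v_0\to v_1$. I would check directly from the rules \ref{s6-1} and \ref{s6-2} that $\nu(v_0)\le4$.
\begin{itemize}
\item If $v_1\in L_1$, the direction $y\to x$ in \ref{s6-1} is chosen only when $\nu(v_1)\le2$, so $\nu(v_0)\le3$.
\item If $v_1\in R_1$, the rule \ref{s6-2} orients $y\to x$ only when $\mu(v_1)\ge3$. In that situation \Cref{cl:root-threshold} gives $v_1\to v$, hence $\nu(v_0)\le1+1+2=4$.
\end{itemize}
Then \Cref{lem:Gxy16} yields $\pt(v_0,v_4)\le4+1+8=13$.

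\textbf{Case 2: $v_3v_4$ is not an anti-edge}, that is, $v_3\to v_4$. Symmetrically, the rules give $\mu(v_4)\le4$. On the $R$ side this is $\mu(v_3)\le2$, so $\mu(v_4)\le3$. On the $L$ side, \Cref{cl:root-threshold} gives $u\to v_3$, so $\mu(v_4)\le2+1+1=4$. Again \Cref{lem:Gxy16} gives a bound of at most $13$.

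\textbf{Remaining case: both $v_0v_1$ and $v_3v_4$ are type-II anti-edges.} This is exactly the configuration of \Cref{cl:f-7}, except that now $v_2\in F_0$. I would invoke \Cref{rm:f-7}, which states that the argument of \Cref{cl:f-7} does not use $V(L_{st})\cap F_0=\emptyset$. For safety, I would recheck the two places where that argument could interact with $F_0$.
\begin{itemize}
\item The step placing $v_1,v_3$ on the same side. If they lay on different sides, $v_1v_2v_3$ would force $v_1,v_3\in L_{11}\cup R_{11}\subseteq F_0$. This contradicts $v_1,v_3\notin F_0$, which is part of the present hypothesis.
\item The auxiliary shortest path from the vertex $z\in R_3\cap F_3$ (chosen in \ref{s6-2}) to $v_4\in F_2$. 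Any such path through $F_0$ has length at least $5$, so it avoids $F_0$, and the case analysis of \Cref{cl:f-7} goes through verbatim, giving $\pt(z,v_4)\le15$ and $v_0\to z$.
\end{itemize}

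I expect the main obstacle to be justifying the reuse of \Cref{cl:f-7} carefully. In particular, that proof reduces to $\nu(v_0)=\mu(v_4)=8$ with $v_1,v_3\in R_1$, and one must confirm that the presence of $v_2\in F_0$ adjacent to $v_1,v_3$ creates no conflict. A possible point of concern is that \Cref{cl:root-threshold} forces $N(v_1)\cap F_0=\{v\}$, so $v_2=v$. This is harmless, since it only fixes $v_2$ and is not used to rule out the configuration.
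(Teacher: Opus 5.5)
Your proposal is correct and follows essentially the same route as the paper's proof: $\alpha=0$ because $v_0,v_4\in A$; the cases $v_0\to v_1$ and $v_3\to v_4$ give $\nu(v_0)\le4$ or $\mu(v_4)\le4$ from the rules of \Cref{step:L23R23} together with \Cref{cl:root-threshold}; and the case of two type-II anti-edges is reduced to \Cref{cl:f-7} via \Cref{rm:f-7}. One small slip in your closing remark: when $v_1,v_3\in R_1$, \Cref{cl:root-threshold} applies to $v_3$ (the head of $v_4\to v_3$, so $\mu(v_3)\ge3$), not to $v_1$ (for which $v_1\to v_0$ means $\mu(v_1)\le2$), though the conclusion $v_2=v$ is unaffected.
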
 
\begin{proof}
Since $v_0,v_4\in A$, neither is adherent, and so $\alpha=0$.

If $v_0\to v_1$ or $v_3\to v_4$, then
$\nu(v_0)\le4$ or $\mu(v_4)\le4$, respectively, by
\Cref{step:L23R23} and \Cref{cl:root-threshold}.
Since $\nu(v_0),\mu(v_4)\le8$, it follows that
$\pt(v_0,v_4)\le13$ by \Cref{lem:Gxy16}. 
Thus, assume that $v_1\to v_0$ and $v_4\to v_3$.
Then the first and last edges of $L_{st}$ are type-II
anti-edges.
By \Cref{rm:f-7}, we may apply \Cref{cl:f-7} to $L_{st}$.
Hence, $\pt(v_0,v_4)\le16=16-\alpha$.
\end{proof}

\begin{lemma}\label{cl:f3-3}
Suppose that $v_2\in F_0$ with 
 $v_0\in F_2$, $v_1\notin F_0$, $v_3\notin F_0$ and $v_4\in F_2$. If  both   $v_0v_1$ and $v_3v_4$ are unoriented after \Cref{step:DUb}, i.e., \ref{Final3-3}. Then $\pt(v_0,v_4)\le 16-\alpha$. 
\end{lemma}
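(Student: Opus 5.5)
The plan is to use \Cref{lem:good-path-gluing} to push the potentials of the endpoints through their unoriented end edges onto $v_1$ and $v_3$, which lie in $F_1$. From there, \Cref{prop:boundF1} and \Cref{claimcenter} control everything, and the only tight situation is handled by exhibiting an explicit dipath.

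\textbf{Gluing at the ends.} Put $H=G^{O_{\ref{step:DUb}}}$. Since $v_0v_1$ is unoriented in $H$, \Cref{claim:xinDyinU}(4) forces $v_0$ and $v_1$ to lie on the same exclusive side. As $v_0\in U\setminus D$, we get $v_1\in U\setminus D$. Symmetrically, $v_3\in D\setminus U$. Then \Cref{lem:good-path-gluing} gives
\[
\D(v_0)\le\D(v_1)+1\le\nu_H(v_1)+1,\qquad
\U(v_4)\le\U(v_3)+1\le\mu_H(v_3)+1.
\]
Because $v_1,v_3\in F_1$, \Cref{prop:boundF1} gives $\nu_H(v_1)\le 7$ and $\mu_H(v_3)\le 7$, with value $7$ only when $v_1\in R$, respectively $v_3\in L$. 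In addition, $\nu_H(v_1)\le 6$ when $v_1\in L\cup X_3$, using \Cref{claimcenter}. By \Cref{lem:Gxy16} we then have $\pt(v_0,v_4)\le\D(v_0)+\U(v_4)+1$. So it suffices to exclude the case $\nu_H(v_1)+\mu_H(v_3)\ge 14-\alpha$, which is essentially $v_1\in R\cap F_1$ with $\nu_H(v_1)\ge 6$ together with $v_3\in L\cap F_1$ with $\mu_H(v_3)\ge 6$.

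\textbf{Exploiting $v_2\in F_0$.} In that residual case, $v_1\in R$, $v_3\in L$ and both are adjacent to $v_2$. This forces $v_2\in X_2$, or else $v_2\in\{v\}\cup R_{11}$ (and symmetrically), with $f(v_1)\ge f(v_2)\ge f(v_3)$ roughly. Next I use the edges $v_1v_2$ and $v_2v_3$, which were oriented by \Cref{step:F13}.

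If $v_1\to v_2$, then $\nu_H(v_1)\le 1+\nu_H(v_2)\le 1+\beta_k\le 4$, which is a contradiction. So $v_2\to v_1$, and likewise $v_3\to v_2$.

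Now look at the weight of $v_2\to v_1$ via \Cref{claimcenter}. If the weight is at most $8$, then $\nu_H(v_1)\le 8-1-\mu_H(v_2)$. Since $v_2\in I_k$ with $\mu_H(v_2)\le\alpha_k$, this gives small bounds except when $\mu_H(v_2)$ is tiny, i.e.\ $v_2\in\{v\}\cup R_{11}$. If the weight is $9$, there is a $5$-dicycle, and its portion from $v_1$ back to $v_2$ has length $4$, giving $\nu_H(v_1)\le 4+\beta_k$. Running the same analysis at $v_3$, the two bounds combine through $\alpha_k+\beta_k\le 4$ to give $\nu_H(v_1)+\mu_H(v_3)\le 12$, or at least $\le 13$ with the configuration pinned down.

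\textbf{Main obstacle.} The hard case is the one where equality survives: $\alpha=2$ with both potentials at the maximal value $7$ (or the analogous $8/6$ split). Here I would imitate the end of \Cref{cl:f-1}. By \ref{prop:critical-2}, an adherent-I endpoint has $\D(v_1)\ge\D(v_0)$, which tightens the first inequality by one. The adherent-II possibility is excluded by \Cref{claim:munuF2}(1), since its adherent partner lies in $L_3\cup R_3$ while $v_1\in F_1$. If tightness still remains, I use \ref{lem:gold-adherent-edge} to orient $v_0\to v_1$ and $v_3\to v_4$. Together with $v_1v_2v_3$ this yields a walk of length $\pt(v_1,v_3)+2$, and I bound $\pt(v_1,v_3)$ through $F_0$ by $\nu_H(v_1)+\mu_H(v_3)$-type estimates. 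I expect the bookkeeping of this step, verifying that the $F_0$-vertex $v_2$ always yields enough slack, to be the main difficulty.
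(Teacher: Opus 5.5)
\textbf{Verdict: there is a genuine gap.} Your opening matches the paper: you glue through $v_1\in U_a\setminus D_a$ and $v_3\in D_a\setminus U_a$, then force $v_2\to v_1$ and $v_3\to v_2$ because $\nu_H(v_1),\mu_H(v_3)\ge5$ in the residual case.

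\textbf{The weight analysis is wrong as written.} For an arc of weight at most $8$ you need a \emph{lower} bound on $\mu_H(v_2)$; the upper bound $\mu_H(v_2)\le\alpha_k$ points the wrong way. The clean form is the identity
\[
\nu_H(v_1)+\mu_H(v_3)=w_1+w_2-2-\bigl(\mu_H(v_2)+\nu_H(v_2)\bigr),
\]
where $w_1,w_2$ are the weights of $v_2\to v_1$ and $v_3\to v_2$. Combine it with two facts:
\begin{itemize}
\item[(i)] $\mu_H(v_2)+\nu_H(v_2)\ge3$ on $L_{11}\cup X_2\cup R_{11}$;
\item[(ii)] a weight-$9$ arc forces $\mu_H(v_2)+\nu_H(v_2)=4$, via the remaining four arcs of its $5$-dicycle.
\end{itemize}
This gives $\nu_H(v_1)+\mu_H(v_3)\le12$, with equality only if $w_1=w_2=9$.

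The core vertices $u,v$ must be excluded separately. There $\mu+\nu=2$ and all incident arcs have weight at most $8$, so the value $12$ can occur with no $5$-dicycle available. You flag this case (``$\mu_H(v_2)$ tiny'') but never resolve it. The paper handles it via \Cref{cl:root-threshold}: if $v_2=u$, then $v_1\in L_1$ with $\nu_H(v_1)\ge3$, so $v_1\in F_{12}$. Hence $u\to v_1$ was oriented in \Cref{step:cycle(auv)}, and \Cref{prop:cycleauv} gives $2+1+\nu_H(v_1)\le7$, a contradiction. The case $v_2=v$ is symmetric.

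Separately, your reduction to ``$v_1\in R$, $v_3\in L$'' is not exhaustive once $\alpha\ge1$: for example $(\nu_H(v_1),\mu_H(v_3))=(6,6)$ with $v_1\in L$ is allowed. The side analysis of where $v_2$ lies is unnecessary anyway.

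\textbf{The decisive gap is the endgame with $\alpha=2$.} Two of your proposed tools fail:
\begin{itemize}
\item[(a)] Since $v_1\in F_1\subseteq A$ and $v_0v_1$ is unoriented, $v_1$ is the unique $U_a$-neighbor of the adherent-I vertex $v_0$. So \ref{prop:critical-2} yields $\D(v_0)>\D(v_1)$, the reverse of the tightening you propose.
\item[(b)] Routing from $v_1$ to $v_3$ through $u,v$ with $\nu+\mu$ estimates gives only $\pt(v_1,v_3)\le\nu(v_1)+\mu(v_3)+1\le13$. That yields a walk of length $15$, while $16-\alpha=14$ is needed.
\end{itemize}
The missing idea is that tightness itself pins down $w_1=w_2=9$. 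By \Cref{claimcenter}, both $v_2\to v_1$ and $v_3\to v_2$ then lie on directed $5$-cycles. Their remaining arcs give $\pt(v_1,v_2)\le4$ and $\pt(v_2,v_3)\le4$. Together with $v_0\to v_1$ and $v_3\to v_4$ from \ref{lem:gold-adherent-edge}, this gives $\pt(v_0,v_4)\le10$.
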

\begin{proof}
Let $H=G^{O_{\ref{step:DUb}}}$.
Since $v_1,v_3\in F_1$ and $v_0v_1,v_3v_4$ are
unoriented in $H$, by \Cref{claim:xinDyinU}, we have
$v_1\in U_a\setminus D_a$ and $v_3\in D_a\setminus U_a$.
By \Cref{lem:good-path-gluing}, it holds that
$\D(v_0)\le\nu_H(v_1)+1$ and
$\U(v_4)\le\mu_H(v_3)+1$.

Recall that $\D(v_0),\U(v_4)\le8$.
If $\D(v_0)+\U(v_4)\le13$, then
$\pt(v_0,v_4)\le14\le16-\alpha$ by \Cref{lem:Gxy16}.
Thus, assume that $\D(v_0)+\U(v_4)\ge14$.
It follows that $\nu_H(v_1),\mu_H(v_3)\ge5$.
Since $\mu_H(v_2),\nu_H(v_2)\le3$ and both
$v_1v_2,v_2v_3$ are already oriented, we have
$v_2\to v_1$ and $v_3\to v_2$.

Suppose that $v_2=u$.
Then $v_1\in L_1$, and \Cref{cl:root-threshold} gives
$v_1\in F_{12}$.
Thus, $uv_1$ is oriented in \Cref{step:cycle(auv)}.
By \Cref{prop:cycleauv} and monotonicity, we obtain
$2+1+\nu_H(v_1)\le7$, a contradiction.
The case $v_2=v$ is symmetric.
Hence, $v_2\in L_{11}\cup X_2\cup R_{11}$, and so
$3\le\mu_H(v_2)+\nu_H(v_2)\le4$.

Let $w_1=\omega_H(v_2\to v_1)$ and
$w_2=\omega_H(v_3\to v_2)$.
By \Cref{claimcenter} and monotonicity, $w_1,w_2\le9$,
with equality only if the corresponding arc lies on
a dicycle of length five.
If $w_1,w_2\le8$, then
$\nu_H(v_1)+\mu_H(v_3)
=w_1+w_2-2-\mu_H(v_2)-\nu_H(v_2)\le11$,
contrary to $\D(v_0)+\U(v_4)\ge14$.
Thus, at least one of $w_1,w_2$ equals nine.
The other four edges of its dicycle give
$9\le\mu_H(v_2)+\nu_H(v_2)+5\le9$.
Consequently, $\mu_H(v_2)+\nu_H(v_2)=4$, and
$\nu_H(v_1)+\mu_H(v_3)=w_1+w_2-6\le12$.

It follows that $\D(v_0)+\U(v_4)=14$,
$w_1=w_2=9$,
$\D(v_0)=\nu_H(v_1)+1$ and
$\U(v_4)=\mu_H(v_3)+1$.
If $\alpha\le1$, then
$\pt(v_0,v_4)\le15\le16-\alpha$.
Thus, assume that both endpoints are adherent.
Since $\D(v_1)\le\nu_H(v_1)<\D(v_0)$ and
$\U(v_3)\le\mu_H(v_3)<\U(v_4)$, by \ref{lem:gold-adherent-edge}, we know that 
$v_0\to v_1$ and $v_3\to v_4$. 
The dicycles containing $v_2\to v_1$ and $v_3\to v_2$
give a $(v_1,v_2)$-dipath and a $(v_2,v_3)$-dipath,
each of length four.
Together with $v_0\to v_1$ and $v_3\to v_4$,
they form a $(v_0,v_4)$-diwalk of length ten.
Therefore, $\pt(v_0,v_4)\le10<16-\alpha$.
\end{proof}

 \begin{lemma}\label{cl:f3-4}
Suppose that $v_2\in F_0$ with 
 $v_0\in F_2$, $v_1\notin F_0$, $v_3\notin F_0$ and $v_4\in F_2$. If one of $v_0v_1$ and $v_3v_4$ is type-II and the other one is unoriented in $G^{O_{\ref{step:DUb}}}$, i.e., \ref{Final3-4}. Then $\pt(v_0,v_4)\le 16-\alpha$. 
  \end{lemma}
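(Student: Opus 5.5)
By symmetry, I will assume that $v_0v_1$ is type-II and $v_3v_4$ is unoriented in $H=G^{O_{\ref{step:DUb}}}$. The reverse case follows by interchanging $u,v$, $\mu,\nu$ and $\U,\D$. Since $v_0$ is incident to a type-II edge, $v_0\in A$ and $v_0$ is not adherent, so $\alpha\le1$. Since $v_3\in F_1$ and $v_3v_4$ is unoriented, \Cref{claim:xinDyinU}(4) gives $v_3\in D\setminus U$ and $v_4\in D$. Then \Cref{lem:good-path-gluing} gives $\U(v_4)\le\mu_H(v_3)+1$.

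First I treat the direction $v_0\to v_1$. Here \ref{s6-1}, \ref{s6-2} and \Cref{cl:root-threshold} give $\nu(v_0)\le4$, hence $\pt(v_0,v_4)\le4+1+8=13$ by \Cref{lem:Gxy16}. So assume $v_1\to v_0$. The type-II rules then force $v_0\in L_2\cup R_2$ and $v_1\in L_1\cup R_1$ with $\nu_H(v_0)\ge4$. By symmetry take $v_1\in R_1$, so $\mu_H(v_1)\le2$. Since $v_1\in F_1$ we have $v_1\notin R_{11}$, and $v_2\in N(v_1)\cap F_0\subseteq\{v\}\cup R_{11}$. It suffices to reach $\nu(v_0)+\mu(v_4)\le 14$, or $\le15$ when $v_4$ is not adherent. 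Since $\nu(v_0)\le8$, the target is $\U(v_4)\le 6$, or $\le7$ when $\alpha=0$.

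The main step bounds $\mu_H(v_3)$ through the $F_0$-vertex $v_2$, where $\mu_H(v_2)\le1$. If $v_2\to v_3$, then $\mu_H(v_3)\le2$ and $\U(v_4)\le3$. Otherwise $v_3\to v_2$ is an arc of $G^{O_{\ref{step:F13}}}$ with weight at most $9$ by \Cref{claimcenter}. If the weight is at most $8$, then $\mu_H(v_3)\le 7-\nu_H(v_2)$, and $\nu_H(v_2)\ge2$ whenever $v_2\neq v$. That gives $\mu_H(v_3)\le5$ and $\U(v_4)\le6$. If the weight is $9$, the arc lies on a dicycle of length five meeting $I_2\cup I_3\cup I_4$, and the four-edge return gives $\mu_H(v_3)\le\mu_H(v_2)+4\le5$ as in \Cref{cl:f2-2}.

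The remaining and hardest case is $v_2=v$. I would mimic \Cref{cl:f3-3}. Now $v_3\in R_1\setminus R_{11}$ with $v_3\to v$, and \Cref{cl:root-threshold} applies to $v_3$ whenever $\mu(v_3)\ge3$. So $v_3\in F_{12}$ and $vv_3$ is oriented in \Cref{step:cycle(auv)}. \Cref{prop:cycleauv} then gives $\mu_H(v_3)+1+2\le7$, so $\mu_H(v_3)\le4$ and $\U(v_4)\le5$. In every case $\nu(v_0)+\mu(v_4)\le8+6=14$, so \Cref{lem:Gxy16} yields $\pt(v_0,v_4)\le15\le16-\alpha$.

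I expect the obstacle to be confirming that $\mu_H(v_3)$ is not inflated when $v_2\in L_{11}$ or $v_2\in X_2$. This cannot occur, since $v_1\in R_1\setminus R_{11}$ has no neighbour in $L_1$ and no neighbour in $X_2$ by \Cref{claim1}. I also need to confirm that the $L_1$-case is genuinely symmetric, with $\nu$ and $\mu$ roles swapped. If the $L_1$-case fails, the fallback is the argument of \Cref{cl:f-7}, using the $F_3$-vertex $z$ behind $v_0$.
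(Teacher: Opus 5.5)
Your treatment of the subcase $v_1\in R_1$ is correct, and it is the mirror image of the paper's easy case $v_4\in L_2$. There $v_2\in\{v\}\cup R_{11}$ has $\mu_H(v_2)\le1$ and $\nu_H(v_2)\ge2$, so $\mu_H(v_3)\le5$, $\U(v_4)\le6$ and $\pt(v_0,v_4)\le15$. Note that $\nu(v)=2$ as well, so the separate appeal to \Cref{prop:cycleauv} for $v_2=v$ is not needed.

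The gap is the step ``by symmetry take $v_1\in R_1$''. You already used the global symmetry (reverse every arc; interchange $u,v$, $L,R$, $\mu,\nu$ and $\U,\D$) to put the type-II edge at $v_0v_1$. That symmetry also reverses the path, so it cannot be used a second time to swap $L_1$ and $R_1$ while keeping the type-II arc $v_1\to v_0$ at the start. The rules are genuinely asymmetric here. By \ref{s6-2}, an arc $v_1\to v_0$ with $v_1\in R_1$ requires $\mu(v_1)\le2$. By \ref{s6-1}, an arc $v_1\to v_0$ with $v_1\in L_1$ requires $\nu(v_1)\ge3$.

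In the missing subcase $v_1\in L_1$, \Cref{cl:root-threshold} gives $v_1\in F_{12}$ and $N(v_1)\cap F_0=\{u\}$. Hence $v_2=u$ and $v_3\in L_1\setminus L_{11}$, and your estimate breaks down when $v_3\to u$:
\begin{itemize}
\item Since $\nu(u)=0$, the weight bound only gives $\mu_H(v_3)\le7$ (compare \Cref{calim:boundmunuV-GO1}(1)). So $\U(v_4)\le8$, and together with $\nu(v_0)\le8$ you only get $\pt(v_0,v_4)\le17$.
\item Even when $v_3\in F_{12}$ and \Cref{prop:cycleauv} gives $\mu_H(v_3)\le6$, the resulting bound $16$ fails when $v_4$ is adherent.
\end{itemize}

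This subcase is exactly the reflection of the paper's hard case ($v_4\in R_2$, $v_3\in R_1$, $v_2=v$), which takes up almost the whole proof. Two ingredients are needed. First, the equality clause of \Cref{calim:boundmunuV-GO1}(1): a dicycle of length five through the root, combined with \ref{lem:gold-adherent-edge} and an explicit diwalk through the root. Second, in the remaining situation, where the arc at the root comes from \Cref{step:F0X31X41}, one shows that $v_4\in L_{23}\cap F_2$ lies in $B_D$ with no oriented incident edge. One then runs a long case analysis on a shortest path from $R_{21}$ to $v_4$, using the partition (\ref{partitionM}) and the stopping rule of \Cref{step:sumatmost8}.

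Your proposed fallback through the vertex $z$ of \Cref{cl:f-7} is not worked out. That argument also relies on both end edges being type-II anti-edges, which is not the situation here.
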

 \begin{proof}
Let $H_0=G^{O_{\ref{step:sumatmost8}}}$ and
$H=G^{O_{\ref{step:DUb}}}$.
The parameters without subscripts are computed in
$G^{O_{\ref{step:complicate}}}$.

By symmetry, assume that $v_0v_1$ is unoriented in $H$
and $v_3v_4$ is type-II.
Then $v_4\in A$ is not adherent, and so $\alpha\le1$.
By \Cref{lem:Gxy16}, we are done if $\D(v_0)\le6$,
or if $\D(v_0)\le7$ and $v_0$ is not adherent.

If $v_3\to v_4$, then $\mu_H(v_4)\le4$ by
\Cref{step:L23R23}, and the result follows.
Thus, assume that $v_4\to v_3$.
Since $v_0v_1$ is unoriented in $H$, we have
$v_1\in U\setminus D$ and
$\D(v_0)\le\nu_H(v_1)+1$.

Suppose that $v_4\in L_2$.
Then $v_3\in L_1\setminus L_{11}$ and
$v_2\in\{u\}\cup L_{11}$.
If $v_1\to v_2$, then $\nu_H(v_1)\le2$.
If $v_2\to v_1$ has weight at most eight in $H_0$,
then $\nu_{H_0}(v_1)\le5$, since $\mu_{H_0}(v_2)\ge2$.
For weight nine, the dicycle supplied by
\Cref{claimcenter} gives
$\nu_{H_0}(v_1)\le\nu_{H_0}(v_2)+4\le5$.
Thus, $\D(v_0)\le6$.

Hence, assume that $v_4\in R_2$.
By \Cref{step:L23R23} and \Cref{cl:root-threshold}, 
$v_3\in R_1$, $N(v_3)\cap F_0=\{v\}$ and $v_3\to v$.
Consequently, $v_2=v$ and $v_1\in R_1\setminus R_{11}$.

We may assume that $\nu_H(v_1)\ge6$.
By the proof of \Cref{calim:boundmunuV-GO1}(1),
either $v_1$ is incident to an edge oriented in
\Cref{step:F0X31X41}, or $v_1\in F_{12}$,
$\nu_H(v_1)=6$, and $v\to v_1$ lies on a dicycle
of length five through $v$.
In the latter case, $\D(v_0)\le7$.
Thus, we only need to consider an adherent $v_0$
with $\D(v_0)=7$.
Since $\D(v_1)\le6$, by \ref{lem:gold-adherent-edge}, it holds that $v_0\to v_1$.
Moreover, $\partial(v_1,v)\le4$.
Since $\partial(v,u)\le4$ and $\mu(v_4)\le8$,
the definition of $\mu$ gives $\partial(v,v_4)\le10$.
Therefore, $\pt(v_0,v_4)\le1+4+10=15$.

Thus, assume that $v_1$ is incident to an edge oriented
in \Cref{step:F0X31X41}.
Then $v\to v_1$ is oriented in that step.
Since $\nu_H(v_1)\ge6$, the initial rules give
$L_{21}\ne\emptyset$.
Also, $v_0\in R_{23}\cap F_2$: a vertex of $R_{22}$
belongs to $F_1$, while all its $R_1$-edges would already
be oriented if $v_0\in R_{21}$.

If $v_0\in V(H_0)$, then $\nu_{H_0}(v_0)\le6$ by
\Cref{lem:gold-deep-layer}, and we are done.
If $v_0$ were first incident to an oriented edge in
\Cref{step:L23R23}, that step would also orient $v_0v_1$.
Hence, $v_0\in B$.
If an edge incident to $v_0$ is oriented in
\Cref{step:DUb}, then $\nu_H(v_0)\le6$ by
\Cref{claim:xinDyinU}.
Therefore, every edge incident to $v_0$ is unoriented in $H$.
By \Cref{rm:crossing}, it follows that $v_0\in B_U$.

Let $u_0u_1\cdots u_m$ be a shortest
$(v_0,L_{21})$-path, where $u_0=v_0$.
Then $2\le m\le4$ and $u_1\in U\setminus D$.
Since $v_0\in R_{23}\cap F_2$, we also have $u_1\in R$.
If $m=2$, then $u_1\in R_{21}$ and
$\nu_{H_0}(u_1)\le3$, giving $\D(v_0)\le4$.
Thus, $m\in\{3,4\}$.

\medskip
\noindent{\bf Case 1.} $u_1\in R_1$.

We show that $\nu_H(u_1)\le5$.
If $m=3$, then $u_2\in R_{21}$ and the initial rules
give $u_1\to u_2\to u_3$, so $\nu_H(u_1)\le4$.
Thus, assume that $m=4$.

We have $u_2\ne v$.
If $u_2\in R_{11}$, then
$v\to u_1\to u_2$ by \ref{item:F121}, giving
$\nu_H(u_1)\le4$.
Also, $u_2\notin L_1\cup X_2$, since otherwise
$u_1\in R_{11}$.
Hence, $u_2\in(R_1\cup R_2)\setminus R_{11}$.

If $u_2\in R_1$, then $u_3\in R_{21}$ and
$u_2\to u_3\to u_4$ is oriented initially.
If $u_1$ has a neighbor in
$R_{11}\cup(R_{21}\cap N(L_2))$, then
$\nu_H(u_1)\le4$.
Otherwise, \ref{item:step:F0X31X415} gives
$u_1\to u_2$, and so $\nu_H(u_1)\le5$.

Suppose that $u_2\in R_2$.
If $u_2$ has a neighbor in $L_2\cup X_3$, then
$u_2\in R_{21}$, $u_1\to u_2$ is oriented initially,
and $\nu_H(u_2)\le4$.
If $u_2$ has a neighbor in $X_2$, then
$u_1\to u_2\to X_2$, giving $\nu_H(u_1)\le4$.
Indeed, an initially processed such vertex belongs to
$R_{21}$.
Otherwise, it has either $f(u_2)=3.5$, or $f(u_2)=3$
and a neighbor $u_1\in F_1$ with larger $f$-value.
In the latter case, a neighbor with smaller $f$-value
would put $u_2$ in $R_{21}$.
Thus, the asserted directions follow from
\ref{item:F121}, \ref{item:F12222} and \ref{item:F123}.

Finally, if
$N(u_2)\cap(L_2\cup X_2\cup X_3)=\emptyset$,
then $u_3\in R_{21}$, and
\ref{item:step:F0X31X414} gives
$u_1\to u_2\to u_3\to u_4$.
Again, $\nu_H(u_1)\le5$.
Consequently, $\D(v_0)\le6$.

\medskip
\noindent{\bf Case 2.} $u_1\in R_2\cup R_3$.

It suffices to prove $\D(u_1)\le6$.
Indeed, this gives $\D(v_0)\le7$.
If $v_0$ were adherent with $\D(v_0)=7$, it would
be adherent-I.
By \ref{prop:critical-2}, the inequality
$\D(u_1)<\D(v_0)$ would force $u_1\in U_a$.
But $v_1\in U_a$ is another neighbor of $v_0$,
contrary to the uniqueness of its $U_a$-neighbor.

If $u_1\in M'_b$, then $\nu_H(u_1)\le6$ by
\Cref{rm:crossing} and \Cref{claim:xinDyinU}.
Thus, assume that $u_1\in U_a\cup B_U$.

We first consider $u_1\in V(H_0)$.
If $u_1\notin F_1$, then
$\nu_{H_0}(u_1)\le6$ by \Cref{lem:gold-deep-layer}.
Hence, assume that $u_1\in R_2\cap F_1$.
The initial rules and
\Cref{prop:F118,prop:cycleauv,prop:F12224}
give $\nu_{H_0}(u_1)\le6$ when $u_1$ is initially
processed or belongs to $F_{11}$.
The same bound holds if $u_1$ has a neighbor in $X_2$,
by the argument in Case 1.

Thus, $N(u_1)\cap F_0\subseteq R_{11}$.
If $u_1\in F_{12}$ and
$N(u_1)\cap F_0=\{r\}$, then
$vv_1v_0u_1rv$ is a cycle of length five through $v$.
The choice of the considered cycle and
\Cref{prop:cycleauv} give $\nu_{H_0}(u_1)\le5$.

Suppose that $u_1\in F_{13}$.
Then $u_1\in R_{22}$ and $f(u_1)=4$.
Consider the first operation orienting an edge in
$[u_1,R_{11}]$.
No edge incident to $u_1$ is oriented before this operation:
an earlier operation involving $u_1$ would already
require or orient an edge in $[u_1,R_{11}]$. If the first operation is in \Cref{step:F13}, then it gives
$u_1\to r$ for some $r\in R_{11}$. Hence,
$\nu_{H_0}(u_1)\le1+\nu_{H_0}(r)\le4$.
Thus, assume that the first operation is before \Cref{step:F13}.

An operation in \Cref{step:cycle(auv)} gives
$\nu_{H_0}(u_1)\le5$.
A whole cycle of length at most four gives
$\nu_{H_0}(u_1)\le6$.
A four-edge path of form (d) gives a dipath of length
at most three from $u_1$ to $F_0$, and hence the same bound.
A shorter path, a four-edge path of form (a), or an
additional edge would require an already oriented edge
in $[u_1,R_{11}]$.

The remaining possibility is a whole cycle of length
five of form (a).
Since $u_1\in R_{22}$, this cycle is contained in
$F_0\cup R$.
The direction at $u_1$ is being chosen for the first time,
and the direction $u_1\to R_{11}$ gives $\nu(u_1)\le4$.
Thus, \ref{item:cycle5-whole} gives
$\nu_{H_0}(u_1)\le4$.
This completes the case $u_1\in V(H_0)$.

Hence, assume that $u_1\notin V(H_0)$.
Suppose that $u_2\to u_1$ is anti.
If it is type-I, it must be oriented in
\Cref{step:DUb}, and $\nu_H(u_1)\le6$.
If it is type-II, then $u_1\in R_2$ and $u_2\in R_1$.
Since $\mathrm{dist}(u_2,L_{21})\le2$, we have
$m=4$, $u_2\in R_{12}$ and $u_3\in R_{21}$.
The initial dipath $u_2\to u_3\to u_4$ gives
$\nu_{H_0}(u_2)\le4$.

Now, all edges of $v_1v_0u_1u_2$ are unoriented in $H_0$.
This is a path: $v_1\ne u_1$ by their locations, and
$v_1\ne u_2$ by the choice of $u_0\cdots u_4$.
Since $\mu_{H_0}(v_1)=1$, we have
$\mu_{H_0}(v_1)+3+\nu_{H_0}(u_2)\le8$,
contrary to \Cref{step:sumatmost8}.

Therefore, $u_1u_2$ is not anti.
If $u_2\in D$, the crossing arc $u_1\to u_2$
gives $\nu_H(u_1)\le6$.
Thus, assume that $u_2\in U\setminus D$.
It now suffices to prove $\D(u_2)\le5$.

If $m=3$, then
$u_2\in R_{21}\cup X_2\cup X_3\cup L_{31}$,
and its adjacency to $u_3\in L_{21}$ gives
$\nu_{H_0}(u_2)\le3$ by the initial rules.
Hence, assume that $m=4$. Let $k$ be the smallest index such that $u_k\notin R$.
Since $u_0,u_1\in R$ and $u_4\in L_{21}$,
it follows that $k\in\{2,3,4\}$. 

\medskip
\noindent{\bf Case 2.1.} $k\in\{2,3\}$.

Suppose first that $k=2$.
If $u_2\in L\cup X_2$, then an edge incident to $u_1$
is oriented initially or belongs to $[F_0,F_1]$,
contrary to $u_1\notin V(H_0)$.

If $u_2\in X_3$, then $\nu_{H_0}(u_2)\le4$.
Indeed, a neighbor in $L_2\cup R_2$ gives a bound
of three by the initial rules.
Otherwise, $u_2$ has a neighbor in $X_2$, and every
$F_1$-neighbor of $u_2$ also belongs to
$X_3\cap N(X_2)$.
If there is such a neighbor, the rules for equal
$f$-values and
\Cref{prop:F118,prop:cycleauv,prop:F12224}
give the bound of four.
If there is none, then $u_2\in\tilde X_3$;
its neighbor $u_1\in R_3$ puts $u_1$ in $R_{31}$,
and the initial rules give $u_2\to X_2$.

Thus, assume that $u_2\in X_4$.
Then $u_1\in R_3$ and $u_3\in L_3\cup X_3$.
If $u_3\in L_3$, then $u_2\in X_{41}$ and
$\nu_{H_0}(u_2)\le4$.
The remaining possibility has
$u_2\in X_{42}^{R}$ and $u_3\in X_3$. 
We consider this case together with the remaining case
for $k=3$ below.

Suppose now that $k=3$.
Then $u_3\in L\cup X_2\cup X_3$.
If $u_3\in L$, the initial arc $u_2\to u_3$
gives $\nu_{H_0}(u_2)\le4$.
If $u_3\in X_2$, then $u_2\in R_2$, and
$u_2\to u_3$ by the argument in Case 1.
If $u_3\in X_3$ and $u_2\in R_2$, the same direction
is prescribed initially, again giving
$\nu_{H_0}(u_2)\le4$.
Thus, the remaining possibility has
$u_2\in R_3$ and $u_3\in X_3$.

In both remaining possibilities,
$u_3\in X_3\cap N(L_{21})$, so
$\mu_{H_0}(u_3)=\nu_{H_0}(u_3)=3$ and $u_3\in M_a$.
If $u_2u_3$ is not anti, then $\D(u_2)\le4$.
If $u_3\to u_2$ belongs to $H_0$, its weight gives
$\nu_{H_0}(u_2)\le5$.

Thus, assume that $u_3\to u_2$ is oriented in
\Cref{step:DUb}.
Then $\nu_H(u_2)\le6$.
If equality holds, $u_2\in U_b\setminus M_b$.
Since $u_3\in M_a$, it follows that $u_2\in C$
and $u_3$ is its unique neighbor in $A$.
If $u_1\in U_a$, this contradicts $u_1\ne u_3$.
Otherwise, $u_1\in B_U$, and (\ref{partitionM})
gives $u_2\in M_r\subseteq D_b$, both when
$u_2\in R_3$ and when $u_2\in X_{42}^{R}$.
This contradicts $u_2\in U\setminus D$.
Therefore, $\nu_H(u_2)\le5$, as required.

\medskip
\noindent{\bf Case 2.2.} $k=4$.

Then $u_3\in R_{21}$, and the initial rules give
$\mu_{H_0}(u_3)=2$, $\nu_{H_0}(u_3)\le3$
and $u_3\to u_4$.
If $u_2u_3$ is not anti, then $\D(u_2)\le4$.
Thus, assume that $u_3\to u_2$.

This arc is not type-II.
Indeed, $u_3$ already satisfies both (a1) and (a2)
after the initial step, so it cannot be the chosen
$R_2$-vertex in \Cref{step:L23R23}.
If it is oriented in \Cref{step:DUb}, then
$u_2\in U_b\setminus D_b$.
All edges of $v_1v_0u_1u_2u_3$ are therefore
unoriented in $H_0$.
This path has weight at most
$\mu_{H_0}(v_1)+4+\nu_{H_0}(u_3)\le8$,
contrary to \Cref{step:sumatmost8}.

Hence, $u_3\to u_2$ belongs to $H_0$.
If its weight is at most eight, then
$\nu_{H_0}(u_2)\le5$.
It remains to consider weight nine, so
$\nu_{H_0}(u_2)=6$.

Consider the operation that orients $u_3u_2$. 
By \Cref{claimcenter}, this operation belongs to \Cref{step:cycle(a)}.
Neither endpoint of $u_3u_2$ belongs to $F_0$, so this edge is not
an additional edge in \ref{item:cycle5-short}.
If the operation orients a whole cycle, that cycle has length five:
a shorter cycle would give weight at most eight, since
$\alpha_k+\beta_k=4$ for $k\in\{2,3,4\}$.
Otherwise, let $\overrightarrow P$ be its selected path and let $H'$
be the partial orientation immediately after the operation.
By monotonicity and \Cref{prop:F12a}, 
$9=\omega_{H_0}(u_3\to u_2)
\le\omega_{H_0}(\overrightarrow P)
\le\omega_{H'}(\overrightarrow P)\le9$.
 Thus, the equality assertion of \Cref{prop:F12a} applies to the
selected path. In either case, the operation gives a dicycle of
length five meeting $F_0$ and containing its selected path or cycle. 
For any vertex $q$ of this dicycle in $F_0$,
the directed subpath from $u_2$ to $q$ has at least
three edges, since
$\nu_{H_0}(u_2)=6$ and $\nu_{H_0}(q)\le3$.
Consequently, the dicycle has the form
$q\to u_3\to u_2\to p\to r\to q$,
where $q\in R_{11}$ is its unique vertex in $F_0$.

The edge $q\to u_3$ was oriented initially.
Thus, the operation cannot orient a whole cycle.
Its selected path contains $u_3\to u_2$, has an
endpoint in $F_0$, and cannot contain $q\to u_3$.
It must therefore be
$u_3\to u_2\to p\to r\to q$.
This is a four-edge path of form (a). 
Let $K$ be the partial orientation immediately before this operation.
The initial rules give $\nu_K(u_3)\le3$.
Hence, the opposite direction has score
$\alpha_4+4+\nu_K(u_3)\le1+4+3=8$.
The selected direction has score at least nine,
since its arc $u_3\to u_2$ has weight nine in $H_0$.
This contradicts \ref{item:cycle5-four}. 

Thus, $\D(u_2)\le5$ in every case.
It follows that $\D(u_1)\le6$, completing Case 2
and the proof.
\end{proof}

\section{Conclusion}

In this paper, we prove that $f(4)\le 16$, improving the previous upper
bound  $f(4)\le18$. A main methodological contribution is the introduction
of the notion of a \emph{good path}, which records directed routes that
remain realizable in a partially oriented graph. Unlike earlier approaches
based mainly on predetermined local configurations, short cycles, or an
R--S orientation, this framework preserves global reachability information
while the orientation is being constructed.

The parameters $\U$ and $\D$, defined through good paths, measure the
minimum adjusted lengths of such realizable routes and guide the orientation
of the remaining edges. In this way, local orientation decisions are linked
to global distance estimates: the resulting parameters $\mu$ and $\nu$ are
controlled by $\U$ and $\D$, apart from the explicitly described exceptional
cases. Together with the layer-by-layer estimates, this yields an orientation
of directed diameter at most $16$.

The good-path framework may also be useful for studying $f(d)$ for larger
diameters. One possible approach is to choose a suitable central edge or
subgraph, partition the graph into distance layers, and define corresponding
good-path parameters to guide the remaining orientations. Although the
definitions and bounds would need to be adapted, the underlying idea of
encoding extendable directed paths is not specific to diameter $4$.

\section*{Acknowledgements}
Yaokun Feng, Xiaopan Lian and Zijian Ren are supported by Fundamental and Interdisciplinary Disciplines Breakthrough Plan of the Ministry of Education of China (JYB2025XDXM207). Xiaopan Lian is also supported by the National Natural Science Foundation of China (No.\,12671404).  Hui Lei is funded by the National Natural Science Foundation of China (Nos.\,12371351, 12431013), the Natural Science Foundation of Tianjin (24JCYBJC01670), and the Fundamental Research Funds for the Central Universities, Nankai University.
\medskip 

\noindent{\bf Declaration on the use of AI}

During the preparation of this paper, ChatGPT was used for proofreading and language editing to improve clarity and fluency. The
mathematical ideas, proofs, and all technical mathematical content are the author’s own work.

\end{document}